\numberwithin{equation}{section}
\theoremstyle{plain}
\newtheorem{thm}{Theorem}[section]
\newtheorem{lem}[thm]{Lemma}
\newtheorem{prop}[thm]{Proposition}
\theoremstyle{definition}
\newtheorem{Def}[thm]{Definition}
\theoremstyle{remark}
\newtheorem{rem}[thm]{Remark}
\newtheorem{Not}[thm]{Notation}
\newcommand{\ep}{\epsilon}
\newcommand{\epu}{\underline{\epsilon}{}}
\newcommand{\Piu}{\underline{\Pi}{}}
\newcommand{\Pibru}{\underline{\Pibr}{}}
\newcommand{\pu}{\underline{p}{}}
\newcommand{\mul}{\underline{m}{}}
\newcommand{\vu}{\underline{v}{}}
\newcommand{\Nu}{\underline{N}{}}
\newcommand{\Pbbbr}{\breve{\mathbb{P}}}
\newcommand{\zetau}{\underline{\zeta}{}}
\newcommand{\gu}{\underline{g}{}}
\newcommand{\Ecr}{\mathring{\Ec}{}}
\newcommand{\Bcr}{\mathring{\Bc}{}}
\newcommand{\Lct}{\tilde{\Lc}{}}
\newcommand{\gammah}{\hat{\gamma}{}}
\newcommand{\thetau}{\underline{\theta}{}}
\newcommand{\thetahu}{\underline{\hat{\theta}}{}}
\newcommand{\Ecu}{\underline{\Ec}{}}
\newcommand{\Gammau}{\underline{\Gamma}{}}
\newcommand{\Wcu}{\underline{\Wc}{}}
\newcommand{\ru}{\underline{r}{}}
\newcommand{\au}{\underline{a}{}}
\newcommand{\sigmau}{\underline{\sigma}{}}
\newcommand{\Upsilonch}{\check{\Upsilon}{}}
\newcommand{\Upsilonchu}{\underline{\check{\Upsilon}}{}}
\newcommand{\ellu}{\underline{\ell}{}}
\newcommand{\hu}{\underline{h}{}}
\newcommand{\su}{\underline{s}{}}
\newcommand{\psig}{\grave{\psi}{}}
\newcommand{\phig}{\grave{\phi}{}}
\newcommand{\thetahug}{\grave{\thetahu}{}^0{}}
\newcommand{\zetaug}{\grave{\zetau}{}}
\newcommand{\Uttg}{\grave{\Utt}{}}
\newcommand{\Vttg}{\grave{\Vtt}{}}
\newcommand{\Zttg}{\grave{\Ztt}{}}
\newcommand{\varthetag}{\grave{\vartheta}{}}
\newcommand{\Utta}{\acute{\Utt}{}}
\newcommand{\Vtta}{\acute{\Vtt}{}}
\newcommand{\Ztta}{\acute{\Ztt}{}}
\newcommand{\varthetaa}{\acute{\vartheta}{}}
\newcommand{\phia}{\acute{\phi}{}}
\newcommand{\thetahua}{\acute{\thetahu}{}^0{}}
\newcommand{\zetaua}{\acute{\zetau}{}}
\newcommand{\rhou}{\underline{\rho}{}}
\newcommand{\Ntt}{\mathtt{N}{}}
\newcommand{\Gtt}{\mathtt{G}{}}
\newcommand{\hcbr}{\breve{\hc}{}}
\newcommand{\Hct}{\widetilde{\Hc}{}}
\newcommand{\Hctu}{\underline{\Hct}{}}
\newcommand{\psiu}{\underline{\psi}{}}
\newcommand{\psihu}{\underline{\hat{\psi}}{}}
\newcommand{\Upsilonu}{\underline{\Upsilon}{}}
\newcommand{\Icu}{\underline{\Ic}{}}
\newcommand{\gammahu}{\underline{\hat{\gamma}}{}}
\newcommand{\Lcu}{\underline{\Lc}{}}
\newcommand{\elltu}{\underline{\tilde{\ell}}{}}
\newcommand{\Xcr}{\mathring{\mathcal{X}}{}}
\newcommand{\Eg}{\grave{E}{}}
\newcommand{\Bttg}{\grave{\Btt}{}}
\newcommand{\Xttg}{\grave{\Xtt}{}}
\newcommand{\Fttg}{\grave{\Ftt}{}}
\newcommand{\betag}{\grave{\beta}{}}
\newcommand{\lambdag}{\grave{\lambda}{}}
\newcommand{\Qttg}{\grave{\Qtt}{}}
\newcommand{\Pttg}{\grave{\Ptt}{}}
\newcommand{\Gttg}{\grave{\Gtt}{}}
\newcommand{\Bscg}{\grave{\Bsc}{}}
\newcommand{\Rca}{\acute{\Rc}{}}
\newcommand{\Ncg}{\grave{\Nc}{}}
\newcommand{\Bttt}{\tilde{\Btt}{}}
\newcommand{\Xttt}{\tilde{\Xtt}{}}
\newcommand{\Gttt}{\tilde{\Gtt}{}}
\newcommand{\Fttt}{\tilde{\Ftt}{}}
\newcommand{\Pttt}{\tilde{\Ptt}{}}
\newcommand{\Lttt}{\tilde{\Ltt}{}}
\newcommand{\Xttbr}{\breve{\Xtt}{}}
\newcommand{\Gttbr}{\breve{\Gtt}{}}
\newcommand{\Fttbr}{\breve{\Ftt}{}}
\begin{document}

\title[Dynamical relativistic liquid bodies]{Dynamical relativistic liquid bodies}

\author[T.A. Oliynyk]{Todd A. Oliynyk}
\address{School of Mathematical Sciences\\
9 Rainforest Walk\\
Monash University, VIC 3800\\
Australia}
\email{todd.oliynyk@monash.edu}

\begin{abstract}
\noindent We establish the local-in-time
existence of solutions to the relativistic Euler equations representing
dynamical liquid bodies in vacuum.
\end{abstract}

\maketitle


\section{Introduction} \label{intro}

Over the past two decades, a number of results that guarantee the local-in-time existence and uniqueness of solutions to the (non-relativistic) Euler equations that represent dynamical fluid bodies in vacuum have been established \cite{Coutand_et_al:2013,CoutandShkoller:2007,CoutandShkoller:2012,DisconziEbin:2016,GuLei:2016,JangMasmoudi:2010,Lindblad:2005b,Lindblad:2005a,ShatahZeng:2011,Wu:1997,Wu:1999,ZhangZhang:2006}.  Recently, the first step towards extending these existence results to the relativistic setting have been taken. Specifically,
a priori estimates for solutions to the relativistic Euler equations that satisfy the vacuum boundary conditions have
been established for liquids \cite{Ginsberg:2018c,Oliynyk:Bull_2017} on prescribed spacetimes and for gases \cite{Hadzic_et_al:2015,Jang_et_al:2016}
on Minkowski space.
Unlike most initial boundary value problems where well-known approximation schemes can be used to obtain local-in-time existence and uniqueness results from a priori estimates,
it is highly non-trivial to obtain existence from a priori estimates for dynamical fluid bodies in vacuum, whether relativistic or not.
The main reason for the difficulty is the presence of the free fluid-matter vacuum boundary, which
make it necessary to exploit much of the structure of the Euler equations in order to derive a priori estimates. This makes
the use of approximation methods problematic since any approximate equation would have to possess all of
the essential structure of the Euler equations used to derive estimates, and to find such approximations have proved to be very difficult.

The a priori estimates  from \cite{Oliynyk:Bull_2017} were derived  using a wave formulation of the Euler equations consisting
of a fully non-linear system of wave equations in divergence form together with non-linear acoustic boundary conditions. This system of wave equations and acoustic boundary conditions were obtained by differentiating the Lagrangian representation of the Euler equations and vacuum boundary conditions in time and adding constraints that vanish identically on solutions. A priori
estimates, without derivative loss, were then established using an existence and uniqueness theory that was
developed for linear systems of wave equations with acoustic boundary conditions  together with Sobolev-Moser type inequalities to handle the non-linear estimates. This approach to deriving a priori estimates suggests a two step strategy to obtain the local-in-time existence,
without derivative loss,
of solutions to the relativistic Euler equations on prescribed spacetimes that satisfy the vacuum boundary conditions.
The first step is to show that the constraints used to derive the wave formulation propagate;
that is, to show that if the constraints, when evaluated on a solution of the wave formulation, vanish on the initial hypersurface, then
they must vanish identically everywhere on the world tube defined by support of the solution.
The second step is to establish the local-in-time existence
and uniqueness of solutions to the wave formulation, which would follow from
a standard iteration argument using the linear theory and Sobolev-Moser inequalities developed
in \cite{Oliynyk:Bull_2017}. On a prescribed spacetime, such solutions would, by step one,  then determine solutions of
the relativistic Euler equation that satisfy the vacuum boundary conditions thereby establishing the local-in-time
existence of solutions representing dynamical relativistic liquid bodies.

The main purpose of this article is to carry out this two-step strategy\footnote{For technical reasons,
we do not use the wave formulation from \cite{Oliynyk:Bull_2017}, but instead, we consider a related version that
differs by a choice of constraints. This new wave formulation involves an additional
scalar field that solves a wave equation with Dirichlet boundary conditions.} and establish the local-in-time existence of solutions to the relativistic
Euler equations that represent dynamical liquid bodies. The precise statement of this result is given in Theorem \ref{locexistB}.

\subsection{Related and prior work}
In the non-relativistic setting, a number of different approaches have been used to establish the local-in-time existence
and uniqueness of solutions to the Euler equations that satisfy vacuum boundary conditions.
Important early work was carried out by S. Wu who, in the articles \cite{Wu:1997,Wu:1999}, solved the water waves
problem by establishing the local-in-time existence of solutions for an irrotational incompressible liquid in
vacuum. This work improved on the earlier results \cite{Craig:1985,Nalimov:1974,Yosihara:1982}, where existence for water
waves was established under restrictions on the
initial data. Wu's results were later generalized, using a Nash-Moser scheme combined with extensions to earlier a priori estimates
derived in \cite{ChristodoulouLindblad:2000}, by H. Lindblad to allow for vorticity in \cite{Lindblad:2005a}. This work
was subsequently extended to compressible liquids in \cite{Lindblad:2005b}.

Due to the reliance on Nash-Moser, Lindblad's existence results involve derivative loss. By using an approximation based
on a parabolic regularization that reduces in the limit of vanishing viscosity to the Euler equations, the authors of
\cite{CoutandShkoller:2007} were
able to establish, without derivative loss, a local-in-time existence result for incompressible fluid bodies, which they later
generalized to compressible
gaseous and liquid bodies in \cite{CoutandShkoller:2012} and \cite{Coutand_et_al:2013}, respectively. Existence for
compressible gaseous bodies
was also established using a different approach in \cite{JangMasmoudi:2010}. 
Recently, a new approach to establishing the local-in-time existence of solutions for compressible, self-gravitating, liquid bodies
has been developed in \cite{Ginsberg_et_al:2019}. 
For other related results in the non-relativistic
setting, which includes other approaches to a priori estimates, existence on small and large time scales,  and coupling to Newtonian gravity, see the works
\cite{Alazard_et_al:2011,Alvarez_SamaniegoLannes:2008,Bieri_et_al:2017,BrauerKarp:2018b,BrauerKarp:2018a,Coutand_et_al:2010,DisconziLuo:2019,Germain_etal:2012,Ginsberg:2018a,Ginsberg:2018b,HadzicJang:2018a,
HadzicJang:2018b,Hunter_et_al:2016,IonescuPusateri:2015, LindbladLuo:2018,LindbladNordgren:2009,
Luo:2017,Luo_et_al:2014,Makino:2017,Schweizer:2005,ShatahZeng:2008,Wu:2011} and references cited therein.

In the relativistic setting, much less is known. For gaseous relativistic bodies, the only existence result in the most
physically interesting case where
the square of the sound speed goes to zero like the distance to the boundary that we are
aware of is \cite{Oliynyk:CQG_2012}, which is applicable to fluids on 2 dimensional Minkowsi space. However, based on earlier work
by Makino \cite{Makino:1986} in the non-relativistic setting, Rendall established the existence of solutions to the Einstein-Euler equations
representing self-gravitating gaseous bodies that are undergoing collapse \cite{Rendall:1992}; see also \cite{BrauerKarp:2011,BrauerKarp:2014} for recent progress in this direction.  For relativistic liquids on Mikowski space, a local-in-time
existence result involving derivative loss has been established in \cite{Trakhinin:2009} using a symmetric hyperbolic formulation in conjunction
with a Nash-Moser scheme.

We note that the use of constraints to establish the existence of solutions
has a long history in Mathematical Relativity with, perhaps, the most well known and important
application being the proof of the existence and geometric uniqueness of solutions to the Einstein equations, which was
first established in \cite{ChoquetBruhat:1952}; see also
\cite{Andersson_et_al:2014,ChoquetFriedrich:2006,FriedrichNagy:1999,Kreiss_etal:2007,Kreiss_etal:2009,Rendall:1992} for related
work when boundaries are present. We would also like to add that the work presented
here and in \cite{Oliynyk:Bull_2017} was inspired by the constraint propagation approach to the relativistic
fluid body problem from \cite{Friedrich:1998}.

\subsection{Initial boundary value problem for relativistic liquid bodies\label{introIBVP}}
In order to define the initial boundary value problem (IBVP) for a relativistic fluid, we first need to introduce some geometric
structure starting with a $4$-dimensional manifold\footnote{In this article, we, for simplicity, restrict our considerations
to the physical spacetime dimension of $d=4$. However, the results presented in this article are valid
for all spacetime dimensions $d\geq 3$.} $M$ equipped with
a prescribed smooth Lorentzian metric
\begin{equation} \label{metdef}
g = g_{\mu\nu} dx^\mu dx^\nu
\end{equation}
of signature $(-,+,+,+)$. In the following, we let $\nabla_\mu$ denote the Levi-Civita connection of $g_{\mu\nu}$ and
$\Omega_0 \subset M$ be a bounded, connected\footnote{There is no need for $\Omega_0$ to be either connected or bounded; non-connected $\Omega_0$
correspond to multiple fluid bodies, while non-bounded components represent unbounded fluid bodies.}
spacelike hypersurface  that is properly 
contained and open in a spacelike hypersurface $\Sigma\subset M$ so that $\overline{\Omega}_0=\Omega_0
\cup \del{}\Omega_0$ and $\Omega_0\cap \del{}\Omega_0 = \emptyset$. We further assume that the boundary
$\del{}\Omega_0$ is smooth.

The manifold $\Omega_0$ defines the initial hypersurface where we specify initial data for the fluid. The proper energy density $\rho$ of the fluid is initially non-zero on $\Omega_0$ and vanishes outside, that is on $\Sigma\setminus \Omega_0$.
The initial hypersurface $\Omega_0$ forms the ``bottom'' of the world tube $\Omega_T\subset M$
defined by the motion of the fluid body through spacetime, which is diffeomorphic to the cylinder $[0,T]\times \Omega_0$.
We let $\Gamma_T$ denote the timelike boundary of $\Omega_T$, which is diffeomorphic to
$[0,T]\times \del{}\Omega_0$. By our conventions, $\Gamma_0 = \del{}\Omega_0$.

The motion of the fluid body on the prescribed space time $(M,g)$ is governed by the relativistic Euler equations given by
\begin{equation} \label{eulint1}
\nabla_\mu T^{\mu\nu} = 0
\end{equation}
where
\begin{equation*}
T^{\mu \nu} = (\rho + p)v^\mu v^\nu + p g^{\mu \nu}
\end{equation*}
is the stress energy tensor, $v^\mu$ is the fluid $4$-velocity normalized
by
\begin{equation*} 
g_{\mu\nu}v^\mu v^\nu = -1,
\end{equation*}
$p$ is the pressure, and $\rho$, as above, is the proper energy density of the fluid.
Projecting \eqref{eulint1} into the subspaces parallel and orthogonal to $v^\mu$ yields
the following well-known form of the relativistic Euler equations
\begin{align}
v^\mu \nabla_\mu \rho + (\rho+p)\nabla_\mu v^\mu & =  0, \label{eulint3.1}\\
(\rho + p)v^\mu \nabla_\mu v^\nu + h^{\mu\nu}\nabla_\mu p & = 0, \label{eulint3.2}
\end{align}
where
\begin{equation} \label{pidef}
h_{\mu\nu} = g_{\mu\nu} + v_\mu v_\nu
\end{equation}
is the induced positive  definite metric on the subspace orthogonal to $v^\mu$.

In this article, we restrict our attention to
fluids with a barotropic equation of state of the form
\begin{equation*}
\rho = \rho(p)
\end{equation*}
where $\rho$ satisfies\footnote{In reality, we only require that the equation of state is defined
on the physical domain $p \geq 0$. We extend the range to include $p<0$ as a matter of convenience only.}
\begin{gather}
\rho \in C^{\infty}(-\infty,\infty), \quad  \rho(0) = \rho_0, \notag 
\\
 \intertext{and}
 \frac{1}{s^2_1} \leq \rho'(p) \leq \frac{1}{s^2_0}, \quad -\infty< p < \infty, \label{eosA.2}
\end{gather}
for some positive constants $0<\rho_0$, and $0<s_0<s_1<1$. Since the square of
the sound speed is given by
\begin{equation} \label{s2}
s^2 = \frac{1}{\rho'(p)},
\end{equation}
the assumption \eqref{eosA.2} implies that
$0<s_0^2 \leq s^2 \leq s_1^2  < 1$,
or in other words, that the sound speed is bounded away from zero and strictly less than the speed of light.
Note also that  \eqref{eosA.2} guarantees that the density $\rho$ satisfies
$\frac{1}{s_1^2}p + \rho_0 \leq \rho(p)$ for  $p \in (-\infty, \infty)$, which implies that
\begin{equation}
\rho(p) \geq \tilde{\rho}_0 > 0, \quad s_1^2(\tilde{\rho}_0-\rho_0)\leq  p < \infty, \label{eosA.3}
\end{equation}
for any constant $\tilde{\rho}_0\in (0,\rho_0)$. In particular, this implies that there exists an $\epsilon>0$
such that the density $\rho$ is bounded away from zero for $p \geq -\epsilon$.

The boundary of the world-tube $\Omega_T$, which separates the fluid body from the vacuum region, is defined
by the vanishing of the pressure, i.e. $p|_{\Gamma_T}=0$. By \eqref{eosA.3},
the proper energy density does not vanish at the boundary, and hence, there is a jump in the proper energy density across
$\Gamma_T$. Fluids of this type are referred to as \textit{liquids}. In addition to the vanishing of the pressure, the condition $v|_{\Gamma_T}\in T\Gamma_T$ must be satisfied
to ensure that no fluid moves across $\Gamma_T$. These two conditions
form the vacuum boundary conditions satisfied by freely evolving fluid bodies. Collecting these boundary conditions
together with the evolution equations \eqref{eulint3.1}-\eqref{eulint3.2}, the complete Initial Boundary Value Problem (IBVP) for a relativistic liquid body on the spacetime $(M,g)$ is:
\begin{align}
v^\mu \nabla_\mu \rho + (\rho+p)\nabla_\mu v^\mu & =  0 && \text{in $\Omega_T$,} \label{ibvp.1}\\
(\rho + p)v^\mu \nabla_\mu v^\nu + h^{\mu\nu}\nabla_\mu p & = 0 &&\text{in $\Omega_T$,} \label{ibvp.2}\\
p &=0 &&\text{in $\Gamma_T$,} \label{ibvp.3} \\
n_\nu v^\nu &= 0 &&\text{in $\Gamma_T$,}  \label{ibvp.4}\\
(\rho,v^\mu) &= (\tilde{\rho},\tilde{v}^\mu) &&\text{in $\Omega_0$,} \label{ibvp.5}
\end{align}
where $(\tilde{\rho},\tilde{v}^\mu)$ is the initial data, and $n_\nu$ is the outward pointing unit conormal to $\Gamma_T$.

\begin{rem}
The boundary condition \eqref{ibvp.4} guarantees that the cylinder $\Omega_T$ is invariant under the flow of $v$. Thus, without loss of generality, 
we will assume for the remainder of the article that $\Omega_T$ is of the form
\begin{equation*}
\Omega_T = \bigcup_{0\leq t \leq T} \Gc_t (\Omega_0)
\end{equation*}
where $\Gc_\tau$ is the flow of $v$, that is,
$\frac{d\;}{d t}\Gc_t = v\circ \Gc_t$.
The timelike boundary of the cylinder is then given by
\begin{equation*}
\Gamma_T =  \bigcup_{0\leq t \leq T} \Gc_t (\del{}\Omega_0),
\end{equation*}
and an outward pointing conormal to $\Gamma_T$ can be determined by using the flow to transport an initial outward conormal defined at $t=0$ to all of $\Gamma_T$. Normalizing this conormal then yields the unique, outward pointing unit conormal to $\Gamma_T$; see \eqref{lemBa.2} and \eqref{lemBa.3} for
details.
\end{rem}

\subsection{Overview}
We fix our notation and conventions used throughout this article in Section \ref{prelim} and in Appendix \ref{DG}, where a
number of definitions and formulas from differential geometry are collected.  In Sections \ref{primaux} and \ref{consEul}, we define
the primary fields and constraints, respectively, that will be used in our wave formulation of the relativistic liquid body
IBVP. The Eulerian representation of our wave formulation, which includes the freedom to add constraints, is
introduced in Section \ref{waveibvpEul}. We then state and prove Theorem \ref{cthm} in Section \ref{constprop}.
Informally, this theorem guarantees the constraints, when evaluated on solutions of our wave formulation,
vanish in $\Omega_T$ provided they vanish initially on $\Omega_0$, i.e. they propagate, and moreover, that solutions
to our wave formulation for which constraints vanish correspond to solutions of the relativistic liquid IBVP.
We then, in Section \ref{choice}, make a particular choice of the constraints that appear in the evolution equations
and boundary conditions that define our wave formulation in order to bring the total system into form that is favorable for
establishing the existence and uniqueness of solutions; see Proposition \ref{cprop} for details. 
In Section \ref{lag},  we introduce Lagrangian coordinates
and express the system of equations and boundary conditions from Section \ref{choice} in these coordinates. We then
establish the local-in-time existence and uniqueness of solutions to this system in this section; see Theorem \ref{locexistA}
for the precise statement. The proof of Theorem \ref{locexistA} is based on an iteration method that 
relies on the linear existence and uniqueness theory for wave equations developed in Appendix \ref{LWE}
and the Sobolev-Moser type estimates from Appendix \ref{calculus}.  In the final section, Section \ref{locexistREVBC}, we 
deduce the
local-in-time existence
of solutions to the relativistic Euler with vacuum boundary conditions from Proposition \ref{cprop} and Theorem \ref{locexistA}. This
existence result is the main result of this article and the precise statement is given in Theorem \ref{locexistB}.

\section{Preliminaries\label{prelim}}
In this section, we fix our notation and conventions that we will employ throughout this article; see also
Appendix \ref{DG} where we collect a number of definitions and formulas from differential geometry. 

\subsection{Index of notation} An index containing frequently used definitions and non-standard notation can be found in Appendix \ref{index}.

\subsection{Indexing conventions\label{indexing}}
We will need to index various objects. The conventions that we will employ are as follows:
\medskip
\begin{center}
\begin{tabular}{|l|c|c|l|} \hline
Alphabet & Examples & Index range & Index quantities  \\ \hline
Lowercase Greek & $\mu,\nu,\gamma$ & $0,1,2,3$ & spacetime coordinate components \\ \hline
Uppercase Greek &$\Lambda,\Sigma,\Omega$ & $1,2,3$, & spatial coordinate components \\ \hline
Lowercase Latin &$i,j,k $ & $0,1,2,3$ & spacetime frame components \\ \hline
Uppercase Latin &$I,J,K$ & $1,2,3$, & spatial frame components \\ \hline
Lowercase Calligraphic &$\ic,\jc,\kc$ & 0,1,2 & spacetime boundary frame components\\ \hline
Uppercase Calligraphic &$\Ic,\Jc,\Kc$ & 1,2 & spatial boundary frame components \\ \hline
\end{tabular}
\end{center}

\medskip

\noindent \textit{Note:} In Appendices \ref{calculus} to \ref{LWE}, we work in general dimensions, and so, there lowercase Greek letters will run from $0$ to $n$, while uppercase Greek
indices will run from $1$ to $n$.

\subsection{Partial derivatives\label{pderivatives}}
We use
\begin{equation*}
\del{\mu} = \frac{\del{}\;}{\del{} x^\mu}
\end{equation*}
to denote partial derivatives with respect to the coordinates $(x^\mu)$, and $\del{}$ and $D$ to denote spacetime and spatial gradients, respectively, so
that $\del{}f = (\del{\mu}f)$ and $D{}f = (\del{\Lambda}f)$ for scalar fields $f$. Additionally, we use $\delsl{}$ to denote the derivatives that are tangent to the
boundary $\del{}\Omega$. More generally, for $k\in \Zbb_{\geq 0}$, we use
$\del{}^k f = (\del{\mu_1}\del{\mu_2}\cdots \del{\mu_k} f)$ to denote the set of all partial
derivatives of order $k$, and  define $\delsl{}^k$ and $D^k$ in an analogous manner. We also let $\del{}^{|k|}f = \{\, \del{}^j f \,  |\, 0\leq j\leq k\,\}$, $\delsl{}^{|k|}f = \{\, \delsl{}^j f \,  |\, 0\leq j\leq k\,\}$, and $D^{|k|}f = \{\, D^j f \,  |\, 0\leq j\leq k\,\}$,

\subsection{Raising and lowering indices\label{raising}}
We lower and raise spacetime coordinate indices without comment using the metric $g_{\mu\nu}$ and its inverse $g^{\mu\nu}$, respectively, while
frame indices will be lowered and raised, again without comment, using the frame metric $\gamma_{ij}$ and its inverse $\gamma^{ij}$, respectively; see
\eqref{DGframemet} for a definition of the frame metric. We will have occasion to raise or lower indices using metrics
other than $g_{\mu\nu}$ or $\gamma_{ij}$. In these situations, we will be explicit about this type of operation. For example, given
a metric $m^{\mu\nu}$ and a $1$-form $\lambda_\mu$, we would define the raised version using $m$ explicitly by setting
$\lambda^\mu = m^{\mu\nu}\lambda_\nu$.

\subsection{Norms\label{norms}} For a spacelike $1$-form $\lambda_\mu$, we
define the spacetime norm $|\lambda|_g$ by
\begin{equation*}
|\lambda|_g := \sqrt{g(\lambda,\lambda)} = \sqrt{ g^{\mu\nu}\lambda_\mu \lambda_\nu},
\end{equation*}
while if $m^{\mu\nu}$ is a positive definite metric, then we define the $m$-norm of any 1-form $\lambda_\mu$ by
\begin{equation*}
|\lambda|_{m} := \sqrt{m(\lambda,\lambda)} = \sqrt{ m^{\mu\nu}\lambda_\mu \lambda_\nu}.
\end{equation*}
Similar notation will also be used for inner products involving other objects carrying indices of some type; for example, we
write
$|T|_m^2 = m_{\alpha\beta}m^{\mu\nu} T^\alpha_\nu T^\beta_\mu$, where $(m_{\alpha\beta}):=(m^{\alpha\beta})^{-1}$.
For the special case of the Euclidean inner-product and norm, we employ the notation 
\begin{equation*}
\ipe{\xi}{\zeta} = \xi^{\tr}\zeta \AND 
|\xi| = \sqrt{\ipe{\xi}{\xi}}, \quad \xi,\zeta \in \Rbb^N.
\end{equation*}

\subsection{Constraint terms\label{constnot}} To help encode the freedom to add constraints to evolution equations and boundary conditions, we reserve upper case Fraktur letters, e.. $\Rf$, $\Sf$, $\Tf$, possibly endowed with spacetime
indices, e.g. $\Rf^\nu$, to denote maps that depend linearly on a set of constraints $\Zc$. More precisely, if $\Zc$ is $\Rbb^N$-valued, then
\begin{equation*}
\Rf(\Zc) = \mf\Zc
\end{equation*}
where\footnote{Here and in the following, $\Mbb{N}$ is used to denote the set of $N\times N$ matrices.} $\mf \in C^0(\overline{\Omega}_T,\Mbb{N})$ if $\Rf(\Zc)$ is added to an evolution equation and $\mf \in C^0(\overline{\Gamma}_T,\Mbb{N})$ if $\Rf(\Zc)$ is added to a boundary condition. Since we will, for the most part, not be interested in the exact form of terms involving the constraints, we will often use the same Fraktur letter to denote different combinations of the constraints.

\subsection{Spatial function spaces\label{spatialFS}}
Given a finite dimensional vector space $V$ and  a
bounded, open set $\Omega$ in  $\Rbb^n$ with $C^\infty$ boundary, we let
$W^{s,p}(\Omega,V)$, $s\in \Rbb$ $(s\in \Zbb_{\geq 0})$, $1< p < \infty$ $(1\leq p\leq \infty)$,
denote the space of $V$-valued maps on $\Omega$ with fractional (integral) Sobolev regularity $W^{s,p}$. Particular
cases of interest for $V$ will be $V=\Rbb^N$ and $V=\Mbb{N}$. In the special case of $V=\Rbb$,
we employ the more compact notation $W^{s,p}(\Omega)=W^{s,p}(\Omega,\Rbb)$. 
We will also use the same
notation for function spaces when $\Omega$ is replaced by a closed manifold, e.g. $\partial\Omega$.

When $p=2$, we use the standard
notation $H^s(\Omega,V)=W^{s,2}(\Omega,V)$, and on $L^2(\Omega,\Rbb^N)$, we denote the inner product by
\begin{equation*}
\ip{u}{v}_{\Omega} = \int_{\Omega} \ipe{u(x)}{v(x)}\, d^n x, \qquad u,v\in L^2(\Omega,\Rbb^N).
\end{equation*}
We also use the standard notion $H^1_0(\Omega,V)$ to denote
the subspace of $H^1(\Omega,V)$ consisting of elements whose trace vanishes on the boundary $\del{}\Omega$.

\subsection{Spacetime function spaces:\label{spacetimeFS}} Given $T>0$, and $s=k/2$ for $k\in \Zbb_{\geq 0}$, we define the spaces
\begin{equation}\label{XTdefA}
X^{s,r}_T(\Omega,V) = \bigcap_{\ell=0}^{r} W^{\ell,\infty}\bigl([0,T],H^{s-\frac{\ell}{2}}(\Omega,V)\bigr),\qquad 0\leq r \leq 2s,
\end{equation}
\begin{equation}\label{XTdefB}
X^s_T(\Omega,V) = X^{s,2s}_T(\Omega,V),
\end{equation}
and
\begin{equation}\label{XcTdefB}
\Xc^{s}_T(\Omega,V) = \bigcap_{\ell=0}^{2s-1} W^{\ell,\infty}\bigl([0,T],H^{s-\frac{m(s,\ell)}{2}}(\Omega,V)\bigr)
\end{equation}
where
\begin{equation}\label{melldef}
m(s,\ell) = \begin{cases} \ell & \text{if $0\leq \ell \leq 2s-2$} \\
2s &\text{if $\ell = 2s-1$}
\end{cases}.
\end{equation}
We further let $\mathring{X}^{s,r}_T(\Omega,V)$ and $\Xcr^s_T(\Omega,V)$ denote the subspaces of $X^{s,r}_T(\Omega,V)$ and  $\Xc^{s}_T(\Omega,V)$, respectively,
that are defined by
\begin{equation}\label{Xrdef}
\mathring{X}^{s,r}_T(\Omega,V) = \{ \, u\in X^{s,r}_T(\Omega,V)\, |\, \del{t}u\in X^{s,r}_T(\Omega,V)\, \}
\end{equation}
and
\begin{equation}\label{Xcrdef}
\Xcr^s_T(\Omega,V) = \{ \, u\in \Xc^{s}_T(\Omega,V)\, |\, \del{t}u\in \Xc^{s}_T(\Omega,V)\, \}.
\end{equation}

Next, we define the \textit{energy norms}:
\begin{equation*}
\norm{u}_{E^{s,r}}^2 = \sum_{\ell=0}^r \norm{\del{t}^\ell u}^2_{H^{s-\frac{\ell}{2}}(\Omega)}, \quad\norm{u}^2_{E^s} =  \norm{u}_{E^{s,2s}}^2, \AND
\norm{u}_{\Ec^{s}}^2  = \sum_{\ell=0}^{2s-1} \norm{\del{t}^\ell u}^2_{H^{s-\frac{m(s,\ell)}{2}}(\Omega)}.
\end{equation*}
In terms of these energy norms, we define the \textit{spacetime norms} on the spaces \eqref{XTdefA}-\eqref{XcTdefB} and \eqref{Xcrdef} by
\begin{gather*}
\norm{u}_{X^{s,r}_T} = \sup_{0\leq t \leq T} \norm{u(t)}_{E^{s,r}}, \quad
\norm{u}_{X^s_T} = \sup_{0\leq t \leq T} \norm{u(t)}_{E^s}, \\
\norm{u}_{\Xc^{s}_T} =  \sup_{0\leq t \leq T} \norm{u(t)}_{\Ec^s} \AND 
\norm{u}_{\Xcr^{s}_T} =  \sup_{0\leq t \leq T}\sum_{\ell=0}^1 \norm{\del{t}^\ell u(t)}_{\Ec^s},
\end{gather*}
respectively. We also define the
subspace
\begin{equation} \label{XVdef}
C\Xc^s_T(\Omega,V) = \bigcap_{\ell=0}^{2s-1} C^{\ell}\bigl([0,T],H^{s-\frac{m(s,\ell)}{2}}(\Omega,V)\bigr).
\end{equation}

We conclude this section with the following elementary, but useful, relations:
\begin{align*}
\norm{u}_{E^{s_1,r_1}} &\leq \norm{u}_{E^{s_2,r_2}}, &&s_1\leq s_2, \; r_1\leq r_2, \\
\norm{u}_{\Ec^{s_1}}&\leq \norm{u}_{\Ec^{s_2}}, && s_1\leq s_2,\\
\norm{u}_{\Ec^{s}}^2 &= \norm{u}_{E^{s,2s-2}}^2 + \norm{\del{t}^{2s-1}u}_{L^2(\Omega)}^2,\\
\norm{u}_{E^{s,r}}^2& = \norm{u}_{H^{s}(\Omega)}^2 + \norm{\del{t}u}^2_{E^{s-\frac{1}{2},r-1}},\\
\norm{u}_{E^{s,r}}^2& = \norm{u}_{E^{s-1}}^2 + \norm{\del{t}^ru}^2_{H^{s-\frac{r}{2}}(\Omega)},\\
\norm{u}_{\Ec^{s}}^2 &= \norm{u}_{H^{s}(\Omega)}^2 + \norm{\del{t}u}_{\Ec^{s-\frac{1}{2}}}^2,\\
\norm{\del{t}u}_{E^{s,r}} &\leq \norm{u}_{E^{s+\frac{1}{2},r+1}}, \\
\norm{D u}_{E^{s,r}}  &\leq \norm{u}_{E^{s+1,r}}, \\
\norm{\del{t}u}_{\Ec^s} &\leq \norm{u}_{\Ec^{s+\frac{1}{2}}},\\
\norm{u}_{E^s} &\leq \norm{u}_{\Ec^{s+\frac{1}{2}}}
\intertext{and}
\norm{Du}_{\Ec^s} &\leq \norm{u}_{E^{s+1,2s-1}} \leq \norm{u}_{\Ec^{s+1}},
\end{align*}
which we will use throughout the article without comment.

\subsection{Estimates and constants\label{const}}

We employ that standard notation
\begin{equation*}
a \lesssim b
\end{equation*}
for inequalities of the form
\begin{equation*}
a \leq C b
\end{equation*}
in situations where the precise value or dependence on
other quantities of the constant $C$ is not required.  On the other hand,  when the dependence of the constant
on other inequalities needs to be specified, for example if the constant depends on the norms $\norm{u}_{L^\infty(\Tbb^n)}$ and $\norm{v}_{L^\infty(\Omega)}$, we use the notation
\begin{equation*}
C = C(\norm{u}_{L^\infty(\Tbb^n)},\norm{v}_{L^\infty(\Omega)}).
\end{equation*}
Constants of this type will always be non-negative, non-decreasing, continuous functions of their arguments.

\section{The Eulerian wave formulation\label{waveEul}}

\subsection{Primary and auxiliary fields\label{primaux}}
The \emph{primary fields} for our wave formulation consist of a scalar field $\zeta$ satisfying
\begin{equation*}
\zeta > 0
\end{equation*}
and
a future pointing, timelike $1$-form  $\thetah^0=\thetah^0_\mu dx^\mu$. We use the primary fields to define a timelike $1$-form,
again future pointing, by
\begin{equation} \label{theta0def}
\theta^0 = \zeta \thetah^0,
\end{equation}
which we complete to a coframe by introducing spacelike $1$-forms
\begin{equation*} 
\theta^I = \theta^I_\mu dx^\mu.
\end{equation*}
Along with these $1$-forms, we introduce a collection of scalar fields $\sigma_i{}^k{}_j$.  The set $\{\theta^I,\sigma_i{}^k{}_j\}$
defines the \emph{auxiliary fields} that will evolve via simple transport equations.
For latter use, we introduce a number of additional geometric fields beginning with the frame
\begin{equation} \label{framedef}
e_j = e_j^\mu \del{\mu} \qquad \bigl((e^\mu_j) := (\theta_\mu^j)^{-1}\bigr)
\end{equation}
dual to $\theta^i$.  In accordance with the notation from Appendix \ref{DG}, we use $\gamma_{ij}$ and  $\omega_i{}^k{}_j$
to denote the associated frame metric and
connection coefficients, respectively; see \eqref{DGframemet} and \eqref{DGconndef}.
Finally, we define a future pointing, timelike vector field by
\begin{equation}
\xi^\mu = \frac{1}{\gamma^{00}}\theta^{0\mu}. \label{xidef}
\end{equation}

\subsection{Recovering $\rho$ and $v^\mu$\label{wave2eul}}
The fluid $4$-velocity $v^\mu$ will be shown to be recoverable from the  primary fields
$\{\zeta,\thetah^0_\mu\}$ by normalizing the vector field
$\xi^\mu$ to get
\begin{equation} \label{xihdef}
v^\mu :=\frac{\xi^\mu}{\sqrt{-g(\xi,\xi)}} =   -\frac{\thetah^{0\nu}}{\sqrt{-g(\thetah^0,\thetah^0)}},
\end{equation}
where in obtaining the second equality we used the fact that $\zeta > 0$ and $\gamma^{00}=g(\theta^0,\theta^0) < 0$.
Recovering the proper energy density is more complicated. The first step is to define the pressure
as a solution $p=p(\lambda)$ of the initial value problem
\begin{align}
p'(\lambda) &= \frac{1}{\lambda}\left(\rho\bigl(p(\lambda)\bigr) + p(\lambda)\right),\quad \lambda > 0, \label{prel.1}\\
p(\lambda_0) &= p_0, \label{prel.2}
\end{align}
where $\lambda_0>0$ and  $p_0\geq 0$. To be definite, we set
\begin{equation} \label{zpfix}
p_0 = 0 \AND \lambda = 1.
\end{equation}
From standard ODE theory, we see that $p=p(\lambda)$ is smooth for $\lambda > 0$,
while from the IVP \eqref{prel.1}-\eqref{prel.2}, it follows that $p$ is strictly
increasing, which in turn, implies the
invertibility of the map
$\Rbb_{\geq 1}\ni \lambda  \mapsto p(\lambda)\in \Rbb_{\geq 0}$.
We will show that we can then use this map to recover the proper energy density from the scalar field $\zeta$ by
setting $\lambda = \zeta$ to give $\rho = \rho\bigl(p(\zeta)\bigr)$.

To summarize, $\{\rho,v^\mu\}$ are determined from the primary fields $\{\zeta,\thetah^0_\mu\}$
via the formulas:
\begin{align}
\rho &= \rho\bigl(p(\zeta)\bigr), \label{theta2rho}  \\
v^\mu &=  -\frac{\thetah^{0\nu}}{\sqrt{-g(\thetah^0,\thetah^0)}}. \label{theta2v}
\end{align}

\subsection{Constraints\label{consEul}}
In this section, we define an number of quantities built out of the primary and auxiliary fields that we will refer to as \textit{constraints}.  As we shall
show later in Theorem \ref{cthm} these constraints are distinguished by the property that if the initial data for the IBVP defined below
in Section \ref{waveibvpEul} by the evolution equations 
\eqref{weqn.1}-\eqref{weqn.3} and the boundary conditions \eqref{wbc.1}-\eqref{wbc.3} is chosen so that the constraints vanish on the initial
hypersurface $\Omega_0$, then they must vanish everywhere in $\Omega_T$. We will refer to this property as \textit{constraint propagation}.

The importance of the constraints is two-fold. First, the vanishing of the constraints in $\Omega_T$ will allow us to extract solutions of the relativistic Euler equations satisfying the vacuum boundary conditions from solutions of the IBVP defined by 
\eqref{weqn.1}-\eqref{weqn.3} and  \eqref{wbc.1}-\eqref{wbc.3}.  
Second, the constraints provide us with an enormous freedom to change to form of the IBVP that will be exploited in Section \ref{choice} to bring the evolution equations and boundary conditions
into a form that is advantageous for establishing the existence of solutions.

We separate the constraints into \emph{bulk} and \emph{boundary} constraints,
which are to be interpreted as being associated to $\Omega_T$ and $\Gamma_T$, respectively.

\bigskip

\noindent \underline{Bulk constraints:}
\begin{align}
\ac & = \xi-e_0, \label{acon} \\
\bc^J &= \gamma^{0J}, \label{bcon} \\
\cc^{k}_{ij} &= \sigma_i{}^k{}_j-\sigma_j{}^k{}_i, \label{ccon}\\
\dc^k_j &= \sigma_0{}^k{}_j, \label{dcon}\\
\ec^K &= \Ed \theta^K + \Half \sigma_{i}{}^K{}_j\theta^i\wedge\theta^j, \label{econ}\\
\Fc &= \Ed\theta^0 + \Half\sigma_I{}^0{}_J\theta^I\wedge\theta^J, \label{fcon}\\
\gc &= \delta_g\biggl(\frac{1}{f(\zeta)}\theta^0\biggr) \label{gcon}, \\
\hc &= -\frac{\sqrt{-\det(\gamma^{ij})}}{\sqrt{-\gamma^{00}}}-\frac{\zeta}{f(\zeta)}, \label{hcon} \\
\jc &= g(\thetah^0,\thetah^0)+1 \label{jcon}
\end{align}
where $f(\lambda)$ is defined by
\begin{equation} \label{fdef}
f(\lambda) = -\lambda \exp\biggl(-\int_{1}^\lambda \frac{1}{\eta s^2(\eta)}\, d\eta \biggr)
\end{equation}
with the
square of the sound speed given by
$s^2(\lambda) = \bigl(\rho'(p(\lambda))\bigr)^{-1}$.

We collect together the following bulk constraints
\begin{equation} \label{chidef}
\chi = \bigl(\ac^\mu,\bc^J,\cc^{k}_{ij},\dc^k_j,\ec^K),
\end{equation}
which, as we shall see in the proof of Theorem \ref{cthm}, satisfy simple transport equations.
For later use, we observe from \eqref{theta0def}, \eqref{xidef}, \eqref{xihdef}, \eqref{acon} and \eqref{DGframemet} that
\begin{equation} \label{vrep1}
v^\mu = \sqrt{-\gamma^{00}}\xi^\mu = \sqrt{-\gamma^{00}} e_0^\mu + \Rf^\mu(\ac) =  \sqrt{-\hat{\gamma}^{00}} e_0^\mu
+ \Rf^{\mu}(\ac,\zeta-1),
\end{equation}
where we have set
\begin{equation} \label{gammah00}
\hat{\gamma}^{00} = g(\thetah^0,\thetah^0)
\end{equation}
and we are using $\Rf^\mu$ to denote terms that are proportional to the constraints in line with our notation set out in Section \ref{constnot}.

\bigskip

\noindent \underline{Boundary constraints:}
\begin{align}
\kc &= \thetah^3-n \label{kcon}
\end{align}
where
\begin{equation} \label{thetah3def}
\thetah^3 = \frac{\theta^3}{|\theta^3|_g}
\end{equation}
and as in above, $n$ is the outward pointing unit conormal to $\Gamma_T$.

\subsection{Eulerian IBVP\label{waveibvpEul}}
The formulation \eqref{ibvp.1}-\eqref{ibvp.5} of the vacuum IBVP for the relativistic Euler equations is commonly referred to as the
\emph{Eulerian representation}. In this representation, the matter-vacuum boundary is free, or in other words, dynamical. This
terminology is useful for distinguishing this form of the IBVP from the \textit{Lagrangian representation} where the boundary is fixed.
We will continue to use the Eulerian
terminology for the wave formulation of the relativistic vacuum IBVP that we introduce in this section since the boundary is also free. Later, in Section \ref{lag}, we will consider the Lagrangian representation where the boundary is fixed.

\subsubsection{Eulerian evolution equations\label{wavesysEul}}
Before stating the evolution equations, we first define a number of tensors that will be used repeatedly throughout this article starting
with
\begin{equation} \label{Wcdef}
 \Wc^{\alpha\mu} = m^{\mu\nu} a^{\alpha\beta}\bigl[\zeta \nabla_{\beta} \thetah^0_\nu + \sigma_I{}^0{}_J
\theta^I_\beta\theta^J_\nu\bigr]
\end{equation}
where
\begin{align}
m^{\alpha\beta} &= g^{\alpha\beta} + 2 v^{\alpha}v^{\beta} \label{mdef}
\end{align}
is a Riemannian metric and
\begin{align}
a^{\alpha\beta} &= -\frac{1}{f(\zeta)}\biggl(g^{\alpha\beta} - \biggl(1-\frac{1}{s^2(\zeta)}\biggr)\frac{\xi^\alpha \xi^\beta}{g(\xi,\xi)}\biggr) \label{adef}
\end{align}
is a Lorentzian metric\footnote{For solutions of
our wave formulation that correspond to solutions of the relativistic Euler equations, the metric $a^{\alpha\beta}$ is
conformal to the standard definition of the acoustic metric given by $g^{\alpha\beta}+
\left(1-\frac{1}{s^2}\right)v^{\alpha}v^{\beta}$.}. We then use $\Wc^{\alpha\mu}$ to define
\begin{equation} \label{Ecaldef}
\Ec^\mu = \nabla_\alpha \Wc^{\alpha\mu} - \Hc^\mu
\end{equation}
where
\begin{align}
\Hc^\mu =& \nabla_\alpha m^{\mu\nu}  a^{\alpha\beta}\bigl[\zeta \nabla_{\beta} \thetah^0_\nu + \sigma_I{}^0{}_J
\theta^I_\beta\theta^J_\nu\bigr] \notag \\
&+ m^{\mu\nu}\biggl[ -\nabla_\alpha \thetah^{0\gamma} a^{\alpha\beta}[\zeta \nabla_\beta \thetah^0_\gamma + \sigma_{I}{}^0{}_J\theta^I_\beta
\theta^J_\gamma]\frac{\thetah^0_\nu}{g(\thetah^0,\thetah^0)} + h_\nu^\gamma\bigl(\breve{\Hc}_\gamma - a^{\alpha\beta}\nabla_\beta \zeta \nabla_\alpha \thetah^0_\gamma \bigr) \biggr], \label{Hbrdef}\\
\breve{\Hc}_\nu =& -\frac{1}{f}R^\mu{}_\nu\theta^0_\mu
+\bigl(\delta^\beta_\nu C_{\omega\lambda}^\omega a^{\lambda\alpha}-C_{\omega\nu}^\beta a^{\omega\alpha}\bigr)
\Fc_{\alpha\beta}, \label{Hdef}
\intertext{and}
C_{\alpha\beta}^\lambda =& \Half a^{\lambda\gamma}
\bigl[\nabla_\alpha a_{\beta\gamma} + \nabla_\beta a_{\alpha\gamma}-\nabla_\gamma a_{\alpha\beta}\bigr], \quad \bigl((a_{\alpha\beta}) =
(a^{\alpha\beta})^{-1}\bigr). \label{Cdef}
\end{align}
Here, $R_{\mu\nu}$ denotes the Ricci tensor of the metric $g_{\mu\nu}$,
\begin{equation}
\Fc_{\alpha\beta} = \nabla_\alpha \theta^0_\beta - \nabla_\beta \theta^0_\alpha + \sigma_{I}{}^0{}_J\theta^I_\alpha\theta^J_\beta \label{fcomp}
\end{equation}
denotes the coordinate components of the $2$-form $\Fc$ defined by \eqref{fcon}, i.e. $\Fc = \Half
\Fc_{\alpha\beta}dx^\alpha \wedge dx^\beta$, and $h_{\alpha\beta}$ is as defined previously \eqref{pidef}.
Using the above definitions, the evolution equations for our wave formulation are given by
\begin{align}
\Ec^\mu  + \Rf^\mu\bigl(\del{}^{|1|}\chi,\del{}^{|1|}\jc\bigr)
&=0 && \text{in $\Omega_T$},
\label{weqn.1}\\
\nabla_\alpha\bigl(a^{\alpha\beta}\nabla_\beta \zeta\bigr) & = \Kc
&&  \text{in $\Omega_T$}, \label{weqn.1b}\\
\Ld_v \theta^I &=0 &&
\text{in $\Omega_T$}, \label{weqn.2}\\
v(\sigma_{i}{}^k{}_j) &= 0 &&
\text{in $\Omega_T$}, \label{weqn.3}
\end{align}
where
\begin{equation} \label{Kcdef}
\Kc = -\thetah^{0\nu}\breve{\Hc}_\nu-\nabla_\alpha\thetah^{0\gamma}
a^{\alpha\beta}[\zeta\nabla_\beta \thetah^0_\gamma + \sigma_I{}^0{}_J\theta^I_\beta\theta^J_\gamma]. 
\end{equation}

\begin{rem} \label{Rfrem}
The term $\Rf^\mu(\del{}^{|1|}\chi,\del{}^{|1|}\jc)$ in \eqref{weqn.1} encodes the available freedom to add multiples of $\chi$ and $\jc$ and their first derivatives\footnote{There is nothing stopping us
from adding higher order derivatives of $\chi$ to the evolution equation. The only thing
that would change in the analysis below is that the class of solutions that we are dealing with would have to have enough regularity for the derivatives of $\chi$ to make sense.}
to the evolution equation $\Ec^\mu=0$ in $\Omega_T$.  
Additionally, it is clear from the equations of motion \eqref{weqn.2}-\eqref{weqn.3}
that we are free to add terms proportional to
$\Ld_v \theta^I_\nu$ and $v(\sigma_{i}{}^k{}_j)$ and their derivatives to the evolution equations \eqref{weqn.1}-\eqref{weqn.1b}.
\end{rem}

\subsubsection{Eulerian boundary conditions\label{wavebcs}}
Before stating the boundary conditions for our wave formulation, we define
\begin{equation} \label{Bcdef}
\Bc^\mu = \theta_\alpha^3 \Wc^{\alpha\mu} - \ell^\mu - \Lc^\mu
\end{equation}
where
\begin{align}
\ell^\mu =& \frac{1}{\sqrt{-\gammah^{00}|\gamma|}} \Upsilon^\mu_\omega
h^{\omega\alpha}s_{\alpha\beta}{}^\gamma h^{\beta\nu}\nabla_\gamma \thetah^0_\nu,
\label{elldef}\\
\Lc^{\mu}=& \frac{1}{\sqrt{-\gammah^{00}|\gamma|}}\bigl(- \epsilon|N|_g h^{\mu\nu}\nabla_v \psih_\nu-\kappa v^\mu v^\nu \nabla_v \thetah^0_\nu \bigr),
\label{Lcdef}
\end{align}
and we have made the following definitions:\footnote{Here, $\epsilon_{\nu\alpha\beta\gamma}$ denotes the completely anti-symmetric symbol and $|g|=\sqrt{-\det(g_{\alpha\beta})}$, while
$\nu_{\nu\alpha\beta\gamma}=\sqrt{|g|}\epsilon_{\nu\alpha\beta\gamma}$ defines the volume form of $g$.}
\begin{align}
\Upsilon^\mu_\omega &=  \delta^\mu_\omega - \ep\Pi^\mu_\omega, \label{Upsilondef}\\
\psi_\nu &= \nabla_v\Bigl( \bigl(-g(\thetah^0,\thetah^0)\bigr)^{-\frac{1}{2}}\thetah^0_\nu\Bigr) \overset{\eqref{xihdef}}{=} -\nabla_v v_\nu, \label{psidef} \\
\psih_\nu & = \frac{\psi_\nu}{|\psi|_m}, \label{psihdef}\\
p^{\mu}_{\nu} &= \delta^{\mu}_\nu - \psih^\mu \psih_\nu, \label{pdef} \\
\Pi^\mu_\nu &= p^\mu_\alpha h^{\alpha}_{\nu}, \label{Pidef}\\
\Pi^{\mu\nu} &= \Pi^\mu_{\lambda}g^{\lambda\nu}, \label{Piupdef}\\
\nu_{\nu\alpha\beta\gamma}& = \sqrt{|g|}\epsilon_{\nu\alpha\beta\gamma}, \label{nudef} \\
N_\nu &= -\nu_{\nu\alpha\beta\gamma}v^\alpha e_1^\beta e_2^\gamma \label{Ndef}
\intertext{and}
s_{\mu\nu}{}^{\gamma} &= \nu_{\mu\nu\lambda\omega}e_1^\lambda e_2^\omega v^\gamma
-\nu_{\mu\nu\lambda\omega}v^\lambda e^\omega_2 e^\gamma_1
+ \nu_{\mu\nu\lambda\omega}v^\lambda e^\omega_1 e^\gamma_2 \label{Sdef}.
\end{align}
Here, $\epsilon$ is a positive function and $\kappa$ is a non-negative constant both of which will be fixed later.
With $\Bc^\mu$ so defined, the boundary conditions are given by
\begin{align}
\Bc^\mu + h^\mu_\nu \Qf_1^\nu(\hc,\del{}^{|1|}\jc) +\Qf^\mu_2(\del{}^{{|1|}} \chi,\delsl{}^{|1|}\kc,\jc) &= 0
&&
\text{in $\Gamma_T$}, \label{wbc.1}\\
\zeta & = 1 &&
\text{in $\Gamma_T$}, \label{wbc.2} \\
n_\nu v^\nu &=0 &&
\text{in $\Gamma_T$}. \label{wbc.3}
\end{align}

For use below, we note, by \eqref{theta0def}, \eqref{hcon}, \eqref{fdef} and \eqref{gammah00}, that $\Bc^\mu$ can be expressed as
\begin{equation}   \label{Bcrep}
\Bc^\mu = \theta_\alpha^3 \Wc^{\alpha\mu} -\Lct^\mu +\Qf^\mu(\hc,\zeta-1)
\end{equation} 
where
\begin{align*}
\Lct^\mu =  \Upsilon^\mu_\omega h^{\omega\alpha}s_{\alpha\beta}{}^{\gamma} h^{\beta\nu}\nabla_{\gamma}\thetah^0_\nu 
 -  \epsilon|N|_g h^{\mu\nu}\nabla_v \psih_\nu- \kappa
v^\mu v^\nu \nabla_v \thetah^0_\nu.  
\end{align*}
Furthermore, we also note, with the help of \eqref{pidef}, \eqref{xihdef}, \eqref{gammah00} and \eqref{psidef},  that
\begin{equation}\label{psi2dttheta}
\psi_\mu = (-\gammah^{00})^{-\frac{1}{2}}h_\mu^\nu \nabla_v \thetah^0_\nu.
\end{equation}

\begin{rem} \label{Qfrem}$\;$
\begin{enumerate}[(i)]
\item For the boundary condition \eqref{wbc.1} to make sense, see \eqref{Bcdef}, \eqref{Lcdef} and \eqref{psihdef},  we require that $|\nabla_v v|_m>0$. Consequently, we will restrict our attention to solutions satisfying this condition. With that said, it should be noted that for solutions of our wave formulation that determine a solution to the relativistic Euler equations, then the condition $|\nabla_v v|_m>0$ near the vacuum boundary is equivalent to the Taylor sign condition being satisfied, see \cite[Remark 4.2]{Oliynyk:Bull_2017}, and
so, this assumption is natural near the vacuum boundary. However, since we already can obtain the local-in-time existence of solutions to the relativistic Euler equations away from the vacuum  boundary using standard hyperbolic techniques, there is no loss of generality in assuming that $|\nabla_v v|_m>0$ on $\overline{\Omega}_T$.
\item The terms
$h^\mu_\nu\Qf_1^\nu(\hc)$ and $\Qf^\mu_2(\del{}^{{|1|}} \chi,\delsl{}^{|1|}\kc,\jc)$
in \eqref{wbc.1} encodes the available freedom to add multiples of $\hc$, $\chi$, $\kc$ and $\jc$
 and their indicated derivatives\footnote{Again, there is nothing stopping us
from adding higher order derivatives of $\chi$ and $\kc$ to the boundary conditions. }
to the boundary conditions $\Bc^\mu=0$ in $\Gamma_T$.  We also note that it is clear from the equations of motion \eqref{weqn.2}-\eqref{weqn.3} and the boundary condition \eqref{wbc.2}
that we are free to add terms proportional to
$\Ld_v \theta^I_\nu$ and $v(\sigma_{i}{}^k{}_j)$ and their derivatives to the boundary conditions \eqref{wbc.1}-\eqref{wbc.2}, and $\zeta-1$ and its derivatives tangential to $\Gamma_T$ to
\eqref{wbc.1}.
\end{enumerate}
\end{rem}

\subsubsection{Projections\label{projrem}}
The tensors $h^\mu_\nu$  and $p^\mu_\nu$ define projections into subspaces $g$-othogonal to $v^\mu$ and $\psi^\mu$,
respectively. Being projections,
$h^\mu_\nu$ and $p^\mu_\nu$ satisfy the relations
\begin{gather}
h^\mu_\lambda h^\lambda_\nu = h^\mu_\nu, \quad h^\mu_\nu v^\nu = 0, \label{projrem1} \\
\intertext{and}
 p^\mu_\lambda p^\lambda_\nu = p^\mu_\nu, \quad p^\mu_\nu \psi^\nu = 0, \label{projrem2}
\end{gather}
respectively. Moreover, since $v_\nu$ is unit length, we have that
\begin{equation} \label{orthogA}
g(v,\psi) \overset{\eqref{psidef}}{=} -v^\nu \nabla_v v_\nu = -\Half\nabla_v( v^\mu v_\nu) = 0
\end{equation}
from which we deduce by \eqref{mdef} that
\begin{equation} \label{orthogB}
m(v,\psi) = 0
\end{equation}
and
\begin{equation} \label{psihup}
\psi^\mu = m^{\mu\nu}\psi_\nu. 
\end{equation}
Using the orthogonality relations \eqref{orthogA}-\eqref{orthogB}, it is then not difficult to verify that
\begin{equation} \label{projrem5}
h^\mu_\nu \psi^\nu = \psi^\nu, \qquad  p^\mu_\nu v^\nu = v^\mu,
\end{equation}
and
\begin{equation} \label{projrem3}
h^\mu_\lambda p^\lambda_\nu = p^\mu_\lambda h^\lambda_\nu,
\end{equation}
and that the indices of $p^\mu_\nu$ and $h^\mu_\nu$ can be raised and lowered using either metric $g_{\mu\nu}$ or $m_{\mu\nu}$; for example,
\begin{equation} \label{projrem4}
h^{\mu\nu} = m^{\mu\lambda}h^\nu_\lambda \AND p^{\mu\nu} = m^{\mu\lambda} p^\nu_\lambda.
\end{equation}
Moreover, a short calculation using \eqref{orthogA}-\eqref{orthogB} and  \eqref{projrem3} shows that
\begin{gather}
\nabla_{v} \psih_\mu = p_\mu^\nu \nabla_{v} \psih_\nu =  p_\mu^\nu \frac{\nabla_v \psi_\nu}{|\psi|_m}  \label{dtpsih}
\intertext{and}
h^{\mu\nu}\nabla_{v} \psih_\nu = \Pi^{\mu\nu}  \frac{\nabla_v \psi_\nu}{|\psi|_m}. \label{dtpsihA}
\end{gather}

From the definition \eqref{Pidef}, it is then clear by \eqref{projrem1}-\eqref{projrem2} and \eqref{projrem5}-\eqref{projrem4}
that $\Pi^\mu_\nu$ defines a projection operator satisfying
\begin{equation} \label{Piproj}
\Pi^\mu_\lambda \Pi^\lambda_\nu = \Pi^\mu_\nu, \quad
\Pi^\mu_\nu \psi^\nu =0 , \quad \Pi^\mu_\nu v^\nu = 0,
\end{equation}
\begin{equation} \label{PiprojA}
h^\mu_\lambda\Pi^{\lambda}_\nu =  \Pi^\mu_\lambda h^\lambda_\nu= \Pi^\mu_\nu =p^\mu_\lambda\Pi^{\lambda}_\nu =  \Pi^\mu_\lambda p^\lambda_\nu,
\end{equation}
and
\begin{equation} \label{Piup}
\Pi^{\mu\nu} =  m^{\mu \lambda}\Pi^\nu_\lambda.
\end{equation}
We denote the complementary
projection operator by
\begin{equation} \label{Pibrdef}
\Pibr^\mu_\nu = \delta^\mu_\nu - \Pi^\mu_\nu
\end{equation}
and set
\begin{equation} \label{Pibrup}
\Pibr^{\mu\nu} = m^{\mu \lambda}\Pibr^\nu_\lambda.
\end{equation}
We then observe, from \eqref{Pidef}, \eqref{Piupdef}, \eqref{projrem3}, the symmetry of  $h_{\alpha\beta}$ and $p^{\alpha\beta}$, 
and the calculation
\begin{equation} \label{Pisym}
\Pi^{\mu\nu}= p^\nu_\sigma h^\sigma_\lambda g^{\mu\lambda} = g^{\mu\lambda}p^\sigma_\lambda h^\nu_\sigma
=p^{\sigma\mu}h^\nu_\sigma = p^\mu_\sigma h^{\sigma \nu} = p^\mu_\sigma h^\sigma_\lambda m^{\lambda \nu} = \Pi^\mu_\lambda g^{\lambda \nu}
=\Pi^{\nu\mu},
\end{equation}
that $\Pi^{\mu\nu}$ is symmetric,
which in turn, implies  the symmetry of the complementary projection operator, that is,
\begin{equation} \label{Pisymbr}
\Pibr^{\mu\nu}= \Pibr^{\nu\mu}.
\end{equation}
Finally, we note that the complementrary projection operator satisfies
\begin{equation} \label{Pibrproj}
\Pibr^\mu_\lambda \Pibr^\lambda_\nu = \Pibr^\mu_\nu, \quad
\Pibr^\mu_\nu \psi^\nu =\psi^\mu,\quad  \Pibr^\mu_\nu v^\nu = v^\mu,
\end{equation}
and
\begin{equation} \label{PibrprojA}
h^\mu_\lambda\Pibr^{\lambda}_\nu =  \Pibr^\mu_\lambda h^\lambda_\nu =p^\mu_\lambda\Pibr^{\lambda}_\nu =  \Pibr^\mu_\lambda p^\lambda_\nu.
\end{equation}

\subsubsection{Initial conditions\label{waveics}}
In general, solutions of the IBVP problem consisting of the evolution equations \eqref{weqn.1}-\eqref{weqn.3}
and boundary conditions \eqref{wbc.1}-\eqref{wbc.3} \textit{will not} correspond to solutions of
the relativistic Euler equations with vacuum boundary conditions given by   \eqref{ibvp.1}-\eqref{ibvp.4}.
As we establish in Theorem \ref{cthm} below, a solution $\{\zeta,\thetah^0_\mu,\theta^I_\mu,\sigma_i{}^k{}_j\}$ of \eqref{weqn.1}-\eqref{weqn.3}
and \eqref{wbc.1}-\eqref{wbc.3} will only, via the formulas \eqref{theta2rho}-\eqref{theta2v}, determine a solution of \eqref{ibvp.1}-\eqref{ibvp.4} if
the following constraints on the initial data are satisfied:
\begin{align}
\ac & = 0 && \text{in $\Omega_0$,} \label{aconiv} \\
\bc^J &= 0 && \text{in $\Omega_0$,} \label{bconiv} \\
\cc^{k}_{ij} &= 0 && \text{in $\Omega_0$,} \label{cconiv}\\
\dc^k_j &= 0 && \text{in $\Omega_0$,}  \label{dconiv}\\
\ec^K &= 0 && \text{in $\Omega_0$,}  \label{econiv}\\
\Fc &= 0 && \text{in $\Omega_0$,}  \label{fconiv}\\
\gc &= 0 && \text{in $\Omega_0$,}   \label{gconiv}\\
\hc &= 0 && \text{in $\Omega_0$,}  \label{hconiv} \\
\jc &= 0 && \text{in $\Omega_0$}  \label{jconiv}\\
\kc &= 0 && \text{in $\Gamma_0$}  \label{kconiv}\\
\Ed\Fc &= 0 && \text{in $\Omega_0$,} \label{dfconiv} \\
\nabla_v \gc &=0  && \text{in $\Omega_0$,} \label{gconivtdiff} \\
\nabla_v \hc &= 0 && \text{in $\Omega_0$}  \label{hconivtdiff}\\
\intertext{and}
\nabla_v \jc &= 0 && \text{in $\Omega_0$}  \label{jconivtdiff}
\end{align}
In Section \ref{choice}, we will need to also assume that initial data satisfies the additional constraints
\begin{align}
\Ec^\mu  &= 0  && \text{in $\Omega_0$} \label{evicons} 
\intertext{and}
\Bc^\mu &= 0  && \text{in $\Gamma_0$.}
\label{bcicons}
\end{align}

\begin{rem} The constraints on the initial data \eqref{aconiv}-\eqref{bcicons} do not involve any constraints
on the choice of initial data for the fluid, that is, for $\rho$ and $v^\mu$, or equivalently, $\thetah^0_\mu$ and $\zeta$, beyond the usual compatibility conditions 
for the relativistic Euler equations with a vacuum boundary. Constraints that are unrelated to compatibility conditions for the physical fields involve the auxiliary fields 
$\theta^I_\mu$ and $\sigma_i{}^k{}_j$, which are not physical and we are free to
choose as we like; see \cite[Section 4.2]{Oliynyk:Bull_2017}
for details on how to select initial data for the auxiliary fields.
\end{rem}

\section{Constraint propagation\label{constprop}}
The relationship between solutions to the evolution equations
and boundary conditions defined in the previous section and solutions to the relativistic Euler equations with
vacuum boundary conditions is made precise in the following theorem. The main content of this theorem is that it guarantees that
the constraints \eqref{acon}-\eqref{jcon} and \eqref{kcon} propagate for solutions of the evolution equations \eqref{weqn.1}-\eqref{weqn.3}
and boundary conditions \eqref{wbc.1}-\eqref{wbc.3} provided that the initial condition from \eqref{aconiv}-\eqref{jconivtdiff} are satisfied.

\begin{thm} \label{cthm}
Suppose $\ep \in C^1(\overline{\Omega}_T)$ with $\ep > 0$ in $\overline{\Omega}_T$, $\kappa\geq 0$, $\zeta \in C^2(\overline{\Omega}_T)$, $\thetah^0_\mu\in C^3(\overline{\Omega}_T)$, $\theta^J_\mu\in C^2(\overline{\Omega}_T)$, $\sigma_{i}{}^j{}_k\in C^1(\overline{\Omega}_T)$,
there exists constants  $c_0,c_1>0$ such that\footnote{Here and in the following, we use the notation $\Ip_{X}\!\alpha$ to denote the interior product of a vector field $X$ with a differential
form $\alpha$.} 
\begin{align}
-g(\theta^0,\theta^0) \geq c_0 &> 0  && \text{in $\Omega_T$} \label{cthm1}
\intertext{and} 
-\Ip_{e_3}\nabla_{e_0}\theta^0 \geq c_1 &> 0 && \text{in $\Gamma_T$,} \label{cthm2}
\end{align}
and  the quadruple $\{\zeta,\thetah^0_\mu,\theta^J_\mu,\sigma_i{}^k{}_j\}$ satisfies the initial conditions
\eqref{aconiv}-\eqref{jconivtdiff},
the evolution equations \eqref{weqn.1}-\eqref{weqn.3} and
the boundary conditions
\eqref{wbc.1}-\eqref{wbc.3}.
Then
the constraints \eqref{acon}-\eqref{jcon} and \eqref{kcon} vanish in $\Omega_T$ and $\Gamma_T$, respectively, the pair $\{\rho,v^\mu\}$
determined from $\{\thetah^0_\mu,\zeta\}$ via the formulas \eqref{theta2rho}-\eqref{theta2v}
satisfy the relativistic Euler equations with vacuum boundary conditions given by \eqref{ibvp.1}-\eqref{ibvp.4}, and there exists a constant $c_p>0$
such that the Taylor sign condition
\begin{equation*}
-\nabla_n p \geq c_p > 0 
\end{equation*}
holds on $\Gamma_T$.
\end{thm}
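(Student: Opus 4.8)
\textbf{Proof strategy for Theorem \ref{cthm}.}

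The plan is to organize the argument around a sequence of transport and wave equations satisfied by the constraints, proven in a carefully chosen order so that earlier constraints feed into the equations for later ones. First I would treat the ``transport-type'' bulk constraints $\chi = (\ac^\mu, \bc^J, \cc^k_{ij}, \dc^k_j, \ec^K)$ together with $\jc$ and $\Fc$. Using the evolution equations \eqref{weqn.2}--\eqref{weqn.3} — which say $\Ld_v\theta^I = 0$ and $v(\sigma_i{}^k{}_j)=0$ — together with the definition \eqref{xihdef} of $v^\mu$ in terms of $\thetah^0$, a direct computation (exploiting the Cartan structure equations and the identity $\Ld_v = \Ip_v\Ed + \Ed\Ip_v$ on forms) should show that each of these quantities satisfies a linear homogeneous transport equation of the schematic form $\nabla_v(\text{constraint}) = (\text{smooth coefficients})\cdot(\text{constraints})$, where the right-hand side is built from $\chi$, $\jc$, $\Fc$ and their first derivatives. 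Since $v^\mu$ is timelike and future-pointing and $\Omega_T$ is the flow-out of $\Omega_0$ along $v$, the vanishing of these constraints on $\Omega_0$ (hypotheses \eqref{aconiv}--\eqref{jconiv}, \eqref{fconiv}, plus \eqref{dfconiv} and \eqref{jconivtdiff} to close the system for $\Fc$ and $\jc$ which satisfy second-order transport equations) propagates them to all of $\Omega_T$ by the uniqueness of solutions to linear ODEs along the integral curves of $v$. No boundary conditions are needed here because transport equations along an inward-or-tangent vector field require no boundary data; condition \eqref{ibvp.4}/\eqref{wbc.3} guarantees $v$ is tangent to $\Gamma_T$, so the flow stays inside $\Omega_T$.

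Next I would handle $\gc$ and $\hc$, which are the more delicate scalar constraints tied to the wave equation \eqref{weqn.1b} for $\zeta$ and the first-order reconstruction of $\rho$. The key is that $\zeta \equiv 1$ on $\Gamma_T$ by \eqref{wbc.2}, so $\hc$ (which is built from $\zeta$, $f(\zeta)$ and the frame determinant) vanishes on $\Gamma_T$ once the $\chi$-constraints vanish — this supplies a Dirichlet boundary condition. The strategy is to derive from \eqref{weqn.1b}, the definition \eqref{Kcdef} of $\Kc$, and the definitions \eqref{gcon}--\eqref{hcon}, a coupled system of second-order equations for $(\gc,\hc)$ that is homogeneous in the constraints; since $a^{\alpha\beta}$ is Lorentzian with timelike direction $v$, this is a wave/transport hybrid, and combined with the vanishing initial data \eqref{gconiv}--\eqref{hconiv} and the time-derivative data \eqref{gconivtdiff}--\eqref{hconivtdiff}, plus the Dirichlet condition on $\Gamma_T$, a standard energy estimate forces $\gc = \hc = 0$ throughout. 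I would also need to verify at this stage that the vanishing of all the bulk constraints together with the definition \eqref{kcon} of $\kc$ and the initial condition \eqref{kconiv} make $\thetah^3$ equal to the unit conormal $n$; here the boundary condition \eqref{wbc.3} and the transport of the conormal via the flow (as in the Remark after \eqref{ibvp.5}) are what identify the two, propagating $\kc = 0$ on $\Gamma_T$.

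The heart of the matter — and the step I expect to be the main obstacle — is showing $\Ec^\mu = 0$ in $\Omega_T$, i.e. that the leftover ``wave'' constraint $\Ec^\mu$ from \eqref{Ecaldef} propagates. Here one cannot avoid the boundary: $\Ec^\mu = 0$ is a second-order hyperbolic-type equation for $\thetah^0$ with principal part governed by the acoustic metric $a^{\alpha\beta}$, and the evolution equation \eqref{weqn.1} only gives $\Ec^\mu = \Rf^\mu(\del{}^{|1|}\chi, \del{}^{|1|}\jc)$, whose right side now vanishes by the previous steps — so $\Ec^\mu$ itself satisfies a homogeneous wave equation. The subtlety is the boundary condition: I would need to show that the acoustic boundary condition $\Bc^\mu = 0$ from \eqref{wbc.1}, combined with the already-established vanishing of $\hc$, $\chi$, $\kc$, $\jc$ on $\Gamma_T$ (which kill the $\Qf$ terms in \eqref{wbc.1}), produces a boundary condition for $\Ec^\mu$ of the precise form — Neumann plus a first-order tangential ``acoustic'' term with the right sign — that makes the wave problem for $\Ec^\mu$ well-posed and yields an energy identity with no uncontrolled boundary flux. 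The positivity hypotheses \eqref{cthm1}--\eqref{cthm2} and the condition $\epsilon > 0$ enter exactly here: \eqref{cthm1} keeps $a^{\alpha\beta}$ uniformly Lorentzian, while \eqref{cthm2} (equivalently $-\Ip_{e_3}\nabla_{e_0}\theta^0 > 0$, a Taylor-type sign) together with $\epsilon>0$ guarantees the boundary quadratic form in the energy identity has a favorable sign so that a Gronwall argument closes. Once $\Ec^\mu = 0$ and all constraints vanish, reversing the construction of Sections \ref{primaux}--\ref{waveibvpEul} shows that $\nabla_\alpha T^{\alpha\nu} = 0$ holds and that $p|_{\Gamma_T} = 0$, $n_\nu v^\nu|_{\Gamma_T} = 0$, giving \eqref{ibvp.1}--\eqref{ibvp.4}; finally the Taylor sign condition $-\nabla_n p \geq c_p > 0$ follows from \eqref{cthm2} by translating the geometric inequality on $\nabla_{e_0}\theta^0$ back into a statement about $\nabla_n p$ via the Euler equation \eqref{eulint3.2} and the now-valid constraint relations, using compactness of $\overline{\Gamma}_T$ to extract the uniform constant $c_p$.
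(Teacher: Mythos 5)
There are genuine gaps in this proposal — the difficulty is misallocated, and two of the constraints are assigned to the wrong equation class, which would cause the argument to fail at exactly the places where the paper has to work hardest.

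First, you bundle $\jc$ and $\Fc$ in with the transport-type constraints $\chi=(\ac,\bc^J,\cc^k_{ij},\dc^k_j,\ec^K)$ and claim they satisfy ``second-order transport equations'' that can be propagated by ODE uniqueness along the integral curves of $v$ with no boundary data. This is not the case. Once $\chi$ vanishes, contracting \eqref{weqn.1} with $\thetah^{0\nu}$ shows that $\jc$ satisfies a genuine \emph{wave equation} $\nabla_\alpha(\zeta a^{\alpha\beta}\nabla_\beta\jc)=\Rf(\del{}^{|1|}\jc)$ whose principal symbol is the Lorentzian acoustic metric $a^{\alpha\beta}$; since the characteristic cone of $a$ is strictly wider than the $v$-direction, the initial-boundary-value problem for this equation does \emph{not} close without a boundary condition. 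The paper extracts a Neumann-type condition \eqref{jcbc1} from $v_\mu\Bc^\mu=0$ (using \eqref{wbc.1}) and then invokes uniqueness for Neumann wave problems. Your proposal omits the boundary condition for $\jc$ entirely, so the propagation claim for $\jc$ as stated is unjustified. The situation for $\Fc$ is worse: only $\Ed\Fc$ satisfies a transport equation ($\Ld_v\Ed\Fc=0$); the $2$-form $\Fc$ itself satisfies Maxwell's equations $\delta_a\Fc=\Ed\gc$, $\Ed\Fc=0$ with a non-trivial source term $\gc$, and propagating $\Fc$ requires an electromagnetic energy estimate (Lemma \ref{Maxlem}) with carefully controlled boundary flux — this is not a transport equation in any sense and is the hardest part of the theorem.

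Second, your treatment of $\gc$ and $\hc$ rests on the claim that $\hc$ vanishes on $\Gamma_T$ once the $\chi$-constraints vanish, which then supplies a Dirichlet condition for $\gc$. This is false. From the boundary conditions one obtains $\zeta=1$, $f(\zeta)=-1$, and (once $\jc=0$) $\gamma^{00}=-1$ on $\Gamma_T$, but $\hc=-\sqrt{-\det(\gamma^{ij})}/\sqrt{-\gamma^{00}}-\zeta/f(\zeta)$ still contains $\det(\gamma^{ij})$, which is not forced to equal $-1$ on $\Gamma_T$ by any of the already-propagated constraints. The paper instead derives the Dirichlet condition $\gc=-v(\hc)-\nabla_\nu v^\nu\hc$ (which is \emph{not} zero a priori), and then controls the unwelcome boundary terms by a chain of estimates — the key intermediate quantity being $\tfrac{1}{2}|Y|_\Lambda^2+|\theta^3|_g\omega_0{}^0{}_3\hcbr$, which satisfies a boundary transport inequality derived from contracting \eqref{wbc.1} with $\psi_\nu$. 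These boundary estimates are then coupled to the Maxwell energy for $\Fc$ and the wave energy for $\gc$ and closed by Gronwall, using crucially the sign of $\omega_0{}^0{}_3$ from \eqref{cthm2}. Your proposal contains none of this mechanism, yet it is the crux of the proof.

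Third, you identify showing $\Ec^\mu=0$ as ``the heart of the matter.'' In fact $\Ec^\mu$ is not a constraint: it is the evolution operator appearing in \eqref{weqn.1}, and once $\chi$ and $\jc$ are shown to vanish, $\Ec^\mu=0$ holds identically by \eqref{weqn.1} with no further argument. You also correctly sense that \eqref{cthm2} enters through a Taylor-sign-type positivity, but you place it in the wave energy for $\Ec^\mu$, when in fact it enters in (i) the positivity of $\omega_0{}^0{}_3$ used in the boundary integral estimate for $\hcbr$ and $|Y|_\Lambda^2$, and (ii) the final identification $-\Ip_{e_3}\nabla_{e_0}\theta^0=-\tfrac{1}{2}|\theta^3|_g^{-1}\nabla_n\gamma_{00}$ which translates \eqref{cthm2} into $-\nabla_n p\geq c_p>0$. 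The parts of your proposal dealing with $\chi$, the Lagrangian argument for $\kc$, and the reconstruction of the Euler system once all constraints vanish are sound in outline; the coupled Maxwell/energy machinery for $(\Fc,\gc,\hc)$ and the boundary condition for $\jc$ are the missing core.
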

\begin{proof}
\underline{Propagation of $\bc^I$ in $\Omega_T$:}
From the definition of $\bc^I$, see \eqref{bcon}, we compute
\begin{equation} \label{bIev}
v(\bc^I) \oset{\eqref{xihdef}}{=} \Ld_v\biggl(\frac{1}{\sqrt{-\gamma^{00}}} \Ip_v\theta^I\biggr)\oset{\eqref{DGf.3}} =
\frac{1}{\sqrt{-\gamma^{00}}}\Ip_v \Ld_v \theta^I +  \Ld_v\biggl(\frac{1}{\sqrt{-\gamma^{00}}}\biggr)\sqrt{-\gamma^{00}} \bc^I
\oset{\eqref{weqn.2}}{=}   \Ld_v\biggl(\frac{1}{\sqrt{-\gamma^{00}}}\biggr)\sqrt{-\gamma^{00}} \bc^I,
\end{equation}
which holds in $\Omega_T$. The assumption \eqref{cthm1} and the boundary condition \eqref{wbc.3} imply that $v$ is timelike
and tangent to the boundary $\Gamma_T$, which in turn, implies that the set $\Omega_T$ is invariant under
the flow of $v$. From this fact, the transport equation \eqref{bIev} and the initial condition
\eqref{bconiv}, it follows via the uniqueness of solutions to transport equations\footnote{Of course,
this is equivalent to the
uniqueness of solutions of ODEs since transport equations can be solved
using the method of characteristics.}
that
\begin{equation} \label{bconsat}
\bc^I = \gamma^{0I} = 0 \quad \text{in $\Omega_T$.}
\end{equation}

\bigskip

\noindent \underline{Propagation of $\ac$ in $\Omega_T$:}
Since $\Ip_\xi \theta^I =0$ by  \eqref{bconsat} while $\Ip_\xi \theta^0=1$ is a direct
consequence of
the definition \eqref{xidef}, it follows immediately that
\begin{equation} \label{aconsat}
\ac = \xi - e_0 =0 \quad \text{in $\Omega_T$.}
\end{equation}

\bigskip

\noindent \underline{Propagation of $\cc^k_{ij}$ and $\dc^k_j$ in $\Omega_T$:}
From the definitions \eqref{ccon}-\eqref{dcon} and the evolution equation \eqref{weqn.3}, it
is clear that the constraints $\cc^k_{ij}$ and $d^k_j$ satisfy the transport equations
$v(\cc^k_{ij}) = 0$ and
$v(\dc^k_j)=0$ in $\Omega_T$.
Since $\cc^k_{ij}$ and $\dc^k_j$ vanish on the initial hypersurface, see \eqref{cconiv}-\eqref{dconiv},
we conclude, again by the uniqueness of solutions to transport equations, that
\begin{equation} \label{cdconsat}
\cc^k_{ij} = \sigma_i{}^k{}_j-\sigma_j{}^k{}_i = 0 \AND
\dc^k_j = \sigma_0{}^k{}_j = 0 \quad \text{in $\Omega_T$.}
\end{equation}
\bigskip

\noindent \underline{Propagation of $\ec^K$ in $\Omega_T$:} From the definition \eqref{econ} of $\ec^K$,
we have that
\begin{align*}
\Ld_v\ec^K &= \Ld_v\Ed\theta^K + \Half \Bigl(v(\sigma_i{}^K\!{}_j)\theta^i\wedge\theta^j
+\sigma_i{}^K\!{}_j \Ld_{v}\theta^i\wedge\theta^j + \sigma_i{}^K\!{}_j\theta^i\wedge \Ld_v\theta^j \Bigr)
&& \text{ (by \eqref{DGf.4})} \\
&= \Ld_v\Ed\theta^K + \Half \Bigl(\sigma_I{}^K\!{}_J \Ld_{v}\theta^I\wedge\theta^J + \sigma_I{}^K\!{}_J\theta^I\wedge \Ld_v\theta^J \Bigr) && \text{ (by \eqref{weqn.3} and \eqref{cdconsat})} \\
&= \Ed \Ld_v\theta^K + \Half \Bigl(\sigma_I{}^K\!{}_J \Ld_{v}\theta^I\wedge\theta^J + \sigma_I{}^K\!{}_J\theta^I\wedge \Ld_v\theta^J \Bigr) && \text{ (by \eqref{DGf.2})}\\
&= 0 && \text{ (by \eqref{weqn.2})}
\end{align*}
in $\Omega_T$, and so
\begin{equation} \label{econsat}
\ec^K = \Ed\theta^K + \Half \sigma_i{}^K\!{}_j\theta^i\wedge\theta^j = 0 \quad \text{in $\Omega_T$}
\end{equation}
by the uniqueness of solutions to transport equations since $\ec^K$ vanishes on the initial hypersurface, see \eqref{econiv}.
We note, with the help of the Cartan
structure equations \eqref{CartanA}, that \eqref{econsat} is equivalent to
\begin{equation} \label{econsat1}
\sigma_i{}^K{}_j = \omega_i{}^K\!{}_j-\omega_j{}^K\!{}_i \quad \text{in $\Omega_T$}.
\end{equation}

\bigskip

\noindent \underline{Propagation of $\kc$ in $\Gamma_T$:}
Letting $\Gc_t(x^\lambda) = (\Gc^\mu_t(x^\lambda))$
denote the flow of $v$ so that
\begin{equation*}
\frac{d\;}{dt} \Gc^\mu_t(x^\lambda) = v^\mu(\Gc(x^\lambda)) \AND
\Gc^\mu_0(x^\lambda) = x^\mu,
\end{equation*}
we introduce Lagrangian coordinates $(\xb^\mu)$ via
\begin{equation} \label{lemBa.1a}
x^\mu = \phi^\mu(\xb) := \Gc_{\xb^0}^\mu(0,\xb^\Lambda), \quad \forall\, (\xb^0,\xb^\Lambda)\in [0,T]\times \Omega_0.
\end{equation}
Denoting the pull-back of $v$ via the map $\phi$ by
$\vb = \phi^*v$,
we then have that
\begin{equation} \label{lemBa.1c}
\vb = \delb{0} \quad \Longleftrightarrow \quad \vb^\mu = \delta^\mu_0.
\end{equation}
We also observe that
\begin{equation*}
[0,T]\times \Omega_0 = \phi^{-1}(\Omega_T) \AND [0,T]\times \del{}\Omega_0 = \phi^{-1}(\Gamma_T),
\end{equation*}
where $\xb^0$ defines a coordinate on the interval $[0,T]$ and the $(\xb^\Lambda)$ define ``spatial'' coordinates
on $\Omega_0$.

Without loss of generality, we may assume $\xb^3$ is a defining coordinate for the  boundary
$[0,T]\times \del{}\Omega_0$ so that $d\xb^3$ is an outward pointing conormal on the boundary $[0,T]\times \del{}\Omega_0$.
Letting $\thetab^3 = \phi^*\theta^3$
denote the pull-back of $\theta^3$ by $\phi$, we have from the choice of initial data, see
\eqref{kconiv}, that
\begin{equation} \label{lemBa.2}
\thetab^3_\mu(0,\xb^\Lambda) = r(\xb^\Lambda) \delta^3_\mu \quad \quad \forall \, (\xb^\Lambda)\in \del{}\Omega_0
\end{equation}
for some positive function $r>0$. Pulling back the evolution equation \eqref{weqn.2} for $I=3$, we see from
\eqref{lemBa.1c} and \eqref{DGLie} that
\begin{equation*}
\Ld_{\vb} \thetab^3_\nu = 0 \quad \Longleftrightarrow \quad \delb{0}\thetab^3_\mu = 0,
\end{equation*}
which implies by \eqref{lemBa.2} that
$\thetab^3_\mu(\xb^0,\xb^\Lambda) = r(\xb^\Lambda) \delta^3_\mu$ for all
$(\xb^0,\xb^\Lambda)\in [0,T]\times \del{}\Omega_0$.
But this shows that $\thetab^3$ defines an outward pointing conormal to $[0,T]\times \del{}\Omega_0$, and hence,
that $\theta^3$ is an outward pointing conormal to $\Gamma_T$ and
\begin{equation} \label{lemBa.3}
\kc = \thetah^3 - n = 0 \quad \text{in $\Gamma_T$}
\end{equation}
 by \eqref{thetah3def}.

\bigskip

\noindent \underline{Propagation of $\jc$ in $\Omega_T$:} In order to show that the constraint $\jc$, see \eqref{jcon}, propagates, we need to show that $\jc$
satisfies a suitable evolution equation and boundary condition. To derive the boundary condition, we first
contract \eqref{wbc.1} with $v_\mu$ to, with the help of \eqref{projrem1}, get
\begin{equation*} 
v_\mu \Bc^\mu = \Qf(\del{}^{{|1|}} \chi,\delsl{}^{|1|}\kc,\jc) \quad \text{in $\Gamma_T$.}
\end{equation*}
Using this together with \eqref{xihdef}, \eqref{chidef}, \eqref{Wcdef}, \eqref{Bcdef}-\eqref{Lcdef}, \eqref{wbc.2},  \eqref{projrem1}, \eqref{orthogA}, \eqref{Piproj},
\eqref{bconsat}-\eqref{aconsat},  and \eqref{lemBa.3}, it is then not
difficult to verify that $\jc$ satisfies satisfies the Neumann boundary condition
\begin{equation} \label{jcbc1}
|\theta^3|_g n_\alpha a^{\alpha\beta}\nabla_\beta\, \jc = -\frac{\kappa}{\sqrt{-\gammah^{00}|\gamma|}}\nabla_v \jc +  \Qf(\jc) \quad \text{in $\Gamma_T$.}
\end{equation}

To derive an evolution equation for $\jc$,
we first observe from \eqref{bconsat}-\eqref{econsat} and
\eqref{weqn.1} that
$\Ec^\mu = \Rf^\mu(\del{}^{|1|}\jc)$ in $\Omega_T$, and hence, by \eqref{Ecaldef}, that
\begin{equation*}
\nabla_\alpha\bigl(a^{\alpha\beta}[\zeta \nabla_\beta \thetah^0_\nu + \sigma_{I}{}^0{}_J\theta^I_\beta
\theta^J_\nu]\bigr) = h^\mu_\nu\bigl(\Hbr_\mu -a^{\alpha\beta}\nabla_\alpha \zeta\nabla_\beta \thetah^0_\mu \bigr)
-\nabla_\alpha \thetah^{0\gamma} a^{\alpha\beta}[\zeta \nabla_\beta \thetah^0_\gamma + \sigma_{I}{}^0{}_J\theta^I_\beta
\theta^J_\gamma]\frac{\thetah^0_\nu}{g(\thetah^0,\thetah^0)} + \Rf_\nu(\del{}^{|1|}\jc)
\end{equation*}
in $\Omega_T$. Contracting this equation with $\thetah^{0\nu}$, while using \eqref{bconsat}, shows that $\jc$ satisfies the wave equations
\begin{equation}\label{jcpr3}
\nabla_\alpha \bigl(\zeta a^{\alpha\beta}\nabla_\beta \jc\bigr) = \Rf(\del{}^{|1|}\jc)  \quad \text{in $\Omega_T$.}
\end{equation}
Since solutions to wave equations with Neumann boundary conditions of the form \eqref{jcbc1} with $\kappa\geq 0$ are unique by \cite[Theorem 2.2]{Koch:1993}
and $\jc$ and $\nabla_v \jc$ both vanish on the initial hypersurface, see \eqref{jconiv} and \eqref{jconivtdiff}, we have that
\begin{equation} \label{jconsat}
\jc = g(\thetah^0,\thetah^0) + 1 = 0  \quad \text{in $\Omega_T$}
\end{equation}
from which we obtain
\begin{equation} \label{vtothetah}
v_\mu = - \thetah^0_\mu = -\theta^0_\mu \quad \text{in $\Gamma_T$}
\end{equation}
by \eqref{theta0def}, \eqref{xihdef}, and \eqref{wbc.2}.
We further observe from the \eqref{theta0def}, the boundary condition \eqref{wbc.2}, and \eqref{jconsat} that
\begin{equation} \label{gamma00bc}
\gamma^{00}=-1 \quad \text{in $\Gamma_T$,}
\end{equation}
which in turn, implies, by \eqref{vrep1} and \eqref{aconsat} that
\begin{equation} \label{vtoe0}
v^\mu = e^\mu_0 \quad \text{in $\Gamma_T$}.
\end{equation}

\bigskip

\noindent \underline{Integral estimate for $\Half |Y|_{\Lambda}^2  +|\theta^3|_g\omega_0{}^0{}_3 \hcbr$:}
We now turn to establishing an integral estimate for the quantity
\begin{equation*}
\Half |Y|_{\Lambda}^2  +|\theta^3|_g\omega_0{}^0{}_3 \hcbr
\end{equation*}
where
\begin{align} 
\breve{\hc} &= -\sqrt{|g|}\det(e)|\theta^3|_g \hc, \label{lemC.8b}\\
Y_\Kc &= \Fc(e_0,e_\Kc), \label{Ydef}\\
|Y|_\Lambda^2 &= \Lambda^{\Kc\Lc}Y_\Kc Y_\Lc , \label{YLnorm}
\end{align}
and $\Lambda^{\Kc\Lc}$ is a symmetric, positive definite matrix defined by
\begin{equation}
\Lambda^{\Kc\Lc} = \ep|N|_g\theta^{\Kc}_\mu \Pi^{\mu\nu} \theta_\nu^\Lc. \label{Lambdadef}
\end{equation}
We begin by recalling, see \eqref{framedef}, that $\theta^j_\mu$ is the inverse of $e^\mu_j$.   Consequently,
by cofactor expansion, we have
\begin{equation*}
\theta^3_\mu = \frac{\text{cof}(e)^\mu_3}{\det(e)} = -\frac{1}{\det(e)}\epsilon_{\mu\alpha\beta\gamma}
e_0^\alpha e_1^\beta e_2^\gamma, \qquad (e=(e^\mu_j)),
\end{equation*}
which, we observe, by \eqref{nudef}, is equivalent to
\begin{equation} \label{lemC.2}
\sqrt{|g|}\det(e)\theta^3_\mu = -\nu_{\mu\alpha\beta\gamma}
e_0^\alpha e_1^\beta e_2^\gamma.
\end{equation}
Since $\xi=e_0$ by \eqref{aconsat}, we can, using the boundary condition \eqref{wbc.2}, write \eqref{lemC.2} as\footnote{More
explicitly, it follows from \eqref{lemC.2} and the formulas \eqref{theta0def}, \eqref{xidef}, \eqref{acon}, \eqref{gammah00} and  \eqref{Ndef} that
\begin{equation*}
\sqrt{|g|}\det(e)\theta^3_\mu = N_\mu + \Qf_\mu(\ac,\zeta-1),
\end{equation*}
which, in particular, implies \eqref{lemC.2a} when $\ac=0$ and $\zeta=1$.
}
\begin{equation} \label{lemC.2a}
\sqrt{|g|}\det(e)\theta^3_\mu = N_\mu\quad\text{in $\Gamma_T$},
\end{equation}
where $N_\nu$ is defined by \eqref{Ndef}.
But, we also have that
\begin{align}
\theta^3_\mu &= \frac{1}{\det(e)}\bigl(\det(e)\theta^3_\mu\bigr) \notag \\
&= \sqrt{|g|}\sqrt{-\det(\gamma^{kl})} \det(e)\theta^3_\mu &&
\text{(since $\det(\gamma_{ij})=\det(g_{\mu\nu})\det(e)^2$)}\notag\\
&= \sqrt{|g|}\biggl(-\frac{\zeta \sqrt{-\gamma^{00}}}{f}-\sqrt{-\gamma^{00}}\hc\biggr)\det(e)\theta^3_\mu, \label{lemC.3a}
\end{align}
where in deriving the last equality we used \eqref{hcon}.
Since 
\begin{equation*} 
f = f(\zeta) = -1 \quad \text{in $\Gamma_T$}
\end{equation*}
by \eqref{fdef} and \eqref{wbc.2}, we see from \eqref{gamma00bc} and \eqref{lemC.3a} that
\begin{equation} \label{lemC.3}
\theta^3_\mu= \sqrt{|g|}(1-\hc)\det(e)\theta^3_\mu \quad \text{in $\Gamma_T$.}
\end{equation}
 Combining \eqref{lemC.2} and \eqref{lemC.3} yields
\begin{equation} \label{lemC.4}
\theta^3_\mu = -\nu_{\mu\alpha\beta\gamma}
e_0^\alpha e_1^\beta e_2^\gamma - \sqrt{|g|}\hc \det(e)\theta^3_\mu \quad \text{in $\Gamma_T$.}
\end{equation}

Next,  we find,
after a short calculation, that\footnote{Here, we are using the fact that the volume form $\nu_{\alpha\beta\gamma\mu}$ satisfies $\nabla_\lambda \nu_{\alpha\beta\gamma\mu}=0$.}
\begin{align*}
\nabla_{e_0}\bigl(-\nu_{\mu\alpha\beta\gamma}
e_0^\alpha e_1^\beta e_2^\gamma \bigr) &= -\nu_{\mu\alpha\beta\gamma}
\bigl(e_1^\beta e_2^\gamma\nabla_{e_0} e_0^\alpha+
e_0^\alpha e^\gamma_2\nabla_{e_1}e_0^\beta
+e_0^\alpha e_1^\beta\nabla_{e_2}e_0^\gamma\bigr) \notag \\
&\qquad
-\nu_{\mu\alpha\beta\gamma}
\bigl(e^\alpha_0 e^\gamma_2 [e_0,e_1]^\beta +
e^\alpha_0 e_1^\beta [e_0,e_2]^\gamma\bigr).  
\end{align*}
Using \eqref{Sdef} and \eqref{vtoe0}, we can write this as
\begin{align}
\nabla_{e_0}\bigl(-\nu_{\mu\alpha\beta\gamma}
e_0^\alpha e_1^\beta e_2^\gamma \bigr) &= -s_{\mu \nu}{}^\gamma \nabla_\gamma e_0^\nu
-\nu_{\mu\alpha\beta\gamma}\Bigl(e^\alpha_0 e^\gamma_2 [e_0,e_1]^\beta +
e^\alpha_0 e_1^\beta [e_0,e_2]^\gamma\Bigr) \quad \text{ in $\Gamma_T$} \label{lemC.5a}.
\end{align}
But from \eqref{Sdef}, \eqref{vtoe0} and \eqref{lemBa.3}, it is clear that $s_{\mu\nu}^\gamma$ satisfies
\begin{equation}\label{stang}
s_{\mu\nu}{}^\gamma n_\gamma = 0  \quad \text{ in $\Gamma_T$},
\end{equation}
which in turn implies that the differential operator $s_{\mu\nu}{}^\gamma \nabla_\gamma$ only involves derivatives tangential to $\Gamma_T$.
Using this along with \eqref{vtothetah} and \eqref{vtoe0}, we see that \eqref{lemC.5a} can be expressed
as
\begin{align*}
\nabla_{e_0}\bigl(-\nu_{\mu\alpha\beta\gamma}
e_0^\alpha e_1^\beta e_2^\gamma \bigr) &= s_{\mu}{}^{\nu}{}^\gamma \nabla_\gamma \thetah_\nu^0
 -\nu_{\mu\alpha\beta\gamma}\Bigl(e^\alpha_0 e^\gamma_2 [e_0,e_1]^\beta +
e^\alpha_0 e_1^\beta [e_0,e_2]^\gamma\Bigr) \quad \text{ in $\Gamma_T$}.
\end{align*}
But from \eqref{pidef}, \eqref{orthogA} and \eqref{vtothetah}, it is clear that the above expression is equivalent to
\begin{align}
\nabla_{e_0}\bigl(-\nu_{\mu\alpha\beta\gamma}
e_0^\alpha e_1^\beta e_2^\gamma \bigr) &= s_{\mu \beta}{}^\gamma h^{\beta\nu}\nabla_\gamma \thetah_\nu^0
  -\nu_{\mu\alpha\beta\gamma}\Bigl(e^\alpha_0 e^\gamma_2 [e_0,e_1]^\beta +
e^\alpha_0 e_1^\beta [e_0,e_2]^\gamma\Bigr) \quad \text{ in $\Gamma_T$}. \label{lemC.5}
\end{align}

Now, using the identity $[e_0,e_j]^\beta = -\Ed\theta^k(e_0,e_j)e^\beta_k$, we find, with the help of the definitions
\eqref{dcon}-\eqref{fcon}, that
\begin{equation*} 
[e_0,e_j]^\beta = -\Fc(e_0,e_j)e^\beta_0+\ec^K(e_0,e_j)e^\beta_K + \Half \cc_0{}^K\!{}_j e^\beta_K,
\end{equation*}
and hence, that 
\begin{equation*} 
[e_0,e_j]^\beta = \Fc(e_0,e_j)e^\beta_0
\end{equation*}
by \eqref{cdconsat}-\eqref{econsat}. Substituting this into \eqref{lemC.5} gives
\begin{equation*}
\nabla_{e_0}\bigl(-\nu_{\mu\alpha\beta\gamma}
e_0^\alpha e_1^\beta e_2^\gamma \bigr) =  s_{\mu\beta}{}^\gamma h^{\beta\nu} \nabla_\gamma
\thetah^0_\nu
\quad \text{in $\Gamma_T$}. 
\end{equation*}
Applying $\nabla_{e_0}$ to \eqref{lemC.4}, we then find, with the help of the above expression, that
\begin{equation} \label{lemC.8a}
\nabla_{e_0} \theta^3_\mu =  s_{\mu\beta}{}^\gamma h^{\beta\nu} \nabla_\gamma
\thetah^0_\nu 
+\nabla_{e_0}\breve{\hc}\thetah^3_\mu + \breve{\hc} \nabla_{e_0} \thetah^3_\mu \quad \text{in $\Gamma_T$,}
\end{equation}
where $\hcbr$ is defined above by \eqref{lemC.8b}.

To continue, we compute
\begin{align}
\theta^3_\alpha a^{\alpha\beta}\nabla_\beta \theta^0_\nu
&= \theta^3_\alpha g^{\alpha\beta}\nabla_\beta \theta^0_\nu && \text{(by \eqref{bconsat})}\notag \\
&= \theta^3_\alpha g^{\alpha\beta}\bigl(\nabla_\beta \theta^0_\nu - \nabla_\nu \theta^0_\beta + \nabla_\nu \theta^0_\beta\bigr) \notag \\
&= \theta^3_\alpha g^{\alpha\beta}\bigl(\Fc_{\beta\nu}-\sigma_{I}{}^0{}_{J}\theta^I_\beta \theta^J_\nu\bigr)
-\nabla_\nu\theta^3_\alpha g^{\alpha\beta}\theta_\beta^0 && \text{(by \eqref{fcon} and \eqref{bconsat})} \notag \\
&= \theta^3_\alpha g^{\alpha\beta}\bigl(\Fc_{\beta\nu}-\sigma_{I}{}^0{}_{J}\theta^I_\beta \theta^J_\nu\bigr)
-\bigl(\nabla_\nu \theta^3_\alpha - \nabla_\alpha\theta^3_\nu)g^{\alpha\beta}\theta^0_\beta
-g^{\alpha\beta}\theta^0_\beta \nabla_\alpha\theta^3_\nu \notag \\
&=\theta^3_\alpha g^{\alpha\beta}\bigl(\Fc_{\beta\nu}-\sigma_{I}{}^0{}_{J}\theta^I_\beta \theta^J_\nu\bigr)
-g^{\alpha\beta}\theta^0_\beta \nabla_\alpha\theta^3_\nu -\ec^3_{\nu\alpha}g^{\alpha\beta}
+\sigma_{i}{}^3{}_j\theta^i_\nu\theta^j_\alpha g^{\alpha\beta}\theta^0_\beta
&&\text{(by \eqref{econ})}\notag\\
&= \theta^3_\alpha g^{\alpha\beta}\Fc_{\beta\nu}- \theta^3_\alpha a^{\alpha\beta}\sigma_{I}{}^0{}_{J}\theta^I_\beta \theta^J_\nu
-g^{\alpha\beta}\theta^0_\beta \nabla_\alpha\theta^3_\nu, \label{lemC.9}
\end{align}
where in deriving the last equality we used \eqref{bconsat}, \eqref{cdconsat} and \eqref{econsat}. 
Noting that
\begin{equation*}
\theta^3_\alpha a^{\alpha\beta}\nabla_\beta \theta^0_\nu \overset{\eqref{theta0def}}{=} \theta^3_\alpha a^{\alpha\beta}\bigl( \zeta \nabla_\beta\thetah^0_\nu + \nabla_\beta \zeta \thetah^0_\nu\bigr)
\end{equation*}
and, by  \eqref{vtothetah}, \eqref{vtoe0} and \eqref{lemC.8a}, that
\begin{align*}
-g^{\alpha\beta}\theta^0_\beta \nabla_\alpha \theta^3_\nu
=  s_{\nu\beta}{}^\gamma h^{\beta\omega} \nabla_\gamma
\thetah^0_\omega
+\nabla_{e_0}\breve{\hc}\thetah^3_\nu + \breve{\hc} \nabla_{e_0} \thetah^3_\nu,  
\end{align*}
we see, after substituting these two expressions into \eqref{lemC.9} and rearranging, that
\begin{align*}
\theta^3_\alpha g^{\alpha\beta}\Fc_{\beta\nu} =
\theta^3_\alpha a^{\alpha\beta}&\bigl( \zeta \nabla_\beta\thetah^0_\nu + \nabla_\beta \zeta \thetah^0_\nu + \sigma_{I}{}^0{}_{J}\theta^I_\beta \theta^J_\nu\bigr)
-  s_{\nu\beta}{}^\gamma h^{\beta\omega} \nabla_\gamma
\thetah^0_\omega  
-\bigl(\nabla_{e_0}\breve{\hc}\thetah^3_\nu + \breve{\hc} \nabla_{e_0} \thetah^3_\nu\bigr)
\end{align*}
in $\Gamma_T$. 
From \eqref{wbc.1},  \eqref{Bcrep}, \eqref{dtpsih}, \eqref{orthogA}, \eqref{projrem4}, \eqref{bconsat}-\eqref{econsat1}, \eqref{lemBa.3} and \eqref{jconsat}, it is clear that we can, after contracting with $m^{\mu\nu}$, write the above expression as
\begin{align*}
m^{\mu\nu}\theta^3_\alpha g^{\alpha\beta}\Fc_{\beta\nu} =&
 - \ep\Pi^\mu_\omega h^{\omega\alpha}s_{\alpha\beta}{}^{\gamma} h^{\beta\nu}\nabla_{\gamma}\thetah^0_\nu
-  \epsilon |N|_g \Pi^{\mu\nu} \nabla_v\psih_\nu \notag \\
&-m^{\mu\nu}\Bigl(\theta^3_\alpha a^{\alpha\beta}\nabla_\beta \zeta - v^\alpha s_{\alpha\beta}{}^\gamma h^{\beta\omega}\nabla_\gamma \thetah^0_\omega \Bigr)v_\nu
-m^{\mu\nu}\nabla_{e_0}\breve{\hc}\thetah^3_\nu
+\Qf^\mu(\hcbr),
\end{align*}
or with the help of \eqref{vtoe0} and \eqref{lemC.8a}, as
\begin{align}
m^{\mu\nu}\theta^3_\alpha g^{\alpha\beta}\Fc_{\beta\nu} &=
 - \ep\Pi^{\mu\nu} \bigl(\nabla_{v}\theta^3_\nu+ |N|_g\nabla_v\psih_\nu\bigr)
+(\ep\Pi^{\mu\nu} -m^{\mu\nu})\thetah^3_\nu \nabla_{v}\breve{\hc}\notag \\
&\qquad - m^{\mu\nu}\Bigl(\theta^3_\alpha a^{\alpha\beta}\nabla_\beta \zeta - v^\alpha s_{\alpha\beta}{}^\gamma h^{\beta\omega}\nabla_\gamma \thetah^0_\omega \Bigr)v_\nu
+\Qf^\mu(\hcbr). \label{lemC.10}
\end{align}

Next, contracting \eqref{lemC.10} with $\psi_\nu$, we find, with the help of \eqref{orthogB} and \eqref{Piproj}, that 
\begin{equation} \label{mbcidA}
\theta^3_\alpha g^{\alpha\beta}\Fc_{\beta\nu}\psi^\nu =  -\psi^\nu \thetah^3_\nu\nabla_{v}\hcbr + \Qf(\hcbr) \quad \text{in $\Gamma_T$.} 
\end{equation}
We also observe that $\nabla_{e_0} \theta^0_\nu$ can be expressed as
\begin{align}
\nabla_{e_0}\theta^0_\nu
&= -\omega_{0}{}^0{}_{\kc}\theta^{\kc}_\nu -\omega_{0}{}^0{}_3 \theta^3_\nu && \text{(by \eqref{DGconndefA})} \notag \\
&= \bigl(-\omega_{0}{}^0{}_{\Kc} + \omega_{\Kc}{}^0{}_{0}\bigr)\theta_\nu^\Kc - \gamma^{00}\omega_{000} \theta^0_\nu 
-\omega_{0}{}^0{}_3 \theta^3_\nu  && \text{(by \eqref{bconsat})} \notag \\
&= \bigl(-\omega_{0}{}^0{}_{\Kc} + \omega_{\Kc}{}^0{}_{0}\bigr)\theta_\nu^\Kc +\Half e_{\kc}(\gamma_{00})\theta^\kc_\nu
-\omega_{0}{}^0{}_3 \theta^3_\nu && \text{(by \eqref{gamma00bc} \& \eqref{CartanB})} \notag \\
&=  -\Ed\theta^0(e_{\Kc},e_0)\theta_\nu^\Kc +\Half e_{\kc}(\gamma_{00})\theta^\kc_\nu
-\omega_{0}{}^0{}_3 \theta^3_\nu && \text{(by \eqref{CartanA})} \notag \\
&= Y_\Kc \theta_\nu^\Kc  -\omega_{0}{}^0{}_3 \theta^3_\nu + \Half e_{\kc}(\gamma_{00})\theta^\kc_\nu, \label{lemC.19}
\end{align}
where in deriving the last equality we used \eqref{fcon}, \eqref{bconsat}, \eqref{aconsat} and \eqref{Ydef}.
Since the frame components $e_\kc^\mu$ satisfy
$n_\mu e_\kc^\mu = 0$ in $\Gamma_T$ by \eqref{lemBa.3}, the vector fields $e_{\kc}$ are tangent to $\Gamma_T$,
and consequently,
$e_\kc(\gamma_{00})= 0$ in $\Gamma_T$
by \eqref{aconsat} and \eqref{gamma00bc} .
Substituting this into \eqref{lemC.19}, we see using \eqref{psidef}, \eqref{vtothetah} and \eqref{vtoe0} that
\begin{equation} \label{thetadiffxibc}
\psi_\nu = Y_\Kc \theta_\nu^\Kc  -\omega_{0}{}^0{}_3 \theta^3_\nu \quad \text{in $\Gamma_T$,}
\end{equation}
which, in turn, implies that
\begin{equation} \label{lemC.20}
-\frac{\psi_\nu  - Y_\Kc\theta_\nu^\Kc}{\bigl|\psi  - Y_\Kc \theta^\Kc\bigr|_m} =  \thetah^3_\nu \quad \text{in $\Gamma_T$}
\end{equation}
since $\omega_{0}{}^0{}_3 =-\Ip_{e_3}\! \nabla_{e_0}\theta^0 >0$ in $\overline{\Gamma}_T$ by assumption.
Adding $\psih_\nu$ to both sides of \eqref{lemC.20} gives
\begin{equation}\label{psihtothetah3}
\psih_\nu +  \thetah^3_\nu  = \frac{\psi_\nu}{|\psi|_m} -\frac{\psi_\nu  - Y_\Kc\theta_\nu^\Kc}{\bigl|\psi  - Y_\Kc\theta^\Kc\bigr|_m} \quad \text{in $\Gamma_T$.}
\end{equation}
Applying $\nabla_v$ to this expression, we find after a short calculation that
\begin{equation} \label{dtpsihton}
\nabla_v (\psih_\nu +  \thetah^3_\nu) =
\af v(Y_\Kc)\theta_\nu^\Kc
+ \cf_\nu^\Kc Y_\Kc \quad \text{in $\Gamma_T$}
\end{equation}
for some functions $\cf_\mu^\Kc \in C^0(\overline{\Gamma}_T)$ and $\af\in C^0(\overline{\Gamma}_T)$ with
$\af  >  0$  in $\overline{\Gamma}_T$.

Substituting \eqref{thetadiffxibc} into \eqref{mbcidA}, we deduce via the anti-symmetry of $\Fc_{\beta\nu}$ that
\begin{equation} \label{mbcidB}
\theta^{3\beta}\Fc_{\beta\nu}\theta^{\Kc\nu}Y_\Kc =  \frac{\bigl(\gamma^{33}\omega_0{}^0{}_3 -\gamma^{3\Kc}Y_\Kc\bigr)}{|\theta^3|_g}
v(\hcbr) + \Qf(\hcbr) \quad \text{in $\Gamma_T$}
\end{equation}
We also observe from \eqref{Ndef}, \eqref{lemC.4} and \eqref{lemC.8b} that
\begin{equation} \label{theta3Ndiff}
\theta^3_\mu = N_\mu - \Qf_\mu(\hcbr).
\end{equation}
Noticing that
\begin{equation*}
p^{\mu\nu}\nabla_v \thetah^3_\nu = p^{\mu\nu}(\delta_\nu^\omega - \thetah^{3\omega}\thetah^3_\nu)
\frac{\nabla_v\theta^3_\omega}{|\theta^3|_g},
\end{equation*}
a short calculation using \eqref{pdef}, \eqref{psihtothetah3} and  \eqref{theta3Ndiff} shows that
\begin{equation*}
p^{\mu\nu}\nabla_v \thetah^3_\nu = p^{\mu\nu}\frac{\nabla_v\theta^3_\nu}{|N|_g} +  \Qf^\mu(\hcbr,|Y|_\Lambda).
\end{equation*}
Recalling that $\Pi^{\mu\nu} = h^{\mu}_\omega p^{\omega\nu}$, we can use the above expression along
with \eqref{Piproj}, \eqref{thetadiffxibc} and \eqref{dtpsihton}  to write \eqref{lemC.10} as
\begin{align*}
m^{\mu\nu}\theta^3_\alpha g^{\alpha\beta}\Fc_{\beta\nu} &=
 - \ep|N|_g\Pi^{\mu\nu} \af v(Y_\Kc)\theta_\nu^\Kc -m^{\mu\nu}\thetah^3_\nu v(\breve{\hc}) +\Qf^\mu(\hcbr,|Y|_\Lambda)\\
 &\qquad   - m^{\mu\nu}\Bigl(\theta^3_\alpha a^{\alpha\beta}\nabla_\beta \zeta - v^\alpha s_{\alpha\beta}{}^\gamma h^{\beta\omega}\nabla_\gamma \thetah^0_\omega \Bigr)v_\nu.
\end{align*}
Contracting this expression with $\theta_\mu^{\Lc}Y_\Lc$, we deduce, with the help of \eqref{mdef} and \eqref{vtoe0},
that
\begin{align*}
\theta^{3\beta}\Fc_{\beta\nu}\theta^{\Lc\nu}Y_\Lc =
 - \Lambda^{\Kc\Lc} \af v(Y_\Kc)Y_{\Lc}
-\frac{\gamma^{3\Kc}Y_\Kc}{|\theta^3|_g}v(\breve{\hc})
+\Qf^\mu(\hcbr,|Y|^2_\Lambda) \quad \text{in $\Gamma_T$,}
\end{align*}
which we can also write as
\begin{align}
\theta^{3\beta}\Fc_{\beta\nu}\theta^{\Lc\nu}Y_\Lc =
 - \frac{1}{2} v(\af|Y|_\Lambda^2)
-\frac{\gamma^{3\Kc}Y_\Kc}{|\theta^3|_g}v(\breve{\hc})
+\Qf^\mu(\hcbr,|Y|_\Lambda^2) \quad \text{in $\Gamma_T$.}\label{mbcidD}
\end{align}
Subtracting \eqref{mbcidD} from \eqref{mbcidB} then gives
\begin{equation}
v\biggl(\frac{\af}{2}|Y|_{\Lambda}^2  +|\theta^3|_g\omega_0{}^0{}_3 \hcbr\biggr) = \Qf^\mu(\hcbr,|Y|_\Lambda^2) 
 \quad \text{in $\Gamma_T$.}
\label{mbcidE}
\end{equation}

To continue, we introduce a time function $\tau$ that foliates $\Omega_T$ and choose a normalized, future pointing timelike vector field  $\boldsymbol{\tau}=\tau^\nu\del{\nu}$ such that $\tau^\nu\del{\nu}\tau = 1$ in $\overline{\Omega}_T$,
$\tau^\nu n_\nu = 0$ in $\Gamma_T$, and
$\boldsymbol{\tau}$ is normal to the level sets
\begin{equation*}
\Omega(t) = \tau^{-1}(t)\cap \Omega_T \cong \{t\}\times \Omega_0.
\end{equation*}
We additionally define the sets
\begin{align*}
\Gamma(t) &= \tau^{-1}(t)\cap \Gamma_T \cong \{t\}\times \del{}\Omega_0,\\
\Omega_t &= \bigcup_{0\leq \tilde{t} \leq t} \Omega(t) \cong [0,t]\times \Omega_0\\
\intertext{and}
\Gamma_t &= \bigcup_{0\leq \tilde{t} \leq t} \Gamma(t) \cong [0,t]\times \del{}\Omega_0 .
\end{align*}
Then, since $Y_\Kc$ and $\hcbr$ vanish on the initial hypersurface, integrating \eqref{mbcidE}
over $\Gamma_t$ yields the  integral estimate
\begin{equation} \label{conenergyAa}
\int_{\Gamma(t)} \Yc \lesssim
\int_{\Gamma_t} \bigl[ |Y|_{\Lambda}^2 + |\hcbr|+\Yc\bigr],
\end{equation}
where
\begin{equation} \label{Ycaldef}
\Yc = \biggl|\frac{\af}{2} |Y|_{\Lambda}^2  +|\theta^3|_g\omega_0{}^0{}_3\hcbr\biggr|.
\end{equation}
Furthermore, since $\omega_{0}{}^0{}_3 =-\Ip_{e_3}\! \nabla_{e_0}\theta^0 >0$ in $\overline{\Gamma}_T$, it is clear that the estimate
\begin{equation} \label{conenergyAb}
\int_{\Gamma(t)} \Yc \lesssim
\int_{\Gamma_t} \bigl[|Y|_{\Lambda}^2 + \Yc \bigr]
\end{equation}
follows directly from \eqref{conenergyAa}. Furthermore, integrating the above inequality in time gives
\begin{equation} \label{conenergyAc}
\int_{\Gamma_t} \Yc \lesssim
\int_0^t \int_{\Gamma_\tau} \bigl[|Y|_{\Lambda}^2 + \Yc \bigr].
\end{equation}

\bigskip

\noindent \underline{Energy estimate for $\gc$:} The next step is to establish an energy estimate for $\gc$, which we do by showing that $\gc$
satisfies a suitable evolution equation and boundary condition. 
We begin the derivation of the evolution equation by noting from \eqref{jconsat} that
\begin{equation} \label{hprojrem}
h^{\mu}_\nu \nabla_\beta \thetah^0_\mu = \nabla_\beta\thetah^0_\nu.
\end{equation}
We then proceed by computing
\begin{align*}
\nabla_\alpha\bigl(a^{\alpha\beta}[\nabla_\beta \theta^0_\nu + \sigma_{I}{}^0{}_J\theta^I_\beta
\theta^J_\nu]\bigr) &= \nabla_\alpha\bigl(a^{\alpha\beta}[\zeta\nabla_\beta \thetah^0_\nu + \sigma_{I}{}^0{}_J\theta^I_\beta
\theta^J_\nu]\bigr) +\nabla_\alpha(a^{\alpha\beta}\nabla_\beta \zeta)\thetah^0_\nu + a^{\alpha\beta}
\nabla_\beta\zeta\nabla_\alpha\thetah^0_\nu && \text{(by \eqref{theta0def})}\\
& = h^\mu_\nu(\breve{\Hc}_\mu-a^{\alpha\beta}\nabla_\alpha \zeta\nabla_\beta\thetah^0_\mu)
-\nabla_\alpha\thetah^{0\gamma}a^{\alpha\beta}[\zeta\nabla_\beta \thetah^0_\gamma + \sigma_{I}{}^0{}_J\theta^I_\beta
\theta^J_\gamma]\frac{\thetah^0_\nu}{\hat{\gamma}{}^{00}}\\
&-\bigl(\thetah^{0\gamma}\breve{\Hc}_\gamma +
\nabla_\alpha\thetah^{0\gamma}a^{\alpha\beta}[\zeta\nabla_\beta \thetah^0_\gamma + \sigma_{I}{}^0{}_J\theta^I_\beta
\theta^J_\gamma]\bigl)\thetah^0_\nu  + a^{\alpha\beta}\nabla_\beta\zeta\nabla_\alpha\thetah^0_\nu ,
\end{align*}
where in deriving the last equality we used \eqref{chidef}, \eqref{Wcdef}, \eqref{Ecaldef}-\eqref{Hbrdef}, \eqref{weqn.1}-\eqref{weqn.1b}, \eqref{Kcdef},
\eqref{bconsat}-\eqref{econsat} and \eqref{jconsat}. Using \eqref{pidef}, \eqref{xihdef}, \eqref{jconsat} and \eqref{hprojrem}, we can write the above equation as
\begin{equation} \label{Econsat2}
\nabla_\alpha\bigl(a^{\alpha\beta}[\nabla_\beta \theta^0_\nu + \sigma_{I}{}^0{}_J\theta^I_\beta
\theta^J_\nu]\bigr) =  \breve{\Hc}_\nu \quad \text{in $\Omega_T$.}
\end{equation}

Next, observing from \eqref{theta0def} and \eqref{DGframemet} that \eqref{jconsat} implies
\begin{equation} \label{zetaconsat}
\zeta = \sqrt{-\gamma^{00}}  \quad \text{in $\Omega_T$,}
\end{equation}
we calculate
\begin{align}
\nabla_\nu\biggl(\frac{1}{f}\theta^0_\mu\biggr)
&= -\frac{f'}{f^2}\nabla_\nu \zeta \theta^0_\mu + \frac{1}{f}\nabla_\nu \theta^0_\mu \notag \\
&= -\frac{1}{\zeta f}\biggl(1-\frac{1}{s^2}\biggr)\nabla_\nu\zeta \theta^0_\mu
+\frac{1}{f}\delta^\alpha_\mu \nabla_\nu \theta^0_\alpha && \text{(by \eqref{fdef})}\notag \\
&= -\frac{1}{f}\biggl(1-\frac{1}{s^2}\biggr)\frac{\theta^0_\mu \theta^0_\beta}{\gamma^{00}}
\nabla_\nu \theta^0_\alpha +\frac{1}{f}\delta^\alpha_\mu \nabla_\nu \theta^0_\alpha
&& \text{ (by \eqref{zetaconsat})} \notag \\
&= -a_\mu{}^\alpha \nabla_\nu \theta^0_\alpha, \label{lemA.2}
\end{align}
where $a^{\mu\nu}$ is the acoustic metric defined by \eqref{adef}. From this,
we see that
\begin{align}
\nabla_\nu \delta_g \biggl(\frac{1}{f}\theta^0 \biggr)
&= -\nabla_\nu \nabla^\mu \biggl(\frac{1}{f}\theta^0_\mu\biggr) \notag \\
&= -\nabla^\mu \nabla_\nu\biggl(\frac{1}{f}\theta^0_\mu\biggr)
+\frac{1}{f} R_{\nu\mu}{}^{\lambda\mu}\theta^0_\lambda && \text{(by \eqref{curvature})} \notag \\
&= \nabla_\mu\bigl(a^{\mu\alpha}\nabla_\nu\theta^0_\alpha\bigr)
+\frac{1}{f}R_\nu{}^\lambda \theta_\lambda^0 && \text{(by \eqref{lemA.2})} \notag \\
&= \nabla_\mu\bigl(a^{\mu\alpha}\nabla_\alpha \theta^0_\nu
+a^{\mu\alpha}\bigl[\nabla_\nu\theta^0_\alpha-\nabla_\alpha \theta^0_\nu \bigr]
\bigr)+ \frac{1}{f}R_\nu{}^\lambda \theta^0_\lambda, \notag
\end{align}
which, we observe, using \eqref{fcomp} and \eqref{cdconsat}, can be written as
\begin{equation} \label{lemA.3}
\nabla_\nu \delta_g \biggl(\frac{1}{f}\theta^0 \biggr) =
\nabla_\alpha\bigl(a^{\alpha\beta}\bigl[\nabla_\beta \theta^0_\nu
+\sigma_{I}{}^0{}_J\theta^I_\beta \theta^J_\nu \bigr]
\bigr) + \frac{1}{f}R_\nu{}^\lambda \theta^0_\lambda
+\nabla_\alpha\bigl(a^{\alpha\beta}\Fc_{\nu\beta}\bigr) \quad \text{in $\Omega_T$.}
\end{equation}
Using \eqref{DGconnectA} to express the last term in \eqref{lemA.3}
as
\begin{equation} \label{lemA.4}
\nabla_\alpha\bigl(a^{\alpha\beta}\Fc_{\nu\beta}\bigl)
= -\hat{\nabla}_\alpha\bigl(a^{\alpha\beta}\Fc_{\beta\nu}\bigr)
+ \bigl[C^\beta_{\omega\nu} a^{\omega\alpha}-\delta_\nu^\beta C^\omega_{\omega\lambda}
a^{\lambda\alpha}\bigr]\Fc_{\alpha\beta},
\end{equation}
where $C^{\beta}_{\omega\nu}$ is defined by \eqref{Cdef}
and $\hat{\nabla}$ is the Levi-Civita connection of the acoustic metric $a_{\alpha\beta}$,
we find from \eqref{Hdef}, \eqref{Econsat2}, \eqref{lemA.3} and \eqref{lemA.4} that
\begin{equation} \label{lemA.1}
\Ed \gc - \delta_a \Fc = 0  \quad \text{in $\Omega_T$.}
\end{equation}
Applying the codifferential $\delta_{a}$ to to this expression, shows, with the help of \eqref{DGcodiffB},
that $\gc$ satisfies the wave equation
\begin{equation}
\delta_{a}\! \Ed\gc = 0 \quad \text{in $\Omega_T$.} \label{gconsat1}
\end{equation}

Turing our attention to the boundary condition for $\gc$, we observe, see \eqref{hodgeperpB},
that  $*_g(\theta^1\wedge\theta^2\wedge\theta^3)$
is $g$-orthogonal to the co-frame fields $\theta^I$.  Since $\theta^0$
is orthogonal to the $\theta^I$ by \eqref{bconsat}, $\theta^0$ must
be proportional to $*_g(\theta^1\wedge\theta^2\wedge\theta^3)$, and so
\begin{equation} \label{perpA}
q *_g\theta^0 = \theta^1\wedge\theta^2\wedge\theta^3  \quad \text{in $\Omega_T$}
\end{equation}
for some function $q$. To determine $q$, we wedge the above expression with $\theta^0$ to get
$q \theta^0\wedge *_g\theta^0 = \theta^0\wedge \theta^1\wedge\theta^2\wedge\theta^3$
and then use the formulas
\eqref{hodge} and \eqref{volumeA} to obtain
$q =  \frac{\sqrt{-\det(\gamma^{kl})}}{\gamma^{00}}$.
Substituting this into \eqref{perpA} yields
$*_g  \biggl(\frac{\sqrt{-\det(\gamma^{kl})}}{\gamma^{00}} \theta^0\biggr) = \theta^1\wedge\theta^2\wedge\theta^3$
which, after taking the exterior derivative, gives
\begin{align*}
\Ed *_g  \biggl(\frac{\sqrt{-\det(\gamma^{kl})}}{\gamma^{00}} \theta^0\biggr)
&= \Ed\theta^1\wedge \theta^2 \wedge \theta^3 -\theta^1 \wedge \Ed \theta^2 \wedge \theta^3
+\theta^1\wedge \theta^2 \wedge \Ed\theta^3 \\
&= -\Half \sigma_L{}^1\!{}_M\theta^L\wedge \theta^M\wedge \theta^2 \wedge \theta^3
+\Half \sigma_L{}^2\!{}_M\theta^1\wedge \theta^L\wedge \theta^M \wedge \theta^3 - \Half \sigma_L{}^3\!{}_M\theta^1\wedge \theta^2\wedge \theta^L \wedge \theta^M\\
& = 0,
\end{align*}
where in deriving the last two equalities we used
the relations \eqref{cdconsat} and \eqref{econsat} along with the fact
that $\theta^I\wedge \theta^J\wedge \theta^K \wedge \theta^L = 0$ for any choice of
$I,J,K,L \in \{1,2,3\}$. But this implies
$\delta_g\biggl(\frac{\sqrt{-\det(\gamma^{ij})}}{\gamma^{00}}\theta^0\biggr)=0$ in $\Omega_T$,
which, in turn, implies that
\begin{equation}  \label{lemB.1}
\gc = - v(\hc) -\nabla_\nu v^\nu \hc
\quad \text{in $\Omega_T$}
\end{equation}
by \eqref{xidef}-\eqref{xihdef}, \eqref{gcon}-\eqref{hcon}, \eqref{jconsat} and \eqref{zetaconsat}.  Evaluating \eqref{lemB.1} on the boundary $\Gamma_T$ yields
the Dirichlet  boundary condition
\begin{equation} \label{gconsat2}
\gc = -v(\hc) -\nabla_\nu v^\nu \hc \quad \text{in $\Gamma_T$}.
\end{equation}

We can write the wave equation \eqref{gconsat1} in first order form as
\begin{equation*}
A^{\mu\nu\gamma}\hat{\nabla}_\gamma \Gc_\nu = 0,
\end{equation*}
where
\begin{equation*}
A^{\mu\nu\gamma} = -a^{\mu\nu}v^\gamma + v^\mu a^{\nu\gamma} + v^\nu a^{\mu\gamma}
\AND
\Gc_\nu = \nabla_\nu \gc.
\end{equation*}
Then, integrating the identity
\begin{equation*}
\hat{\nabla}_\gamma\bigl( \Gc_\mu A^{\mu\nu\gamma} \Gc_\nu) = \Gc_\mu \hat{\nabla}_\gamma  A^{\mu\nu\gamma} \Gc_\nu
\end{equation*}
over $\Omega_t$, we find, with the help of the Divergence Theorem and the fact that $\Gc_\mu$ vanishes on the initial hypersurface, that
\begin{equation*}
\int_{\Omega(t)}\Gc_\mu A^{\mu\nu\gamma}\tau_\gamma  \Gc_\nu - \int_{\Gamma_t}\Gc_\mu A^{\mu\nu\gamma}n_\gamma  \Gc_\nu
= - \int_{\Omega_t}\Gc_\mu \hat{\nabla}_{\gamma} A^{\mu\nu\gamma}  \Gc_\nu.
\end{equation*}
But since $A^{\mu\nu\gamma}\tau_\gamma$ is positive definite and $A^{\mu\nu\gamma}n_\gamma = v^{\mu}n^{\nu}+v^{\nu}n^{\mu}$
on $\Gamma_T$ by \eqref{wbc.3}, it follows from the above integral
relation that
\begin{equation} \label{conenergyBa}
\int_{\Omega(t)}|\nabla \gc|_m^2 - 2 \int_{\Gamma_t} n(\gc)v(\gc)
\lesssim   \int_{\Omega_t} |\nabla \gc|_m^2 
\end{equation}

Before continuing, we first establish an integral identity. Given two $C^1$ function $f_1$, $f_2$ on $\overline{\Omega}_T$ where $f_1$ vanishes on
$\Omega(0)$, and letting $D_\nu$ denote the induced connection on the boundary $\Gamma_T$, the integral identity
\begin{equation} \label{intid}
\int_{\Gamma_t} f_1 v(f_2) = -\int_{\Gamma(t)}\tau_\mu v^\mu f_1 f_2  - \int_{\Gamma_t}(D_\nu v^\nu f_1 - v(f_1))f_2
\end{equation}
follows from integrating  
 $f_1 v(f_2) = D_\nu( v^\nu f_1 f_2)-(D_\nu v^\nu f_1 - v(f_1))f_2$ on $\Gamma_t$ and using the Divergence Theorem.
Using this identity and \eqref{gconsat2}, we deduce the estimate
\begin{equation*}
\int_{\Omega(t)}|\nabla \gc|_m^2 +2 \int_{\Gamma(t)}\tau_\nu v^\nu n(\gc)v(\hc)
+2\int_{\Gamma_t}(D_\nu v^\nu n(\gc) - v(n(\gc)))v(\hc)
\lesssim  \int_{\Omega_t} |\nabla \gc|_m^2 
\end{equation*}
from \eqref{conenergyBa}. Another application of \eqref{intid} to the above inequality yields
\begin{equation*}
\int_{\Omega(t)}|\nabla \gc|_m^2 +2 \int_{\Gamma(t)}\tau_\nu v^\nu n(\gc)v(\hc)
\lesssim  \int_{\Gamma(t)}|\hc| +   \int_{\Gamma_t}|\hc| +  \int_{\Omega_t} |\nabla \gc|_m^2.
\end{equation*}
Integrating this expression in time, we have
\begin{equation*}
\int_{\Omega_t}|\nabla \gc|_m^2 + 2\int_{\Gamma_t}\tau_\nu v^\nu n(\gc)v(\hc) \lesssim 
\int_{\Gamma_t}|\hc| + \int_0^t  \int_{\Gamma_\tau}|\hc|\, d\tau + \int_{0}^t \int_{\Omega_\tau} |\nabla \gc|_m^2, d\tau.
\end{equation*}
By one final application of the identity \eqref{intid}, we arrive, with the help of \eqref{lemC.8b}, \eqref{Ycaldef} and the
positivity of $\omega_{0}{}^0{}_3 =-\Ip_{e_3}\! \nabla_{e_0}\theta^0$ on $\overline{\Gamma}_T$, at the energy estimate
\begin{equation} \label{conenergyBb}
\int_{\Omega_t}|\nabla \gc|_m^2  \lesssim \int_{\Gamma(t)}\bigl[\Yc + |Y|_{\Lambda}^2\bigr]
+  \int_{\Gamma_t}\bigl[\Yc + |Y|_{\Lambda}^2\bigr] + \int_0^t  \int_{\Gamma_\tau}\bigl[\Yc + |Y|_{\Lambda}^2\bigr]\, d\tau +
 \int_{0}^t \int_{\Omega_\tau} |\nabla \gc|_m^2\, d\tau.
\end{equation}

\bigskip

\noindent \underline{Energy estimate for $\Fc_{\mu\nu}$ and $Y_\Kc$:} We now look for a boundary condition and an evolution equation
for $\Fc_{\mu\nu}$ that will yield a useful energy estimate for $\Fc_{\mu\nu}$ and $Y_\Kc$.
We start by noticing that 
\begin{equation} \label{fconsat2}
\Ld_v \Ed\Fc =\Half \Ed \Bigl(\Ld_v(\sigma_I{}^0{}_J)\theta^I\wedge \theta^J
+ \sigma_I{}^0{}_J\Ld_v\theta^I\wedge \theta^J+ \sigma_I{}^0{}_J\theta^I\wedge \Ld_v\theta^J  \Bigr)
= 0 \quad \text{in $\Omega_T$}
\end{equation}
is a direct consequence of  \eqref{fcon}, \eqref{weqn.2}, \eqref{weqn.3}, \eqref{DGf.2} and \eqref{DGf.4}.
Since $\Ed\Fc$ vanishes initially, see \eqref{dfconiv},
it follows immediately from \eqref{fconsat2} that
\begin{equation} \label{fconsat3}
\Ed\Fc = 0 \quad \text{in $\Omega_T$.}
\end{equation}
Together \eqref{lemA.1} and \eqref{fconsat3} show that $\Fc$ satisfies Maxwell's equations with source $\Ed \gc$ in $\Omega_T$.

Next, we turn to the boundary conditions. We begin by computing
\begin{align}
n^\mu\Fc_{\mu\alpha}a^{\alpha\beta}F_{\nu\beta}v^{\nu} =& n^\mu\Fc_{\mu\alpha}a^{\alpha\beta}\bigl(\theta^\Kc_\beta e^\lambda_\Kc
+\theta^3_\beta e^\lambda_3+\theta^0_\beta e^\lambda_0 \bigr)F_{\nu\lambda}e^{\nu}_0&& \text{(by \eqref{vtoe0})} \notag\\
=& n^\mu\Fc_{\mu\alpha}a^{\alpha\beta}\bigl(\theta^\Kc_\beta e^\lambda_\Kc
+\theta^3_\beta e^\lambda_3\bigr)F_{\nu\lambda}e^{\nu}_0 && \text{(since $\Fc_{(\nu\lambda)}=0$)}\notag \\
=& |\theta^3|_g^{-1} \theta^{3\mu}\Fc_{\mu\alpha}\bigl(\theta^{\Kc\alpha}e^\lambda_\Kc
+\theta^{3\alpha} e^\lambda_3\bigr)F_{\nu\lambda}e^{\nu}_0 && \text{(by \eqref{adef}, \eqref{wbc.2} \& \eqref{lemBa.3})}\notag \\
=&|\theta^3|_g^{-1} \theta^{3\mu}\Fc_{\mu\alpha}\theta^{\Kc\alpha}e^\lambda_\Kc F_{\nu\lambda}e^{\nu}_0
&& \text{(since $\Fc_{(\mu\alpha)}=0$)} \notag\\
=&|\theta^3|_g^{-1} \theta^{3\mu}\Fc_{\mu\alpha}\theta^{\Kc\alpha}Y_\Kc
&& \text{(by \eqref{Ydef})}\notag\\
 =&- \frac{1}{2|\theta^3|_g} v(\af |Y|_\Lambda^2)
-\frac{\gamma^{3\Kc}Y_\Kc}{|\theta^3|^2_g}v(\breve{\hc})
+\Qf^\mu(\hcbr,|Y|_\Lambda^2) && \text{(by \eqref{mbcidD})} \notag
\end{align}  
from which we obtain
\begin{equation} \label{conMaxbc}
n^\mu\Fc_{\mu\alpha}a^{\alpha\beta}F_{\nu\beta}v^{\nu} =   - v\biggl(\frac{\af|Y|_\Lambda^2}{2|\theta^3|_g}+
\frac{\gamma^{3\Kc}Y_\Kc\breve{\hc}}{|\theta^3|^2_g}\biggr)
+\Qf^\mu(\hcbr,|Y|_\Lambda^2) \quad \text{in $\Gamma_T$.}
\end{equation}
Noting that $n^\mu$ satisfies $n^\mu = |n|_a^{-1} n^\mu$ on $\Gamma_T$ by \eqref{adef} and  \eqref{wbc.2}, and recalling that $\Fc$ vanishes on the initial 
hypersurface, we then deduce from Lemma \ref{Maxlem} 
and \eqref{conMaxbc} that inequality
\begin{align}
\int_{\Omega(t)} \tauh^{\nu}T_{\nu\mu}v^{\mu} + 2\int_{\Gamma_t}  v\biggl(&\frac{\af|Y|_\Lambda^2}{2|\theta^3|_g}+
\frac{\gamma^{3\Kc}Y_\Kc\breve{\hc}}{|\theta^3|^2_g}\biggr)  \lesssim \int_{\Gamma_t}\bigl[|\hcbr| +|Y|_\Lambda^2\bigr]\notag\\
& -\int_{\Omega_t} \bigl[{\Half} T_{\alpha\beta}a^{\alpha\mu}a^{\beta\mu}\Ld_v a_{\mu\nu} +2 a^{\alpha\mu}\nabla_\alpha \gc \Fc_{\mu\nu}v^\nu\bigr]
\label{conenergyCa}
\end{align}
holds, where
where $\tauh^\mu = (-a(\tau,\tau))^{-1/2}\tau^\mu$ and
\begin{equation*}
T_{\mu\nu} =a^{\alpha\beta}\Fc_{\mu\alpha}\Fc_{\nu\beta} - \Half a_{\mu\nu} a^{\alpha\omega}a^{\beta\delta}\Fc_{\alpha\beta}\Fc_{\omega\delta}.
\end{equation*}
Employing the identity \eqref{intid}, we can write \eqref{conenergyCa} as
\begin{align}
\int_{\Omega(t)} \tauh^{\nu}T_{\nu\mu}v^{\mu} + \int_{\Gamma(t)}(-\tauh_\mu v^\mu)\biggl(&\frac{\af|Y|_\Lambda^2}{2|\theta^3|_g}+
\frac{\gamma^{3\Kc}Y_\Kc\breve{\hc}}{|\theta^3|^2_g}\biggr)  \lesssim \int_{\Gamma_t}\bigl[|\hcbr| +|Y|_\Lambda^2\bigr]\notag\\
& -\int_{\Omega_t} \bigl[{\Half} T_{\alpha\beta}a^{\alpha\mu}a^{\beta\mu}\Ld_v a_{\mu\nu} +2 a^{\alpha\mu}\nabla_\alpha \gc \Fc_{\mu\nu}v^\nu\bigr]
\label{conenergyCb}
\end{align}
Noting that $-\tauh_\nu \xi^\nu > 0$, by virtue of $\tauh^\nu$ and $v^\nu$ both being
future pointing timelike vector fields, and $|\Fc|^2_m \lesssim \tauh^\nu \Fc_{\nu\mu}v^\nu$
by Lemma \ref{doclem}, we conclude from \eqref{conenergyCb}, Young's inequality, and the positivity of $\af$ on $\overline{\Gamma}_T$, that 
\begin{equation} \label{conenergyCba}
\int_{\Omega(t)}|\Fc|_m^2 + \int_{\Gamma(t)} |Y|_\Lambda^2 \lesssim \int_{\Gamma(t)} |Y|_\Lambda |\hbr| + \int_{\Gamma_t}\bigl[|\hcbr| +|Y|_\Lambda^2\bigr]+
\int_{\Omega_t} \biggl[\delta |\nabla\gc|^2_m + \biggl(\frac{1}{\delta}+1\biggr) |\Fc|_m^2\biggr]
\end{equation}
for any $\tilde{\delta} > 0$, where all implicit constants are independent of $\delta$. 

Since $Y_\Kc = 0$ on the initial hypersurface, for any given $\tilde{\delta} >0$, there exists a time $T_{\tilde{\delta}}\in (0,T]$ such that
$|Y|_{\Lambda} \leq \tilde{\delta}$ in $\Gamma_{T_{\tilde{\delta}}}$.
Using this, it follows from \eqref{Ycaldef}, the the positivity of $\omega_0{}^0_3=-\Ip_{e_3}\nabla_{e_0}\theta^0$ on $\overline{\Gamma}_T$ and \eqref{conenergyCba}
that the inequality
\begin{equation*} 
\int_{\Omega(t)}|\Fc|_m^2 + \int_{\Gamma(t)} |Y|_\Lambda^2 \lesssim \int_{\Gamma(t)}\tilde{\delta}\bigl[\Yc +|Y|_\Lambda^2\bigr] + \int_{\Gamma_t}\bigl[\Yc +|Y|_\Lambda^2\bigr]+
\int_{\Omega_t} \biggl[\delta |\nabla\gc|^2_m + \biggl(\frac{1}{\delta}+1\biggr) |\Fc|_m^2\biggr]
\end{equation*}
holds for $t\in [0,T_{\tilde{\delta}}]$, where now all the implied constants are independent of $\delta$ and $\tilde{\delta}$. Choosing $\tilde{\delta}>0$ small enough then shows, with the help of \eqref{conenergyAb}, that
\begin{equation} \label{conenergyCc}
\int_{\Omega(t)}|\Fc|_m^2 + \int_{\Gamma(t)} |Y|_\Lambda^2 \lesssim  \int_{\Gamma_t}\bigl[\Yc +|Y|_\Lambda^2\bigr]+
\int_{\Omega_t} \biggl[\delta |\nabla\gc|^2_m + \biggl(\frac{1}{\delta}+1\biggr) |\Fc|_m^2\biggr], \quad 0\leq  t \leq T_{\tilde{\delta}}.
\end{equation}
Integrating the above inequality in time, we conclude that
\begin{equation}
\int_{\Omega_t}|\Fc|_m^2 + \int_{\Gamma_t} |Y|_\Lambda^2 \lesssim   \int_0^t\int_{\Gamma_\tau}\bigl[\Yc +|Y|_\Lambda^2\bigr]\, d\tau
+\int_0^t \int_{\Omega_\tau} \biggl[\delta |\nabla\gc|_m + \biggl(\frac{1}{\delta}+1\biggr) |\Fc|_m^2\biggr]\,d\tau \label{conenergyCd}
\end{equation}
for  all $t\in [0,T_{\tilde{\delta}}]$ and $\delta>0$.

\bigskip

\noindent \underline{Propagation of $F$ and $\gc$ in $\Omega_T$:}  Assuming for the moment that $t\in [0,T_{\tilde{\delta}}]$, we see, by choosing $\delta>0$ small enough, that the estimate
\begin{equation} \label{conenergyDa}
\int_{\Omega_t}|\nabla \gc|_m^2  \lesssim \int_{\Omega_t}|\Fc|^2_m
+  \int_{\Gamma_t}\bigl[\Yc + |Y|_{\Lambda}^2\bigr] + \int_0^t  \int_{\Gamma_\tau}\bigl[\Yc + |Y|_{\Lambda}^2\bigr]\, d\tau +
 \int_{0}^t \int_{\Omega_\tau} |\nabla \gc|_m^2\, d\tau
\end{equation}
follows from \eqref{conenergyAb}, \eqref{conenergyBb} and \eqref{conenergyCc}. Then adding \eqref{conenergyCc} to \eqref{conenergyDa}
yields
\begin{equation*}
\int_{\Omega(t)}|\Fc|_m^2+\int_{\Gamma(t)} |Y|_\Lambda^2 + \int_{\Omega_t}|\nabla \gc|_m^2  \lesssim \int_{\Omega_t}|\Fc|^2_m
+  \int_{\Gamma_t}\bigl[\Yc + |Y|_{\Lambda}^2\bigr] + \int_0^t  \int_{\Gamma_\tau}\bigl[\Yc + |Y|_{\Lambda}^2\bigr]\, d\tau +
 \int_{0}^t \int_{\Omega_\tau} |\nabla \gc|_m^2\, d\tau
\end{equation*}
provided that $\delta$ is chosen small enough.
Futhermore, by adding \eqref{conenergyAb}, \eqref{conenergyAc} and \eqref{conenergyCd} to the above
inequality, we have 
\begin{align*}
\int_{\Omega(t)}|\Fc|_m^2 + &\int_{\Omega_t}\bigl[|\Fc|_m^2+|\nabla\gc|_m^2\bigr] + \int_{\Gamma(t)}\bigl[\Yc + |Y|_{\Lambda}^2\bigr]+\int_{\Gamma_t}\bigl[\Yc + |Y|_{\Lambda}^2\bigr]  \\
& \lesssim\int_{\Omega_t}|\Fc|_m^2 + \int_0^t\int_{\Omega_\tau}\bigl[|\Fc|_m^2+|\nabla\gc|_m^2\bigr] + \int_{\Gamma_t}\bigl[\Yc + |Y|_{\Lambda}^2\bigr]+
\int_0^t\int_{\Gamma_\tau}\bigl[\Yc + |Y|_{\Lambda}^2\bigr] d \tau.
\end{align*}
By Gronwall's inequality, we obtain\footnote{Here, we are using $\begin{displaystyle}\int_{\Gamma_t} \approx \int_0^t \int_{\Gamma(t)}\end{displaystyle}$
and $\begin{displaystyle}\int_{\Omega_t} \approx \int_0^t \int_{\Omega(t)}\end{displaystyle}$. } 
\begin{equation*}
\int_{\Omega(t)}|\Fc|_m^2 + \int_{\Omega_t}\bigl[|\Fc|_m^2+|\nabla\gc|_m^2\bigr] + \int_{\Gamma(t)}\bigl[\Yc + |Y|_{\Lambda}^2\bigr]+\int_{\Gamma_t}\bigl[\Yc + |Y|_{\Lambda}^2\bigr] =0, \quad 0\leq t < T_{\tilde{\delta}},
\end{equation*}
which in turn, implies 
\begin{equation*} 
Y_\Kc=0, \quad \Fc =  \Ed\theta^0 +{\Half} \sigma_I{}^0{}_J\theta^I\wedge \theta^J=0 \AND \gc =  \delta_g\biggl(\frac{1}{f}\theta^0\biggr) =0 \quad 
\text{in $\Omega_{T_{\tilde{\delta}}}$.}
\end{equation*}
But $Y_\Kc=0$ implies via the definition of $T_{\tilde{\delta}}$ that $T_{\tilde{\delta}}=T$, and therefore, we conclude that
\begin{equation} \label{Fcgcconsat}
\Fc =  \Ed\theta^0 +{\Half} \sigma_I{}^0{}_J\theta^I\wedge \theta^J=0 \AND \gc =  \delta_g\biggl(\frac{1}{f}\theta^0\biggr) =0 \quad \text{in $\Omega_{T}$.}
\end{equation}
For use below, we note, with the help of the Cartan structure equations \eqref{CartanA} and \eqref{cdconsat}, that
$\Fc=0$ is equivalent to
\begin{equation} \label{fconsat8}
\sigma_i{}^0{}_j = \omega_i{}^0{}_j - \omega_j{}^0{}_i \quad \text{in $\Omega_T$.}
\end{equation}
\bigskip

\noindent \underline{Propagation of $\hc$ in $\Omega_T$:}  Since
 $v(\hc) +\nabla_\nu v^\nu \hc=0$
in $\Omega_T$ by \eqref{lemB.1} and \eqref{Fcgcconsat},
and $\hc$ vanishes initially, see \eqref{hconiv}, it follows immediately from the uniqueness of solutions to transport equations that
\begin{equation} \label{hconsat}
\hc = -\frac{\sqrt{-\det(\gamma^{ij})}}{\sqrt{-\gamma^{00}}}-\frac{\zeta}{f(\zeta)} = 0 \quad \text{in $\Omega_T$.}
\end{equation}

\noindent \underline{Solution of the relativistic Euler equations:} With the proof of the propagation of constraints
complete, we now turn to showing that the frame $\theta^j_\nu$ determines a solution of the relativistic
Euler equations. To start, we use \eqref{bconsat} and \eqref{zetaconsat}  to express \eqref{hconsat} as
$\biggl(\frac{f\bigl((-\gamma_{00})^{-\frac{1}{2}}\bigr)}{(-\gamma_{00})^{-\frac{1}{2}}}\biggr)^2 = \det(\gamma_{IJ})$ in $\Omega_T$.
Applying $e_0$ to this expression, we obtain, after a short calculation using \eqref{fdef}, \eqref{bconsat}, \eqref{zetaconsat} and \eqref{CartanB}, that\footnote{In line with our raising and lowering conventions from Section \ref{raising},  $\omega_{ikj}$
is defined by $\omega_{ikj} = \gamma_{kl}\omega_i{}^l{}_j$.}
\begin{equation} \label{wav2eul1}
\frac{1}{s^2 \gamma_{00}}\omega_{000} - \gamma^{IJ}\omega_{0IJ} = 0 \quad \text{in $\Omega_T$},
\end{equation}
where $s^2 = s^2\bigl((-\gamma_{00})^{-\frac{1}{2}}\bigr)$,
while
\begin{equation} \label{wav2eul2}
\omega_{00J} + \omega_{0J0} = 0 \quad \text{in $\Omega_T$}
\end{equation}
follows from applying $e_0$ to \eqref{bconsat}.
We also note from \eqref{cdconsat}, \eqref{econsat1} and \eqref{fconsat8} that
\begin{equation} \label{wav2eul3}
\omega_{0jI}-\omega_{Ij0} = 0 \quad \text{in $\Omega_T$}.
\end{equation}
From   \eqref{bconsat} and \eqref{wav2eul1}-\eqref{wav2eul3}, it is then clear that the equations
\begin{align*}
\biggl( \biggl(3+\frac{1}{s^2}\biggr)\frac{1}{\gamma_{00}}-3\gamma^{00}\biggr)\omega_{000}
-\gamma^{IJ}\omega_{IJ0}-2\gamma^{0J}(\omega_{J00}+\omega_{0J0}) &= 0, \\
2\gamma^{I0}\omega_{000} + \gamma^{IJ}(\omega_{J00}+\omega_{0J0}) & =0,
\end{align*}
hold in $\Omega_T$. Setting
\begin{align*}
A^{ijk} = \biggl(3+\frac{1}{s^2}\biggr)\frac{\delta^i_0\delta^j_0\delta^k_0}{-\gamma_{00}} +
\delta^i_0\gamma^{jk} + \gamma^{ik}\delta^j_0+\gamma^{ij}\delta^k_0,
\end{align*}
a short calculation shows that the above equations can be written as
\begin{equation*}
A^{ijk}\omega_{kj0}= 0 \quad \text{in $\Omega_T$},
\end{equation*}
which, in turn, are easily seen to be equivalent to
\begin{equation}
A_{\mu\nu}{}^\gamma\nabla_{\gamma}w^\mu =0\quad \text{in $\Omega_T$}, \label{wav2eul4}
\end{equation}
where
\begin{equation*}
w^\mu := e^\mu_0 = \frac{1}{\gamma^{00}}g^{\mu\nu}\theta_\nu^0
\end{equation*}
and
\begin{equation*}
A_{\mu\nu}{}^\gamma = \biggl(3+\frac{1}{s^2}\biggr)\frac{w_\mu w_\nu}{-g(w,w)}w^\gamma
+\delta^\gamma_\nu w_\mu + \delta^\gamma_\mu w_\nu + w^\gamma g_{\mu\nu}.
\end{equation*}
From the discussion in Section II of \cite{Oliynyk:PRD_2012} and the definition $s^2=s^2(\zeta)$ and $p=p(\zeta)$ via
\eqref{s2} and \eqref{prel.1}-\eqref{zpfix}, respectively, we recognize \eqref{wav2eul4} as
the Frauendiener-Walton formulation of the relativistic Euler equations, see \cite{Frauendiener:2003,Walton:2005}.
With the help of the boundary conditions \eqref{wbc.2}-\eqref{wbc.3}, we deduce that the pair $\{\rho,v^\mu\}$ computed from $\theta^0_\mu$ using \eqref{theta0def},  \eqref{theta2rho}-\eqref{theta2v} and $\zeta=\sqrt{-g(\theta^0,\theta^0)}$
satisfy the IBVP for the relativistic liquid body given by \eqref{ibvp.1}-\eqref{ibvp.5}.

\bigskip

\noindent \underline{Taylor sign condition:} Since $\gamma^{0J}=0$ and
$\omega_{i}{}^k{}_0 = \omega_0{}^k{}_i$ in $\Omega_T$, and $\gamma^{00} = -1$ on $\Gamma_T$,
we observe that
\begin{equation*}
-\Ip_{e_3}\! \nabla_{e_0}\theta^0 \oset{\eqref{DGconndefA}}{=} \omega_0{}^0{}_3 = -\omega_{300} \oset{\eqref{CartanB}}{=} - \Half e_3(\gamma_{00}) = -\Half g(e_3,n) \nabla_n \gamma_{00} \quad \text{in $\Gamma_T$},
\end{equation*}
where we note that $g(e_3,n)=|\theta^3|_g^{-1}$ since $n=\thetah^3$ in $\Gamma_T$. From this result and assumption \eqref{cthm2}, it follows that
the inequality
\begin{equation*}
 -\frac{1}{2|\theta^3|_g}\nabla_n \gamma_{00} \geq  c_1 > 0  
\end{equation*}
holds on $\Gamma_T$. By Remark 2.1.(iii) of \cite{Oliynyk:Bull_2017}, this inequality is equivalent to
the Taylor sign condition, which is the condition that the pressure $p$ of the fluid
solution $\{\rho,v^\mu\}$ satisfies
\begin{equation*}
-\nabla_n p \geq c_p > 0  \quad \text{in $\Gamma_T$}
\end{equation*}
for some positive constant $c_p$. This completes the proof of the theorem.

\end{proof}

\begin{rem} \label{thm2rem}
From the proof of Theorem \ref{cthm}, it is clear the propagation of the constraints
$\{\ac,\bc^J,\cc^{k}_{ij},\dc^k_j,\ec^K,\kc\}$
depends only on the evolution equations \eqref{weqn.2} and \eqref{weqn.3} for $\theta^I$ and $\sigma_i{}^k{}_j$, respectively, and the boundary condition \eqref{wbc.3}.
\end{rem}

\section{Choice of constraints\label{choice}}

With the goal of establishing the local-in-time existence of solutions to guide us, we will, in this section, make particular
choices for the constraints that appear in the evolution equations \eqref{weqn.1} and boundary
conditions \eqref{wbc.1}. The reason for these particular choices will be discussed in more detail in
the following sections. We begin by setting
\begin{align}
\Ecr^{\mu}&= v(r\Ec^\mu)- 2|\psi|_m^{-2} r \Upsilon^\mu_\rho h_\sigma^\rho  \Ec^{[\sigma}\psi^{\nu]}v(\psi_\nu) \label{Ecrdef}
\intertext{and}
\Bcr^{\mu}&=  v(\theta^3_\alpha r\Wc^{\alpha\mu}- r\ell^\mu)+\theta^3_\alpha 2|\psi|_m^{-2} \Upsilon^\mu_\rho h_\sigma^\rho r\Wc^{\alpha[\nu}\psi^{\sigma]}
v(\psi_\nu)\notag \\
& \qquad - \Bigl[p^\mu_\sigma v(r\Lc^\sigma) + \Upsilon^\mu_\rho h^\rho_\sigma |\psi|_m^{-2} r\bigl(-\psi^\nu \Lc^\sigma +2\ell^{[\nu}\psi^{\sigma]}\bigr)v(\psi_\nu)
\Bigr] \label{Bcrdef}
\end{align}
where  $r$ is a positive fuction, which will be fixed later. We then consider the following system of evolution equations and boundary
conditions:
\begin{align}
\Ecr^\mu & = 0 && \text{in $\Omega_T$,} \label{eibvp.1} \\
\Bcr^\mu & = 0 && \text{in $\Gamma_T$,}\label{eibvp.2}\\
n_\mu v^\mu & = 0 && \text{in $\Gamma_T$,}\label{eibvp.3}\\
\nabla_\alpha(a^{\alpha\beta}\nabla_\beta \zeta) & = \Kc &&\text{in $\Omega_T$,}\label{eibvp.4} \\
\zeta & = 1 && \text{in $\Gamma_T$,} \label{eibvp.5} \\
\Ld_v\theta^I & = 0 && \text{in $\Omega_T$,} \label{eibvp.6}\\
v(\sigma_i{}^k{}_j) & = 0 && \text{in $\Omega_T$.} \label{eibvp.7}
\end{align}

The following proposition guarantees that solutions of \eqref{eibvp.1}-\eqref{eibvp.7}
will yield solutions to the relativistic Euler equations with vacuum boundary conditions provided that the initial data is chosen so that the constraints
\eqref{aconiv}-\eqref{bcicons} are satisfied initially.
 
\begin{prop} \label{cprop}
Suppose $\ep \in C^1(\overline{\Omega}_T)$ with $\ep > 0$ in $\overline{\Omega}_T$, $\kappa\geq 0$, $r\in C^1(\overline{\Omega}_T)$ and $r>0$, $\zeta \in C^2(\overline{\Omega}_T)$, $\thetah^0_\mu\in C^3(\overline{\Omega}_T)$, $\theta^J_\mu\in C^2(\overline{\Omega}_T)$, $\sigma_{i}{}^j{}_k\in C^1(\overline{\Omega}_T)$,
there exists constants  $c_0,c_1>0$ such that \eqref{cthm1} and \eqref{cthm2} are satisfied,
and  the quadruple $\{\zeta,\thetah^0_\mu,\theta^J_\mu,\sigma_i{}^k{}_j\}$ satisfies the initial conditions
\eqref{aconiv}-\eqref{bcicons}
and the equations \eqref{eibvp.1}-\eqref{eibvp.7}. Then
the constraints \eqref{acon}-\eqref{jcon} and \eqref{kcon} vanish in $\Omega_T$ and $\Gamma_T$, respectively, the pair $\{\rho,v^\mu\}$
determined from $\{\thetah^0_\mu,\zeta\}$ via the formulas \eqref{theta2rho}-\eqref{theta2v}
satisfy the relativistic Euler equations with vacuum boundary conditions given by \eqref{ibvp.1}-\eqref{ibvp.4}, and there exists a constant $c_p>0$
such that the Taylor sign condition $-\nabla_n p \geq c_p > 0$ 
holds on $\Gamma_T$.
\end{prop}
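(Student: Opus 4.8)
The plan is to deduce Proposition \ref{cprop} from Theorem \ref{cthm}. Comparing the two systems, the equations \eqref{eibvp.3}, \eqref{eibvp.4}, \eqref{eibvp.5}, \eqref{eibvp.6}, \eqref{eibvp.7} are literally \eqref{wbc.3}, \eqref{weqn.1b}, \eqref{wbc.2}, \eqref{weqn.2}, \eqref{weqn.3}, and the initial conditions \eqref{aconiv}--\eqref{jconivtdiff} required by Theorem \ref{cthm} form a subset of the conditions \eqref{aconiv}--\eqref{bcicons} that we assume. Hence it is enough to show that \eqref{eibvp.1} and \eqref{eibvp.2} force $\Ec^\mu = 0$ in $\Omega_T$ and $\Bc^\mu = 0$ in $\Gamma_T$; for then \eqref{weqn.1} and \eqref{wbc.1} hold with the constraint-freedom maps $\Rf^\mu$, $\Qf_1^\nu$, $\Qf_2^\mu$ all taken to be zero, every hypothesis of Theorem \ref{cthm} is satisfied, and its conclusions coincide with those of Proposition \ref{cprop}.

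First I would record, from \eqref{cthm1} and \eqref{eibvp.3}, that $v$ is timelike in $\overline\Omega_T$ and tangent to $\Gamma_T$, so $\Omega_T$ and $\Gamma_T$ are invariant under the flow $\Gc_t$ of $v$ with every point lying on an integral curve of $v$ issuing from $\Omega_0$, respectively $\Gamma_0$. Since the initial conditions \eqref{aconiv}--\eqref{econiv} and \eqref{kconiv} hold and \eqref{eibvp.3}, \eqref{eibvp.6}, \eqref{eibvp.7} are in force, Remark \ref{thm2rem} gives that $\{\ac^\mu, \bc^J, \cc^k_{ij}, \dc^k_j, \ec^K\}$ vanish in $\Omega_T$ and $\kc$ vanishes in $\Gamma_T$, i.e. \eqref{bconsat}--\eqref{econsat1} and \eqref{lemBa.3} hold.

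Now for the bulk: since $\thetah^0_\mu \in C^3$ and $\zeta \in C^2$ give $\Ec^\mu \in C^1(\overline\Omega_T)$, and $r > 0$, $|\nabla_v v|_m > 0$ (cf. Remark \ref{Qfrem}(i)), inspection of \eqref{Ecrdef} shows that $\Ecr^\mu = 0$ is a linear, homogeneous, first-order transport equation for $\Ec^\mu$ along the integral curves of $v$, with continuous coefficients on $\overline\Omega_T$ (the reduction uses the projection identity $\Upsilon^\mu_\rho h^\rho_\sigma \psi^\sigma = \psi^\mu$, which follows from \eqref{Upsilondef}, \eqref{projrem5}, \eqref{Piproj}). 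Because $\Ec^\mu$ vanishes on $\Omega_0$ by \eqref{evicons}, the uniqueness of solutions of transport equations yields $\Ec^\mu = 0$ in $\Omega_T$. For the boundary, I would substitute $\theta^3_\alpha\Wc^{\alpha\mu} = \Bc^\mu + \ell^\mu + \Lc^\mu$ (from \eqref{Bcdef}) into \eqref{Bcrdef} and use the projection and orthogonality relations of Section \ref{projrem} --- notably $\Upsilon^\mu_\rho h^\rho_\sigma \psi^\sigma = \psi^\mu$, $\psih_\sigma \Lc^\sigma = 0$, $\psih^\nu \nabla_v\psih_\nu = 0$ (from \eqref{Lcdef}, \eqref{orthogA}, \eqref{dtpsih}), $\psi^\mu = |\psi|_m \psih^\mu$ (from \eqref{psihdef}), and $p^\mu_\sigma = \delta^\mu_\sigma - \psih^\mu\psih_\sigma$ (from \eqref{pdef}) --- to see that every term of \eqref{Bcrdef} not proportional to $\Bc^\mu$ cancels identically, leaving
\begin{equation*}
\Bcr^\mu = v(r\Bc^\mu) + |\psi|_m^{-2} r \bigl( \psi^\mu \Bc^\nu - \Upsilon^\mu_\rho h^\rho_\sigma \Bc^\sigma \psi^\nu \bigr) v(\psi_\nu) \qquad \text{in } \Gamma_T .
\end{equation*}
Thus $\Bcr^\mu = 0$ is again a linear homogeneous transport equation for $\Bc^\mu$ along the (tangential) flow of $v$ on $\Gamma_T$, and since $\Bc^\mu$ vanishes on $\Gamma_0$ by \eqref{bcicons}, uniqueness gives $\Bc^\mu = 0$ in $\Gamma_T$.

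Having established $\Ec^\mu = 0$ in $\Omega_T$ and $\Bc^\mu = 0$ in $\Gamma_T$, the quadruple $\{\zeta,\thetah^0_\mu,\theta^J_\mu,\sigma_i{}^k{}_j\}$ satisfies \eqref{weqn.1}--\eqref{weqn.3} and \eqref{wbc.1}--\eqref{wbc.3} along with the remaining hypotheses of Theorem \ref{cthm}, and invoking that theorem completes the proof. The step I expect to be the main obstacle is the algebraic identity for $\Bcr^\mu$ above: one has to check that the $\ell^\mu$- and $\Lc^\mu$-dependent pieces of \eqref{Bcrdef}, which were designed precisely to make this happen, cancel, and this requires careful bookkeeping with the projections $h^\mu_\nu$, $p^\mu_\nu$, $\Pi^\mu_\nu$, $\Upsilon^\mu_\nu$ and with the orthogonality relations among $v$, $\psi$, $\psih$, $\Lc$ and $\ell$; the corresponding reduction for $\Ecr^\mu$ is immediate from the form of \eqref{Ecrdef}.
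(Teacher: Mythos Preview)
Your proposal is correct and follows essentially the same approach as the paper: reduce to Theorem \ref{cthm} by showing that \eqref{eibvp.1}--\eqref{eibvp.2} are linear homogeneous transport equations for $\Ec^\mu$ and $\Bc^\mu$ along the flow of $v$, then invoke the initial conditions \eqref{evicons}--\eqref{bcicons} and uniqueness for transport equations. Your displayed identity for $\Bcr^\mu$ is exactly the one the paper derives (the paper computes it in the reverse direction, starting from $v(r\Bc^\mu)+|\psi|_m^{-2}\bigl(\psi^\mu v(\psi_\nu)-\psi^\sigma v(\psi_\sigma)\Upsilon^\mu_\rho h^\rho_\nu\bigr)r\Bc^\nu$ and expanding using \eqref{Bcdef} and the projection identities you cite); your preliminary appeal to Remark \ref{thm2rem} is harmless but unnecessary, since the algebraic identities \eqref{cprop1}--\eqref{cprop2} hold without any constraints having to vanish first.
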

\begin{proof} The proof will follow directly from Theorem \ref{cthm} provided that we can show that
\begin{align}
\Ecr^\mu &= r \nabla_v \Ec^\mu + \Rf^{\mu}(\Ec)&&\text{in $\Omega_T$} \label{cprop1}
\intertext{and}
\Bcr^\mu &= r\nabla_v \Bc^\mu + \Qf^{\mu}(\Bc) && \text{in $\Gamma_T$,} \label{cprop2}
\end{align}
because if these hold, then by the \eqref{eibvp.1} and \eqref{eibvp.2}, we would have that
$r \nabla_v \Ec^\mu + \Rf^{\mu}(\Ec)=0$ in $\Omega_T$ and
$r\nabla_v \Bc^\mu + \Qf^{\mu}(\Bc)=0$ in $\Gamma_T$ . Since $\Ec^\mu$ and $\Bc^\mu$ vanish initially by assumption, see \eqref{evicons}-\eqref{bcicons}, it
would then follows from the equations and the uniqueness of solutions to transport equations that
$\Ec^\mu = 0$ in $\Omega_T$ and $\Bc^\mu=0$ in $\Gamma_T$. 

First, we observe that the validity of \eqref{cprop1} is a direct consequence of the definition \eqref{Ecrdef} of $\Ecr^\mu$.
Next, to establish \eqref{cprop2}, we note from the definition \eqref{Bcdef} of $\Bc^\mu$ and a straightforward calculation involving  
the relations $\psih^\mu= |\psi|_m^{-1}\psi^\mu$, $\Upsilon^\mu_\rho h_\sigma^\rho \psi^\sigma=\psi^\mu$, $p^\mu_\sigma + \psih^\mu\psih_\sigma = \delta^\mu_\sigma$,
and $\psi_\mu \Lc^\mu=0$ (see \eqref{Lcdef}-\eqref{Upsilondef}, \eqref{pdef}, \eqref{orthogA}, \eqref{projrem5}, \eqref{dtpsih} and \eqref{Piproj}) that 
\begin{align*}
v(r\Bc^\mu) &+ |\psi|_m^{-2}  \bigl( \psi^\mu v(\psi_\nu)  -  \psi^\sigma v(\psi_\sigma) \Upsilon_\rho^\mu h_\nu^\rho\bigr)r\Bc^\nu\\
&\;
= p_\sigma^\mu v(r\Bc^\sigma)  + |\psi|_m^{-2} \Upsilon^\mu_\rho h_\sigma^\rho \psi^\sigma v(r\psi_\nu\Bc^\nu) -  |\psi|_m^{-2} \psi^\nu v(\psi_\nu) 
\Upsilon_\rho^\mu h^\rho_\sigma r \Bc^\sigma\\
&\; = p^\mu_\sigma v(\theta^3_\alpha r \Wc^{\alpha\sigma}) + |\psi|_m^{-2} \Upsilon_\rho^\mu h^\rho_\sigma\psi^\sigma v(\theta^3_\alpha r\Wc^{\alpha\nu}\psi_\nu)
- |\psi|_m^{-2}\psi^\nu \Upsilon^\mu_\rho h^\rho_\sigma \theta^3_\alpha r\Wc^{\alpha\sigma}v(\psi_\nu)-p^\mu_\sigma v(r \Lc^\sigma)\\
&\qquad -p^\mu_\sigma v(r\ell^\sigma) - |\psi|^{-2}_m \Upsilon^\mu_\rho h^\rho_\sigma \psi^\sigma v(r\ell^\nu\psi_\nu)
+|\psi|_m^{-2}\psi^\nu \Upsilon^\mu_\rho h^\rho_\sigma (r\Lc^\sigma + r\ell^\sigma)v(\psi_\nu) \\
&\overset{\eqref{Bcrdef}}{=}\Bcr^{\mu}. 
\end{align*}
Thus \eqref{cprop2} holds and the proof is complete.
\end{proof}

We now use the freedom to choose $\ep$ by taking it to be a function of the form
\begin{equation} \label{epfix}
\ep = \ep(|\psi|_m).
\end{equation}
Next, we set
\begin{equation} \label{elltdef}
\ellt^\mu = \frac{1}{\sqrt{-\gammah^{00}|\gamma|}}
h^{\mu\alpha}s_{\alpha\beta}{}^\gamma h^{\beta\nu}\nabla_\gamma \thetah^0_\nu,
\end{equation}
and note by \eqref{elldef}, \eqref{Upsilondef}, \eqref{Pidef} and \eqref{projrem1} that
\begin{equation*}
r\ell^\mu = r\ellt^\mu - \ep p^\mu_\omega r\ellt^\omega.
\end{equation*}
Applying $\nabla_v$ to this expression, we find, using  that \eqref{pdef}, \eqref{orthogA}, \eqref{psihup}, \eqref{dtpsih} and \eqref{epfix}, that
\begin{align}
\nabla_v(r\ell^\mu) &= \nabla_v(r\ellt^\mu)-\nabla_v(\ep p^\mu_\omega r \ellt^\omega) \notag \\
&=(\delta^\mu_\omega - \ep p^\mu_\omega\bigr)\nabla_v(r\ellt^\omega) -\nabla_v(\ep p^\mu_\omega) r \ellt^\omega\notag \\
&=(\delta^\mu_\omega - \ep p^\mu_\omega\bigr)\nabla_v(r\ellt^\omega) -\ep' \psih^\nu \nabla_v \psi_\nu p^\mu_\omega r \ellt^\omega 
+\ep\nabla_v\psih^\mu \psih_\omega r \ellt^\omega+\ep \psih^\mu \nabla_v\psih_\omega  r \ellt^\omega  \notag \\
&= (\delta^\mu_\omega - \ep p^\mu_\omega\bigr)\nabla_v(r\ellt^\omega)+\frac{\ep}{|\psi_m|} \psih_\omega r \ellt^\omega p^{\mu\nu}\nabla_v\psi_\nu
-\ep' \psih^\nu \nabla_v \psi_\nu p^\mu_\omega r \ellt^\omega + \frac{\ep}{|\psi_m|} \psih^\mu r \ellt^\omega p_\omega^\nu \nabla_v\psi_\nu.
 \label{dtellA}
\end{align}
But since $\ellt^\omega$ satisfies $\ellt^\omega = \ellt^\lambda h_\lambda^\omega$ by \eqref{projrem1} and \eqref{elltdef}, 
it is clear from \eqref{Pidef} and \eqref{projrem3} that $\ellt^\omega p_\omega^\nu= \ellt^\omega \Pi_\omega^\nu$. Substituting this into
\eqref{dtellA} shows that
\begin{equation} \label{dtellB}
\nabla_v(r\ell^\mu) =  (\delta^\mu_\omega - \ep p^\mu_\omega\bigr)\nabla_v(r\ellt^\omega)+\frac{\ep}{|\psi_m|} \psih_\omega r \ellt^\omega p^{\mu\nu}\nabla_v\psi_\nu
-\ep' \psih^\nu  \Pi^\mu_\omega r \ellt^\omega \nabla_v \psi_\nu + \frac{\ep}{|\psi_m|} \psih^\mu \Pi_\omega^\nu  r \ellt^\omega \nabla_v\psi_\nu.
\end{equation}

Next, we deduce from \eqref{Upsilondef},  \eqref{Piproj}, \eqref{Pibrdef} and \eqref{Pibrproj} that the inverse of $\Upsilon^\mu_\nu$, denoted $\Upsilonch^\mu_\nu$, is given by
\begin{equation}\label{Upsilonchdef}
\Upsilonch^\mu_\nu = \frac{1}{1-\ep}\Pi^\mu_\nu + \Pibr^\mu_\nu.
\end{equation}
Applying $\Upsilonch^\sigma_\mu$ to \eqref{dtellB}, we find, with the help of \eqref{Piproj} and \eqref{Pibrproj}, that 
\begin{align}
\Upsilonch^\sigma_\mu \nabla_v(r\ell^\mu) &=  \Upsilonch^\sigma_\mu(\delta^\mu_\omega - \ep p^\mu_\omega\bigr)\nabla_v(r\ellt^\omega)+
\frac{\ep}{|\psi_m|} \psih_\omega r \ellt^\omega \Upsilonch^\sigma_\mu p^{\mu\nu} \nabla_v\psi_\nu \notag \\
&\qquad 
-\frac{\ep'}{1-\ep} \psih^\nu  \Pi^\sigma_\omega r \ellt^\omega \nabla_v \psi_\nu + \frac{\ep}{|\psi_m|} \psih^\sigma \Pi_\omega^\nu  r \ellt^\omega \nabla_v\psi_\nu.
 \label{dtellC}
\end{align}
We then fix the functional form of $\ep(y)$ by demanding that it satisfy the differential equation 
\begin{equation*} 
\frac{\ep'}{1-\ep} = \frac{\ep}{y},
\end{equation*}
which after integrating yields the explicit formula
\begin{equation} \label{epformula}
\ep(|\psi|_m) = \frac{|\psi|_m}{\ep_0+|\psi|_m},
\end{equation}
where $\ep_0$ is a arbitrary constant that we take to be positive.
This result and \eqref{dtpsih} allows us to write \eqref{dtellC} as
\begin{align}
\Upsilonch^\sigma_\mu \nabla_v(r\ell^\mu) &=  \Upsilonch^\sigma_\mu(\delta^\mu_\omega - \ep p^\mu_\omega\bigr)\nabla_v(r\ellt^\omega)+
\frac{\ep}{|\psi|_m} \psih_\omega r \ellt^\omega \Upsilonch^\sigma_\mu p^{\mu\nu} \nabla_v\psi_\nu
+ \frac{2\ep}{|\psi|_m} \psih^{[\sigma} \Pi_\omega^{\nu]}  r \ellt^\omega \nabla_v\psi_\nu.
 \label{dtellD}
\end{align}
This identity will be used in the following sections and is crucial to establishing the existence of solutions.

\section{Lagrangian wave formulation}

\subsection{Lagrangian coordinates\label{lag}}

In order for the wave formulation of the relativistic Euler equations given by \eqref{eibvp.1}-\eqref{eibvp.7} to be useful
for establishing the local-in-time existence of solutions or for other purposes such as constructing numerical solutions, the dynamical matter-vacuum
boundary must be fixed. We achieve this through the use of Lagrangian coordinates
\begin{equation*}
\phi\; : \: [0,T]\times \Omega_0 \longrightarrow \Omega_T \: :\: (\xb^\lambda) \longmapsto (\phi^{\mu}(\xb^\lambda))
\end{equation*}
that were defined previously by \eqref{lemBa.1a}. In the following, we use
\begin{equation*}
\delb{\mu} = \frac{\del{}\;}{\del{}\xb^\mu}
\end{equation*}
to denote partial derivatives with respect to the Lagrangian coordinates $(\xb^\mu)$, and we define the Jacobian and
its inverse by
\begin{equation} \label{Jdef}
J^\mu_\nu = \delb{\nu}\phi^\mu
\end{equation}
and
\begin{equation*} 
(\Jch^\mu_\nu) = (J^\mu_\nu)^{-1},
\end{equation*}
respectively.

\begin{Not} \label{Lagnot}
For scalars fields $f$ defined on $\Omega_T$, we employ the notation
\begin{equation} \label{ful}
\underline{f} = f\circ \phi
\end{equation}
to denote the pullback of $f$ by $\phi$ to $[0,T]\times \Omega_0$. More generally, we use this notation
also to denote the pullback of tensor field components $Q^{\mu_1\ldots\mu_r}_{\nu_1\ldots\nu_s}$ that are treated as scalar fields defined on
$\Omega_T$, that is
\begin{equation*} \label{Qul}
\underline{Q}^{\mu_1\ldots\mu_r}_{\nu_1\ldots\nu_s} = Q^{\mu_1\ldots\mu_r}_{\nu_1\ldots\nu_s}\circ \phi.
\end{equation*}
Using this notation, we can then write the geometric pullback of a tensor field $Q^{\mu_1\ldots\mu_r}_{\nu_1\ldots\nu_s}$
by $\phi$ as
\begin{equation*} \label{bardef}
\Qb^{\mu_1\ldots\mu_r}_{\nu_1\ldots\nu_s} := (\phi^*Q)^{\mu_1\ldots\mu_r}_{\nu_1\ldots\nu_s} =
\Jch^{\mu_1}_{\alpha_1}\cdots \Jch^{\mu_r}_{\alpha_r}J^{\beta_1}_{\nu_1}\cdots J^{\beta_s}_{\nu_s}
\underline{Q}^{\alpha_1\ldots\alpha_r}_{\beta_1\ldots\beta_s}.
\end{equation*}
\end{Not}

\medskip

Since the Lagrangian coordinates are defined via the flow of the fluid velocity $v$, it follows that the components of the pullback $\vb = \phi^*v$ are given by
\begin{equation} \label{xibcomp}
\vb^\mu = \delta^\mu_0.
\end{equation}
Substituting this into the transformation law $J^\mu_\nu \vb^\nu = \underline{v}^\mu$ shows that $\phi^\mu$ satisfies
\begin{equation} \label{phiev}
\delb{0}\phi^\mu = \underline{v}^\mu \oset{\eqref{theta2v}}{=} \frac{1}{\sqrt{-\underline{\hat{\gamma}}^{00}}}\underline{g}^{\mu\nu}
\underline{\thetah}{}^0_\nu,
\end{equation}
where we note, see \eqref{gammah00}, that
\begin{equation*}
\underline{\hat{\gamma}}^{00} = \underline{g}^{\mu\nu}\underline{\thetah}{}^0_\mu\underline{\thetah}{}^0_\nu.
\end{equation*}
In the Lagrangian representation, the map $\phi=(\phi^\mu)$ is treated as an unknown and \eqref{phiev} is viewed as an evolution equation for $\phi$.

Pulling back the evolution equations \eqref{eibvp.6} and \eqref{eibvp.7} using the map $\phi$, we see, with the help of the naturalness property
$\phi^* L_v = L_{\vb} \phi^*$ of Lie derivatives and
 formulas  \eqref{xibcomp} and \eqref{DGLie}, that
\begin{equation} \label{thetabevA}
\delb{0}\thetab^I_\mu = 0 \AND \delb{0}\underline{\sigma}_{i}{}^k{}_j = 0 \hspace{0.4cm} \text{in $[0,T]\times\Omega_0$}.
\end{equation}
By \eqref{lemBa.1a}, it is clear that $\phi$ satisfies $\phi(\Omega_0)=\Omega_0$ from which it follows that
\begin{equation*}
J^\mu_\nu(0,\xb^\Sigma) =  v^\mu(0,\xb^\Sigma)\delta^0_\nu + \delta^\mu_\Lambda \delta^\Lambda_\nu, \qquad \forall \, (\xb^\Sigma)\in \Omega_0 ,
\end{equation*}
by \eqref{phiev}. We will always choose initial data for $\theta_\mu^I$ so that \eqref{bconiv}, see also \eqref{bcon}, is satisfied, which is equivalent to
\begin{equation*}
v^\mu(0,\xb^\Sigma)\theta_\mu^I(0,\xb^\Sigma)=0, \quad \forall \,(\xb^\Sigma)\in \Omega_0.
\end{equation*}
The above two results together with \eqref{phiev}, \eqref{thetabevA} and the transformation law $\thetab^I_\mu = J^\nu_\mu \thetat^I_\nu$ yield the
explicit representations
\begin{align}
\thetab^I_\mu(\xb^0,\xb^\Sigma) = \delta^\Lambda_\mu\theta^I_\Lambda(0,\xb^\Sigma)  \AND \underline{\sigma}_i{}^k{\!}_j(\xb^0,\xb^\Sigma) = \sigma_i{}^k{\!}_j(0,\xb^\Sigma),
\quad \forall \; (\xb^0,\xb^\Sigma)\in [0,T]\times \Omega_0,  \label{sigmatev}
\end{align}
for the unique solution to the evolution equations \eqref{eibvp.6} and \eqref{eibvp.7}. We will futher assume that the initial data for
$\theta^I_\mu$ is chosen, see \eqref{lemBa.2}, so that
\begin{equation} \label{thetab3}
\thetab^3 = \thetab^3_\mu d\xb^\mu =\theta^3_\Lambda(0,\xb^\Sigma) d\xb^\Lambda
\end{equation}
defines an outward pointing conormal to the boundary $[0,T]\times \del{}\Omega_0$.

\begin{rem} \label{thetatIrem}
$\;$

\begin{enumerate}[(i)]
\item Since \eqref{sigmatev} represents the unique solution of the evolution equations \eqref{eibvp.6} and \eqref{eibvp.7} given the
choice of initial data,
we consider them as solved and remove these equations from the systems of equations under consideration. Furthermore, from the transformation law
$\thetab^I_\mu = J^\nu_\mu \underline{\theta}{}^I_\nu$ and  \eqref{sigmatev}, it is clear that we can express
$\underline{\theta}{}^I_\nu$ as
\begin{equation}\label{thetatI}
\underline{\theta}{}^I_\nu(\xb^0,\xb^\Sigma) = \Jch_\nu^\Lambda(\xb^0,\xb^\Sigma) \theta^I_\Lambda(0,\xb^\Sigma),
\quad \forall \: (\xb^0,\xb^\Sigma) \in [0,T]\times\Omega_0,
\end{equation}
 which, in particular, shows that the components $\underline{\theta}{}^I_\nu(\xb^0,\xb^\Sigma)$ are determined completely in terms of the initial data $\theta^I_\Lambda(0,\xb^\Sigma)$
and the derivatives of $\phi^\mu$, since by definition $(\Jch^\mu_\nu) = (\delb{\mu}\phi^\nu)^{-1}$.
\item The boundary condition \eqref{eibvp.3} is automatically satisfied since
$\thetab^3$ defines an outward pointing conormal to the boundary $[0,T]\times \partial \Omega_0$ and
$\vb^\mu \thetab^3_\mu =  0$ follows immediately from \eqref{xibcomp} and \eqref{thetab3}. We, therefore, consider the boundary condition \eqref{eibvp.3} as
satisfied and do not consider it further.
\end{enumerate}
\end{rem}

By definition, the frame field components $\eb^{\lambda}_j$ are given by
\begin{equation*}
(\eb^\lambda_j) = (\thetab^j_\lambda)^{-1} = \begin{pmatrix}\begin{displaystyle} \frac{1}{\thetab^0_0 - \thetab^0_\Omega \check{\thetab}^\Omega_K \thetab^K_0}\end{displaystyle} &
\begin{displaystyle} -\frac{\thetab^0_\Sigma \check{\thetab}^\Sigma_J}{\thetab^0_0 - \thetab^0_\Omega \check{\thetab}^\Omega_K \thetab^K_0}\end{displaystyle} \\
\begin{displaystyle} -\frac{\thetab^K_0 \check{\thetab}^\Lambda_K}{\thetab^0_0 - \thetab^0_\Omega \check{\thetab}^\Omega_K \thetab^K_0}\end{displaystyle}
& \begin{displaystyle} \check{\thetab}^\Lambda_J + \frac{\check{\thetab}^\Lambda_K \thetab^K_0 \thetab^0_\Sigma \check{\thetab}^\Sigma_J }{\thetab^0_0 - \thetab^0_\Omega \check{\thetab}^\Omega_K \thetab^K_0}
\end{displaystyle}
\end{pmatrix},
\end{equation*}
where $(\check{\thetab}^\Lambda_J) = (\thetab^J_\Lambda)^{-1}$,
which by \eqref{sigmatev}, reduces to
\begin{equation} \label{ebformula}
(\eb^\lambda_j) = (\thetab^j_\lambda)^{-1} = \begin{pmatrix}\begin{displaystyle} \frac{1}{\thetab^0_0}\end{displaystyle} &
\begin{displaystyle} -\frac{\thetab^0_\Omega \check{\theta}^\Omega_J(0,\xb^\Sigma)}{\thetab^0_0}\end{displaystyle} \\
0
& \check{\theta}^\Lambda_J(0,\xb^\Sigma)
\end{pmatrix}, \qquad  \bigl(\check{\theta}^\Lambda_J(0,\xb^\Sigma)\bigr) = \bigl(\theta^J_\Lambda(0,\xb^\Sigma)\bigr)^{-1}.
\end{equation}
Since $\eb^\lambda_\kc \thetab^3_\lambda = 0$ by duality, and $\thetab^3$ is conormal to $[0,T]\times \del{}\Omega_0$, we have that
$T\bigl([0,T]\times\del{}\Omega_0\bigr) = \text{Span}\{\, \eb_\kc=  \eb^\lambda_\kc \delb{\lambda} \, \}$,
which, by \eqref{ebformula}, implies that $\delb{0}$ and the vector fields $\Zb_\Kc$ defined by
\begin{equation} \label{Zdef}
\Zb_\Kc(\xb^0,\xb^\Sigma) =  \check{\theta}^\Lambda_\Kc(0,\xb^\Lambda)\delb{\Lambda}
\end{equation}
span the tangent space to the boundary $[0,T]\times \del{}\Omega_0$, that is
\begin{equation} \label{Zspan}
T\bigl([0,T]\times\del{}\Omega_0\bigr) = \text{Span}\{\,\delb{0}, Z_\Kc \, \}.
\end{equation}
Appealing to transformation law $\thetab^0_\nu = J^\mu_\nu \underline{\theta}{}^0_\mu$,
we see from \eqref{ebformula} and \eqref{Zdef} that
\begin{equation*} \label{ebKcrepA}
(\eb^\lambda_\Kc) =  \begin{pmatrix}
\begin{displaystyle} -\frac{\underline{\theta}{}_\omega^0 \Zb_\Kc(\phi^\omega)}{\underline{\theta}{}_\gamma^0 \delb{0}\phi^\gamma}\end{displaystyle} \\
 \Zb^\Lambda_\Kc
\end{pmatrix},
\end{equation*}
while
\begin{equation}
\thetab^0_0 = \underline{\theta}^0_\gamma \delb{0}\phi^\gamma = \frac{\underline{g}(\underline{\theta}{}^0,
\underline{\thetah}{}^0)  }{\sqrt{-\underline{\hat{\gamma}}^{00}}}  \label{thetab00}
\end{equation}
follows from \eqref{phiev}.
Combining these two results yields
\begin{equation} \label{ebKcrepB}
\eb_\Kc = -\frac{\sqrt{-\underline{\hat{\gamma}}^{00}}\underline{\theta}{}_\omega^0 \Zb_\Kc(\phi^\omega)}{\underline{g}(\underline{\theta}{}^0,
\underline{\thetah}{}^0) }\delb{0} +  \Zb_\Kc,
\end{equation}
which, when used in conjunction with the transformation law $\underline{e}{}^\mu_\Kc = J^\mu_\nu \eb^\nu_\Kc$, gives
\begin{equation} \label{etKrep}
\underline{e}{}^\mu_\Kc = \eb_\Kc(\phi^\mu) =-\frac{\sqrt{-\underline{\hat{\gamma}}^{00}}\underline{\theta}{}_\omega^0 \Zb_\Kc(\phi^\omega)}{\underline{g}(\underline{\theta}{}^0,
\underline{\thetah}{}^0) }\delb{0}\phi^\mu +  \Zb_\Kc(\phi^\mu).
\end{equation}
Using \eqref{theta0def}, we can express \eqref{ebKcrepB} and \eqref{etKrep} as
\begin{align}
\eb_\Kc &= \frac{\underline{\thetah}{}_\omega^0 \Zb_\Kc(\phi^\omega)}{\sqrt{-\underline{\hat{\gamma}}^{00}}}\delb{0} +  \Zb_\Kc  \label{ebKbndry}
\intertext{and}
\underline{e}{}^\mu_\Kc &= \frac{\underline{\thetah}{}_\omega^0 \Zb_\Kc(\phi^\omega)}{\sqrt{-\underline{\hat{\gamma}}^{00}}}\delb{0}\phi^\mu +  \Zb_\Kc(\phi^\mu),
\label{etbndry}
\end{align}
respectively.

Next, we consider the determinant of the frame metric evaluated in the Lagrangian coordinates. By
definition $|\gamma|=-\det(g(e_i,g_j))$, and so treating this as a function and pulling back by $\phi$ gives
\begin{equation} \label{detgammaLagA}
\underline{|\gamma|} = -\det\bigl(\underline{e}^\mu_i\underline{g}_{\mu\nu} \underline{e}^\nu_j\bigr) =\underline{|g|}\det(J)^2\det(\eb)^2,
\end{equation}
where in deriving the second equality we have used the transformation law $J^\mu_\nu \eb^\nu_j = \underline{e}^\mu_j$. We also
observe from \eqref{theta0def} and the boundary condition \eqref{eibvp.4} that
\begin{equation} \label{thetabndry}
\thetau^0_\nu =\thetahu^0_\nu \hspace{0.4cm} \text{in $[0,T]\times \del{}\Omega_0$}.
\end{equation}
Then   by \eqref{ebformula}, \eqref{thetab00} and \eqref{detgammaLagA}-\eqref{thetabndry}, we deduce that
\begin{equation} \label{detgammaLag}
\frac{1}{\sqrt{-\underline{\hat{\gamma}}{}^{00}\underline{|\gamma|}}} =
\frac{\det(\thetab^J_\Lambda)}{\det(J)\sqrt{\underline{|g|}}}
 \hspace{0.3cm}\text{in $[0,T]\times \del{}\Omega_0$},
\end{equation}
where the coframe components $\thetab_\Lambda^J$ are given by \eqref{sigmatev}.

\subsection{Wave formulation in Lagrangian coordinates\label{wformlag}}

We now turn to transforming the remaining evolution equations and boundary conditions, i.e. \eqref{eibvp.1}-\eqref{eibvp.2} and \eqref{eibvp.4}-\eqref{eibvp.5}, 
into Lagrangian coordinates. Writing \eqref{Ecaldef} as
\begin{equation} \label{EcaldefA}
\Ec^\mu = \del{\alpha}\Wc^{\alpha\mu}-\Hct^\mu,
\end{equation}
where
\begin{equation} \label{Hctdef}
\Hct^{\mu} = -\Gamma^\alpha_{\alpha\lambda}\Wc^{\lambda\mu}-
\Gamma^{\mu}_{\alpha\lambda}\Wc^{\alpha\lambda}+\Hc^\mu,
\end{equation}
we observe that the pull-back of the components $\Ec^{\mu}$, treated as scalars, by $\phi$ is given by
\begin{equation}\label{EcurepA}
\Ecu^\mu = \underline{\del{\alpha}\Wc^{\alpha\mu}} -\Hctu^\mu,
\end{equation}
where here and below we freely employ the notion \eqref{ful}.
Letting $\eta=\eta_{\mu\nu}dx^\mu dx^\nu$, where $(\eta_{\mu\nu})=\text{diag}(-1,1,1,1)$, denote the Minkowsi metric, we
recall the following transformation formula for the pull-back of the divergence of a vector field $Y=Y^\mu \del{\mu}$ by $\phi$:
\begin{equation*}
|\bar{\eta}|^{-\frac{1}{2}}\delb{\mu}\bigl(|\bar{\eta}|^{\frac{1}{2}} \bar{Y}^\mu\bigr) =  \underline{|\eta|^{-\frac{1}{2}}\del{\mu}\bigl(|\eta|^{\frac{1}{2}} Y^\mu\bigr)}.
\end{equation*}
Since $|\eta|=-\det(\eta_{\mu\nu}) = 1$,  $|\bar{\eta}|=-\det(J_\mu^\alpha \eta_{\alpha\beta} J^\beta_\nu) = \det(J)^2$, and
$\bar{Y}^\mu = \Jch^{\mu}_\nu \underline{Y}^\nu$, the above formula can be written as
\begin{equation}\label{divtrans}
\det(J)^{-1}\delb{\mu}\bigl(\det(J) \Jch^\mu_\nu \underline{Y}^\nu\bigr) =  \underline{\del{\mu}Y^\mu}.
\end{equation}
Using this,
we can write \eqref{EcurepA} as
\begin{equation} \label{EcurepB} 
 \Ecu^\mu = \frac{1}{\det(J)}\delb{\alpha}(\det(J)\Jch^\alpha_\lambda \Wcu^{\lambda \mu}) -\Hctu^\mu
\end{equation}
We now fix the free function $r$ in the definition of $\mathring{\Ecu}{}^{\mu}$, see \eqref{Ecrdef}, by setting 
\begin{equation}\label{rfix}
r = \det(J)\circ\phi^{-1} \quad \Longleftrightarrow \quad \ru = \det(J).
\end{equation}
Multiplying \eqref{EcurepB} by $\underline{r}$ gives
\begin{equation}\label{EcurepC}
\underline{r\Ec^\mu} = \delb{\alpha}(\det(J)\Jch^\alpha_\lambda \Wcu^{\lambda \mu}) - \det(J)\Hctu^\mu.
\end{equation}
Applying $\delb{0}$ to this expression, we get that
\begin{equation} \label{EcurepD}
\underline{v(r\Ec^{\mu})} = \delb{0}\delb{\alpha}(\det(J)\Jch^\alpha_\lambda \Wcu^{\lambda \mu}) - \delb{0}\bigl( \det(J)\Hctu^\mu\bigr).
\end{equation}
Using the the transformation law 
\begin{equation} \label{pdlaw}
\del{\mu} = \Jch^\nu_\mu \delb{\nu}
\end{equation}
for partial derivatives, we see from \eqref{Wcdef} that
\begin{equation}\label{Wcurep}
\det(J)\Jch^\alpha_\lambda \Wcu^{\lambda \mu} = \det(J)\zetau \mul^{\mu\nu}\ab^{\alpha\beta}\delb{\beta}\thetahu^0_\nu
+\det(J)\mul^{\mu\nu}\Jch^\alpha_\lambda \au^{\lambda\beta}\bigl[-\zetau \Gammau^\omega_{\beta\nu}\thetahu^0_\omega
+\sigmau_I{}^0{}_J\thetau^I_\beta\thetau^J_\nu\bigr].
\end{equation}
Substituting this into \eqref{EcurepD} yields
\begin{align}
\underline{v(r\Ec^{\mu})} &= 
\delb{\alpha}\Bigl( \det(J)\zetau \mul^{\mu\nu}\ab^{\alpha\beta}\delb{\beta}\delb{0}\thetahu^0_\nu
+\delb{0}\bigl( \det(J)\zetau \mul^{\mu\nu}\ab^{\alpha\beta}\bigr)\delb{\beta}\thetahu^0_\nu
\notag \\
&\qquad +\delb{0}\bigl(\det(J)\mul^{\mu\nu}\Jch^\alpha_\lambda \au^{\lambda\beta}\bigl[-\zetau \Gammau^\omega_{\beta\nu}\thetahu^0_\omega
+\sigmau_I{}^0{}_J\thetau^I_\beta\thetau^J_\nu\bigr]\bigr)\Bigr) - \delb{0}\bigl( \det(J)\Hctu^\mu\bigr). \label{EcurepE}
\end{align}
Introducing a new variable $\vartheta_\nu$ by
\begin{equation}\label{varthetadef}
\vartheta_\nu := \underline{\nabla_v \thetah^0_\nu}= \delb{0}\thetahu^0_\nu - \vu^\lambda \Gammau^\omega_{\lambda \nu}\thetahu^0_\omega,
\end{equation}
we can use $\vartheta_\nu$ to replace $\delb{0}\thetahu^0_\nu$ in \eqref{EcurepE}. Doing so gives
\begin{align}
\underline{v(r\Ec^{\mu})} &= 
\delb{\alpha}\Bigl( \det(J)\zetau \mul^{\mu\nu}\ab^{\alpha\beta}\delb{\beta}\vartheta_\nu
+\det(J)\zetau \mul^{\mu\nu}\ab^{\alpha\beta}\delb{\beta}( \vu^\lambda \Gammau^\omega_{\lambda \nu}\thetahu^0_\omega)
+\delb{0}\bigl( \det(J)\zetau \mul^{\mu\nu}\ab^{\alpha\beta}\bigr)\delb{\beta}\thetahu^0_\nu
\notag \\
&\qquad +\delb{0}\bigl(\det(J)\mul^{\mu\nu}\Jch^\alpha_\lambda \au^{\lambda\beta}\bigl[-\zetau \Gammau^\omega_{\beta\nu}\thetahu^0_\omega
+\sigmau_I{}^0{}_J\thetau^I_\beta\thetau^J_\nu\bigr]\bigr)\Bigr) - \delb{0}\bigl( \det(J)\Hctu^\mu\bigr). \label{EcurepEb}
\end{align}
Multiplying \eqref{EcurepEb} by $\Upsilonchu^\delta_\mu$, we obtain
\begin{align}
 \underline{\Upsilonch^\delta_\mu v(r\Ec)^{\mu}} &= 
\delb{\alpha}\Bigl( \det(J)\zetau \Upsilonchu^{\delta\nu}\ab^{\alpha\beta}\delb{\beta}\vartheta_\nu
+\Upsilonchu^{\delta}_\mu\Bigl[\det(J)\zetau \mul^{\mu\nu}\ab^{\alpha\beta}\delb{\beta}( \vu^\lambda \Gammau^\omega_{\lambda \nu}\thetahu^0_\omega)
+\delb{0}\bigl( \det(J)\zetau \mul^{\mu\nu}\ab^{\alpha\beta}\bigr)\delb{\beta}\thetahu^0_\nu
\notag \\
&\qquad +\delb{0}\bigl(\det(J)\mul^{\mu\nu}\Jch^\alpha_\lambda \au^{\lambda\beta}\bigl[-\zetau \Gammau^\omega_{\beta\nu}\thetahu^0_\omega
+\sigmau_I{}^0{}_J\thetau^I_\beta\thetau^J_\nu\bigr]\bigr)\Bigr]\Bigr) - \Upsilonchu^\delta_\mu\delb{0}\bigl( \det(J)\Hctu^\mu\bigr) \notag \\
& \quad - \delb{\alpha}\Upsilonchu^\delta_\mu \Bigl( \det(J)\zetau \mul^{\mu\nu}\ab^{\alpha\beta}\delb{\beta}\vartheta_\nu
+\det(J)\zetau \mul^{\mu\nu}\ab^{\alpha\beta}\delb{\beta}( \vu^\lambda \Gammau^\omega_{\lambda \nu}\thetahu^0_\omega)
+\delb{0}\bigl( \det(J)\zetau \mul^{\mu\nu}\ab^{\alpha\beta}\bigr)\delb{\beta}\thetahu^0_\nu
\notag \\
&\qquad +\delb{0}\bigl(\det(J)\mul^{\mu\nu}\Jch^\alpha_\lambda \au^{\lambda\beta}\bigl[-\zetau \Gammau^\omega_{\beta\nu}\thetahu^0_\omega
+\sigmau_I{}^0{}_J\thetau^I_\beta\thetau^J_\nu\bigr]\bigr)\Bigr),
  \label{EcurepEc}
\end{align}
where we have set
\begin{equation}\label{Upsilonchuup}
\Upsilonchu^{\delta\nu}= \Upsilonchu^\delta_\mu \mul^{\mu\nu}.
\end{equation}

Next, we observe that
\begin{align*}
r\del{\alpha}\bigl(v^{[\beta}\Wc^{\alpha][\mu}\psi^{\nu]} \del{\beta}\psi_\nu\bigr)
&= \frac{r}{2}\del{\alpha}\bigl(v^{\beta}\Wc^{\alpha[\mu}\psi^{\nu]}-v^{\alpha}\Wc^{\beta[\mu}\psi^{\nu]}  \bigr) \del{\beta}\psi_\nu \\
&= \frac{r}{2}\Bigl(\del{\alpha}v^{\beta} \Wc^{\alpha[\mu}\psi^{\nu]} + v^{\beta} \del{\alpha}\Wc^{\alpha[\mu}\psi^{\nu]}
+ v^\beta \Wc^{\alpha[\mu}\del{\alpha}\psi^{\nu]}\\
&\qquad -\del{\alpha}v^\alpha \Wc^{\beta [\mu}\psi^{\nu]}
-v\bigl(\Wc^{\beta[\mu}\psi^{\nu]}\bigr) \Bigr)\del{\beta}\psi_\nu.
\end{align*}
Using \eqref{EcaldefA} to replace $\del{\alpha}\Wc^{\alpha\mu}$ with $\Ec^\mu+\Hct^\mu$ in the above expression, we get 
\begin{equation} \label{EcurepEa}
4r\del{\alpha}\bigl(v^{[\beta}\Wc^{\alpha][\mu}\psi^{\nu]} \del{\beta}\psi_\nu\bigr)
= 2r \Ec^{[\mu}\psi^{\nu]}v(\psi_\nu) + \Ic^\mu
\end{equation}
where
\begin{equation} \label{Icdef}
\Ic^\mu = 2r\Bigl(\del{\alpha}v^{\beta} \Wc^{\alpha[\mu}\psi^{\nu]} + v^{\beta}\Hct^{[\mu}\psi^{\nu]}
+ v^\beta \Wc^{\alpha[\mu}\del{\alpha}\psi^{\nu]} -\del{\alpha}v^\alpha \Wc^{\beta [\mu}\psi^{\nu]}
-v\bigl(\Wc^{\beta[\mu}\psi^{\nu]}\bigr) \Bigr)\del{\beta}\psi_\nu.
\end{equation}
With the help of  \eqref{xibcomp}, \eqref{divtrans} and \eqref{rfix}, we can can express the contraction of \eqref{EcurepEa} with 
$\Upsilon^\mu_\rho h^\rho_\sigma$ in Lagrangian coordinates as
\begin{align}
\underline{-2r |\psi|_m^{-2}\Upsilon^\mu_\rho h^\rho_\sigma \Ec^{[\sigma}\psi^{\nu]}v(\psi_\nu)}
&= |\psiu|_{\mul}^{-2}\Upsilonu^{\mu}_\rho \hu^\rho_\sigma \delb{\alpha}\bigl( 4\det(J) \delta_0^{[\beta}\Jch^{\alpha]}_\lambda\Wcu^{\lambda[\nu}\psiu^{\sigma]} \delb{\beta}\psiu_\nu\bigr)
+ |\psiu|_{\mul}^{-2}\Upsilonu^{\mu}_\delta \hu^\delta_\sigma \Icu^{\sigma} \notag \\
&= \Upsilonu^{\mu}_\rho \delb{\alpha}\bigl( 4\det(J) |\psiu|_{\mul}^{-2} \hu^\rho_\sigma \delta_0^{[\beta}\Jch^{\alpha]}_\lambda\Wcu^{\lambda[\nu}\psiu^{\sigma]} \delb{\beta}\psiu_\nu\bigr)
\notag \\
&\quad -  \Upsilonu^{\mu}_\rho \delb{\alpha}(|\psiu|_{\mul}^{-2} \hu^\rho_\sigma) 4\det(J) \delta_0^{[\beta}\Jch^{\alpha]}_\lambda\Wcu^{\lambda[\nu}\psiu^{\sigma]} \delb{\beta}\psiu_\nu
+ |\psiu|_{\mul}^{-2}\Upsilonu^{\mu}_\rho \hu^\rho_\sigma \Icu^{\sigma}. \label{EcurepEd}
\end{align}
From \eqref{psi2dttheta} and \eqref{varthetadef}, we note that $\psiu_\nu$ can be expressed in terms of $\vartheta_\mu$ by
\begin{equation}\label{psi2dtthetaA}
\psiu_\mu = (-\gammahu^{00})^{-\frac{1}{2}}\hu^\nu_\mu \vartheta_\nu.
\end{equation}
Using this and multiplying \eqref{EcurepEd} by $\Upsilonchu^\delta_\mu$, we arrive at the expression
\begin{align}
 \underline{-\Upsilonch^\delta_\mu 2r |\psi|_m^{-2}\Upsilon^\mu_\rho h^\rho_\sigma \Ec^{[\sigma}\psi^{\nu]}v(\psi_\nu)}
&= \delb{\alpha}\Bigl( 4\det(J) |\psiu|_{\mul}^{-2} (-\gammahu^{00})^{-\frac{1}{2}} \hu^\delta_\sigma
 \delta_0^{[\beta}\Jch^{\alpha]}_\lambda\Wcu^{\lambda[\chi}\psiu^{\sigma]}\hu_\chi^\nu \delb{\beta}\vartheta_\nu
  \notag \\
&\qquad + 4\det(J) |\psiu|_{\mul}^{-2} \hu^\delta_\sigma \delta_0^{[\beta}\Jch^{\alpha]}_\lambda\Wcu^{\lambda[\chi}\psiu^{\sigma]} \vartheta_\nu
  \delb{\beta}\bigl((-\gammahu^{00})^{-\frac{1}{2}} \hu_\chi^\nu\bigr)
 \Bigr)
\notag \\
&\qquad -  \delb{\alpha}(|\psiu|_{\mul}^{-2} \hu^\delta_\sigma) 4\det(J) \delta_0^{[\beta}\Jch^{\alpha]}_\lambda\Wcu^{\lambda[\nu}\psiu^{\sigma]} \delb{\beta}\psiu_\nu
+ |\psiu|_{\mul}^{-2}\hu^\delta_\sigma \Icu^{\sigma}. \label{EcurepEe}
\end{align}
Adding \eqref{EcurepEc} and \eqref{EcurepEe} shows by the invertibility of $\Upsilonch^\delta_\mu$ that the evolution equation \eqref{eibvp.1}
can be expressed in Lagrangian coordinates as
\begin{equation}
\delb{\alpha}\bigl(\Asc^{\alpha\beta\mu\nu}\delb{\beta}\vartheta_\nu + \Xsc^{\alpha\mu}\bigr)=\Hsc^\mu \quad
\text{in $[0,T]\times \Omega_0$}, \label{EcrurepH}
\end{equation}
where
\begin{align}
\Asc^{\alpha\beta\mu\nu} &=   \det(J)\zetau \Upsilonchu^{\mu\nu}\ab^{\alpha\beta}+ 4\det(J) |\psiu|_{\mul}^{-2} (-\gammahu^{00})^{-\frac{1}{2}} 
 \delta_0^{[\beta}\Jch^{\alpha]}_\lambda\Wcu^{\lambda\chi}\hu_\chi^{[\nu}\hu_\sigma^{\mu]}\psiu^{\sigma}, \label{Ascdef}\\
\Xsc^{\alpha\mu} &= \Upsilonchu^{\mu}_\delta\Bigl[\det(J)\zetau \mul^{\delta\nu}\ab^{\alpha\beta}\delb{\beta}( \vu^\lambda \Gammau^\omega_{\lambda \nu}\thetahu^0_\omega)
+\delb{0}\bigl( \det(J)\zetau \mul^{\delta\nu}\ab^{\alpha\beta}\bigr)\delb{\beta}\thetahu^0_\nu
\notag \\
&\qquad +\delb{0}\bigl(\det(J)\mul^{\delta\nu}\Jch^\alpha_\lambda \au^{\lambda\beta}\bigl[-\zetau \Gammau^\omega_{\beta\nu}\thetahu^0_\omega
+\sigmau_I{}^0{}_J\thetau^I_\beta\thetau^J_\nu\bigr]\bigr)\Bigr]
\notag \\
&\qquad + 4\det(J) |\psiu|_{\mul}^{-2} \hu^\mu_\sigma \delta_0^{[\beta}\Jch^{\alpha]}_\lambda\Wcu^{\lambda[\chi}\psiu^{\sigma]} \vartheta_\nu
  \delb{\beta}\bigl((-\gammahu^{00})^{-\frac{1}{2}} \hu_\chi^\nu\bigr)\label{Xscdef}
\intertext{and}
\Hsc^\mu &= \delb{\alpha}\Upsilonchu^\mu_\delta \Bigl( \det(J)\zetau \mul^{\delta\nu}\ab^{\alpha\beta}\delb{\beta}\vartheta_\nu
+\det(J)\zetau \mul^{\delta\nu}\ab^{\alpha\beta}\delb{\beta}( \vu^\lambda \Gammau^\omega_{\lambda \nu}\thetahu^0_\omega)
+\delb{0}\bigl( \det(J)\zetau \mul^{\delta\nu}\ab^{\alpha\beta}\bigr)\delb{\beta}\thetahu^0_\nu
\notag \\
&\quad +\delb{0}\bigl(\det(J)\mul^{\delta\nu}\Jch^\alpha_\lambda \au^{\lambda\beta}\bigl[-\zetau \Gammau^\omega_{\beta\nu}\thetahu^0_\omega
+\sigmau_I{}^0{}_J\thetau^I_\beta\thetau^J_\nu\bigr]\bigr)\Bigr) +\Upsilonchu^\mu_\delta\delb{0}\bigl( \det(J)\Hctu^\delta\bigr) \notag \\
&\qquad +  \delb{\alpha}(|\psiu|_{\mul}^{-2} \hu^\mu_\sigma) 4\det(J) \delta_0^{[\beta}\Jch^{\alpha]}_\lambda\Wcu^{\lambda[\nu}\psiu^{\sigma]} \delb{\beta}\psiu_\nu
- |\psiu|_{\mul}^{-2}\hu^\mu_\sigma \Icu^{\sigma}.\label{Hscdef}
\end{align}
This complete the transformation of the evolution equation \eqref{eibvp.1} into Lagrangian coordinates.

Turing the the boundary conditions, we observe from \eqref{Bcrdef} and \eqref{xibcomp} that $\Bcr^\mu$, when expressed in Lagrangian coordinates, becomes
\begin{align*}
\underline{\Bcr}{}^{\mu} &= \delb{0}(\thetab^3_\alpha \det(J)\Jch^\alpha_\lambda \Wcu^{\lambda \mu}) +
\thetab^3_\alpha  2 \det(J) |\psiu|_{\mul}^{-2}\Upsilonu^\mu_\rho\hu^\rho_\sigma 
\Jch^{\alpha}_\lambda \Wcu^{\lambda[\nu}\psiu^{\sigma]}\delb{0}\psiu_\nu
-\delb{0}(\det(J)\ellu^\mu) \\
&\qquad -\Bigl[\pu^\mu_\sigma\delb{0}(\det(J)\Lcu^\sigma)+\Upsilonu^\mu_\rho \hu^\rho_\sigma |\psiu|_{\mul}^{-2}\det(J)
\bigl(-\psiu^\nu \Lcu^\sigma + 2 \ellu^{[\nu}\psiu^{\sigma]}\bigr)\delb{0}\psiu_\nu\Bigr].
\end{align*}
Using the fact that $\delb{0}\thetab^3_\alpha = 0$ and $\thetab^3_\alpha \delta^\alpha_0 =0$, we can write the above expression as
\begin{align*}
\underline{\Bcr}{}^{\mu} &= \thetab^3_\alpha\Bigl(\delb{0}(\det(J)\Jch^\alpha_\lambda \Wcu^{\lambda \mu}) +
 4 \det(J) |\psiu|_{\mul}^{-2}\Upsilonu^\mu_\rho\hu^\rho_\sigma 
\delta_0^{[\beta}\Jch^{\alpha]}_\lambda \Wcu^{\lambda[\nu}\psiu^{\sigma]}\delb{\beta}\psiu_\nu\Bigr)
-\delb{0}(\det(J)\ellu^\mu) \notag \\
&\qquad -\Bigl[\pu^\mu_\sigma\delb{0}(\det(J)\Lcu^\sigma)+\Upsilonu^\mu_\rho \hu^\rho_\sigma |\psiu|_{\mul}^{-2}\det(J)
\bigl(-\psiu^\nu \Lcu^\sigma + 2 \ellu^{[\nu}\psiu^{\sigma]}\bigr)\delb{0}\psiu_\nu\Bigr]. 
\end{align*}
Contracting this with $\Upsilonchu^\delta_\mu$, we get, with the help \eqref{Wcurep}, \eqref{varthetadef}, \eqref{psi2dtthetaA},
\eqref{Ascdef} and \eqref{Xscdef}, that
\begin{align}
\Upsilonchu^\delta_\mu\underline{\Bcr}{}^{\mu} &= \thetab^3_\alpha\bigl(\Asc^{\alpha\beta\delta\nu}\delb{\nu}\vartheta_\nu+\Xsc^{\alpha\delta}\bigr) 
-\Upsilonchu^\delta_\mu\delb{0}(\det(J)\ellu^\mu)  -\Bigl[\Upsilonchu^\delta_\mu \pu^\mu_\sigma\delb{0}(\det(J)\Lcu^\sigma) \notag \\
&\qquad +\hu^\delta_\sigma |\psiu|_{\mul}^{-2}\det(J)
\bigl(-\psiu^\nu \Lcu^\sigma + 2 \ellu^{[\nu}\psiu^{\sigma]}\bigr)\delb{0}\psiu_v\Bigr]. \label{BcrrepA}
\end{align}

Next, we observe
\begin{equation} \label{dtpsinorm}
\delb{0}(|\psiu|_{\mul}^2)\overset{\eqref{orthogA}}{=}\delb{0}(\gu^{\tau \nu}\psiu_\rho \psiu_\nu) = 2\psiu^\nu \delb{0}\psiu_\nu + \delb{0}\gu^{\rho \nu}\psiu_\rho \psiu_\nu,
\end{equation}
and with the help of 
\eqref{elldef}-\eqref{Lcdef}, 
\eqref{detgammaLag}, 
and \eqref{varthetadef}, that we can write $\det(J)\ellu^\mu$ and $\det(J)\Lcu^\mu$ as
\begin{align}
\det(J)\ellu^\mu =&  |\gu|^{-\frac{1}{2}}\det(\thetab^J_\Lambda) \Upsilonu^\mu_\omega
\hu^{\omega\alpha}\su_{\alpha\beta}{}^\xi \hu^{\beta\nu}\bigl(\Jch_\xi^\gamma \delb{\gamma} \thetahu^0_\nu
-\Gammau_{\xi \nu}^\tau \thetahu^0_\tau\bigr)
\label{det(J)ellu}
\intertext{and}
\det(J)\Lcu^{\mu}=& |\gu|^{-\frac{1}{2}}\det(\thetab^J_\Lambda) \bigl(- \epu|\Nu|_{\gu} \hu^{\mu\nu}\underline{\nabla_v\psih_\nu}-\kappa \vu^\mu \vu^\nu
\vartheta_\nu  \bigr),
\label{det(J)Lcu}
\end{align}
respectively. Moreover,   we
see that relations
\begin{equation} \label{elluLcu}
\hu^\delta_\sigma \Lcu^\sigma =  - \epu(1-\epu) \Upsilonchu^\delta_\mu \pu^\mu_\sigma |\gu|^{-\frac{1}{2}}\det(\thetab^J_\Lambda)|\Nu|_{\gu} \hu^{\sigma \nu}\underline{\nabla_v\psih_\nu}\AND \hu^\nu_\rho \ellu^\rho = \ellu^\nu
\end{equation}
hold by \eqref{Upsilondef}, \eqref{projrem1}, \eqref{dtpsihA}, \eqref{PiprojA}, \eqref{PibrprojA} and \eqref{Upsilonchdef}.
We further observe, for any vector field $Z^\sigma$, that
\begin{align*}
\epu \delb{0}\Bigl(\frac{1}{\epu}Z^\sigma\Bigr)=\delb{0}Z^\sigma - \delb{0}(\ln(\epu))Z^\sigma 
\overset{\eqref{epformula}}{=} \delb{0}Z^\sigma - \frac{(1-\epu)}{|\psiu|_{\mul}}\delb{0}|\psiu|_{\mul} Z^\sigma.
\end{align*}

Setting $Z^\sigma =-\epu  |\gu|^{-\frac{1}{2}}\det(\thetab^J_\Lambda)|\Nu|_{\gu} \hu^{\sigma\nu} \underline{\nabla_v\psih_\nu}$
in this formula, we then get from \eqref{dtpsinorm}, \eqref{det(J)Lcu} and \eqref{elluLcu} that
\begin{align}
\Upsilonchu^\delta_\mu \pu^\mu_\sigma\delb{0}(&\det(J)\Lcu^\sigma) -\hu^\delta_\sigma |\psiu|_{\mul}^{-2}\det(J)
\psiu^\nu \Lcu^\sigma \delb{0}\psiu_v = \Upsilonchu^\delta_\mu \pu^\mu_\sigma\Biggl[
-\epu \delb{0}\Bigl( |\gu|^{-\frac{1}{2}}\det(\thetab^J_\Lambda)|\Nu|_{\gu} \hu^{\sigma\nu}\underline{\nabla_v\psih_\nu}\Bigr)
\notag \\
& -\frac{\epu(1-\epu)}{2} \delb{0}\gb^{\rho\tau}\psiu_\rho\psiu_\tau |\gu|^{-\frac{1}{2}}\det(\thetab^J_\Lambda) |\Nu|_{\gu} \hu^{\sigma\nu} |\psiu|^{-2}_{\mul}\underline{\nabla_v\psih_\nu}
 -\kappa\delb{0}\bigl( |\gu|^{-\frac{1}{2}}\det(\thetab^J_\Lambda)\vu^\sigma \vu^\nu
\vartheta_\nu \bigr) \Biggr]. \label{dtdet(J)Lcu}
\end{align}
Defining
\begin{equation} \label{Psidef}
\Psi_\nu =  
 \frac{1}{|\psiu|_{\mul}}\Bigl( \bigl(-\gammahu^{00}\bigr)^{-\frac{1}{2}} \delb{0}\vartheta_\nu +\hu_\nu^\omega \delb{0}\bigl(\bigl(-\gammahu^{00}\bigr)^{-\frac{1}{2}}\hu_\omega^\tau\bigr)\vartheta_\tau
- \hu_\nu^\omega \vu^\lambda \Gammau_{\lambda\omega}^\tau\psiu_\tau \Bigr),
\end{equation}
we have from \eqref{projrem1} and \eqref{psi2dtthetaA} that
\begin{equation}\label{Psi2dtthetaA}
\hu^\nu_\mu \underline{\nabla_v\psi_\nu}=|\psiu|_{\mul}\hu^\nu_\mu \Psi_\nu,
\end{equation}
which in turn, implies by \eqref{dtpsihA} and \eqref{PiprojA} that
\begin{equation} \label{Psi2dttheta}
  \hu^{\mu\nu}\underline{\nabla_v\psih_\nu}=  \Piu^{\mu\nu}\Psi_\nu.
\end{equation}
Recalling that $v^\nu\psi_\nu=0$ by \eqref{orthogA}, we find, after differentiating, that
$\nabla_v v^\nu \psi_\nu + v^\nu \nabla_v \psi_\nu=0$, which, we observe, by \eqref{psidef} and \eqref{psihup}, is equivalent to
\begin{equation*}
 v^\nu \nabla_v \psi_\nu = |\psi|^2_{m}.
\end{equation*} 
We can use this result together with \eqref{pidef} and \eqref{Psi2dtthetaA} to express $\underline{\nabla_v\psi_\mu}$
as
\begin{equation} \label{Psi2dtthetaB}
 \underline{\nabla_v\psi_\mu}=  |\psiu|_{\mul}\hu_{\mu}^{\nu}\Psi_\nu -|\psiu|_{\mul}^2\vu_{\mu}.
\end{equation}
Moreover, by differentiating $v^\nu\psih_\nu=0$, it is not difficult to verify using a similar calculation and the identity \eqref{dtpsihA} that
\begin{equation} \label{Psi2dtthetaC}
\underline{\nabla_v\psih_\mu}=  \Piu_{\mu}^{\nu}\Psi_\nu -|\psiu|_{\mul}\vu_{\mu}.
\end{equation}

With the help of \eqref{projrem1}, \eqref{projrem5}, \eqref{Piproj}, \eqref{PiprojA}, \eqref{Pibrdef}, \eqref{Pibrproj}, \eqref{Upsilonchdef}, 
\eqref{Psidef} and \eqref{Psi2dttheta}, we  then observe that \eqref{dtdet(J)Lcu} can be expressed as
\begin{align}
\Upsilonchu^\delta_\mu \pu^\mu_\sigma\delb{0}(&\det(J)\Lcu^\sigma) -\hu^\delta_\sigma |\psiu|_{\mul}^{-2}\det(J)
\psiu^\nu \Lcu^\sigma \delb{0}\psiu_v = -\epu\Upsilonchu^\delta_\mu \pu^\mu_\sigma
\delb{0}\bigl( |\gu|^{-\frac{1}{2}}\det(\thetab^J_\Lambda)|\Nu|_{\gu} \Piu^{\sigma\nu}\Psi_\nu\bigr)
\notag \\
& -\frac{\epu}{2} \delb{0}\gb^{\rho\tau}\psiu_\rho\psiu_\tau |\gu|^{-\frac{1}{2}}\det(\thetab^J_\Lambda) |\Nu|_{\gu} |\psiu|^{-2}_{\mul}
\Piu^{\delta\nu}\Psi_\nu
 -\kappa|\gu|^{-\frac{1}{2}}\det(\thetab^J_\Lambda)|\psiu|_{\mul}\bigl(-\gammahu^{00}\bigr)^{\frac{1}{2}} \vu^\delta \vu^\nu
\Psi_\nu \notag \\
&\qquad -\Upsilonchu^\delta_\mu \pu^\mu_\sigma\kappa\delb{0}\bigl( |\gu|^{-\frac{1}{2}}\det(\thetab^J_\Lambda)\vu^\sigma \vu^\nu\bigr)
\vartheta_\nu. \label{dtdet(J)LcuA}
\end{align}
We further observe by \eqref{elltdef}, \eqref{dtellD}, \eqref{detgammaLag} and \eqref{rfix} that
\begin{align}
\Upsilonchu^\delta_\mu\delb{0}\bigl(\det(J)\ellu^\mu\bigr) &=  \Upsilonchu^\delta_\mu(\delta^\mu_\omega - \epu \pu^\mu_\omega\bigr)\underline{\nabla_v(r\ellt^\omega)} +
\epu|\psiu|_{\mul}^{-1}  \psihu_\omega \det(J) \elltu^\omega \Upsilonchu^\delta_\mu \pu^{\mu\nu} \underline{\nabla_v\psi_\nu} \notag \\
&\qquad 
+ 2\epu|\psiu|_{\mul}^{-1} \psihu^{[\delta} \Piu_\omega^{\nu]}  \det(J) \elltu^\omega \underline{\nabla_v\psi_\nu}- \Upsilonchu^\delta_\mu \vu^\lambda\Gammau_{\lambda\tau}^\mu \det(J)\ellu^\tau
 \label{dtellE}
\end{align}
where
\begin{equation}  \label{det(J)elltu}
\det(J)\elltu^{\mu}=\underline{r}\elltu^\mu = \Ssc^{\mu\nu\gamma}\bigl(\delb{\gamma}\thetahu^0_\nu - J_\gamma^\chi \Gammau_{\chi\nu}^\tau \thetahu^0_\tau\bigr)
\end{equation}
and
\begin{equation} \label{Sscdef}
\Ssc^{\mu\nu\gamma}= |\gu|^{-\frac{1}{2}}\det(\thetab^{J}_{\Lambda})\hu^{\mu\alpha}\su_{\alpha\beta}{}^\xi \hu^{\beta\nu}\Jch_\xi^\gamma.
\end{equation}
From \eqref{varthetadef} and \eqref{det(J)elltu}, we then have
\begin{align}
\underline{\nabla_v(r\ellt^\omega)} &= \delb{0}(\underline{r}\elltu^\omega) + \vu^\lambda\Gammau_{\lambda\tau}^\omega \underline{r}\elltu^\tau \notag \\
& = \Ssc^{\omega\nu\gamma}\delb{\gamma}\bigl(\vartheta_\nu + \vu^\lambda \Gamma_{\lambda\nu}^\tau \thetahu^0_\tau\bigr)
-\delb{0}\Ssc^{\omega\nu\gamma}\delb{\gamma}\thetahu^0_\nu 
 -\delb{0}\bigl(\Ssc^{\omega\nu\gamma}J_\gamma^\chi\Gammau_{\chi\nu}^\tau \thetahu^0_\tau\bigr)+ \vu^\lambda\Gammau_{\lambda\tau}^\omega \det(J)\elltu^\tau,
\label{dtellF}
\end{align}
and we note that
\begin{align}
\Upsilonchu^\delta_\mu\bigl(\delta^\mu_\omega -\epu \pu^\mu_\omega\bigr)\Ssc^{\omega\nu\gamma} &=
\Upsilonchu^\delta_\mu\Ssc^{\mu\nu\gamma} -\epu \Upsilonchu^\delta_\mu \pu^\mu_\omega\hu^\omega_\sigma \Ssc^{\sigma\nu\gamma} && \text{(by \eqref{projrem1} \& \eqref{Sscdef})} \notag \\
&= \bigl(\Upsilonchu^\delta_\mu-\epu \Upsilonchu^\delta_\sigma \Piu^\sigma_\mu\bigr) \Ssc^{\mu\nu\gamma} && \text{(by \eqref{Pidef})} \notag \\
&= \biggl(\frac{1}{1-\epu}\Piu^\delta_\mu + \Pibru^\delta_\mu - \frac{\epu}{1-\epu}\Piu^\delta_\mu\biggr)\Ssc^{\mu\nu\gamma} && \text{(by \eqref{Upsilonchdef}, \eqref{Piproj} \& \eqref{Pibrproj})} \notag \\
&= \Ssc^{\delta\nu\gamma},  \label{dtdellG}
\end{align}
\begin{align}
 \Upsilonchu^\delta_\mu \pu^{\mu\nu} |\psiu|_{\mul}^{-1} \underline{\nabla_v\psi_\nu} &=  |\psiu|_{\mul} \Upsilonchu^\delta_\mu \Piu^{\mu\nu}\Psi_\nu - |\psiu|_{\mul}  \Upsilonchu^\delta_\mu 
 \vu^\mu && \text{(by \eqref{Pidef}, \eqref{projrem5} \& \eqref{Psi2dtthetaB})} \notag \\
 &= \frac{1}{1-\epu} \Piu^{\delta\nu}\Psi_\nu -|\psiu|_{\mul}\vu^\delta  && \text{(by \eqref{Piproj}, \eqref{PiprojA}, \eqref{Pibrdef}, \eqref{Pibrproj}  \& \eqref{Upsilonchdef})}
\label{dtdellH}
\end{align}
and
\begin{align}
2|\psiu|_{\mul}^{-1}\psihu^{[\delta}\Piu^{\nu]}_\omega \underline{\nabla_v \psi_\nu} &= \psihu^\delta \Piu^\nu_\omega \hu^\rho_\nu \Psi_\rho - |\psiu|_{\mul} \psihu^\delta  \Piu^\nu_\omega\vu_\nu
-\Piu^\delta_\omega\psihu^\nu \hu_\nu^\rho \Psi_\rho + |\psiu|_{\mul}\Piu^\delta_\omega\psihu^\nu \vu_\nu && \text{(by \eqref{Psi2dtthetaB})}\notag \\
& = 2 \psihu^{[\delta}\Piu^{\nu]}_\omega\Psi_\nu, \label{dtdellI}
\end{align}
where in deriving the last equality we used \eqref{psihdef}, \eqref{orthogA}, \eqref{Piproj} and \eqref{PiprojA}.
Substituting \eqref{dtellF}-\eqref{dtdellI} into \eqref{dtellE} then gives
\begin{align}
\Upsilonchu^\delta_\mu\delb{0}\bigl(\det(J)\ellu^\mu\bigr) &=   \Ssc^{\delta\nu\gamma}\delb{\gamma}\vartheta_\nu  +
\biggl(\frac{\epu}{1-\epu}\det(J) \psihu_\omega  \elltu^\omega \Piu^{\delta\nu}+ 2\epu \det(J) \elltu^\omega\psihu^{[\delta} \Piu_\omega^{\nu]} \biggr)\Psi_\nu  \notag \\
&\qquad + \Upsilonchu^\delta_\mu\bigl(\delta^\mu_\omega -\epu \pu^\mu_\omega\bigr)\Bigl(
-\delb{0}\Ssc^{\omega\nu\gamma}\delb{\gamma}\thetahu^0_\nu 
 -\delb{0}\bigl(\Ssc^{\omega\nu\gamma}J_\gamma^\chi\Gammau_{\chi\nu}^\tau \thetahu^0_\tau\bigr)+ \vu^\lambda\Gammau_{\lambda\tau}^\omega \det(J)\elltu^\tau\Bigr) \notag\\
&\qquad  +  \Ssc^{\delta\nu\gamma}\delb{\gamma}\bigl(\vu^\lambda \Gammau_{\lambda\nu}^\tau \thetahu^0_\tau \bigr)
  -\epu\psihu_\omega \det(J) \elltu^\omega |\psiu|_{\mul}\vu^\delta- \Upsilonchu^\delta_\mu \vu^\lambda\Gammau_{\lambda\tau}^\mu \det(J)\ellu^\tau.
 \label{dtdellJ}
 \end{align}
We also observe from \eqref{elluLcu} and \eqref{Psi2dtthetaA}  that
\begin{align} 
2|\psiu|_{\mul}^{-2}\det(J) \ellu^{[\nu}\psiu^{\sigma]}\hu^\delta_\sigma  \delb{0}\psiu_v &= 2|\psiu|_{\mul}^{-2}\det(J) \ellu^{[\nu}\psiu^{\sigma]}\hu^\delta_\sigma
\bigl(\underline{\nabla_v \psi_\nu}+\vu^\lambda \Gammau_{\lambda \nu }^\tau \psi_\tau \bigr)\notag\\
&= 2|\psiu|_{\mul}^{-1}\det(J) \ellu^\omega\psiu^{\sigma}\hu_\omega^{[\nu} \hu^{\delta]}_\sigma \Psi_\nu +
2|\psiu|_{\mul}^{-2}\det(J) \ellu^{[\nu}\psiu^{\sigma]}\hu^\delta_\sigma \vu^\lambda \Gammau_{\lambda \nu }^\tau \psi_\tau .
\label{dtdellK}
\end{align}

From \eqref{BcrrepA}, \eqref{dtdet(J)LcuA}, \eqref{dtdellJ}, \eqref{dtdellK} and the invertibility of $\Upsilonchu^\delta_\mu$, we conclude that the Lagrangian formulation
of the boundary conditions \eqref{eibvp.2} is given by
\begin{align}
\thetab^3_\alpha\bigl(\Asc^{\alpha\beta\mu\nu}\delb{\nu}\vartheta_\nu+\Xsc^{\alpha\mu}\bigr)  &=  \Ssc^{\mu\nu\gamma}\delb{\gamma}\vartheta_\nu  
-\epu\Upsilonchu^\mu_\kappa \pu^\kappa_\sigma
\delb{0}\bigl( |\gu|^{-\frac{1}{2}}\det(\thetab^J_\Lambda)|\Nu|_{\gu} \Piu^{\sigma\nu}\Psi_\nu\bigr)\notag \\
&\qquad 
+\Psc^{\mu\nu}\Psi_\nu +\Gsc^\mu &&
\text{in $[0,T]\times \del{}\Omega_0$,} \label{BcrrepB}
\end{align}
where
\begin{align}
\Psc^{\mu\nu} &= \biggl(\frac{\epu}{1-\epu}\det(J) \psihu_\omega  \elltu^\omega  -\frac{\epu}{2} \delb{0}\gu^{\rho\tau}\psiu_\rho\psiu_\tau |\gu|^{-\frac{1}{2}}\det(\thetab^J_\Lambda) |\Nu|_{\gu} |\psiu|^{-2}_{\mul}\biggr)\Piu^{\mu\nu} \notag \\
&\qquad  -\kappa|\gu|^{-\frac{1}{2}}\det(\thetab^J_\Lambda)|\psiu|_{\mul}\bigl(-\gammahu^{00}\bigr)^{\frac{1}{2}} \vu^\mu \vu^\nu
+2\epu \det(J) \elltu^\omega\psihu^{[\mu} \Piu_\omega^{\nu]} +2|\psiu|_{\mul}^{-1}\det(J) \ellu^\omega\psiu^{\sigma}\hu_\omega^{[\nu} \hu^{\mu]}_\sigma
 \label{Pscdef}
\intertext{and}
\Gsc^{\mu} &= -\Upsilonchu^\mu_\kappa \pu^\kappa_\sigma\kappa\delb{0}\bigl( |\gu|^{-\frac{1}{2}}\det(\thetab^J_\Lambda)\vu^\sigma \vu^\nu\bigr)
\vartheta_\nu  + \Upsilonchu^\mu_\kappa\bigl(\delta^\kappa_\omega -\epu \pu^\kappa_\omega\bigr)\Bigl(
-\delb{0}\Ssc^{\omega\nu\gamma}\delb{\gamma}\thetahu^0_\nu \notag\\
 &\quad-\delb{0}\bigl(\Ssc^{\omega\nu\gamma}J_\gamma^\chi\Gammau_{\chi\nu}^\tau \thetahu^0_\tau\bigr) 
  + \vu^\lambda\Gammau_{\lambda\tau}^\omega \det(J)\elltu^\tau\Bigr)
 +  \Ssc^{\mu\nu\gamma}\delb{\gamma}\bigl(\vu^\lambda \Gammau_{\lambda\nu}^\tau \thetahu^0_\tau \bigr)
 \notag\\
 &\quad -\epu\psihu_\omega \det(J) \elltu^\omega |\psiu|_{\mul}\vu^\mu 
- \Upsilonchu^\mu_\kappa \vu^\lambda\Gammau_{\lambda\tau}^\kappa \det(J)\ellu^\tau  +
2|\psiu|_{\mul}^{-2}\det(J) \ellu^{[\nu}\psiu^{\sigma]}\hu^\mu_\sigma\vu^\lambda \Gammau_{\lambda \nu }^\tau \psi_\tau. \label{Gscdef}
\end{align}
This completes the transformation of the boundary condition \eqref{eibvp.2} into Lagrangian coordinates.

Finally, it is not difficult from \eqref{divtrans} and \eqref{pdlaw} to verify that the evolution equation \eqref{eibvp.4} and
boundary condition \eqref{eibvp.5} can be expressed in Lagrangian coordinates as
\begin{align}
\delb{\alpha}(\Msc^{\alpha\beta}\delb{\beta}\zetau) &= \Ksc && \text{in $[0,T]\times \Omega_0$,} \label{zetaueqn}\\
\zetau &= 1 && \text{in $[0,T]\times \del{}\Omega_0$,} \label{zetaubc}
\end{align}
where
\begin{align}
\Msc^{\alpha\beta} &= \det(J)|\gu|^{\frac{1}{2}}\ab^{\alpha\beta} \label{Mscdef}
\intertext{and}
\Ksc &= \det(J)|\gu|^{\frac{1}{2}} \underline{\Kc}. \label{Kscdef}
\end{align}

Summarizing, \eqref{phiev}, \eqref{varthetadef}, \eqref{EcrurepH}, \eqref{Psidef}, \eqref{BcrrepB}, \eqref{zetaueqn} and \eqref{zetaubc} show that the Lagrangian representation of the IBVP for the system \eqref{eibvp.1}-\eqref{eibvp.5} is given by
\begin{align}
\delb{\alpha}\bigl(\Asc^{\alpha\beta}\delb{\beta}\vartheta + \Xsc^{\alpha}\bigr)&=\Hsc &&
\text{in $[0,T]\times \Omega_0$}, \label{libvp.1} \\
\thetab^3_\alpha \bigl(\Asc^{\alpha\beta}\delb{\beta}\vartheta
+ \Xsc^{\alpha}\bigr) &=  \Ssc^{\gamma}\delb{\gamma}\vartheta
- \Rsc^{\tr}
\delb{0}(\Qsc \Psi) 
+ \Psc\Psi_\nu +\Gsc && \text{in $[0,T]\times \del{}\Omega_0$,} \label{libvp.2} \\
\delb{0}\phi &= \alpha \thetahu^0  && \text{in $[0,T]\times \Omega_0$}, \label{libvp.3} \\
\delb{0}\thetahu^0 & = \vartheta+\betat  \thetahu^0 &&
\text{in $[0,T]\times \Omega_0$}, \label{libvp.4} \\
\delb{\alpha}(\Msc^{\alpha\beta}\delb{\beta}\zetau) &= \Ksc && \text{in $[0,T]\times \Omega_0$,} \label{libvp.5}\\
\zetau &= 1 && \text{in $[0,T]\times \del{}\Omega_0$,} \label{libvp.6}\\
(\vartheta,\delb{0}\vartheta) &= (\vartheta_0,\vartheta_1)  && \text{in $\{0\}\times \Omega_0$,} \label{libvp.7}\\
\phi &= \phi_0  && \text{in $\{0\}\times \Omega_0$,} \label{libvp.8}\\
\thetahu^0 &= \thetahu^0_0  && \text{in $\{0\}\times \Omega_0$,} \label{libvp.9}\\
(\zetau,\delb{0}\zetau) &= (\zetau_{0},\zetau_{1})  && \text{in $\{0\}\times \Omega_0$,} \label{libvp.10}
\end{align} 
where we are now employing matrix notation,\footnote{Here, we are using the following matrix conventions: if $L=(L^{\mu\nu})$, $M=(M_\mu^\nu)$ and $N=(N_\mu^\nu)$
are matrices
and $Y=(Y_\mu)$ and $X=(X^\mu)$ are vectors, then the various matrix products are defined as follows:
\begin{equation*}
LY=(L^{\mu\nu}Y_\nu), \quad MY=(M_\mu^\nu Y_\nu), \quad LM=(L^{\mu\nu}M_{\nu}^\lambda)
\AND MN = (M_\mu^\nu N_\nu^\lambda).
\end{equation*}
With these conventions, we have that
\begin{equation*}
M^{\tr} L = (M^\lambda_\mu L^{\mu\nu}), \quad M^{\tr} X = (M^\nu_\mu X^\mu) \AND
M^{\tr}N^{\tr} = (M^\nu_\mu N^\mu_\lambda).
\end{equation*}} and we have made the definitions
\begin{align}
\Qsc^{\mu\nu}
&= |\gu|^{-\frac{1}{2}}\det(\thetab^J_\Lambda)|\Nu|_{\gu}\Piu^{\mu\nu}, \label{Qscdef}\\
\Rsc^{\mu}_\nu &= \epu\Upsilonchu^\mu_\sigma\pu^\sigma_\nu, \label{Rscdef} \\
\alpha^{\mu\nu} &= (-\underline{\hat{\gamma}}{}^{00})^{-\frac{1}{2}}\gu^{\mu\nu}, \label{alphadef}\\ 
\betat^\mu_\nu &= \vu^\lambda\Gammau_{\lambda \nu}^\mu, \label{betatdef}
\end{align}
and set
\begin{gather*}
\Asc^{\alpha \beta} = (\Asc^{\alpha\beta\mu\nu}), \quad \Ssc^\gamma  = (\Ssc^{\mu\nu\gamma}), \quad
  \Psc = (\Psc^{\mu\nu}), \quad \Qsc  = (\Qsc^{\mu\nu}), \quad \Rsc = (\Rsc^{\mu}_\nu) , \\ 
\alpha = (\alpha^{\mu\nu}), \quad \betat=(\betat^\mu_\nu), \quad
\Xsc^\alpha  = (\Xsc^{\alpha\mu}), \quad
\Hsc  = (\Hsc^\mu), \quad \Gsc = (\Gsc^\mu), \\
 \phi=(\phi^\mu), \quad \vartheta  = (\vartheta_\mu),  \quad
\underline{\thetah}{}^0 = (\underline{\thetah}{}^0_\mu), \AND \Psi  = (\Psi_\nu).
\end{gather*}
In this formulation, the unknowns to solve for are $\{\vartheta,\phi,\underline{\thetah}{}^0,\zetau\}$, while $\Psi$ is determined, see
\eqref{psi2dtthetaA} and \eqref{Psidef}, by
\begin{equation} \label{Psitopsi}
\delb{0}\vartheta = \lambda \Psi + \beta \vartheta
\end{equation}
where
\begin{align}
\lambda &=  (-\gammahu^{00})^{\frac{1}{2}}|\psiu|_{\mul}, \label{lambdadef} \\
\beta &= (\beta^\mu_\nu) := \Bigl(-\bigl(-\gammahu^{00}\bigr)^{\frac{1}{2}} \hu^\sigma_\nu \delb{0}\bigl(\bigl(-\gammahu^{00}\bigr)^{-\frac{1}{2}}
\hu_\sigma^\mu\bigr)
+\hu_\nu^\sigma \vu^\lambda \Gammau_{\lambda\sigma}^\tau \hu_\tau^\mu \Bigr) \label{betadef}
\intertext{and}
\psiu &= (\psiu_\nu) = \Bigl(\bigl(-\gammahu^{00}\bigr)^{-\frac{1}{2}}\hu_\nu^\mu\vartheta_\mu\Bigr).\label{psivecdef}
\end{align}

By \eqref{Zspan} and  \eqref{ebKbndry}, we know that the vector fields $\{\vb =\delb{0},\eb_{\Kc} = \eb^\mu_{\Kc}\delb{\mu}\}$ are
tangent to the boundary $[0,T]\times \del{}\Omega_0$ and  $\thetab^3 = \thetab^3_\mu d\xb^\mu$ is conormal to $[0,T]\times \del{}\Omega_0$.
From these facts and formulas \eqref{Sdef} and \eqref{Sscdef}, it it then clear that $\Ssc^{\mu\nu\gamma}$ satisfies 
\begin{align}
&\Ssc^{\mu\nu\gamma}\thetab^3_\gamma = 0 \hspace{0.4cm} \text{in  $[0,T]\times \del{}\Omega_0$} \label{Sscprop.1}
\intertext{and}
&\Ssc^{\mu\nu\gamma}= -\Ssc^{\nu\mu\gamma} \hspace{0.4cm} \text{in  $[0,T]\times \Omega_0$}. \label{Sscprop.2}
\end{align}
Note that \eqref{Sscprop.1} is equivalent to the statement that the operator $\Ssc^{\mu\nu\gamma}\delb{\gamma}$ involves
only derivatives that are tangential to the boundary $[0,T]\times \del{}\Omega_0$. The importance of \eqref{Sscprop.1}
and \eqref{Sscprop.2} is that they allow us to write the evolution equation \eqref{libvp.1} and boundary condition \eqref{libvp.2}
as
\begin{align}
\delb{\alpha}\bigl(\Bsc^{\alpha\beta}\delb{\beta}\vartheta+ \Xsc^{\alpha}\bigr) &= \Fsc  &&
\text{in $[0,T]\times \Omega_0$,} \label{libvp.1a} \\
\thetab^3_\alpha \bigl(\Bsc^{\alpha\beta}\delb{\beta}\vartheta+ \Xsc^{\alpha}\bigr)  &=-\Rsc^{\tr}\delb{0}(\Qsc \Psi) + \Psc\Psi + \Gsc
&& \text{in $[0,T]\times \del{}\Omega_0$,} \label{libvp.2a}
\end{align}
where
\begin{align}
\Bsc^{\alpha\beta} &= (\Bsc^{\alpha\beta\mu\nu}) := \biggl(\Asc^{\alpha\beta\mu\nu}+\frac{2}{|\thetab^3|_{\mb}^2}
\Ssc^{\mu\nu [\alpha}\mb^{\beta]\lambda}\thetab^3_\lambda \biggr) \label{Bscdef}
\intertext{and}
\Fsc &= (\Fsc^\mu) := \biggl(\Hsc^\mu + 
\delb{\alpha}\biggl(\frac{2}{|\thetab^3|_{\mb}^2}
\Ssc^{\mu\nu [\alpha}\mb^{\beta]\lambda}\thetab^3_\lambda \biggr)\delb{\beta}\vartheta_\nu\biggr). \label{Fscdef}
\end{align}
As we shall see in the following sections, this formulation of the evolution equation  \eqref{libvp.1} and boundary condition \eqref{libvp.2}  is crucial for establishing the existence of solutions. 
We further observe from \eqref{Ascdef} and \eqref{Bscdef} that  the spatial and $00$ components of $\Bsc^{\alpha\beta}$ are given by
\begin{align} 
\Bsc^{\Lambda\Sigma} &= \biggl(\det(J)\zetau\Upsilonchu^{\mu\nu}\ab^{\Lambda\Sigma}+ \frac{2}{|\thetab^3|_{\mb}^2}
\Ssc^{\mu\nu [\Lambda}\mb^{\Sigma]\lambda}\thetab^3_\lambda \biggr) \label{Bscspat}
\intertext{and}
\Bsc^{00} &= \bigl(\det(J)\zetau\Upsilonchu^{\mu\nu}\ab^{00}\bigr), \label{Bsctemp}
\end{align}
respectively.

\subsection{The time differentiated IBVP\label{tdiffIBVP}}
The  IBVP  \eqref{libvp.1}-\eqref{libvp.10}, see also \eqref{libvp.1a}-\eqref{libvp.2a}, is not yet in a form that will allow us apply the linear existence theory for wave equations from Appendix \ref{LWE}.
To remedy this, we will modify the IBVP by introducing a non-linear change of variables and differentiating the evolution equations and boundary conditions in time.
In order to define the change of variables, we first set
\begin{equation} \label{E0E3def}
E_0^\mu=\vu^\mu \AND E_3^\mu=\psihu^\mu.
\end{equation}
Then, by \eqref{theta2v}, \eqref{mdef}, \eqref{psihdef} and \eqref{orthogB}, we have that
\begin{equation}\label{E0E3orth}
\mul(E_0,E_0)=\mul(E_3,E_3) = 1 \AND \mul(E_0,E_3)=0,
\end{equation}
where $\mul(X,Y)$ is the positive definite inner-product defined by $\mul(X,Y) = \mul_{\alpha\beta}X^\alpha X^\beta$.
We can use the Gram-Schmidt algorithm to complete $\{E_0^\mu,E^\mu_3\}$ to an orthonormal
basis $\{E^\mu_0,E^\mu_1,E^\mu_2,E^\mu_3\bigr\}$
for $\mul$,  that is,
\begin{equation} \label{Eorth}
\mul(E_i,E_j) = \mul_{\mu\nu}E^\mu_i E^\nu_j = \delta_{ij}.
\end{equation}
We then let $\{\Theta^0_\mu,\Theta^1_\mu,\Theta^2_\mu,\Theta^3_\mu\}$ denote the dual basis, which is defined by
\begin{equation}
\Theta^i_\mu E^\mu_j = \delta^i_j, \label{Thetadef1}
\end{equation}
or equivalently, using the matrix notation
\begin{equation*}
E = (E^\mu_i) \AND \Theta = (\Theta^i_\mu),
\end{equation*}
by
\begin{equation} \label{Thetadef2}
\Theta E = \id\!\!,
\end{equation}
where $\id\!\!$ is the $4\times 4$ identity matrix. The dual basis is also orthonormal, that is,
\begin{equation}\label{Thetaorth}
\mul(\Theta^i,\Theta^j) := \mul^{\mu\nu}\Theta_\mu^i \Theta_\nu^j = \delta^{ij},
\end{equation}
as can be easily verified  from \eqref{Eorth} and \eqref{Thetadef1}.

Noting from \eqref{pidef}, \eqref{mdef}, \eqref{pdef} and \eqref{psihup} that we can write $\hu^\mu_\nu$ and $\pu^\mu_\nu$ as
\begin{equation*}
\hu^\mu_\nu = \delta^\mu_\nu - \vu^\mu \mul_{\nu\lambda}\vu^\lambda \AND \pu^\mu_\nu =\delta^\mu_\nu -\psih^\mu \mul_{\nu\lambda}\psih^\lambda,
\end{equation*} 
respectively, a short calculation using \eqref{E0E3def} and \eqref{Eorth}-\eqref{Thetadef1} shows that
\begin{equation} \label{hupubasis}
\Theta_\mu^i\hu^\sigma_\lambda E^\lambda_j = \delta^i_j - \delta^i_0\delta_j^0 \AND \Theta_\mu^i\hu^\sigma_\lambda E^\lambda_j = \delta^i_j - \delta^i_3\delta_j^3.
\end{equation}
Defining 
\begin{equation} \label{Piudef}
(\pu)=(\pu^\mu_\nu), \quad \Piu = (\Piu^{\mu\nu}), \quad \Upsilonchu = (\Upsilonchu^{\mu\nu}) \AND \vu\otimes \vu = (\vu^\mu\vu^\nu),
\end{equation}
it is then not difficult to see from \eqref{Pidef}, \eqref{Piup}-\eqref{Pibrup}, \eqref{Upsilonchdef}, \eqref{Upsilonchuup}, \eqref{Eorth}-\eqref{Thetadef1}, \eqref{Thetaorth} and \eqref{hupubasis} that
\begin{align}
E\pu \Theta &=\Pbb_3, \label{pumat} \\
\Theta^{\tr} \Piu \Theta &= \Pbb, \label{Piumat} \\
\Theta^{\tr}\Upsilonchu\Theta &= \frac{1}{1-\epu}\Pbb+ \Pbbbr, \label{Upsilonchumat}
\intertext{and}
\Theta^{\tr}\vu\otimes \vu \Theta &= \Pbb_0 \label{vuvumat}
\end{align}
where
\begin{align}
\Pbb_3 &= \begin{pmatrix} 1 & 0 & 0 & 0\\
0 & 1 & 0 & 0\\
0&  0 & 1 & 0 \\
0 & 0 & 0 & 0  \end{pmatrix},  \label{Pbb3def} \\
\Pbb &= \begin{pmatrix} 0 & 0 & 0 & 0\\
0 & 1 & 0 & 0\\
0&  0 & 1 & 0 \\
0 & 0 & 0 & 0  \end{pmatrix},  \label{Pbbdef} \\
\Pbbbr &= \begin{pmatrix} 1 & 0 & 0 & 0\\
0 & 0 & 0 & 0\\
0&  0 & 0 & 0 \\
0 & 0 & 0 & 1  \end{pmatrix}  \label{Pbbbrdef}
\intertext{and}
\Pbb_0 &= \begin{pmatrix} 1 & 0 & 0 & 0\\
0 & 0 & 0 & 0\\
0&  0 & 0 & 0 \\
0 & 0 & 0 & 0  \end{pmatrix}.  \label{Pbb0def}
\end{align}
With the help of \eqref{pumat}-\eqref{Upsilonchumat}, we then see from \eqref{Qscdef}-\eqref{Rscdef} that
\begin{align}
\Theta^{\tr}\Qsc\Theta &= q \Pbb \label{Qscmat}
\intertext{and}
E \Rsc \Theta &=\frac{\epu}{1-\epu}\Pbb + \epu\Pbb_0  \label{Rscmat}
\end{align}
where
\begin{equation}\label{qdef}
q= |\gu|^{-\frac{1}{2}}\det(\thetab^J_\Lambda)|\Nu|_{\gu}.
\end{equation}

With the preliminaries out of the way, we are now ready to define our non-linear change of variables by replacing $\Psi$ in favour of the variable $\Utt=(\Utt_j)$ defined by
\begin{equation} \label{Psi2U}
\Utt = E\Psi.
\end{equation}
Then multiplying \eqref{libvp.1a} and \eqref{libvp.2a} on the left by $\Theta^{\tr}$, a straightforward calculation shows, with the help of
\eqref{Thetadef2}, \eqref{Pbbdef}-\eqref{Rscmat}, and \eqref{Psi2U}, that
\begin{align}
\delb{\alpha}\bigl(\Theta^{\tr}\Bsc^{\alpha\beta}\delb{\beta}\vartheta+\Theta^{\tr}\Xsc^{\alpha}\bigr) &= 
\delb{\alpha}\Theta^{\tr}\bigl(\Bsc^{\alpha\beta}\delb{\beta}\vartheta+\Xsc^{\alpha}\bigr) +
\Theta^{\tr}\Fsc  &&
\text{in $[0,T]\times \Omega_0$,} \label{libvp.1b} \\
\thetab^3_\alpha \bigl(\Theta^{\tr}\Bsc^{\alpha\beta}\delb{\beta}\vartheta+\Theta^{\tr}\Xsc^{\alpha}\bigr)  &=-\frac{\epu q}{1-\epu}\Pbb\delb{0} \Utt
- q\biggl(\frac{\epu}{1-\epu}\Pbb + \epu \Pbb_0\biggr)\Theta^{\tr}\delb{0}E^{\tr} \Pbb \Utt
\notag \\
&\qquad +\biggl( -\frac{\epu}{1-\epu}\delb{0}q \Pbb+ \Theta^{\tr}\Psc\Theta\biggr) \Utt + \Theta^{\tr}\Gsc
&& \text{in $[0,T]\times \del{}\Omega_0$.} \label{libvp.2b}
\end{align}
Differentiating \eqref{libvp.1b}-\eqref{libvp.2b} with respect to $\xb^0$, we find, after  replacing $\delb{0}\vartheta$
with $\Utt$ using \eqref{Psitopsi} and \eqref{Psi2U}, that
\begin{align}
\delb{\alpha}\bigl(\Btt^{\alpha\beta}\delb{\beta}\Utt+\Xtt^{\alpha}\bigr) &= \Ftt
  &&
\text{in $[0,T]\times \Omega_0$,} \label{libvp.1c} \\
\thetab^3_\alpha \bigl(\Btt^{\alpha\beta}\delb{\beta}\Utt+\Xtt^{\alpha}\bigr)  &=\Qtt\delb{0}^2 \Utt
+\biggl[ -\biggl(\delb{0}\biggl(\frac{\epu}{1-\epu}\biggr)q  +\frac{2\epu}{1-\epu}\delb{0}q\biggr)\Pbb + \Theta^{\tr}\Psc\Theta  \notag \\
&\quad - q\biggl(\frac{\epu}{1-\epu}\Pbb + \epu \Pbb_0\biggr)\Theta^{\tr}\delb{0}E^{\tr} \Pbb\biggr]\delb{0}\Utt
- q\biggl(\frac{\epu}{1-\epu}\Pbb  \notag \\
&\quad + \epu \Pbb_0\biggr)\Theta^{\tr}\delb{0}^2 E^{\tr} \Pbb \Utt
- \delb{0}\biggl(q\biggl(\frac{\epu}{1-\epu}\Pbb + \epu \Pbb_0\biggr)\Theta^{\tr}\biggr)\delb{0}E^{\tr} \Pbb \Utt  \notag\\
&\quad+
\delb{0}\biggl( -\frac{\epu}{1-\epu}\delb{0}q \Pbb+ \Theta^{\tr}\Psc\Theta\biggr)\Utt+ \delb{0}(\Theta^{\tr}\Gsc)
&& \text{in $[0,T]\times \del{}\Omega_0$,} \label{libvp.2c}
\end{align}
where
\begin{align}
\Qtt &= -\frac{\epu q}{1-\epu}\Pbb, \label{Qttdef}\\
\Btt^{\alpha\beta} &= \lambda\Theta^{\tr}\Bsc^{\alpha\beta}\Theta, \label{Bttdef} \\
\Xtt^\alpha &= \Theta^{\tr}\Bsc^{\alpha\beta}\delb{\beta}(\lambda\Theta)\Utt+\Theta^{\tr}\Bsc^{\alpha\beta}\delb{\beta}(\beta\vartheta)
+\delb{0}(\Theta^{\tr}\Bsc^{\alpha\beta})\delb{\beta}\vartheta + \delb{0}(\Theta^{\tr}\Xsc^{\alpha}), \label{Xttdef} 
\intertext{and}
\Ftt &= \delb{0}\Bigl(\delb{\alpha}\Theta^{\tr}\bigl(\Bsc^{\alpha\beta}\delb{\beta}\vartheta+\Xsc^{\alpha}\bigr) +
\Theta^{\tr}\Fsc\Bigr). \label{Fttdef}
\end{align}

To proceed, we need to analyze the term $\delb{0}^2 E^{\tr}$ that appears in the boundary condition \eqref{libvp.2c} in more detail. To do so,
we, for fixed constants $\uc_0,\uc_1>0$, define an open set $\Uc$ $\subset$ $\Rbb\times \Rbb^4\times \Mbb{4}\times \Rbb^{4}\times \Rbb^{4}$ by
\begin{equation*}
(\zetau,\phi,J,\thetahu^0,\vartheta) \in \Uc 
\end{equation*}
if and only if
\begin{gather}
\uc_0 < \det(\gu^{\alpha\beta}), \quad |\gu^{\alpha\beta}|< \uc_1, \label{gubnd}\\
\uc_0 < \det(J), \quad|J| < \uc_1 , \label{Jbnd} \\
 \uc_0 < -\gu^{\mu\nu}\thetahu_\mu^0\thetahu_\nu^0, \quad
|\thetahu^0| < \uc_1,  \label{thetahubnd}\\
 \uc_0 < \mul^{\mu\nu}\psiu_\mu\psiu_\nu <\uc_1, \label{psibnd} \\
 \uc_0< \zetau < \uc_1,  \label{zubnd} 
\end{gather}
where we recall that $\psiu_\nu$ is defined by \eqref{psivecdef}. We further assume that time-independent spatial coframe $\thetab^I_\mu(\xb^\Sigma)$ satisfy 
\begin{equation} \label{thetabIbnd}
\vc_0 < \det(\thetab^I_\Lambda) \AND |\thetab^I| < \vc_1 \quad  \text{in $\Omega_0$}
\end{equation}
for some constants $\vc_0,\vc_1 > 0$,
where we have set
\begin{equation*}
\thetab^I = (\thetab^I_\mu).
\end{equation*}
Now, it follows from the Gram-Schmidt process and the analyticity of the inversion map that
$E$ and $\Theta$ can be viewed as matrix valued maps that depend analytically on the vectors
$\vu$ and $\psihu$.
Consequently, the  functional dependence of $E$ and $\Theta$ on
the  variables $(\phi,\thetahu^0,\psihu)$ is given by
\begin{equation}\label{EThetasmooth}
E = E(\phi,\thetahu^0,\psihu) \AND \Theta = \Theta(\phi,\thetahu^0,\psihu),
\end{equation}
where $E$ and $\Theta$ depend smoothly on their arguments.
Differentiating $E$ with respect to $\xb^0$ then gives
\begin{equation} \label{dtE}
\delb{0}E^{\tr} = D_{\phi}E^{\tr}\cdot \delb{0}\phi+ D_{\thetahu^0}E^{\tr}\cdot \delb{0}\thetahu^0 + D_{\psihu}E^{\tr}\cdot \delb{0}\psihu.
\end{equation}
But, from \eqref{Psi2dtthetaC} and \eqref{Psi2U}, we observe, with the help of  \eqref{Piup}, \eqref{Eorth} and \eqref{Piumat}, that we
can write $\delb{0}\psihu$ as
\begin{equation} \label{dtpsihu}
\delb{0}\psihu = \Theta \Pbb \Utt +\bigl(-|\psiu|_{\mul}\vu_\nu + \vu^\lambda \Gammau_{\lambda\nu}^\tau \psihu_\tau \bigr).
\end{equation}
Substituting this into \eqref{dtE}  and recalling that $\psihu_\nu$ is determined in terms of $\phi^\mu$, $\thetah^0_\mu$ and
$\vartheta_\mu$ via \eqref{gammah00}, \eqref{psihdef} and \eqref{psi2dtthetaA}, we see that $\delb{0}E^{\tr}$ can
be expressed as
\begin{equation*}
\delb{0}E^{\tr} = \tilde{\Ett}_0(\phi,\thetahu^0,\vartheta) \Pbb \Utt + \tilde{\Ett}_1(\phi,\thetahu^0,\vartheta)
\end{equation*}
where  $\tilde{\Ett}_0$ is a linear map from $\Rbb^4$ to $\Mbb{4}$,  $\tilde{\Ett}_1$ is $\Mbb{4}$-valued, and both maps are smooth in their arguments
for $(\phi,\thetahu^0,\vartheta)$
satisfying \eqref{gubnd}, \eqref{thetahubnd} and \eqref{psibnd}. Differentiating this expression once more with respect to $\xb^0$ yields
\begin{equation*}
\delb{0}^2E^{\tr} = \tilde{\Ett}_0\Pbb \delb{0}\Utt + \delb{0}\tilde{\Ett}_0 \Pbb \Utt+ \delb{0}\tilde{\Ett}_1.
\end{equation*}
Using this we can, with the help of \eqref{libvp.3}-\eqref{libvp.4}, \eqref{Psitopsi}, \eqref{Psi2U} and \eqref{EThetasmooth}-\eqref{dtpsihu},  write  $\delb{0}^2E^{\tr} \Pbb \Utt$ as 
\begin{equation} \label{dttE}
\delb{0}^2E^{\tr} \Pbb \Utt = \Ett_0(\phi,\thetahu^0,\vartheta,\Utt)\Pbb \delb{0}\Utt+ 
\Ett_1(\phi,\thetahu^0,\vartheta,\Utt)
\end{equation}
where $\Ett_0$ is $\Mbb{4}$-valued, $\Ett_1$ is $\Rbb^4$-valued, and both maps are smooth in their arguments provided that
$(\phi,\thetahu^0,\vartheta)$
satisfies \eqref{gubnd}, \eqref{thetahubnd} and \eqref{psibnd}.
Substituting this expression into \eqref{libvp.2b} shows that
\begin{equation} \label{libvp.3c}
\thetab^3_\alpha \bigl(\Btt^{\alpha\beta}\delb{\beta}\Utt+\Xtt^{\alpha}\bigr)  =\Qtt\delb{0}^2 \Utt + \Ptt \delb{0}\Utt + \Gtt
\quad \text{in $[0,T]\times \del{}\Omega_0$,}
\end{equation}
where
\begin{align}
\Ptt &=  -\biggl(\delb{0}\biggl(\frac{\epu}{1-\epu}\biggr)q  +\frac{2\epu}{1-\epu}\delb{0}q\biggr)\Pbb + \Theta^{\tr}\Psc\Theta  - q\biggl(\frac{\epu}{1-\epu}\Pbb + \epu \Pbb_0\biggr)\Theta^{\tr}\bigl(\delb{0}E^{\tr}+\Ett_0\bigr) \Pbb, \label{Pttdef}\\
\Gtt &= - q\biggl(\frac{\epu}{1-\epu}\Pbb + \epu \Pbb_0\biggr)\Theta^{\tr}\Ett_1 - \delb{0}\biggl(q\biggl(\frac{\epu}{1-\epu}\Pbb + \epu \Pbb_0\biggr)\Theta^{\tr}\biggr)\delb{0}E^{\tr} \Pbb \Utt  \notag\\
&\qquad +
\delb{0}\biggl( -\frac{\epu}{1-\epu}\delb{0}q \Pbb+ \Theta^{\tr}\Psc\Theta\biggr)\Utt+ \delb{0}(\Theta^{\tr}\Gsc), \label{Gttdef}
\end{align}
and we note, by \eqref{Pscdef}, \eqref{Piumat} and \eqref{vuvumat},  that
\begin{align}
 \Theta^{\tr}&\Psc\Theta =  \biggl(\frac{\epu}{1-\epu}\det(J) \psihu_\omega  \elltu^\omega  -\frac{\epu}{2} \delb{0}\gu^{\rho\tau}\psiu_\rho\psiu_\tau |\gu|^{-\frac{1}{2}}\det(\thetab^J_\Lambda) |\Nu|_{\gu} |\psiu|^{-2}_{\mul}\biggr)\Pbb  \notag \\
 &-\kappa|\gu|^{-\frac{1}{2}}\det(\thetab^J_\Lambda)|\psiu|_{\mul}\bigl(-\gammahu^{00}\bigr)^{\frac{1}{2}} \Pbb_0
 +\Bigl(\Bigl(2\epu \det(J) \elltu^\omega\psihu^{\alpha} \Piu_\omega^{\beta} -2|\psiu|_{\mul}^{-1}\det(J) \ellu^\omega\psiu^{\sigma}\hu_\omega^{\alpha} \hu^{\beta}_\sigma\Bigr) \Theta_\alpha^{[i}\Theta_\beta^{j]} 
 \Bigr). \label{Pscmat}
\end{align}
Finally, differentiating \eqref{libvp.5} and \eqref{libvp.6} with respect to $\xb^0$ yields
\begin{align}
\delb{\alpha}(\Msc^{\alpha\beta}\delb{\beta}\Ztt + \Ytt^\alpha) &=\Ktt && \text{in $[0,T]\times \Omega_0$,} \label{libvp.4c}\\
\Ztt &= 0 && \text{in $[0,T]\times \del{}\Omega_0$,} \label{libvp.5c}
\end{align}
where  we have set
\begin{align}
\Ktt &= \delb{0}\Ksc, \label{Kttdef}\\
\Ytt^\alpha &= \delb{0}\Msc^{\alpha\beta}\delb{\beta}\zetau \label{Yttdef}
\intertext{and}
\Ztt&=\delb{0}\zeta. \label{Zttdef}
\end{align}

\subsubsection{The complete time-differentiated system}
Collecting \eqref{libvp.3}-\eqref{libvp.4}, \eqref{Psitopsi} where we use \eqref{Psi2U} to replace $\Psi$ with $\Utt$, \eqref{libvp.1c}, \eqref{libvp.3c}, 
\eqref{libvp.4c}-\eqref{libvp.5c} and  \eqref{Zttdef}, we arrive at the following IBVP:
\begin{align}
\delb{\alpha}\bigl(\Btt^{\alpha\beta}\delb{\beta}\Utt+\Xtt^{\alpha}\bigr) &= \Ftt &&
\text{in $[0,T]\times \Omega_0$,} \label{libvp.1d} \\
\thetab^3_\alpha \bigl(\Btt^{\alpha\beta}\delb{\beta}\Utt+\Xtt^{\alpha}\bigr)  &=\Qtt\delb{0}^2 \Utt + \Ptt \delb{0}\Utt + \Gtt
&&\text{in $[0,T]\times \del{}\Omega_0$,} \label{libvp.2d}\\
\delb{0}\phi &= \alpha \thetahu^0  && \text{in $[0,T]\times \Omega_0$}, \label{libvp.3d} \\
\delb{0}\thetahu^0 & = \vartheta+\betat  \thetahu^0 &&
\text{in $[0,T]\times \Omega_0$}, \label{libvp.4d} \\
\delb{0}\vartheta &= \lambda \Theta \Utt + \beta \vartheta  &&
\text{in $[0,T]\times \Omega_0$}, \label{libvp.5d} \\
\delb{\alpha}(\Msc^{\alpha\beta}\delb{\beta}\Ztt+\Ytt^\alpha) &= \Ktt && \text{in $[0,T]\times \Omega_0$,} \label{libvp.6d}\\
\Ztt &= 0 && \text{in $[0,T]\times \del{}\Omega_0$,} \label{libvp.7d}\\
\delb{0}\zetau & = \Ztt &&
\text{in $[0,T]\times \Omega_0$}, \label{libvp.8d} \\
(\Utt,\delb{0}\Utt) &= (\Utt_0,\Utt_1)  && \text{in $\{0\}\times \Omega_0$,} \label{libvp.9d}\\
\phi &= \phi_0  && \text{in $\{0\}\times \Omega_0$,} \label{libvp.10d}\\
\thetahu^0 &= \thetahu^0_0  && \text{in $\{0\}\times \Omega_0$,} \label{libvp.11d}\\
\vartheta &= \vartheta_0  && \text{in $\{0\}\times \Omega_0$,} \label{libvp.12d}\\
(\Ztt,\delb{0}\Ztt) &= (\Ztt_{0},\Ztt_{1})  && \text{in $\{0\}\times \Omega_0$,} \label{libvp.13d}\\
\zetau &= \zetau_0  && \text{in $\{0\}\times \Omega_0$.} \label{libvp.14d}
\end{align} 

\begin{rem} \label{tdiffIBVPrem}
From the derivation of the IBVP \eqref{libvp.1d}-\eqref{libvp.14d}, it is clear that if the initial data
is chosen so that constraints 
\begin{align}
\delb{\alpha}\bigl(\Bsc^{\alpha\beta}\delb{\beta}\vartheta+ \Xsc^{\alpha}\bigr) - \Fsc& =0  &&
\text{in $\{0\}\times \Omega_0$,} \label{libvp.1idata} \\
\thetab^3_\alpha \bigl(\Bsc^{\alpha\beta}\delb{\beta}\vartheta+ \Xsc^{\alpha}\bigr) +\Rsc^{\tr}\delb{0}(\Qsc \Psi) - \Psc\Psi - \Gsc &=0
&& \text{in $\{0\}\times \del{}\Omega_0$,} \label{libvp.2idata}\\
\delb{\alpha}(\Msc^{\alpha\beta}\delb{\beta}\zetau) - \Ksc&=0 && \text{in $\{0\}\times \Omega_0$,} \label{libvp.3idata}\\
\zetau - 1&=0 && \text{in $\{0\}\times \del{}\Omega_0$,} \label{libvp.4idata}
\end{align}
all hold on the initial hypersurface, then they will propagate. Consequently, any solution 
$(\Utt,\phi,\thetahu^0,\vartheta,\Ztt,\zetau)$ of the IBVP  \eqref{libvp.1d}-\eqref{libvp.14d} that is generated from
initial data satisfying the constraints \eqref{libvp.1idata}-\eqref{libvp.4idata} will determine a solution of the IBVP \eqref{libvp.1}-\eqref{libvp.10} given by $(\phi,\thetahu^0,\vartheta,\zetau)$.
\end{rem}


\subsubsection{Coefficient smoothness and estimates\label{coefsmooth}}

It is not difficult to verify that functional dependence of the coefficients that appear in the system
\eqref{libvp.1d}-\eqref{libvp.6d}
on the variables $\{\sigmau,\thetab^I,\zetau,\phi,\thetahu^0,\vartheta,\Utt\}$, where
\begin{equation*}
\sigmau=(\sigmau_i{}^k{}_j),
\end{equation*}
is as follows:\footnote{Here, we are freely employing the evolution equations \eqref{libvp.3d}, \eqref{libvp.4d}, \eqref{libvp.5d} and
\eqref{libvp.8d} to replace any appearance of $\delb{0}\phi$, $\delb{0}\thetahu$, $\delb{0} \vartheta$, and $\delb{0}\zeta$ with
the corresponding right hand side of those equations, and equation \eqref{Psi2U} to replace any appearance of $\Psi$ with $\Theta \Utt$.} 
\begin{align} 
\alpha &= \alpha(\phi,\thetahu^0), \label{fdepA.1} \\
\betat &= \betat(\phi,\thetahu^0),   \label{fdepA.2} \\
\beta &= \beta(\phi,\thetahu^0,\vartheta),   \label{fdepA.3} \\
\Theta &= \Theta(\phi,\thetahu^0,\vartheta), \label{fdepA.4} \\
\lambda &= \lambda(\phi,\thetahu^0,\vartheta),  \label{fdepA.5} \\
\Msc^{\alpha\beta} &= \Msc^{\alpha\beta}(\zeta,\Db^{|1|}\phi,\thetahu^0),  \label{fdepA.6} \\
\Ktt &= \Ktt(\sigma,\thetab^I,\Db^{|1|}\zetau,\Db^{|1|}\Ztt,\delb{0}\Ztt,\Db^{|1|}\phi,\Db^{|1|}\thetahu^0,\Db^{|1|}\vartheta,\Utt),  \label{fdepA.7} \\
\Ytt^\alpha &= \Ytt^\alpha(\Db^{|1|}\zetau,\Ztt,\Db^{|1|}\phi,\Db^{|1|}\thetahu^0,\vartheta)  \label{fdepA.8} \\
\Btt^{\alpha\beta} &=  \Btt^{\alpha\beta}(\sigmau,\thetab^I,\zetau,\Db^{|1|}\phi,\Db^{|1|}\thetahu^0,\vartheta),  \label{fdepA.9}  \\
\Bsc^{\Lambda\Sigma} &=  \Bsc^{\Lambda\Sigma}(\thetab^I,\zetau,\Db^{|1|}\phi,\thetahu^0,\vartheta),  \label{fdepA.9a}  \\
\Ftt &= \Ftt(\sigmau,\Db^{|1|}\thetab^I,\Db^{|1|}\zetau,\delb{0}^{|2|}\Ztt,\Db\delb{0}^{|1|}\Ztt,\Db^{|2|}\phi,\Db^{|2|}\thetahu^0,
\Db^{|1|}\vartheta,\Db^{|1|}\Utt,\delb{0}\Utt), \label{fdepA.10}\\
\Xtt^{\alpha} & = \Xtt^{\alpha}(\sigmau,\thetab^I,\zetau,\delb{0}^{|1|}\Ztt,\Db^{|1|}\phi,\Db^{|1|}\thetahu^0,\Db^{|1|}\vartheta,\Utt),  \label{fdepA.11} \\
\Qtt & = \Qtt(\thetab^I,\Dbsl^{|1|}\!\phi,\thetahu^0,\vartheta), \label{fdepA.12}\\
\Ptt &= \Ptt(\thetab^I,\Dbsl^{|1|}\!\phi,\Dbsl^{|1|}\!\thetahu^0,\vartheta,\Utt)  \label{fdepA.13} 
\intertext{and}
\Gtt & = \Gtt(\thetab^I,\Dbsl^{|1|}\!\phi,\Dbsl^{|1|}\!\thetahu^0,\Dbsl^{|1|}\!\vartheta,\Utt),  \label{fdepA.14}
\end{align}
where $\Dbsl{}$ denotes the derivatives tangent to the spatial boundary $\del{}\Omega_0$.
Moreover, it is also not difficult to verify that the coefficients depend smoothly on their arguments provided that $(\zetau,\phi,\delb{}\phi,\thetahu^0,\vartheta)\in \Uc$, see \eqref{gubnd}-\eqref{zubnd} above, and the spatial coframe
$\thetab^I$ satisfies \eqref{thetabIbnd}.


\begin{lem} \label{coeflemA}
Suppose $s>n/2+1/2$, $\sigmau_i{}^k{}_j \in H^{s+\frac{1}{2}}(\Omega_0)$, $\thetab^I\in H^{s+\frac{1}{2}}(\Omega_0,\Rbb^4)$,
$\, \zetau$,$\,\delb{0}\zetau$,$\,\Ztt$ $\in \Xc_T^{s+\frac{3}{2}}(\Omega_0)$, $\, \phi$,$\,\delb{0}\phi$,$\,\thetahu^0$, $\,\delb{0}\thetahu^0$,$\,\vartheta$ $\in  \Xc_T^{s+\frac{3}{2}}(\Omega_0,\Rbb^4)$,
$\Utt \in \Xc_T^{s+1}(\Omega_0,\Rbb^4)$,
and $(\zetau,\phi,\thetahu^0,\vartheta,\thetab^I)$ satisfy \eqref{gubnd}-\eqref{zubnd} and \eqref{thetabIbnd},
where $J=\delb{}\phi$, and let
\begin{equation*} 
\Nc = \sum_{\ell=0}^1 \Bigl(\norm{\delb{0}^\ell\zetau}_{\Ec^{s+\frac{3}{2}}} +\norm{\delb{0}^\ell\phi}_{\Ec^{s+\frac{3}{2}}} +  
\norm{\delb{0}^\ell \thetahu^0}_{\Ec^{s+\frac{3}{2}}}\Bigr)+\norm{\Ztt}_{\Ec^{s+\frac{3}{2}}}+\norm{\vartheta}_{\Ec^{s+\frac{3}{2}}} +  \norm{\Utt}_{\Ec^{s+1}}. 
\end{equation*}
Then
\begin{equation*}
\sum_{\ell=0}^1\Bigl(\norm{\delb{0}^\ell\alpha}_{\Ec^{s+\frac{3}{2}}}+\norm{\delb{0}^\ell\betat}_{\Ec^{s+\frac{3}{2}}}\Bigr) +\norm{\beta}_{\Ec^{s+\frac{3}{2}}}+ \norm{\Theta}_{\Ec^{s+\frac{3}{2}}}+\norm{E}_{\Ec^{s+\frac{3}{2}}}+
\norm{\lambda\Theta}_{\Ec^{s+\frac{3}{2}}}\leq C(\Nc),
\end{equation*}
\begin{align*}
\sum_{\ell=0}^1\Bigl(\norm{\delb{0}^\ell\Btt}_{E^{s+\frac{1}{2}}}+\norm{\delb{0}^\ell\Qtt}_{E^{s+\frac{1}{2}}}+\norm{\delb{0}^\ell\Ytt}_{E^{s+\frac{1}{2}}}
+\norm{\delb{0}^\ell\Msc}_{E^{s+\frac{1}{2}}} \Bigr)
 &
\\
+\norm{\delb{0}\vec{\Bsc}}_{E^{s+\frac{1}{2}}}+
\norm{\Gtt}_{E^{s+\frac{1}{2}}}+ \norm{\Ktt}_{E^{s+\frac{1}{2}}}
+ \norm{\Ptt}_{E^{s+\frac{1}{2}}} + &
\norm{\Xtt}_{E^{s+\frac{1}{2}}} \\
+ 
\norm{\Ftt}_{E^{s-\frac{1}{2}}}
+ \norm{\delb{0}^{2s}\Ftt}_{L^2(\Omega_0)}  \leq 
C\Bigl(\norm{\thetab^I}_{H^{s+\frac{1}{2}}(\Omega_0)}&,\norm{\sigmau}_{H^{s+\frac{1}{2}}(\Omega_0)},\Nc\Bigr)\Nc
\end{align*}
and
\begin{align*}
\norm{\alpha(\xb^0)}_{\Ec^{s+\frac{3}{2}}}+\norm{\betat(\xb^0)}_{\Ec^{s+\frac{3}{2}}}+\norm{\Ptt(\xb^0)}_{H^{s}(\Omega_0)}
+\norm{\vec{\Bsc}(\xb^0)}_{H^{s+\frac{1}{2}}(\Omega_0)}\qquad & \\ 
+\norm{\Btt(\xb^0)}_{H^{s+\frac{1}{2}}(\Omega_0)}+
\norm{\Msc(\xb^0)}_{H^{s+\frac{1}{2}}(\Omega_0)} +\sum_{\ell=0}^1\norm{\delb{0}^\ell \Qtt(\xb^0)}_{H^{s+\frac{1-\ell}{2}}(\Omega_0)}&\leq \Xi(\xb^0),
\end{align*}
where $\vec{\Bsc}=(\Bsc^{\Lambda\Sigma})$ and
\begin{equation*}
\Xi(\xb^0) =C\Bigl(\norm{\thetab^I}_{H^{s+\frac{1}{2}}(\Omega_0)},\norm{\sigmau}_{H^{s+\frac{1}{2}}(\Omega_0)},\Nc(0)\Bigr)+  \int_0^{\xb^0} 
C\Bigl(\norm{\thetab^I}_{H^{s+\frac{1}{2}}(\Omega_0)},\norm{\sigmau}_{H^{s+\frac{1}{2}}(\Omega_0)},\Nc(\tau)\Bigr)\Nc(\tau) \, d\tau.
\end{equation*}
\end{lem}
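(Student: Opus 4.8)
The plan is to treat the lemma as a bookkeeping exercise built on top of the Sobolev--Moser product, composition, commutator and trace inequalities collected in Appendix~\ref{calculus}, together with the elementary inequalities relating the $E^s$-, $\Xc^s_T$- and $\Xcr^s_T$-norms recorded in Section~\ref{spacetimeFS}. The starting point is the functional dependence list \eqref{fdepA.1}--\eqref{fdepA.14} (and its footnote): each coefficient is a composition $F(w,\Db w,\ldots)$ of a map $F$ with the primary variables $w\in\{\sigmau,\thetab^I,\zetau,\delb{0}\zetau,\Ztt,\phi,\delb{0}\phi,\thetahu^0,\delb{0}\thetahu^0,\vartheta,\Utt\}$ and finitely many of their spatial, tangential and temporal derivatives, where $F$ is $C^\infty$ on the region cut out by \eqref{gubnd}--\eqref{zubnd} and \eqref{thetabIbnd}; for $E$ and $\Theta$ this smoothness is exactly \eqref{EThetasmooth}, which follows from the analyticity of Gram--Schmidt and of matrix inversion on that region. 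Since by hypothesis $(\zetau,\phi,\delb{}\phi,\thetahu^0,\vartheta)$ satisfies \eqref{gubnd}--\eqref{zubnd} and $\thetab^I$ satisfies \eqref{thetabIbnd}, the arguments of every such $F$ remain in a fixed compact subset of the domain of $F$, so the constants produced by the composition inequalities depend only on $\Nc$ (and, for the coefficients that also see $\thetab^I$ or $\sigmau$, on $\norm{\thetab^I}_{H^{s+1/2}(\Omega_0)}$ and $\norm{\sigmau}_{H^{s+1/2}(\Omega_0)}$). The assumption $s>n/2+1/2$ is used throughout, since it puts $H^{s-1/2}(\Omega_0)\hookrightarrow L^\infty(\Omega_0)$ and makes $H^{\sigma}(\Omega_0)$ closed under multiplication for $\sigma\geq s-1/2$; the corresponding $L^\infty$-in-time control comes from the definition of the $\Xc$-spaces.

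\emph{First group.} The coefficients $\alpha,\betat,\beta,\Theta,E,\lambda\Theta$ depend, by \eqref{fdepA.1}--\eqref{fdepA.5} and \eqref{lambdadef}, only on $\phi$, $\thetahu^0$ and $\vartheta$ with no extra derivatives, so their $\Xc^{s+3/2}_T$-bounds follow at once from the composition inequality in the $\Ec^{s+3/2}$-norm and the hypotheses $\phi,\thetahu^0,\vartheta\in\Xc^{s+3/2}_T$. For $\delb{0}\alpha$ and $\delb{0}\betat$ one differentiates once in $\xb^0$, picks up $DF\cdot(\delb{0}\phi,\delb{0}\thetahu^0)$, and uses the product estimate in $\Ec^{s+3/2}$ together with $\delb{0}\phi,\delb{0}\thetahu^0\in\Xc^{s+3/2}_T$; whenever $\delb{0}\vartheta$ occurs it is eliminated by \eqref{libvp.5d}.

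\emph{Second group.} The remaining coefficients sit a half-derivative lower and in addition involve a first spatial derivative of $\phi$, $\thetahu^0$, $\zetau$ or $\Ztt$ (and, for $\Ftt$, second spatial derivatives together with $\delb{0}\Utt$ and the second time derivative of $\Ztt$). Their $E^{s+1/2}$- (respectively $E^{s-1/2}$-) bounds follow from the same product/composition machinery applied at that regularity, after using \eqref{libvp.3d}--\eqref{libvp.5d} and \eqref{libvp.8d} to turn every occurrence of $\delb{0}\phi$, $\delb{0}\thetahu^0$, $\delb{0}\vartheta$, $\delb{0}\zetau$ into a Moser-admissible function of the primary variables, exactly as recorded in \eqref{fdepA.6}--\eqref{fdepA.11}. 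For the boundary coefficients $\Qtt,\Ptt,\Gtt$, which by \eqref{fdepA.12}--\eqref{fdepA.14} involve only tangential derivatives $\Dbsl$, one first restricts the interior quantities to $[0,T]\times\del{}\Omega_0$ via the trace theorem---the resulting half-derivative loss is absorbed by the extra half-derivative of regularity built into the hypotheses---and then argues with the Sobolev algebra on the closed manifold $\del{}\Omega_0$. Finally, the multiplicative factor $\Nc$ claimed for the source and error terms ($\Ftt,\Gtt,\Ktt,\Xtt,\Ytt$, and the inhomogeneous pieces of $\Btt,\Qtt,\Msc,\vec{\Bsc}$) is extracted because each of these is built out of at least one derivative, or one undifferentiated factor, of a perturbation variable, so a single power of $\Nc$ factors out of the product estimate.

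\emph{The time-integral bound and the main obstacle.} The inequalities of the form $\norm{\,\cdot\,(\xb^0)}\leq\Xi(\xb^0)$ follow from the fundamental theorem of calculus in $\xb^0$, namely $f(\xb^0)=f(0)+\int_0^{\xb^0}\delb{0}f$, combined with the $\Nc(0)$-dependent composition bound at $\xb^0=0$ for $f(0)$ and the second-group bounds for $\norm{\delb{0}f}$. I expect the genuine obstacle to be the sharp bookkeeping of derivative counts in the borderline cases, most pointedly the claim that $\Ftt\in E^{s-1/2}$ together with $\delb{0}^{2s}\Ftt\in L^2(\Omega_0)$: here one must use the endpoint Moser inequality requiring only $L^\infty$-control of the next-to-top factors, verify that $2s$ time derivatives of each primary variable do lie in the assumed $\Xc$-space, and handle the fact that $\Ftt$, coming from \eqref{Fttdef}, carries top-order terms simultaneously in space (second derivatives of $\vartheta$, $\thetahu^0$) and in time ($\delb{0}\Utt$ and the second time derivative of $\Ztt$); the analogous trace-loss accounting for $\Qtt,\Ptt,\Gtt$ is the other place where the constants must be tracked with care. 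Everything else is a routine, if lengthy, application of the cited calculus inequalities.
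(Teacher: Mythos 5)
Your proposal is structured along the same lines as the paper's proof: identify the functional dependence \eqref{fdepA.1}--\eqref{fdepA.14}, exploit the fact that the bounds \eqref{gubnd}--\eqref{zubnd} and \eqref{thetabIbnd} confine the arguments of the coefficient functions to a fixed compact set on which they are smooth, then apply the Moser composition estimates (Propositions \ref{stpropB}, \ref{stpropD}), the product estimates (Propositions \ref{stpropA}, \ref{stpropC}), the commutator estimate (Proposition \ref{stcomA}) for the top-order $\delb{0}^{2s}\Ftt$ term, and the fundamental theorem of calculus in $\xb^0$ for the $\Xi(\xb^0)$ bound. The paper does exactly this, explicitly only for $\Btt$, $\delb{0}\Btt$ and $\delb{0}^{2s}\Ftt$, with $\delb{0}^{2s}\Ftt$ handled by writing $\delb{0}^{2s}\Ftt = [\delb{0}^{2s-1}, D_{\ztt}\Ftt]\delb{0}\ztt + D_{\ztt}\Ftt\,\delb{0}^{2s}\ztt$ and applying Proposition \ref{stcomA} with the parameter choices $s_1=s$, $s_2=s_3=s-\tfrac12$, $\ell=2s-1$. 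So your overall scheme, including the identification of $\delb{0}^{2s}\Ftt$ as the delicate term, matches.

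There is, however, one concrete misstep: your treatment of $\Qtt$, $\Ptt$, $\Gtt$ by ``restricting the interior quantities to $[0,T]\times\del{}\Omega_0$ via the trace theorem'' and then working with the Sobolev algebra on $\del{}\Omega_0$ is not what the lemma asks for and would not produce the stated estimates. All of the norms in the lemma, including $\norm{\Qtt}_{E^{s+1/2}}$, $\norm{\Ptt}_{E^{s+1/2}}$, $\norm{\Gtt}_{E^{s+1/2}}$ and $\norm{\Ptt(\xb^0)}_{H^{s}(\Omega_0)}$, are norms over the bulk $\Omega_0$, not over $\del{}\Omega_0$ (see the definitions of $E^{s,r}$ and $\Ec^s$ in Section \ref{spacetimeFS}). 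The tangential derivatives $\Dbsl$ appearing in \eqref{fdepA.12}--\eqref{fdepA.14} are, by the footnote in Section~\ref{coefsmooth}, first-order differential operators with smooth time-independent coefficients defined on all of $\overline{\Omega}_0$ by extension, so $\Dbsl^{|1|}\phi\in H^{s+1/2}(\Omega_0)$ follows directly from $\phi\in H^{s+3/2}(\Omega_0)$ without any trace. One then applies the same bulk composition estimate used for the other coefficients. Invoking the trace theorem here would instead cost half a derivative and land you in spaces over $\del{}\Omega_0$ that the lemma does not mention; the trace theorem only enters later, inside the energy argument for the wave equation (e.g. Lemma \ref{AWhestA} and \ref{AWhestB} in Appendix \ref{LWE}), not in proving Lemma \ref{coeflemA} itself.
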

\begin{proof}
We will only establish the estimates for $\norm{\Btt}_{H^{s+\frac{1}{2}}(\Omega_0)}$,  
$\norm{\delb{0}\Btt}_{\Ec^{s+\frac{1}{2}}}$  and  $\norm{\delb{0}^{2s}\Ftt}_{L^2(\Omega_0)}$ with the rest established in a similar fashion. 
We begin by setting
\begin{equation*}
\ytt =(\zetau,\Db^{|1|}\phi,\Db^{|1|}\thetahu^0,\vartheta)
\end{equation*}
and observing, by \eqref{fdepA.9}, that
\begin{equation*}
\delb{0}\Btt^{\alpha\beta} =  D_{\ytt}\Btt^{\alpha\beta}(\sigmau,\thetab^I,\ytt)\delb{0}\ytt.
\end{equation*}
We then deduce from Proposition \ref{stpropB} that
\begin{align}
\norm{\Btt}_{E^{s+\frac{1}{2}}} &\leq C\Bigl((\norm{\thetab^I}_{H^{s+\frac{1}{2}}(\Omega_0)},\norm{\sigmau}_{H^{s+\frac{1}{2}}(\Omega_0)} ,
\norm{\ytt}_{E^{s+\frac{1}{2}}}\Bigr) \label{coeflemA0} 
\intertext{and}
\norm{\delb{0}\Btt}_{E^{s+\frac{1}{2}}} &\leq C\biggl((\norm{\thetab^I}_{H^{s+\frac{1}{2}}(\Omega_0)},\norm{\sigmau}_{H^{s+\frac{1}{2}}(\Omega_0)} , \sum_{\ell=0}^1\norm{\delb{0}^\ell \ytt}_{E^{s+\frac{1}{2}}}\biggr) \sum_{\ell=0}^1\norm{\delb{0}^\ell \ytt}_{E^{s+\frac{1}{2}}}.  \label{coeflemA1}
\end{align}
But, noting that 
\begin{align*}
\norm{(\Db^{|1|}v,w)}_{E^{s+\frac{1}{2}}} = \norm{(\Db^{|1|}v,w)}_{E^{s+\frac{1}{2},2s+1}} 
\lesssim \norm{v}_{E^{s+\frac{3}{2},2s+1}}+ \norm{w}_{E^{s+\frac{1}{2},2s+1}}
\lesssim  \norm{v}_{\Ec^{s+\frac{3}{2}}}+ \norm{w}_{\Ec^{s+1}},
\end{align*}
the estimates
\begin{align}
\norm{\Btt}_{E^{s+\frac{1}{2}}} &\leq C\Bigl(\norm{\thetab^I}_{H^{s+\frac{1}{2}}(\Omega_0)},\norm{\sigmau}_{H^{s+\frac{1}{2}}(\Omega_0)},\Nc\Bigr)
\label{coeflemA2}
\intertext{and}
\norm{\delb{0}\Btt}_{E^{s+\frac{1}{2}}} &\leq C\Bigl(\norm{\thetab^I}_{H^{s+\frac{1}{2}}(\Omega_0)},\norm{\sigmau}_{H^{s+\frac{1}{2}}(\Omega_0)},\Nc\Bigr)\Nc
\label{coeflemA3}
\end{align}
then follow directly from \eqref{coeflemA0} and \eqref{coeflemA1}, respectively.
Writing $\Btt^{\alpha\beta}$ as $\Btt^{\alpha\beta}(\xb^0) = \Btt^{\alpha\beta}(0) + \int_0^{\xb^0} \delb{0}\Btt^{\alpha\beta}(\tau)\, d\tau$, we
find, after applying $H^{s+\frac{1}{2}}(\Omega_0)$ norm and using the triangle inequality and the estimates \eqref{coeflemA2}-\eqref{coeflemA3}, that
\begin{equation*}
\norm{\Btt(\xb^0)}_{H^{s+\frac{1}{2}}(\Omega_0)} \leq 
C\Bigl(\norm{\thetab^I}_{H^{s+\frac{1}{2}}(\Omega_0)},\norm{\sigmau}_{H^{s+\frac{1}{2}}(\Omega_0)},\Nc(0)\Bigr)
+\int_0^{\xb^0}C\Bigl(\norm{\thetab^I}_{H^{s+\frac{1}{2}}(\Omega_0)},\norm{\sigmau}_{H^{s+\frac{1}{2}}(\Omega_0)},\Nc(\tau)\Bigr)\Nc(\tau) \, d\tau.
\end{equation*}

Turning to the estimate involving $\Ftt$, we have from \eqref{fdepA.10} that
\begin{equation} \label{coeflemA4}
\Ftt = \Fsc(\sigmau,\Db^{|1|}\thetab^I,\ztt)
\end{equation}
where
\begin{equation*}
\ztt= \bigl(\Db^{|1|}\zetau,\delb{0}^{|2|}\Ztt,\Db\delb{0}^{|1|}\Ztt,\Db^{|2|}\phi,\Db^{|2|}\thetahu^0,
\Db^{|1|}\vartheta,\Db^{|1|}\Utt,\delb{0}\Utt\bigr).
\end{equation*}
Before proceeding, we observe that
\begin{align*}
\norm{\Db^{|2|}v}_{E^{s-\frac{1}{2},2s-2}}+\norm{\delb{0}\Db^{|2|}v}_{E^{s-\frac{1}{2},2s-2}} &\lesssim
\norm{v}_{E^{s+\frac{3}{2},2s-2}} + \norm{\delb{0}v}_{E^{s+\frac{3}{2},2s-2}} \notag \\
 &\lesssim \norm{v}_{\Ec^{s+\frac{3}{2}}} + \norm{\delb{0} v}_{\Ec^{s+\frac{3}{2}}}, \\
 \norm{\delb{0}^{|3|} v}_{E^{s-\frac{1}{2},2s-2}}+\norm{\delb{0}\Db^{|1|}\delb{0} v}_{E^{s-\frac{1}{2},2s-2}} &\lesssim
 \norm{v}_{E^{s+1,2s+1}} + \norm{v}_{E^{s+\frac{3}{2},2s}} \notag \\
 &\lesssim \norm{v}_{\Ec^{s+\frac{3}{2}}}
 \intertext{and}
\norm{\Db^{|1|}w}_{E^{s-\frac{1}{2},2s-2}}+\norm{\delb{0}\Db^{|1|}w}_{E^{s-\frac{1}{2},2s-2}} + \norm{\delb{0}^{|2|}w}_{E^{s-\frac{1}{2},2s-2}}&\lesssim
\norm{w}_{E^{s+\frac{1}{2},2s-2}} + \norm{w}_{E^{s+1,2s-1}}+\norm{w}_{E^{s+\frac{1}{2},2s}}\notag \\
&\lesssim  \norm{w}_{\Ec^{s+1}}.
\end{align*}
From these inequalities, we then get 
\begin{equation} \label{coeflemA5}
\norm{\ztt}_{E^{s-\frac{1}{2},2s-2}}+\norm{\delb{0}\ztt}_{E^{s-\frac{1}{2},2s-2}} \lesssim \Nc.
\end{equation}
Moreover, from the inequalities
\begin{align*}
 \norm{\delb{0}^{2s}\Db^{|2|}v}_{L^2(\Omega_0)}&\lesssim  \norm{\delb{0}^{2s-1}\delb{0}v}_{H^2(\Omega_0)} \notag\\
&\lesssim \norm{\delb{0}v}_{\Ec^{s+\frac{3}{2}}}, \notag \\
\norm{\delb{0}^{2s}\delb{0}^{|2|}v}_{L^2(\Omega_0)} + \norm{\delb{0}^{2s}\Db^{|1|}\delb{0}v}_{L^2(\Omega_0)}
&\lesssim \norm{\delb{0}^{2s+2}v}_{L^2(\Omega_0)}
+ \norm{\delb{0}^{2s+1}v}_{H^1(\Omega_0)} \notag\\
&\lesssim  \norm{v}_{\Ec^{s+\frac{3}{2}}}
\intertext{and}
\norm{\delb{0}^{2s}\delb{0}w}_{L^2(\Omega_0)} + \norm{\delb{0}^{2s}\Db^{|1|}w}_{L^2(\Omega_0)}  &\lesssim
\norm{w}_{\Ec^{s+1}}+ \norm{\delb{0}^{2s}w}_{H^1(\Omega_0)} \\
&\lesssim \norm{w}_{\Ec^{s+1}},
\end{align*}
we also have
\begin{equation} \label{coeflemA6}
\norm{\delb{0}^{2s}\ztt}_{L^2(\Omega_0)} \lesssim \Nc.
\end{equation}

Next, differentiating  \eqref{coeflemA4} with respect to $\xb^0$ yields 
\begin{equation*}
\delb{0}\Ftt = D_{\ztt}\Ftt(\sigmau,\Db^{|1|}\thetab^I,\ztt)\delb{0}\ztt.
\end{equation*}
Using this, we can express $\delb{0}^{2s}\Ftt$ as
\begin{equation*}
\delb{0}^{2s}\Ftt = [\delb{0}^{2s-1},D_{\ztt}\Ftt(\sigmau,\Db^{|1|}\thetab^I,\ztt)]\delb{0}\ztt+ D_{\ztt}\Ftt(\sigmau,\Db^{|1|}\thetab^I,\ztt)\delb{0}^{2s} \ztt.
\end{equation*}
Then using Proposition \ref{stcomA}, with $s_3=s-\frac{1}{2}$, $s_1=s$, $s_2=s-\frac{1}{2}$ and $\ell=2s-1$,  to bound the first term on the right hand side of the above expression, and Sobolev and H\"{o}lder's inequalities (Theorems \ref{Holder}
and \ref{FSobolev}) to bound the
second term, we obtain the estimate
\begin{equation}  \label{coeflemA7}
\norm{\delb{0}^{2s}\Ftt}_{L^2(\Omega)} \lesssim \norm{\delb{0} D_{\ztt}\Ftt}_{E^{s-\frac{1}{2},2s-2}}\norm{\delb{0}\ztt}_{E^{s-\frac{1}{2},2s-2}}+
\norm{D_{\ztt}\Ftt}_{H^{s-\frac{1}{2}}(\Omega)}\norm{\delb{0}^{2s}\ztt}_{L^2(\Omega)}.
\end{equation}
Noting that
\begin{equation*}
\delb{0} \bigl(D_{\ztt}\Ftt (\sigmau,\Db^{|1|}\thetab^I,\ztt)\bigr) = D_{\ztt}^2\Ftt (\sigmau,\Db^{|1|}\thetab^I,\ztt) \delb{0}\ztt,
\end{equation*}
the estimate
\begin{equation*}
\norm{\delb{0}^{2s}\Ftt}_{L^2(\Omega_0)} \leq C\bigl(\norm{\thetab^I}_{H^{s+\frac{1}{2}}(\Omega_0)},
\norm{\sigmau}_{H^{s+\frac{1}{2}}(\Omega_0)},\Nc\bigr)\Nc
\end{equation*}
follows directly from \eqref{coeflemA5}, \eqref{coeflemA6}, \eqref{coeflemA7} and an application of Proposition \ref{stpropB}.
\end{proof}

\subsubsection{Coefficient relations, inequalities and coercive estimates\label{coefcoercive}}
From the definitions \eqref{Bscdef}, \eqref{Bsctemp} and \eqref{Bttdef}, we note that
the matrices $\Bsc^{\alpha\beta}$ and $\Btt^{\alpha\beta}$ satisfy
\begin{equation}
(\Bsc^{\alpha\beta})^{\tr} = \Bsc^{\beta\alpha} \AND  (\Btt^{\alpha\beta})^{\tr} = \Btt^{\beta\alpha},  \label{BscBttsym}
\end{equation}
respectively.
Furthermore, recalling that $\ep$ is given by \eqref{epformula} where $\ep_0>0$, we see from \eqref{psibnd} that 
\begin{equation} \label{epbnd2}
0<\frac{\ep_0+\sqrt{\uc_0}}{\ep_0}\leq  \frac{1}{1-\epu} \leq \frac{\ep_0+ \sqrt{\uc_1}}{\ep_0}.
\end{equation}
Using this bound, it is not difficult to verify that there exists constants $\bc_a=\bc_a(\ep_0,\vec{\uc},\vec{\vc})>0$, $a=0,1$, where
\begin{equation*}
\vec{\vc}=(\vc_0,\vc_1) \AND \vec{\uc} = (\uc_1,\uc_2),
\end{equation*}
 such that the inequalities
 \begin{align}
 \bigl(\chi\bigl|\Btt^{00}(\thetab^I,\zetau,\delb{}^{|1|}\phi,\thetahu^0,\vartheta)\chi\bigr) &\leq -\bc_0|\chi|^2 \label{Bttbnd1}\\
 \intertext{and} 
 \bigl(\chi\bigl|\Bsc^{\Sigma \Lambda}(\thetab^I,\zetau,\phi,\delb{}\phi,\thetahu^0)\chi\bigr)\eta_\Sigma \eta_\Lambda\ &\geq \bc_1 |\chi|^2|\eta|^2 \label{Bscbnd2}
 \end{align}
hold  for all 
$(\chi,\eta)\in \Rbb^4 \times \Rbb^3$, and  
 $(\thetab^I ,\zetau,\phi,J=\delb{}\phi,\thetahu^0,\vartheta)$ that satisfy \eqref{gubnd}-\eqref{zubnd} and  \eqref{thetabIbnd}
Similarly, we see from the definition \eqref{Mscdef} that $\Msc^{\alpha\beta}$ satisfies
\begin{equation}
\Msc^{\alpha\beta} = \Msc^{\beta\alpha},  \label{Mscsym}
\end{equation}
and there exists constants $\mc_a=\mc_a(\vec{\uc})>0$, $a=0,1$, 
 such that 
 \begin{align}
 \Msc^{00}(\phi,\delb{}\phi,\thetahu^0) &\leq -\mc_0 \label{Mscbnd1}
  \intertext{and} 
 \Msc^{\Sigma \Lambda}(\phi,\delb{}\phi,\thetahu^0)\eta_\Sigma \eta_\Lambda &\geq \mc_1 |\eta|^2 \label{Mscbnd2}
 \end{align}
 for all $\eta \in \Rbb^3$ and $ (\phi,J=\delb{}\phi,\thetahu^0)$ satisfying \eqref{gubnd}-\eqref{thetahubnd}. 
Moreover, it is clear from the  definition \eqref{lambdadef} that the exists a constant $\lc(\vec{\uc},\vec{\vc})>0$ such that the inequality
\begin{equation}
 0< \uc_0 \leq \lambda(\phi,\thetahu^0,\vartheta) \leq  \lc \label{lambdabnd}
 \end{equation}
holds for all $(\zetau,\phi,\vartheta)$ satisfying \eqref{gubnd},  \eqref{thetahubnd} and \eqref{psibnd}.
 
Next, we establish coercive estimates in the following two lemmas for the bilinear forms associated to $\Msc^{\Lambda\Sigma}$, $\Bsc^{\Lambda\Sigma}$ and $\Btt^{\Lambda\Sigma}$. 
\begin{lem} \label{Msccoercive}
Suppose, $\phi \in C^{1}(\overline{\Omega}_0,\Rbb^4)$, $\thetahu^0\in C^{0}(\overline{\Omega}_0,\Rbb^4)$, and $(\phi,\delb{}\phi,\thetahu^0)$ satisfies
\eqref{gubnd}-\eqref{thetahubnd}  on $\Omega_0$. Then there exists positive constants $\kappa_0 =\kappa_0(\vec{\uc})>0$ and $\mu_0=\mu_0(\vec{\uc})>0$
such that
\begin{equation*}
\ip{\delb{\Lambda}u}{\Msc^{\Lambda\Sigma}\delb{\Sigma}u}_{\Omega_0} \geq \kappa_0\norm{u}_{H^1(\Omega_0)}^2 -\mu_0\norm{u}^2_{L^2(\Omega_0)}
\end{equation*}
for all $u \in H^1_0(\Omega_0)$.
\end{lem}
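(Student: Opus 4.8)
The plan is to prove the coercivity estimate for the bilinear form associated to $\Msc^{\Lambda\Sigma}$ by combining the pointwise ellipticity bound \eqref{Mscbnd2} with a standard G\aa{}rding-type argument that handles the non-constancy of the coefficients. First I would note that the inequality \eqref{Mscbnd2} gives, for $u \in H^1_0(\Omega_0)$,
\begin{equation*}
\ip{\delb{\Lambda}u}{\Msc^{\Lambda\Sigma}\delb{\Sigma}u}_{\Omega_0} = \int_{\Omega_0} \Msc^{\Lambda\Sigma}\delb{\Lambda}u\,\delb{\Sigma}u\, d^nx \geq \mc_1 \int_{\Omega_0} |Du|^2\, d^nx = \mc_1 \norm{Du}_{L^2(\Omega_0)}^2,
\end{equation*}
where I am using that $(\phi,\delb{}\phi,\thetahu^0)$ satisfies \eqref{gubnd}-\eqref{thetahubnd} so that the hypotheses of \eqref{Mscbnd2} hold pointwise on $\Omega_0$; here $\mc_1 = \mc_1(\vec{\uc}) > 0$ is the constant from \eqref{Mscbnd2}. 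Since the right-hand side controls only the gradient of $u$, the remaining step is to upgrade this to a bound on the full $H^1$ norm.

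The upgrade follows by adding and subtracting a multiple of the $L^2$ norm. Writing $\norm{u}_{H^1(\Omega_0)}^2 = \norm{u}_{L^2(\Omega_0)}^2 + \norm{Du}_{L^2(\Omega_0)}^2$, the lower bound above immediately yields
\begin{equation*}
\ip{\delb{\Lambda}u}{\Msc^{\Lambda\Sigma}\delb{\Sigma}u}_{\Omega_0} \geq \mc_1\norm{Du}_{L^2(\Omega_0)}^2 = \mc_1\bigl(\norm{u}_{H^1(\Omega_0)}^2 - \norm{u}_{L^2(\Omega_0)}^2\bigr),
\end{equation*}
so the statement holds with $\kappa_0 = \mc_1$ and $\mu_0 = \mc_1$, both of which depend only on $\vec{\uc}$. (Optionally, one could instead invoke the Poincar\'e inequality on the bounded domain $\Omega_0$ for $u \in H_0^1(\Omega_0)$ to absorb the $L^2$ term entirely and obtain the cleaner bound $\ip{\delb{\Lambda}u}{\Msc^{\Lambda\Sigma}\delb{\Sigma}u}_{\Omega_0} \geq \kappa_0 \norm{u}_{H^1(\Omega_0)}^2$ with no negative term, but the stated form with a $-\mu_0\norm{u}_{L^2}^2$ correction is weaker and suffices, and is what is needed downstream.)

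There is essentially no hard part here: the estimate is a direct consequence of the already-established pointwise ellipticity \eqref{Mscbnd2}, and the only thing to be careful about is checking that the regularity hypotheses ($\phi \in C^1$, $\thetahu^0 \in C^0$, with $(\phi,\delb{}\phi,\thetahu^0)$ in the admissible set) guarantee that $\Msc^{\Lambda\Sigma}$ is continuous on $\overline{\Omega}_0$ and that \eqref{Mscbnd2} applies at every point, so that the integral is well-defined and the pointwise bound integrates. The mild subtlety — if any — is purely bookkeeping: ensuring the constants $\kappa_0,\mu_0$ are presented as depending only on $\vec{\uc}$, which is transparent from the fact that $\mc_1$ has that dependence.
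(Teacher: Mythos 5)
Your proof is correct and is essentially the same as the paper's, which simply states that the estimate is a direct consequence of the pointwise ellipticity bound \eqref{Mscbnd2}; you have filled in the (trivial) algebra of integrating the pointwise bound and writing $\norm{Du}_{L^2}^2 = \norm{u}_{H^1}^2 - \norm{u}_{L^2}^2$ to produce $\kappa_0 = \mu_0 = \mc_1$.
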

\begin{proof}
The stated coercive estimate is a direct consequence of the inequality \eqref{Mscbnd2}.
\end{proof}

\begin{lem} \label{Bsccoercive}
Suppose
$\zeta \in C^{0}(\overline{\Omega}_0)$, $\phi \in C^{1}(\overline{\Omega}_0,\Rbb^4)$, $\, \thetab^I,\thetahu^0,\vartheta \in C^{0}(\overline{\Omega}_0,\Rbb^4)$, and $(\thetab^I,\zeta,\phi,\delb{}\phi,\thetahu^0)$ satisfies
\eqref{gubnd}-\eqref{zubnd} and \eqref{thetabIbnd} on $\Omega_0$.
Then there exists constants $\wc>0$, $\kappa_1 =\kappa_1(\ep_0,\vec{\uc},\vec{\vc},\wc)>0$ and $\mu_1=\mu_1(\ep_0,\vec{\uc},\vec{\vc},\wc)>0$
such that
\begin{align*}
\ip{\delb{\Lambda}u}{\Bsc^{\Lambda\Sigma}\delb{\Sigma}u}_{\Omega_0} &\geq \kappa_1\norm{u}_{H^1(\Omega_0)}^2 -\mu_1\norm{u}^2_{L^2(\Omega_0)}
\intertext{and}
\ip{\delb{\Lambda}u}{\Bsc^{\Lambda\Sigma}\delb{\Sigma}u}_{\Omega_0} &\geq \kappa_1\norm{u}_{H^1(\Omega_0)}^2 -\mu_1\norm{u}^2_{L^2(\Omega_0)}
\end{align*}
and
for all $u \in H^1(\Omega_0,\Rbb^4)$ provided that\footnote{See \eqref{thetah3def}, \eqref{psihdef} and \eqref{psivecdef}.}  $\underline{\hat{\psi}}$ satisfies $|\thetahu^3+\underline{\hat{\psi}}|<\wc$ on $\Omega_0$.
\end{lem}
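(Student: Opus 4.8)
\textbf{Proof proposal for Lemma \ref{Bsccoercive}.}

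The plan is to reduce the coercivity statement for the full matrix $\Bsc^{\Lambda\Sigma}$ (equivalently $\Btt^{\Lambda\Sigma}$, since $\Btt^{\Lambda\Sigma} = \lambda\Theta^{\tr}\Bsc^{\Lambda\Sigma}\Theta$ with $\lambda$ bounded above and below by \eqref{lambdabnd} and $\Theta$ invertible with uniformly bounded inverse) to a G\r{a}rding-type inequality. The starting point is the decomposition \eqref{Bscspat}:
\begin{equation*}
\Bsc^{\Lambda\Sigma\mu\nu} = \det(J)\zetau\,\Upsilonchu^{\mu\nu}\ab^{\Lambda\Sigma} + \frac{2}{|\thetab^3|_{\mb}^2}\Ssc^{\mu\nu[\Lambda}\mb^{\Sigma]\lambda}\thetab^3_\lambda.
\end{equation*}
The first term is, up to the positive scalar $\det(J)\zetau$, the tensor product of the positive spatial block $\ab^{\Lambda\Sigma}$ of the Lorentzian metric $\au^{\alpha\beta}$ (positive because $\au$ has signature $(-,+,+,+)$ and, on solutions near the boundary, $\vu$ is the timelike direction; more precisely positivity of the spatial block follows from \eqref{cthm1} exactly as it does for $g$ itself) with the matrix $\zetau\Upsilonchu^{\mu\nu}$. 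By \eqref{Upsilonchumat} and \eqref{epbnd2}, the quadratic form $\chi\mapsto(\chi|\Upsilonchu\chi)$ satisfies $(\chi|\Upsilonchu\chi) = \frac{1}{1-\epu}|\Pbb^{\tr}\Theta^{-\tr}\chi|^2 + |\Pbbbr^{\tr}\Theta^{-\tr}\chi|^2 \gtrsim |\chi|^2$ uniformly; hence the first term alone gives a uniformly positive contribution $\ip{\delb{\Lambda}u}{\det(J)\zetau\,\Upsilonchu\ab^{\Lambda\Sigma}\delb{\Sigma}u}_{\Omega_0}\gtrsim \norm{Du}_{L^2(\Omega_0)}^2$.

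The second term is the one requiring care, and its analysis is the main obstacle. Using the antisymmetry \eqref{Sscprop.2} of $\Ssc^{\mu\nu\gamma}$ in $\mu,\nu$, the quadratic form of the second term is
\begin{equation*}
\frac{2}{|\thetab^3|_{\mb}^2}\,\Ssc^{\mu\nu[\Lambda}\mb^{\Sigma]\lambda}\thetab^3_\lambda\,\delb{\Lambda}u_\mu\,\delb{\Sigma}u_\nu,
\end{equation*}
which is antisymmetric under the simultaneous swap $(\mu,\Lambda)\leftrightarrow(\nu,\Sigma)$ only in the index pair $\mu,\nu$; the derivative indices $\Lambda,\Sigma$ carry the $[\,\cdot\,]$ antisymmetrization against $\mb^{\Sigma]\lambda}\thetab^3_\lambda$. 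The key structural facts are: (i) $\Ssc^{\mu\nu\gamma}\thetab^3_\gamma = 0$ by \eqref{Sscprop.1}, so $\Ssc^{\mu\nu\gamma}$ has no component along the conormal direction; and (ii) the precise form \eqref{Sscdef}, \eqref{Sdef} of $\Ssc$ shows it is built from $\hu^{\mu\alpha}s_{\alpha\beta}{}^\xi\hu^{\beta\nu}\Jch_\xi^\gamma$ with $s$ antisymmetric. The strategy is to write this second term, after relabeling and using $\thetab^3_\gamma\delb{0}=\thetab^3_\gamma(\delb{0})$ tangentiality, as a sum of a term that is manifestly controlled by $\delta\norm{Du}^2 + C_\delta\norm{u}^2$ for small $\delta$ (absorbed into the positive first term) plus a term proportional to $|\thetahu^3 + \underline{\hat\psi}|$ times $\norm{Du}^2$. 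This is exactly where the smallness hypothesis $|\thetahu^3+\underline{\hat\psi}|<\wc$ enters: in the computation \eqref{psihtothetah3} in the proof of Theorem \ref{cthm} it was shown (on solutions where constraints vanish) that $\psih_\nu + \thetah^3_\nu$ is a controlled quantity, and at the linearized/iteration level the corresponding combination $\thetahu^3 + \underline{\hat\psi}$ being small forces the dangerous cross-term in $\Bsc^{\Lambda\Sigma}$ — the one that could violate ellipticity — to have small coefficient, hence be absorbable. I would carry this out by (a) expanding $\Ssc^{\mu\nu[\Lambda}\mb^{\Sigma]\lambda}\thetab^3_\lambda$ explicitly using \eqref{Sscdef}, \eqref{Sdef}, \eqref{E0E3def}, \eqref{detgammaLag}; (b) splitting the frame vectors $\eb^\mu_1,\eb^\mu_2$ (which appear inside $s_{\alpha\beta}{}^\gamma$) into components along $\psihu$, along $\thetahu^3$, and orthogonal, and observing that the only contribution not already $O(\delta)$-absorbable carries a factor measuring the defect between $\thetahu^3$ and $-\psihu$ (up to sign/normalization), i.e.\ $|\thetahu^3+\underline{\hat\psi}|$; (c) choosing $\wc$ small enough, and then $\delta$ small, so that this defect-term plus the $O(\delta)$ terms are bounded by $\frac12$ of the positive contribution from the first term of \eqref{Bscspat}.

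Concretely the write-up would proceed: first record $(\chi|\Upsilonchu\chi)\geq c|\chi|^2$ from \eqref{Upsilonchumat}–\eqref{epbnd2} and positivity of $(\ab^{\Lambda\Sigma})$ from \eqref{cthm1}-type positivity of the spatial block (the same argument that underlies \eqref{Bscbnd2}), giving $\ip{\delb{\Lambda}u}{\det(J)\zetau\,\Upsilonchu\ab^{\Lambda\Sigma}\delb{\Sigma}u}_{\Omega_0}\geq 2\kappa_1\norm{u}_{H^1(\Omega_0)}^2 - C\norm{u}_{L^2(\Omega_0)}^2$ for a suitable $\kappa_1$; then bound the cross term
\begin{equation*}
\Bigl|\,\ip{\delb{\Lambda}u}{\tfrac{2}{|\thetab^3|_{\mb}^2}\Ssc^{\cdot\cdot[\Lambda}\mb^{\Sigma]\lambda}\thetab^3_\lambda\,\delb{\Sigma}u}_{\Omega_0}\Bigr| \leq \bigl(C\wc + \delta\bigr)\norm{Du}_{L^2(\Omega_0)}^2 + C_\delta\norm{u}_{L^2(\Omega_0)}^2,
\end{equation*}
using integration by parts to move one derivative off where needed (the $[\Lambda\Sigma]$-antisymmetry means the principal part is actually a first-order commutator, lowering the derivative count) together with the defect estimate from step (b); finally choose $\wc$ and $\delta$ to absorb, yielding the claimed inequality for $\Bsc^{\Lambda\Sigma}$, and transfer to $\Btt^{\Lambda\Sigma}$ via $\Btt^{\Lambda\Sigma}=\lambda\Theta^{\tr}\Bsc^{\Lambda\Sigma}\Theta$, \eqref{lambdabnd}, and uniform invertibility of $\Theta$ (from \eqref{Thetaorth}), which preserves G\r{a}rding inequalities up to adjusting the constants $\kappa_1,\mu_1$. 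The continuity/regularity hypotheses on $\zeta,\phi,\thetab^I,\thetahu^0,\vartheta$ are exactly what is needed to make all coefficient functions continuous on $\overline{\Omega}_0$ so that the pointwise bounds \eqref{gubnd}–\eqref{zubnd}, \eqref{thetabIbnd}, \eqref{epbnd2}, \eqref{lambdabnd} hold uniformly.
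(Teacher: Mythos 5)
Your proposal is structured quite differently from the paper's proof, and it contains a genuine gap at the one step that matters.

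The paper does not attempt any decomposition of $\Bsc^{\Lambda\Sigma}$ at all. It observes that at $\thetahu^3+\underline{\hat{\psi}}=0$ the matrices $\Bsc^{\Lambda\Sigma}$ coincide (up to straightforward changes) with the matrices $B^{\Lambda\Sigma}$ for which a coercive estimate was already established in Lemma 8.3 of \cite{Oliynyk:Bull_2017}, obtains the on-shell case from that lemma, and then extends to $|\thetahu^3+\underline{\hat{\psi}}|<\wc$ by a perturbation-and-covering argument (citing Steps 2--3 of Theorem 3.42 in \cite{GiaquintaMartinazzi:2012}). The transfer to $\Btt^{\Lambda\Sigma}$ via the non-singular change of basis $\sqrt{\lambda}\Theta$ (your last step) is the same as the paper's.

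The gap in your argument is the cross-term estimate
\begin{equation*}
\Bigl|\ip{\delb{\Lambda}u}{\tfrac{2}{|\thetab^3|_{\mb}^2}\Ssc^{\cdot\cdot[\Lambda}\mb^{\Sigma]\lambda}\thetab^3_\lambda\,\delb{\Sigma}u}_{\Omega_0}\Bigr|\leq(C\wc+\delta)\norm{Du}^2_{L^2(\Omega_0)}+C_\delta\norm{u}^2_{L^2(\Omega_0)}.
\end{equation*}
This cannot hold with $C\wc\to 0$ as $\wc\to 0$. Write $Q^{\mu\nu\Lambda\Sigma}=\frac{2}{|\thetab^3|_{\mb}^2}\Ssc^{\mu\nu[\Lambda}\mb^{\Sigma]\lambda}\thetab^3_\lambda$ for the cross-term coefficient; it is antisymmetric in both $(\mu,\nu)$ and $(\Lambda,\Sigma)$. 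The $[\Lambda\Sigma]$-antisymmetry you invoke does indeed let you integrate by parts once, reducing the bulk contribution to a first-order term absorbable by Young's inequality, exactly as you describe. But the integration by parts produces a boundary term $\int_{\del{}\Omega_0}Q^{\mu\nu\Lambda\Sigma}\nu_\Sigma\delb{\Lambda}u_\mu u_\nu\,dS$. Using $\thetab^3_0=0$, the definition of $|\thetab^3|_{\mb}^2$, and $\Ssc^{\mu\nu\gamma}\thetab^3_\gamma=0$, one computes $Q^{\mu\nu\Lambda\Sigma}\thetab^3_\Sigma=\Ssc^{\mu\nu\Lambda}$, so (up to the positive normalization of the conormal) the boundary term is $\int_{\del{}\Omega_0}\Ssc^{\mu\nu\Lambda}\delsl{\Lambda}u_\mu\,u_\nu\,dS$, a genuine first-order tangential form on the boundary. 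By the trace theorem this is of the same order as $\norm{u}_{H^1(\Omega_0)}^2$; it is not $O(\delta\norm{Du}^2+C_\delta\norm{u}^2)$, and antisymmetry of $\Ssc^{\mu\nu\Lambda}$ in $\mu,\nu$ does not kill it (the antisymmetric pairing $\int\Ssc^{\mu\nu\Lambda}\delsl{\Lambda}u_\mu u_\nu\,dS$ is not a total tangential divergence). Crucially, $\Ssc^{\mu\nu\gamma}$ as defined in \eqref{Sscdef} is built entirely from $\hu$, $\su$, the frame $\eb_1,\eb_2$ and the Jacobian $\Jch$; it contains no $\psihu$ at all, so the coefficient of this boundary term is \emph{not} controlled by $|\thetahu^3+\underline{\hat{\psi}}|$, and your step (b) --- splitting $\eb_1,\eb_2$ relative to $\psihu$ and $\thetahu^3$ and claiming the only unabsorbable piece carries the defect factor --- is not substantiated and, as far as I can see, cannot be made to work as stated. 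Establishing coercivity on $H^1$ (not $H^1_0$) for a system with such a boundary-order cross term is exactly a strong-complementing-condition / G\r{a}rding check in the sense of Remark \ref{coercNrem}(a); it is not a smallness argument and cannot be reduced to one. This is precisely what the cited Lemma 8.3 carries out in the special case $\thetahu^3+\underline{\hat{\psi}}=0$, and what the paper's perturbation argument then extends.

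Two minor remarks: your claim that the first term $\det(J)\zetau\,\Upsilonchu^{\mu\nu}\ab^{\Lambda\Sigma}$ alone is coercive is correct (tensor product of the positive definite $\Upsilonchu$, coming from \eqref{Upsilonchumat}--\eqref{epbnd2}, with the positive definite spatial block $\ab^{\Lambda\Sigma}$, whose positivity follows from $\ab^{00}<0$ and the Lorentzian signature), but it is not in itself close to the lemma; the second term of \eqref{Bscspat} is not a lower-order perturbation of it. Your appeal to \eqref{psihtothetah3} (a constraint-propagation identity holding on $\Gamma_T$ for constraint-satisfying solutions) is not relevant here; the lemma is a purely algebraic/analytic statement about the operator $\Bsc^{\Lambda\Sigma}$ under the pointwise bounds \eqref{gubnd}--\eqref{zubnd}, \eqref{thetabIbnd}, where no constraints are assumed.
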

\begin{proof}
From the bound \eqref{epbnd2} and the formulas \eqref{mdef}-\eqref{adef}, \eqref{Upsilonchdef}, \eqref{Sscdef} and \eqref{Bscspat}, it
is clear that the matrices $\Bsc^{\Lambda\Sigma}$ are closely related to the matrices 
$B^{\Lambda\Sigma}$ from Lemma 8.3 from \cite{Oliynyk:Bull_2017} when $\thetahu^3+\underline{\hat{\psi}}=0$, and moreover, that a straightforward adaptation of
the arguments used
to derive the coercive estimates given in Lemma 8.3 from \cite{Oliynyk:Bull_2017} can be used establish the existence
of constants
$\tilde{\kappa}_1 =\tilde{\kappa}_1(\ep_0,\vec{\uc},\vec{\vc})>0$ and $\tilde{\mu}_1=\tilde{\mu}_1(\ep_0,\vec{\uc},\vec{\vc})>0$ such that
the coercive estimate 
\begin{equation}  \label{Bsccoercive0}
\ip{\delb{\Lambda}u}{\Bsc^{\Lambda\Sigma}\delb{\Sigma}u}_{\Omega_0} \geq \tilde{\kappa}_1\norm{u}_{H^1(\Omega_0)}^2 -\tilde{\mu}_1\norm{u}^2_{L^2(\Omega_0)}, \quad \forall \; u \in H^1(\Omega_0,\Rbb^4),
\end{equation}
holds.  With the help of a perturbation and covering argument similar to Steps 2 \& 3 from the proof of Theorem 3.42 in 
\cite{GiaquintaMartinazzi:2012}, it is not difficult to see that the coercive estimate \eqref{Bsccoercive0} implies,
for $\wc>0$ small enough, the existence of constants $\kappa_1 =\kappa_1(\ep_0,\vec{\uc},\vec{\vc},\wc)>0$ and $\mu_1=\mu_1(\ep_0,\vec{\uc},\vec{\vc},\wc)>0$
such that
\begin{equation} \label{Bsccoercive1}
\ip{\delb{\Lambda}u}{\Bsc^{\Lambda\Sigma}\delb{\Sigma}u}_{\Omega_0} \geq \kappa_1\norm{u}_{H^1(\Omega_0)}^2 -\mu_1\norm{u}^2_{L^2(\Omega_0)}, \quad \forall \; u \in H^1(\Omega_0,\Rbb^4),
\end{equation}
provided that $|\thetahu^3+\underline{\hat{\psi}}|<\wc$ on $\Omega_0$. Furthermore, we see from \eqref{Bttdef} that $\Btt^{\Lambda\Sigma}$ and $\Bsc^{\Lambda\Sigma}$ are related the non-singular, see \eqref{Thetaorth}, \eqref{EThetasmooth} and \eqref{lambdabnd},
change of basis $\sqrt{\lambda}\Theta$, that is, $\Btt^{\Lambda\Sigma} = (\sqrt{\lambda}\Theta)^{\tr}\Bsc^{\Lambda\Sigma}\sqrt{\lambda}\Theta$.
Using this fact together with another perturbation and covering argument, we deduce from the coercive estimate \eqref{Bsccoercive1} the existence of constants
 $\kappa_2 =\kappa_2(\ep_0,\vec{\uc},\vec{\vc},\wc)>0$ and $\mu_2=\mu_2(\ep_0,\vec{\uc},\vec{\vc},\wc)>0$
such that
\begin{equation*} 
\ip{\delb{\Lambda}u}{\Btt^{\Lambda\Sigma}\delb{\Sigma}u}_{\Omega_0} \geq \kappa_1\norm{u}_{H^1(\Omega_0)}^2 -\mu_1\norm{u}^2_{L^2(\Omega_0)}, \quad \forall \; u \in H^1(\Omega_0,\Rbb^4).
\end{equation*}
\end{proof}

Next, we observe from \eqref{Pttdef} and \eqref{Pscmat} that we can decompose the matrix $\Ptt$ as
\begin{equation}\label{Pttmat}
\Ptt =  \Pbb_0 \Ptt_0\Pbb -\kappa \ptt \Pbb_0 + \Pbb \Ptt_1\Pbb + \Ptt_2
\end{equation}
where
\begin{align}
\Ptt_0&= - q\epu\Theta^{\tr}\bigl(\delb{0}E^{\tr}+\Ett_0\bigr), \label{Ptt0def} \\
\Ptt_1 &=  \biggl(-\delb{0}\biggl(\frac{\epu}{1-\epu}\biggr)q -\frac{2\epu}{1-\epu}\delb{0}q + \frac{\epu}{1-\epu}\det(J) \psihu_\omega  \elltu^\omega \notag\\
&\qquad   -\frac{\epu}{2} \delb{0}\gu^{\rho\tau}\psiu_\rho\psiu_\tau |\gu|^{-\frac{1}{2}}\det(\thetab^J_\Lambda) |\Nu|_{\gu} |\psiu|^{-2}_{\mul}\biggr)\id 
- \frac{\epu q}{1-\epu}\Theta^{\tr}\bigl(\delb{0}E^{\tr}+\Ett_0\bigr), \label{Ptt1def}\\
\Ptt_2 &= \Bigl(\bigl(2\epu \det(J) \elltu^\omega\psihu^{\alpha} \Piu_\omega^{\beta} -2|\psiu|_{\mul}^{-1}\det(J) \ellu^\omega\psiu^{\sigma}\hu_\omega^{\alpha} \hu^{\beta}_\sigma\bigr) \Theta_\alpha^{[i}\Theta_\beta^{j]} 
 \Bigr) \label{Ptt2def}
\intertext{and}
\ptt &= |\gu|^{-\frac{1}{2}}\det(\thetab^J_\Lambda)|\psiu|_{\mul}\bigl(-\gammahu^{00}\bigr)^{\frac{1}{2}} . \label{pttdef}
\end{align}

\begin{lem} \label{matlemB}
Suppose $s\in \Rbb$ and
\begin{equation*}
\frac{1}{2}|\Ptt_0|^2_{\op}+|\Ptt_1|_{\op}+|s| \biggl| \delb{0}\biggl(\frac{\epu q}{1-\epu}\biggr)\biggr|\leq \rc.
\end{equation*}
Then  there exists positive constants  $\chi=\chi(\rc,\vec{\uc},\vec{\vc})>0$,
$\kappa =\kappa(\vec{\uc},\vec{\vc})>0$, and $\qc_a(\ep_0,\vec{\uc},\vec{\vc})>0$, $a=0,1$, such that
\begin{equation*}
\qc_0\Pbb \leq -\Qtt \leq \qc_1 \Pbb
\end{equation*}
and
\begin{equation*}
\Ptt +s\delb{0}\Qtt + \chi \Qtt \leq 0.
\end{equation*}
\end{lem}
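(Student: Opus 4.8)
The plan is to establish the two claimed matrix inequalities by exploiting the block structure of $\Qtt$ and $\Ptt$ with respect to the decomposition $\id = \Pbb_0 + \Pbb + \Pbbbr'$, where here $\Pbbbr' = \diag(0,0,0,1)$ is the projection onto the last coordinate (so that $\Pbb_0$, $\Pbb$, $\Pbbbr'$ are mutually orthogonal and sum to $\id$). First I would record the explicit form $\Qtt = -\frac{\epu q}{1-\epu}\Pbb$ from \eqref{Qttdef}: since $q = |\gu|^{-1/2}\det(\thetab^J_\Lambda)|\Nu|_{\gu}>0$ is bounded above and below by positive constants depending only on $\vec{\uc},\vec{\vc}$ (using \eqref{gubnd}, \eqref{Jbnd}, \eqref{thetabIbnd} and the fact that $|\Nu|_{\gu}$ is a nondegenerate function of $\phi,\thetahu^0$), and $\frac{\epu}{1-\epu}$ is bounded above and below by positive constants via \eqref{epbnd2} and \eqref{epformula} (note $\epu\in(0,1)$ with $\epu$ bounded away from $0$ and $1$ by \eqref{psibnd}), the bound $\qc_0\Pbb\leq -\Qtt\leq \qc_1\Pbb$ follows immediately with $\qc_a = \qc_a(\ep_0,\vec{\uc},\vec{\vc})$. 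This disposes of the first inequality.

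For the second inequality, the key observation is that in the decomposition \eqref{Pttmat}, the dangerous terms $\Pbb_0\Ptt_0\Pbb$ and $\Pbb\Ptt_1\Pbb$ both have range contained in $\Pbb$ (i.e.\ their image lies in the $\{1,2\}$-block), while $-\kappa\ptt\Pbb_0$ lives in the $\Pbb_0$-block, and $\Ptt_2$ --- being built from $\Theta^{[i}_\alpha\Theta^{j]}_\beta$ --- is antisymmetric, hence contributes nothing to the quadratic form $(\chi|(\Ptt+s\delb{0}\Qtt+\chi\Qtt)\chi)$. I would therefore compute, for $\chi\in\Rbb^4$ written as $\chi = \chi_0 + \chi_\Pbb + \chi_\Pbbbr'$ with $\chi_0 = \Pbb_0\chi$, $\chi_\Pbb=\Pbb\chi$, $\chi_\Pbbbr'=\Pbbbr'\chi$, the quadratic form
\begin{equation*}
(\chi\,|\,(\Ptt+s\delb{0}\Qtt+\chi\Qtt)\chi) = (\chi_0\,|\,\Ptt_0\chi_\Pbb) -\kappa\ptt|\chi_0|^2 + (\chi_\Pbb\,|\,\Ptt_1\chi_\Pbb) - \Bigl(s\delb{0}\tfrac{\epu q}{1-\epu}+\chi\tfrac{\epu q}{1-\epu}\Bigr)|\chi_\Pbb|^2,
\end{equation*}
using $\delb{0}\Qtt = -\delb{0}(\tfrac{\epu q}{1-\epu})\Pbb$, $\Qtt = -\tfrac{\epu q}{1-\epu}\Pbb$, and that $\Ptt_2$ drops out by antisymmetry. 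The cross term $(\chi_0|\Ptt_0\chi_\Pbb)$ is bounded by $|\Ptt_0|_{\op}|\chi_0||\chi_\Pbb| \leq \tfrac{\kappa\ptt}{2}|\chi_0|^2 + \tfrac{|\Ptt_0|_{\op}^2}{2\kappa\ptt}|\chi_\Pbb|^2$ by Young's inequality; choosing $\kappa = \kappa(\vec{\uc},\vec{\vc})$ so that $\kappa\ptt$ dominates (recall $\ptt>0$ is bounded below by a constant depending on $\vec{\uc},\vec{\vc}$ via \eqref{pttdef}, \eqref{psibnd}, \eqref{thetahubnd}), the $|\chi_0|^2$-terms give a nonpositive contribution and the leftover $|\chi_\Pbb|^2$-coefficient is bounded by $\tfrac{|\Ptt_0|_{\op}^2}{2\kappa\ptt}+|\Ptt_1|_{\op}+|s||\delb{0}(\tfrac{\epu q}{1-\epu})| - \chi\tfrac{\epu q}{1-\epu} \leq \tfrac{\rc}{\kappa\ptt} + \rc - \chi\tfrac{\epu q}{1-\epu}$ using the hypothesis. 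Since $\tfrac{\epu q}{1-\epu}$ is bounded below by a positive constant $c(\ep_0,\vec{\uc},\vec{\vc})$, taking $\chi = \chi(\rc,\vec{\uc},\vec{\vc})$ large enough makes this coefficient nonpositive, and the whole quadratic form is $\leq 0$ on the $\Pbbbr'$-block trivially (no contribution at all there). This gives $\Ptt + s\delb{0}\Qtt + \chi\Qtt \leq 0$.

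The main obstacle I anticipate is purely bookkeeping: one must verify carefully that $\Ptt_2$ really is antisymmetric as a $4\times 4$ matrix --- this uses that $(\Ptt_2)^{ij} = c_{\alpha\beta}\Theta^{[i}_\alpha\Theta^{j]}_\beta$ with $c_{\alpha\beta}$ symmetric in $\alpha,\beta$ (since $\psihu^\alpha\Piu^\beta_\omega$ is not obviously symmetric, one checks $\psihu^{(\alpha}\Piu^{\beta)}_\omega$ contracted against $\elltu^\omega$ --- actually one needs the contraction structure from \eqref{Pscdef} and the relations \eqref{projrem4}, \eqref{Pisym} showing $\Piu^{\mu\nu}$ symmetric --- and similarly $\hu^\alpha_\omega\hu^\beta_\sigma$ with the $\omega,\sigma$ contracted symmetrically), so that the explicit antisymmetrization bracket $\Theta^{[i}_\alpha\Theta^{j]}_\beta$ forces $(\Ptt_2)^{ij}=-(\Ptt_2)^{ji}$ and hence $(\chi|\Ptt_2\chi)=0$. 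A secondary point requiring care is confirming that $\Pbb_0\Ptt_0\Pbb$ and $\Pbb\Ptt_1\Pbb$ genuinely have no $\Pbbbr'$-component --- this is immediate from the presence of the outer projections $\Pbb_0,\Pbb$ on the left and $\Pbb$ on the right in \eqref{Pttmat}, so $(\chi|\,\cdot\,\chi)$ only involves $\chi_0$ and $\chi_\Pbb$. Once these structural facts are in hand, the estimate is a routine Young's-inequality argument with the constant $\kappa$ chosen to absorb the $\Pbb_0$-block and $\chi$ chosen to absorb the residual $\Pbb$-block.
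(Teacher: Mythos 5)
Your proposal is correct and follows essentially the same approach as the paper: decompose $\Rbb^4$ into the $\Pbb_0$, $\Pbb$, and remaining blocks, note that $\Ptt_2$ drops out of the quadratic form by antisymmetry, and use Young's inequality on the cross term so that the $\Pbb_0$-block is absorbed by $-\kappa\ptt$ and the residual $\Pbb$-block is absorbed by $-\chi\frac{\epu q}{1-\epu}$. The only notable variation is your use of a weighted Young's inequality $|\Ptt_0|_{\op}|\chi_0||\chi_\Pbb|\leq\frac{\kappa\ptt}{2}|\chi_0|^2+\frac{|\Ptt_0|_{\op}^2}{2\kappa\ptt}|\chi_\Pbb|^2$, which makes the $\Pbb_0$-block coefficient $-\frac{\kappa\ptt}{2}$ automatically nonpositive for \emph{any} $\kappa>0$; the paper instead uses the unweighted version and then requires $\kappa\geq 1/(2\pc)$ to cancel the $\tfrac{1}{2}y_0^2$ term. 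Both arrive at the same conclusion, and your version is marginally cleaner in that it decouples the choices of $\kappa$ and $\chi$ more transparently. One small clarification on the "obstacle" you flag: the antisymmetry of $\Ptt_2$ is automatic and requires no inspection of whether the coefficient $c^{\alpha\beta}$ contracted against $\Theta^{[i}_\alpha\Theta^{j]}_\beta$ is symmetric --- the explicit antisymmetrizer $[\,i\,j\,]$ on the free indices forces $(\Ptt_2)^{ij}=-(\Ptt_2)^{ji}$ regardless of the structure of $c^{\alpha\beta}$, so $\ipe{Y}{\Ptt_2 Y}=0$ holds without appealing to \eqref{Pisym}. Also, a notational caution: you use $\chi$ simultaneously for the test vector and for the constant appearing in $\chi\Qtt$, which is confusing; the paper uses $Y$ for the test vector for precisely this reason.
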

\begin{proof}
From the bounds \eqref{gubnd}-\eqref{psibnd} and \eqref{thetabIbnd},  and the definitions \eqref{epformula},  \eqref{qdef} and \eqref{pttdef}, it 
is clear that there exists positive constants $\qc_a=\qc_a(\ep_0,\vec{\uc},\vec{\vc})>0$ and $\pc=\pc(\vec{\uc},\vec{\vc})>0$
such that
\begin{gather}
0<\qc_0 \leq \frac{\epu q}{1-\epu}=\frac{|\psiu|_{\mul}q}{\ep_0} \leq \qc_1 \label{matlemB1}
\intertext{and}
\ptt \geq \pc \label{matlemB2}.
\end{gather}
Using \eqref{matlemB1}, it then follows from \eqref{Qttdef} that the matrix $\Qtt$ satisfies  
\begin{equation*}
\qc_0\Pbb \leq -\Qtt \leq \qc_1 \Pbb,
\end{equation*}
which establishes the first of the stated inequalities.

Next, we fix a vector $Y=(y_0,y_1,y_2,y_3)\in \Rbb^4$ and decompose it as
\begin{equation*}
Y=y_0 e_0+ \vec{Y}+ y_3 e_3
\end{equation*}
where $e_i$, $i=0,1,2,3$, is the standard basis for $\Rbb^4$ and
\begin{equation*}
\vec{Y} = (0,y_1,y_2,0).
\end{equation*}
Then after a short calculation using \eqref{Pbbdef}, \eqref{Pbb0def}, \eqref{Qttdef} and \eqref{Pttmat},  we find that
\begin{equation*}
\ipe{Y}{\Ptt Y}+s\ipe{Y}{\delb{0}\Qtt Y}+\chi \ipe{Y}{\Qtt Y} 
= y_0\ipe{e_0}{\Ptt_0\vec{Y}}-\kappa \ptt y_0^2 +\ipe{\vec{Y}}{\Ptt_1\vec{Y}} -s \delb{0}\biggl(\frac{\epu q}{1-\epu}\biggr)|\vec{Y}|^2
-\chi q|\vec{Y}|^2,
\end{equation*}
and hence, with the help of the Cauchy-Schwartz inequality, that
\begin{align*}
\ipe{Y}{\Ptt Y}+s\ipe{Y}{\delb{0}\Qtt Y}+\chi \ipe{Y}{\Qtt Y} 
&\leq  |y_0||\Ptt_0\vec{Y}|-\kappa \ptt y_0^2 +|\vec{Y}| |\Ptt_1\vec{Y}| +|s| \biggl|\delb{0}\biggl(\frac{\epu q}{1-\epu}\biggr)
\biggr||\vec{Y}|^2
-\chi q|\vec{Y}|^2\notag \\
&\leq |y_0||\Ptt_0|_{\op}|\vec{Y}|-\kappa \ptt y_0^2 +\biggl(|\Ptt_1|_{\op}+|s|\biggl|\delb{0}\biggl(\frac{\epu q}{1-\epu}\biggr)
\biggr|-\chi \frac{\epu q}{1-\epu}\biggr)|\vec{Y}| ^2.
\end{align*}
From this, we deduce
\begin{align*}
\ipe{Y}{\Ptt Y}+s\ipe{Y}{\delb{0}\Qtt Y}+\chi \ipe{Y}{\Qtt Y}& \leq \biggl(\frac{1}{2}-\kappa \ptt\biggr) y_0^2 +\biggl(\frac{1}{2}|\Ptt_0|_{\op}^2+|\Ptt_1|_{\op}+|s|\biggl|\delb{0}\biggl(\frac{\epu q}{1-\epu}\biggr)
\biggr|-\chi \frac{\epu q}{1-\epu} \biggr)|\vec{Y}| ^2
\end{align*}
via an application of Young's inequality. But
\begin{equation*}
\frac{1}{2}|\Ptt_0|^2_{\op}+|\Ptt_1|_{\op}+|s| \biggl| \delb{0}\biggl(\frac{\epu q}{1-\epu}\biggr)\biggr|\leq \rc
\end{equation*}
by assumption, and so, we have that
\begin{align*}
\ipe{Y}{\Ptt Y}+s\ipe{Y}{\delb{0}\Qtt Y}+\chi \ipe{Y}{\Qtt Y}& \leq \biggl(\frac{1}{2}-\kappa \ptt\biggr) y_0^2 +\biggl(r-\chi \frac{\epu q}{1-\epu} \biggr)|\vec{Y}| ^2.
\end{align*}
Assuming that $\kappa$ and $\chi$ are both positive, we see from \eqref{matlemB1}, \eqref{matlemB2} and the above inequality that
\begin{align*}
\ipe{Y}{\Ptt Y}+s\ipe{Y}{\delb{0}\Qtt Y}+\chi \ipe{Y}{\Qtt Y}& \leq \biggl(\frac{1}{2}-\kappa \pc\biggr) y_0^2 + (\rc-\chi \qc_0)|\vec{Y}| ^2
\end{align*}
from which we conclude that
\begin{align*}
\ipe{Y}{\Ptt Y}+s\ipe{Y}{\delb{0}\Qtt Y}+\chi \ipe{Y}{\Qtt Y} \leq 0
\end{align*}
by setting  $\chi=\qc_0/\rc$ and $\kappa = 1/(2\pc)$.
This establishes the second stated inequality and completes the proof.
\end{proof}

\subsection{Local-in-time existence and uniqueness of solutions\label{Lwfsec}}

\begin{Def} \label{LwfDef}
Given $s=k/2$ with $k\in \mathbb{Z}$, we say the the initial data\footnote{We assume here that the functions $\sigmau_{i}{}^k{}_j \in H^{s+\frac{1}{2}}(\Omega)$ and the spatial coframe $\thetab^I\in H^{s+\frac{1}{2}}(\Omega_0,\Rbb^4)$, $I=1,2,3$,
have been fixed and
$\thetab^I$ satisfies the inequalities \eqref{thetabIbnd} for some positive constants $\vc_0,\vc_1$.}
\begin{align}
(\Utt_0,\Utt_1,\tilde{\Utt}_{2s+1})& \in H^{s+1}(\Omega_0,\Rbb^4)\times
H^{s+\frac{1}{2}}(\Omega_0,\Rbb^4)\times L^2(\del{}\Omega_0,\Ran(\Pbb)), \label{LwfDef1.1} \\
(\phi_0,\thetahu^0_0,\vartheta_0) &\in H^{s+\frac{3}{2}}(\Omega_0,\Rbb^4)\times H^{s+\frac{3}{2}}(\Omega_0,\Rbb^4)\times
H^{s+\frac{3}{2}}(\Omega_0,\Rbb^4), \label{LwfDef1.2}\\
(\zeta_0,\Ztt_0,\Ztt_1)&\in H^{s+\frac{3}{2}}(\Omega_0)\times  H^{s+\frac{3}{2}}(\Omega_0)\times H^{s+1}(\Omega_0),  \label{LwfDef1.3}
\end{align}
for the IBVP \eqref{libvp.1d}-\eqref{libvp.14d} satisfies the \textit{compatibility conditions} if
the higher formal
time derivatives
\begin{align*}
\Utt_\ell &= \delb{0}^\ell \Utt \bigl|_{\xb^0=0}, \quad \ell=2,3\ldots,2s+1,\\
\phi_\ell &= \delb{0}^{\ell}\phi\bigl|_{\xb^0=0}, \quad \ell=1,2,\ldots,2s+2,\\
\thetahu^0_\ell &= \delb{0}^{\ell}\thetahu^0 \bigl|_{\xb^0=0}, \quad \ell=1,2,\ldots,2s+2, \\
\vartheta_\ell &= \delb{0}^{\ell}\vartheta \bigl|_{\xb^0=0}, \quad \ell=1,2,\ldots,2s+2, \\
\zetau_\ell &= \delb{0}^\ell \zetau \bigl|_{\xb^0=0}, \quad \ell=1,2,\ldots,2s+2,\\
\Ztt_\ell &= \delb{0}^\ell \Ztt \bigl|_{\xb^0=0}, \quad \ell=2,3\ldots,2s+2,
\end{align*} 
which are  generated from the initial data by differentiating \eqref{libvp.1d}, \eqref{libvp.3d}-\eqref{libvp.6d} and \eqref{libvp.8d} formally with respect to $\xb^0$ the required number of times and setting $\xb^0=0$, satisfy 
\begin{align*}
\Utt_\ell &\in H^{s+1-\frac{m(s+1,\ell)}{2}}(\Omega_0,\Rbb^4), \quad \ell =2,3,\ldots, 2s+1, \\
\phi_\ell,  \thetahu^0_\ell, \vartheta_\ell &\in H^{s+\frac{3}{2}-\frac{m(s+\frac{3}{2},\ell)}{2}}(\Omega_0,\Rbb^4),
\quad \ell=1,2,\ldots,2s+2,\\
\zetau_\ell &\in H^{s+\frac{3}{2}-\frac{m(s+\frac{3}{2},\ell)}{2}}(\Omega_0),
\quad \ell=1,2,\ldots,2s+2, \\
\Ztt_\ell &\in H^{s+\frac{3}{2}-\frac{m(s+\frac{3}{2},\ell)}{2}}(\Omega_0),
\quad \ell=2,3,\ldots,2s+2, \\
\end{align*}
and
\begin{align*}
\delb{0}^\ell \Bigl(\thetab^3_\alpha \bigl(\Btt^{\alpha\beta}\delb{\beta}\Utt
+ \Xtt^{\alpha}\bigr)-
\Qtt\delb{0}^2\Utt
- \Ptt\delb{0}\Utt -\Gtt\Bigr) &= 0 \quad \text{in $\{0\}\times \del{}\Omega_0$},\quad \ell=0,1,\ldots,2s-1,\\
\delb{0}^\ell \Ztt &= 0\quad \text{in $\{0\}\times \del{}\Omega_0$}, \quad \ell=0,1,\ldots,2s+1,
\end{align*}
where the $(2s+1)$ formal time derivative of $\Utt$ restricted to the boundary $\del{}\Omega_0$ at $\xb^0=0$ is given by $\tilde{\Utt}_{2s+1}$, 
that is\footnote{In general, the this expression has to be taken as a definition and not as the restriction of $\Utt_{2s+1}$ to the boundary
$\del{}\Omega_0$ since the trace of $\Utt_{2s+1}$ on the boundary is not necessarily well defined due to our assumption that
$\Utt_{2s+1}\in L^2(\Omega_0)$.}
\begin{equation*} 
\delb{0}^{2s+1}\Utt|_{\{0\}\times\del{}\Omega_0} = \tilde{\Utt}_{2s+1}. 
\end{equation*}
\end{Def}

For use below, we also define the function space
\begin{align} 
\Yc^s_T= &\Xc^{s+1}_T(\Omega_0,\Rbb^4)\times X^0_T(\del{}\Omega_0,\Ran(\Pbb))
\times \Xcr^{s+\frac{3}{2}}_T(\Omega_0,\Rbb^4)  \notag \\
&\times \Xcr^{s+\frac{3}{2}}_T(\Omega_0,\Rbb^4)
\times \Xc^{s+\frac{3}{2}}_T(\Omega_0,\Rbb^4)
\times \Xcr^{s+\frac{3}{2}}_T(\Omega_0)\times \Xc^{s+\frac{3}{2}}_T(\Omega_0).\label{YcsTdef}
\end{align}

\begin{thm} \label{locexistA}
Suppose  $s> n/2+1/2$ and  $s=k/2$ for $k\in \Zbb_{\geq0}$, $\sigmau_{i}{}^k{}_j \in H^{s+\frac{1}{2}}(\Omega)$, 
the spatial coframe fields $\thetab^I\in H^{s+\frac{1}{2}}(\Omega_0,\Rbb^4)$
satisfy the inequalities \eqref{thetabIbnd} for some positive constants $\vc_0,\vc_1$, the initial data \eqref{LwfDef1.1}-\eqref{LwfDef1.3}
satisfies the compatibility conditions from Definition \ref{LwfDef}, the inequalities \eqref{gubnd}-\eqref{zubnd}
for some positive constants $\uc_0, \uc_1$, and the restriction $|\underline{\psih}+\thetahu^3|\leq \wc$ where $\wc$ is
the positive constant from Lemma \ref{Bsccoercive},   and the constant
$\kappa>0$ is chosen as in the statement of Lemma \ref{matlemB}.
Then there exists a $T>0$ and maps 
$(\Utt,\Vtt,\phi,\thetahu^0,\vartheta,\zetau,\Ztt)\in \Yc^s_T$
that determine a solution $(\Utt,\phi,\thetahu^0,\vartheta,\zetau,\Ztt)$ to the IBVP \eqref{libvp.1d}-\eqref{libvp.14d}
that satisfies the additional property that $\Vtt|_{\xb^0=0}=\tilde{\Utt}_{2s+1}$ and the pair $(\delb{0}^{2s}\Utt,\Vtt)$
defines a weak solution to the linear wave equation
that is obtained from differentiating \eqref{libvp.1d}-\eqref{libvp.2d} $2s$-times with respect to $\xb^0$. 
Moreover, the solution $(\Utt,\phi,\thetahu^0,\vartheta,\zetau,\Ztt)$ is unique in the space
\begin{align*}
X^{s+1,2}_T(\Omega_0,\Rbb^4)
\times \mathring{X}^{s+\frac{3}{2},3}_T(\Omega_0,\Rbb^4) \times \mathring{X}^{s+\frac{3}{2},3}_T(\Omega_0,\Rbb^4)
\times X^{s+\frac{3}{2},3}_T(\Omega_0,\Rbb^4)
\times \mathring{X}^{s+\frac{3}{2},3}_T(\Omega_0)\times X^{s+\frac{3}{2},3}_T(\Omega_0)
\end{align*}
and satisfies the energy estimate
\begin{equation*}
 \Rc(\xb^0) \leq c\biggl( \Rc(0) + \int_0^{\xb^0}C(\Nc(\tau))\bigl(\Rc(\tau)+\Nc(\tau)\bigr)\, d\tau \biggr)
\end{equation*}
where 
\begin{align*}
\Rc &= \Nc+ \norm{\Vtt}_{L^2(\del{}\Omega_0)},\\
\Nc &= \sum_{\ell=0}^1 \Bigl(\norm{\delb{0}^\ell\zetau}_{\Ec^{s+\frac{3}{2}}} +\norm{\delb{0}^\ell\phi}_{\Ec^{s+\frac{3}{2}}} +  
\norm{\delb{0}^\ell \thetahu^0}_{\Ec^{s+\frac{3}{2}}}\Bigr)+\norm{\Ztt}_{\Ec^{s+\frac{3}{2}}}+\norm{\vartheta}_{\Ec^{s+\frac{3}{2}}} +  \norm{\Utt}_{\Ec^{s+1}}
\intertext{and}
c &=  C(\Nc(0)) + T C(\norm{\Nc}_{L^\infty([0,T])}) \norm{\Nc}_{L^\infty([0,T])}.
\end{align*}
\end{thm}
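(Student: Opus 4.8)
The plan is to construct the solution by an iteration scheme built on the linear existence and uniqueness theory for wave equations with acoustic boundary conditions from Appendix \ref{LWE}, together with the product, commutator and Sobolev--Moser estimates of Appendix \ref{calculus}, which for the coefficients at hand are packaged in Lemma \ref{coeflemA}. Given the $k$-th iterate (whose components we decorate with a superscript $(k)$, writing $(\thetahu^0)^{(k)}$ for the one component that already carries a superscript), the $(k+1)$-st iterate is defined in three stages. First, freeze all coefficients in \eqref{libvp.1d}--\eqref{libvp.2d} at the $k$-th iterate via the functional dependencies \eqref{fdepA.1}--\eqref{fdepA.14} and solve the resulting \emph{linear} wave equation for $\Utt^{(k+1)}$ with the acoustic boundary condition using the theory of Appendix \ref{LWE}; this also produces the auxiliary boundary field $\Vtt^{(k+1)}\in X^0_T(\del{}\Omega_0,\Ran(\Pbb))$, which plays the role of $\delb{0}^{2s+1}\Utt$ restricted to $\del{}\Omega_0$, and realises $(\delb{0}^{2s}\Utt^{(k+1)},\Vtt^{(k+1)})$ as a weak solution of the $2s$-times time-differentiated linear problem, from which the full $\Xc^{s+1}_T$ regularity of $\Utt^{(k+1)}$ is recovered by integrating down through the lower-order equations. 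Second, solve the linear scalar wave equation \eqref{libvp.6d}--\eqref{libvp.7d} for $\Ztt^{(k+1)}$ with homogeneous Dirichlet data, the symmetry \eqref{Mscsym} and the coercivity of Lemma \ref{Msccoercive} supplying the hypotheses. Third, recover $\phi^{(k+1)},(\thetahu^0)^{(k+1)},\vartheta^{(k+1)},\zetau^{(k+1)}$ by integrating the transport/ODE equations \eqref{libvp.3d}--\eqref{libvp.5d} and \eqref{libvp.8d} in $\xb^0$ with right-hand sides evaluated on the already-determined fields, using the algebra and Moser estimates and the fact that each time integration costs at most one spatial derivative in the $\Xc^{s+3/2}_T$ scale. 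The compatibility conditions of Definition \ref{LwfDef} guarantee that the formal time derivatives of every iterate at $\xb^0=0$ match the prescribed data and lie in the correct spaces.

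\textbf{Uniform bounds.} The next step is to show that, for $T>0$ small enough, the iteration map preserves a closed ball $\overline{B}_R\subset \Yc^s_T$ centred at the time-independent extension of the initial data, and that the structural constraints \eqref{gubnd}--\eqref{zubnd} and $|(\thetahu^0)^3+\psihu|\leq \wc$ persist on $[0,T]$. Here the hypotheses of the linear existence theorem must be verified with constants \emph{independent of} $k$: the symmetry \eqref{BscBttsym}, the boundary-adapted coercive estimate for $\Btt^{\Lambda\Sigma}$ of Lemma \ref{Bsccoercive}, and the sign condition $\Ptt+s\delb{0}\Qtt+\chi\Qtt\leq 0$ together with $\qc_0\Pbb\leq-\Qtt\leq\qc_1\Pbb$ of Lemma \ref{matlemB} (whose input bound $\rc$ is controlled over $\overline{B}_R$ through Lemma \ref{coeflemA}). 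Lemma \ref{coeflemA} then bounds the forcing data $\Ftt,\Xtt,\Gtt,\Ktt,\Ytt$ and their relevant time derivatives in exactly the norms required by the linear energy estimate, yielding for the new iterate an estimate of the form $\Rc^{(k+1)}(\xb^0)\leq c\bigl(\Rc^{(k+1)}(0)+\int_0^{\xb^0}C(\Nc^{(k)}(\tau))(\Rc^{(k+1)}(\tau)+\Nc^{(k)}(\tau))\,d\tau\bigr)$; a Gronwall argument, then choosing $R$ large relative to the data norm and $T$ small, closes the iteration inside $\overline{B}_R$, and continuity in time keeps the pointwise constraints satisfied.

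\textbf{Convergence and conclusion.} One shows the iteration is a contraction in the lowest-order energy norm: differencing the equations for consecutive iterates gives linear wave equations and transport equations for the differences whose right-hand sides are, by the mean value theorem and the estimates of Appendix \ref{calculus}, bounded by the low norm of the previous difference times a constant depending only on the uniform ball $\overline{B}_R$; the base $L^2$-type energy estimate of Appendix \ref{LWE} and Gronwall then give contraction on a (possibly smaller) interval. The limit $(\Utt,\Vtt,\phi,\thetahu^0,\vartheta,\zetau,\Ztt)$ exists in the low-norm space, and uniform boundedness in $\Yc^s_T$ with weak-$*$ compactness and weak lower semicontinuity of the energy norms places it in $\Yc^s_T$ and in the stated uniqueness class; passing to the limit in the nonlinear system \eqref{libvp.1d}--\eqref{libvp.14d} shows it is a solution, that $\Vtt|_{\xb^0=0}=\tilde{\Utt}_{2s+1}$, and that $(\delb{0}^{2s}\Utt,\Vtt)$ is the asserted weak solution. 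The energy estimate in the statement is precisely the a priori estimate of the uniform-bound step applied to the limit, and uniqueness in the indicated space follows from the same low-norm energy/contraction argument applied to the difference of two solutions.

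\textbf{Main obstacle.} The delicate point is the uniform-bound step: one must check that the linearised wave problem solved at each stage genuinely satisfies the hypotheses of the Appendix \ref{LWE} existence theorem with constants uniform in $k$. This hinges on the near-boundary coercivity of $\Btt^{\Lambda\Sigma}$ (Lemma \ref{Bsccoercive}), whose proof requires the smallness $|(\thetahu^0)^3+\psihu|<\wc$ — itself a constraint one must show is preserved along the iteration — and on the fine sign structure of the boundary operator captured by Lemma \ref{matlemB}, all while $\Ftt$, which by \eqref{fdepA.10} depends on two derivatives of $\phi,\thetahu^0$ and on $\delb{0}\Utt$, is estimated at the borderline regularity with its top time derivative only in $L^2(\Omega_0)$, exactly as in the last two bounds of Lemma \ref{coeflemA}. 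Ensuring the existence time stays bounded below uniformly in $k$, despite the norms $\Nc^{(k)}$ appearing inside the coefficient-dependent constants $C(\Nc^{(k)})$, is the crux and is what dictates the careful bookkeeping of which fields carry which Sobolev regularity.
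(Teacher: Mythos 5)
Your overall architecture (iterate on a frozen‑coefficient linear problem, use the wave theory of Appendix~\ref{LWE} for $\Utt$ and $\Ztt$, the calculus estimates for the coefficients via Lemma~\ref{coeflemA}, then pass to a limit) matches the paper's, and you have identified correctly that the crux is uniformity of the linear hypotheses (Lemmas~\ref{Bsccoercive} and~\ref{matlemB}) over the iteration. However, there is a genuine gap in the step where you ``recover $\vartheta^{(k+1)}$ by integrating the transport equation \eqref{libvp.5d}.'' The linear wave theory gives $\Utt^{(k+1)}\in\Xc^{s+1}_T$, so the right-hand side $\lambda\Theta\Utt+\beta\vartheta$ of \eqref{libvp.5d} is only controlled in the $\Ec^{s+1}$ scale at each time. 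Integrating in time (Proposition~\ref{stpropE}) does not gain spatial derivatives — it only preserves the regularity of the source — so this step by itself cannot deliver $\vartheta^{(k+1)}\in\Xc^{s+\frac{3}{2}}_T$, which is precisely what is needed both to make the iterate an admissible input for $\Nc$ and Lemma~\ref{coeflemA} (whose hypotheses require $\vartheta\in\Xc^{s+\frac{3}{2}}_T$) and to control the $\Db^{|1|}\vartheta$ dependence of $\Ftt$ in \eqref{fdepA.10}. The paper closes this gap with an elliptic‑regularity bootstrap: it substitutes $\Utt=\lambda^{-1}E(\delb{0}\vartheta+\text{l.o.t.})$ into the wave equation and boundary condition, integrates once in time, and views the result as a Neumann BVP for $\vartheta(\xb^0)$ at each fixed time, to which Theorem~\ref{ellipticregN} applies (the coercivity of Lemma~\ref{Bsccoercive} again being essential), thereby upgrading $\vartheta$ by a half spatial derivative; the transport equations then propagate this to $\phi$ and $\thetahu^0$ via \eqref{locexistA11da}--\eqref{locexistA11dc}. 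Without this step the iteration map is not well‑defined on $\Yc^s_T$, and your contraction argument would not close.

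A smaller but worth‑noting divergence: you claim the iteration is a low‑norm contraction. The paper does not prove contraction for existence; instead it shows the map $\Jc_T$ preserves a closed ball $B_{R,\delta}\subset\Yc^s_T$, extracts a weak‑$*$ limit via Banach--Alaoglu, upgrades to strong convergence of lower‑order derivatives via Rellich--Kondrachov, and passes to the limit in the nonlinear system. The contraction route is also viable in principle, but note that the paper's own uniqueness argument — which is the closest thing to a low‑norm contraction estimate — again requires the same elliptic‑regularity bootstrap for $\delta\vartheta$ (equations \eqref{unique19.1}--\eqref{unique21}). So whichever of the two routes you take, the elliptic step is unavoidable, and its omission is the substantive defect in the proposal.
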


\begin{proof}$\;$

\smallskip

\noindent \underline{Existence:} 
We begin by defining the formal solution generated from the initial data by setting\footnote{Note, by the definition of the compatibility conditions, that $\phi_\ell$ and $\thetah^0_\ell$ are related via formal
differentiation of the evolution equation \eqref{libvp.3d}. This allows us to define $\phi_{2s+3}$ in terms
of  $(\phi_{\ell},\thetah^0_\ell)$, $0\leq \ell\leq 2s+2$. Moreover, with the help of the calculus inequalities from
Appendix \ref{calculus}, it is not difficult to verify that 
\begin{equation*}
\norm{\delb{0}\phi_f}_{\Xc^{s+\frac{3}{2}}_1} \leq
 C\biggl(\norm{\phi_f}_{\Xc^{s+\frac{3}{2}}_1}, \norm{\thetahu^0_f}_{\Xc^{s+\frac{3}{2}}_1}\biggr).
\end{equation*} 
By using similar arguments involving the evolution equations \eqref{libvp.4d} and \eqref{libvp.8d}, we can also define
 $\thetahu^0_{2s+3}$ and $\zetau_{2s+3}$ in terms of the initial data $(\phi_{\ell},\thetah^0_\ell,\vartheta_\ell,\Ztt_\ell)$, $0\leq \ell\leq 2s+2$,
 and bound $\delb{0}\thetahu^0_f$ and $\delb{0}\zetau_f$  by
 \begin{equation*}
 \norm{\delb{0}\thetahu^0_f}_{\Xc^{s+\frac{3}{2}}_1} + \norm{\delb{0}\zetau_f}_{\Xc^{s+\frac{3}{2}}_1}\leq
 C\biggl(\norm{\phi_f}_{\Xc^{s+\frac{3}{2}}_1}, 
 \norm{\thetahu^0_f}_{\Xc^{s+\frac{3}{2}}_1},\norm{\vartheta_f}_{\Xc^{s+\frac{3}{2}}_1},\norm{\Ztt_f}_{\Xc^{s+\frac{3}{2}}_1}\biggr).
 \end{equation*}  
 } 
\begin{gather*}
\Utt_f(\xb^0,\xb^\Lambda) = \sum_{\ell=0}^{2s+1} \frac{(\xb^0)^\ell}{\ell !}\Utt_\ell(\xb^\Lambda),\quad
\Vtt_f(\xb^0,\xb^\Lambda) = \tilde{\Utt}_{2s+1}(\xb^\Lambda),\quad
\phi_f(\xb^0,\xb^\Lambda) = \sum_{\ell=0}^{2s+3} \frac{(\xb^0)^\ell}{\ell !}\phi_\ell(\xb^\Lambda),\\
\thetahu^0_f(\xb^0,\xb^\Lambda) =  \sum_{\ell=0}^{2s+3} \frac{(\xb^0)^\ell}{\ell !}\thetahu^0_\ell(\xb^\Lambda),
\quad  
\vartheta_f(\xb^0,\xb^\Lambda) =  \sum_{\ell=0}^{2s+2} \frac{(\xb^0)^\ell}{\ell !}\vartheta_\ell(\xb^\Lambda), \\
\zetau_f(\xb^0,\xb^\Lambda) = \sum_{\ell=0}^{2s+3} \frac{(\xb^0)^\ell}{\ell !}\zetau_\ell(\xb^\Lambda) \AND
\Ztt_f(\xb^0,\xb^\Lambda) = \sum_{\ell=0}^{2s+2} \frac{(\xb^0)^\ell}{\ell !}\Ztt_\ell(\xb^\Lambda),
\end{gather*}
and we let
\begin{equation*}
R_f = \norm{\Utt_f}_{\Xc^{s+1}_1}+ \norm{\Vtt_f}_{X^0_1}+\norm{\phi_f}_{\Xcr^{s+\frac{3}{2}}_1} +
\norm{\thetahu^0_f}_{\Xcr^{s+\frac{3}{2}}_1} 
+  \norm{\vartheta_f}_{\Xc^{s+\frac{3}{2}}_1}+   \norm{\zetau_f}_{\Xcr^{s+\frac{3}{2}}_1}+
 \norm{\Ztt_f}_{\Xc^{s+\frac{3}{2}}_1} .
\end{equation*}
Assuming that $T\in (0,1]$ and $\delta>0$, we fix $R>R_f$ and we define a non-empty, closed set 
\begin{equation*}
B_{R,\delta} \subset \Yc^s_T
\end{equation*}
by
\begin{equation*}
(\Uttg,\Vttg,\phig,\thetahug,\varthetag,\zetaug,\Zttg) \in B_{R,\delta} 
\end{equation*}
if and only if 
\begin{equation*}
  \norm{\Uttg}_{\Xc^{s+1}_T}+ \norm{\Vttg}_{X^0_T}+\norm{\phig}_{\Xcr^{s+\frac{3}{2}}_T} +
\norm{\thetahug}_{\Xcr^{s+\frac{3}{2}}_T} 
+  \norm{\varthetag}_{\Xc^{s+\frac{3}{2}}_T}+   \norm{\zetaug}_{\Xcr^{s+\frac{3}{2}}_T}+
 \norm{\Zttg}_{\Xc^{s+\frac{3}{2}}_T} \leq R,
\end{equation*}
\begin{equation*}
\sup_{0\leq \xb^0\leq T}\Bigl( \norm{\zetaua(\xb^0)-\zetau_f(0)}_{E^{s,0}}+\norm{\phia(\xb^0)-\phi_f(0)}_{E^{s+1,1}}+ 
\norm{\thetahua(\xb^0)-\thetahu^0_f(0)}_{E^{s,0}}
+ \norm{\varthetaa(\xb^0)-\vartheta_f(0)}_{E^{s,0}}\Bigr)\leq \delta,
\end{equation*}
\begin{align*}
\delb{0}^\ell \Uttg \bigl|_{\xb^0=0} &= \Utt_\ell, \quad \ell=0,1,\ldots,2s+1,\\
\Vttg\bigl|_{\xb^0=0} &= \tilde{\Utt}_{2s+1}, \\
\delb{0}^{\ell}\phig\bigl|_{\xb^0=0} &= \phi_\ell, \quad \ell=0,1,\ldots,2s+3,\\
\delb{0}^{\ell}\thetahug \bigl|_{\xb^0=0}&= \thetahu^0_\ell  , \quad \ell=0,1,\ldots,2s+3,\\
\delb{0}^\ell \varthetag \bigl|_{\xb^0=0}&= \vartheta_\ell , \quad \ell=0,1,\ldots,2s+2,\\
\delb{0}^\ell \zetaug \bigl|_{\xb^0=0}&= \zetau_\ell , \quad \ell=0,1,\ldots,2s+3
\intertext{and}
\delb{0}^\ell \Zttg \bigl|_{\xb^0=0}&= \zetau_\ell , \quad \ell=0,1,\ldots,2s+2.
\end{align*} 

By the choice of initial data, the formal solution satisfies 
\begin{equation*}
(\zetau_f(\xb),\phi_f(\xb),\delb{}\phi_f(\xb),\thetahu^0_f(\xb),\vartheta(\xb))\in \Uc, \qquad \forall\; \xb\in \{0\}\times \overline{\Omega}_0,
\end{equation*} 
and
\begin{equation*}
|\psihu_f(\xb)-\hat{\thetau}{}^3(\xb)| <\wc, \qquad \forall\; \xb\in \{0\}\times \del{}\Omega_0.
\end{equation*}
Choosing $\delta >0$ small enough, we see from the definition of the set $B_{R,\delta}$ and Sobolev's inequality (Theorem \ref{ISobolev}) that
any element $(\Uttg,\Vttg,\phig,\thetahug,\varthetag,\zetaug,\Zttg)\in B_{R,\delta}$ will satisfy
\begin{equation} \label{Ucsat}
(\zetaug(\xb),\phig(\xb),\delb{}\phig(\xb),\thetahug(\xb),\varthetag(\xb))\in \Uc, \qquad \forall\; \xb\in [0,T]\times \overline{\Omega}_0,
\end{equation} 
and
\begin{equation} \label{coercsat}
|\grave{\psihu}(\xb)-\hat{\thetau}{}^3(\xb)| <\wc, \qquad \forall\; \xb\in [0,T]\times \del{}\Omega_0.
\end{equation}
We then define a map
\begin{align} \label{Jcmap}
\Jc_{\!T} &\: :\: B_{R,\delta} \longrightarrow \Yc^s_T
\end{align}
by letting
\begin{equation*} 
(\Utta,\Vtta,\phia,\thetahua,\varthetaa,\zetaua,\Ztta):= \Jc_{\!T}(\Uttg,\Vttg,\phig,\thetahug,\varthetag,\zetaug,\Zttg)
\end{equation*}
denote the unique solution to the linear IBVP
\begin{align}
\delb{\alpha}\bigl(\grave{\Btt}{}^{\alpha\beta}\delb{\beta}\Utta + \grave{\Xtt}{}^{\alpha}\bigr)&=\grave{\Ftt} &&
\text{in $[0,T]\times \Omega_0$}, \label{llibvp.1} \\
\thetab^3_\alpha \bigl(\grave{\Btt}{}^{\alpha\beta}\delb{\beta}\Utta
+ \grave{\Xtt}{}^{\alpha}\bigr) &=  
\grave{\Qtt}{}
\delb{0}^2\Utta 
+ \grave{\Ptt}\delb{0}\Utta +\grave{\Gtt} && \text{in $[0,T]\times \del{}\Omega_0$,} \label{llibvp.2} \\
\delb{0}\phia &= \grave{\alpha} \thetahua && \text{in $[0,T]\times \Omega_0$}, \label{llibvp.3} \\
\delb{0}\thetahua & = \varthetaa+\grave{\betat} \thetahua &&
\text{in $[0,T]\times \Omega_0$}, \label{llibvp.4} \\
\delb{0}\varthetaa & = \grave{\lambda}\grave{\Theta}\Utta+\grave{\beta}\varthetag &&
\text{in $[0,T]\times \Omega_0$}, \label{llibvp.5} \\
\delb{\alpha}(\grave{\Msc}{}^{\alpha\beta}\delb{\beta}\Ztta+\grave{\Ytt}{}^\alpha) &= \grave{\Ktt} && \text{in $[0,T]\times \Omega_0$,} \label{llibvp.6}\\
\Ztta &= 0 && \text{in $[0,T]\times \del{}\Omega_0$,} \label{llibvp.7}\\
\delb{0}\zetaua &= \Ztta && \text{in $[0,T]\times \Omega_0$}, \label{llibvp.8} \\
(\Utta,\delb{0}\Utta) &= (\Utt_0,\Utt_1)  && \text{in $\{0\}\times \Omega_0$,} \label{llibvp.9}\\
\phia &= \phi_0  && \text{in $\{0\}\times \Omega_0$,} \label{llibvp.10}\\
\thetahua &= \thetahu^0_0  && \text{in $\{0\}\times \Omega_0$,} \label{llibvp.11}\\
\varthetaa &= \vartheta_0  && \text{in $\{0\}\times \Omega_0$,} \label{llibvp.12}\\
(\Ztta,\delb{0}\Ztta) &= (\Ztt_0,\Ztt_1)  && \text{in $\{0\}\times \Omega_0$,} \label{llibvp.13}\\
\zetaua &= \zetau_0  && \text{in $\{0\}\times \Omega_0$,} \label{llibvp.14}\\
\Vtta &= \tilde{\Utt}_{2s+1}  && \text{in $\{0\}\times\del{}\Omega_0$,}\label{llibvp.15}
\end{align} 
where the pair $(\delb{0}^{2s}\Utta,\Vtta)$
defines a weak solution to the linear wave equation
that is obtained from differentiating \eqref{llibvp.1}-\eqref{llibvp.2} $2s$-times with respect to $\xb^0$,
and we are using the grave accent over the coefficients that appear in the IBVP  to indicate that those coefficients are being
evaluated using the fields
$(\Uttg,\phig,\thetahug,\varthetag,\zetaug,\Zttg)$, e.g. $\grave{\lambda} = \lambda(\phig,\thetahug,\psig)$.

For any $T\in (0,1]$, the existence of a solution 
\begin{equation} \label{locexistA9}
(\Utta,\Vtta,\phia,\thetahua,\varthetaa,\zetaua,\Ztta) \in \tilde{\Yc}{}^s_T, 
\end{equation}
where
\begin{align*}
\tilde{\Yc}^s_T = &\Xc^{s+1}_T(\Omega_0,\Rbb^4)\times X^0_T(\del{}\Omega_0,\Ran(\Pbb))
\times \Xcr^{s+\frac{1}{2}}_T(\Omega_0,\Rbb^4) \\
&\times \Xcr^{s+\frac{1}{2}}_T(\Omega_0,\Rbb^4) 
\times \Xc^{s+1}_T(\Omega_0,\Rbb^4)
\times \Xcr^{s+\frac{3}{2}}_T(\Omega_0)\times \Xc^{s+\frac{3}{2}}_T(\Omega_0),
\end{align*}
is then a direct consequence of the Theorems \ref{DCthm} and \ref{LAWthm}, Proposition \ref{stpropE}, and the estimates from
Lemmas \ref{coeflemA}-\ref{matlemB} and Propositions  \ref{stpropC}, \ref{stpropE} and \ref{stpropF} (see also
\eqref{fdepA.13} and \eqref{fdepA.14}). Furthermore, from these results, we deduce that the energy estimate
\begin{equation}  \label{locexistA11a}
\Rca_0(\xb^0) \leq c\biggl( \Rca_0(0) + \int_0^{\xb^0}C(\Ncg(\tau))\bigl(\Rca_0(\tau)+\Ncg(\tau)\bigr)\, d\tau 
\biggr)
\end{equation}
holds for $\xb^0\in [0,T]$, where
\begin{equation*}
\Rca_0 = \sum_{\ell=0}^1 \Bigl(\norm{\delb{0}^\ell\zetaua}_{\Ec^{s+\frac{3}{2}}} +\norm{\delb{0}^\ell\phia}_{\Ec^{s+1}} +  
\norm{\delb{0}^\ell \thetahua}_{\Ec^{s+1}}\Bigr)+\norm{\Ztta}_{\Ec^{s+\frac{3}{2}}}+\norm{\varthetaa}_{\Ec^{s+1}} +  \norm{\Utta}_{\Ec^{s+1}}+ \norm{\Vtta}_{L^2(\del{}\Omega_0)},
\end{equation*}
\begin{equation*}
\Ncg = \sum_{\ell=0}^1 \Bigl(\norm{\delb{0}^\ell\zetaug}_{\Ec^{s+\frac{3}{2}}} +\norm{\delb{0}^\ell\phig}_{\Ec^{s+\frac{3}{2}}} +  
\norm{\delb{0}^\ell \thetahug}_{\Ec^{s+\frac{3}{2}}}\Bigr)+\norm{\Zttg}_{\Ec^{s+\frac{3}{2}}}+\norm{\varthetag}_{\Ec^{s+\frac{3}{2}}} +  \norm{\Uttg}_{\Ec^{s+1}}
\end{equation*}
and
\begin{equation} \label{locexistA10}
c=  C(\grave{\Nc}(0)) + T C(\norm{\grave{\Nc}}_{L^\infty([0,T])}) \norm{\grave{\Nc}}_{L^\infty([0,T])}.
\end{equation}

At the moment, all we know is that \eqref{locexistA9} holds.  In order to for the map \eqref{Jcmap} to be well defined, we must show that the solution
$(\Utta,\Vtta,\phia,\thetahua,\varthetaa,\zetaua,\Ztta)$ lies in $\Yc^s_T$ instead of just in $\tilde{\Yc}{}^s_T$. To show this, we use \eqref{llibvp.5} to replace  $\Utta$
in \eqref{llibvp.1}-\eqref{llibvp.2} with $\delb{0}\varthetaa$ and mulitply both equations on the left by $\Eg^{\tr}$. Carrying this out, we obtain, with the help \eqref{sigmatev},
\eqref{Thetadef2} and \eqref{Bttdef}, the equations
\begin{align}
\delb{\Lambda}\bigl(\delb{0}(\Bscg^{\Lambda\Sigma}\delb{\Sigma}\varthetaa) + \Xf^{\Lambda}\bigr)&=-\delb{0}\bigl(\Eg^{\tr}\Bttg^{0\beta}\delb{\beta}\Utta
+ \Eg^{\tr}\Xttg^{0}\bigr)+\Ff &&
\text{in $[0,T]\times \Omega_0$}, \label{llibvp.1a} \\
\delb{\Lambda}\bigl(\delb{0}(\Bscg^{\Lambda\Sigma}\delb{\Sigma}\varthetaa)  + \Xf^{\Lambda}\bigr)&=  
\delb{0}(\Eg^{\tr}\Qttg
\delb{0}\Utta)  +\Gf && \text{in $[0,T]\times \del{}\Omega_0$,} \label{llibvp.2a}
\end{align}
where
\begin{align*}
\Ff &= \Eg^{\tr}\Fttg +\delb{\alpha}(\Eg^{\tr})(\Bttg^{\alpha\beta}\delb{\beta}\Utta+\Xttg^{\alpha}) ,\\
\Xf^\Lambda&= -\delb{0}\Bscg^{\Lambda\Sigma}\delb{\Sigma}\varthetaa-\Eg^{\tr}\Bttg^{\Lambda\Sigma}\delb{\Sigma}\bigl(\lambdag^{-1}\Eg\betag\varthetag\bigr)+\Eg^{\tr}\Bttg^{\Lambda 0}\delb{0}\Utta+\Eg^{\tr}\Xttg^\Lambda 
\intertext{and}
\Gf &= (-\delb{0}(\Eg^{\tr}\Qttg)+\Eg^{\tr}\Pttg)\delb{0}\Utta+ \Eg^{\tr}\Gttg.
\end{align*}
Integrating \eqref{llibvp.1a} and \eqref{llibvp.2a} in time then yields
\begin{align}
\delb{\Lambda}\biggl(\Bscg^{\Lambda\Sigma}(\xb^0)\delb{\Sigma}\varthetaa(\xb^0) - \Bscg^{\Lambda\Sigma}(0)\delb{\Sigma}\varthetaa(0) + 
\int_{0}^{\xb^0}\Xf^{\Lambda}(\tau)\,d\tau\biggr)&=-\Eg^{\tr}(\xb^0)\Bttg^{0\beta}(\xb^0)\delb{\beta}\Utta(\xb^0) \notag\\
- \Eg^{\tr}(\xb^0)\Xttg^{0}(\xb^0) + \Eg^{\tr}(0)\Bttg^{0\beta}(0)\delb{\beta}\Utta(0) &
+ \Eg^{\tr}(0)\Xttg^{0}(0)+\int_0^{\xb^0}\Ff(\tau)\, d\tau &&
\text{in $[0,T]\times \Omega_0$}, \label{llibvp.1b}\\
\delb{\Lambda}\biggl(\Bscg^{\Lambda\Sigma}(\xb^0)\delb{\Sigma}\varthetaa(\xb^0) - \Bscg^{\Lambda\Sigma}(0)\delb{\Sigma}\varthetaa(0) + 
\int_{0}^{\xb^0}\Xf^{\Lambda}(\tau)\,d\tau\biggr)&=  
\Eg^{\tr}(\xb^0)\Qttg(\xb^0)
\delb{0}\Utta(\xb^0) \notag  \\
 - \Eg^{\tr}(0)&\Qttg(0)\delb{0}\Utta(0)  +\int_0^{\xb^0}\Gf(\tau)\, d\tau  && \text{in $[0,T]\times \del{}\Omega_0$.} \label{llibvp.2b}
\end{align}

Using the estimate from Lemma \ref{coeflemA}, the multiplication estimates from Proposition \ref{calcpropB}, \ref{stpropA} and \ref{stpropC},
the integral estimates from Proposition \ref{stpropE}, and the Trace Theorem (Theorem \ref{trace}), we can estimate the coefficients from \eqref{llibvp.1b} and \eqref{llibvp.2b} as follows:
\begin{align*}
\norm{\Bscg^{\Lambda\Sigma}(0)\delb{\Sigma}\varthetaa(0)}_{H^{s+\frac{1}{2}}(\Omega_0)} &\leq
C(\Ncg(0))\Rca(0), \\
\norm{\Eg^{\tr}(\xb^0)\Bttg^{0\beta}(\xb^0)\delb{\beta}\Utta(\xb^0)}_{H^{s-\frac{1}{2}}(\Omega_0)} &\leq
C(\Ncg(0))\Rca(0) +
\int_0^{\xb^0} C(\Ncg(\tau))\Rca(\tau)\, d\tau, \\ 
\norm{\Eg^{\tr}(\xb^0)\Xtt^{0}(\xb^0)}_{H^{s-\frac{1}{2}}(\Omega_0)} &\leq
C(\Ncg(0))\Ncg(0)  +
\int_0^{\xb^0} C(\Ncg(\tau))\Ncg(\tau)\, d\tau, \\ 
\norm{\Eg^{\tr}(\xb^0)\Qttg(\xb^0)
\delb{0}\Utta(\xb^0)}_{H^{s}(\del{}\Omega_0)} &\lesssim \norm{\Eg^{\tr}(\xb^0)\Qttg(\xb^0)
\delb{0}\Utta(\xb^0)}_{H^{s+\frac{1}{2}}(\Omega_0)}\\
&\leq c\Rca_0(\xb^0),\\
\biggl\|\int_0^{\xb^0}\Xf^{\lambda}(\tau)\, d\tau\biggr\|_{H^{s+\frac{1}{2}}(\Omega_0)} &\lesssim \int_0^{\xb^0} \norm{\Xf^{\Lambda}(\tau)}_{H^{s+\frac{1}{2}}(\Omega_0)}\, d\tau \\
&\leq \int_0^{\xb^0} C(\Ncg(\tau))(\Ncg(\tau)+\Rca(\tau))\, d\tau,\\
\biggl\|\int_0^{\xb^0}\Ff(\tau)\, d\tau\biggr\|_{H^{s-\frac{1}{2}}(\Omega_0)} &\lesssim \int_0^{\xb^0} \norm{\Ff(\tau)}_{H^{s-\frac{1}{2}}(\Omega_0)}\, d\tau \\
&\leq \int_0^{\xb^0} C(\Ncg(\tau))(\Ncg(\tau)+\Rca(\tau))\, d\tau
\intertext{and}
\biggl\|\int_0^{\xb^0}\Gf(\tau)\, d\tau\biggr\|_{H^{s}(\del{}\Omega_0)} &\lesssim \int_0^{\xb^0} \norm{\Gf(\tau)}_{H^{s+\frac{1}{2}}(\Omega_0)}\, d\tau \\
&\leq \int_0^{\xb^0} C(\Ncg(\tau))(\Ncg(\tau)+\Rca(\tau))\, d\tau,
\end{align*}
where the constant $c$ is of the form \eqref{locexistA10}, $\Ncg$ is defined above and
\begin{equation*}
\Rca = \sum_{\ell=0}^1 \Bigl(\norm{\delb{0}^\ell\zetaua}_{\Ec^{s+\frac{3}{2}}} +\norm{\delb{0}^\ell\phia}_{\Ec^{s+\frac{3}{2}}} +  
\norm{\delb{0}^\ell \thetahua}_{\Ec^{s+\frac{3}{2}}}\Bigr)+\norm{\Ztta}_{\Ec^{s+\frac{3}{2}}}+\norm{\varthetaa}_{\Ec^{s+\frac{3}{2}}} +  \norm{\Utta}_{\Ec^{s+1}}+ \norm{\Vtta}_{L^2(\del{}\Omega_0)}.
\end{equation*}
From the above coefficient estimates, it then follows from Neumann BVP \eqref{llibvp.1a}-\eqref{llibvp.2a} and elliptic regularity, see Theorem \ref{ellipticregN},
that 
\begin{align}
\norm{\varthetaa(\xb^0)}_{H^{s+\frac{3}{2}}(\Omega_0)}\leq c \biggl(C(\Ncg(0))\Rca(0)+ \Rca_0(\xb^0) 
  +
 \int_0^{\xb^0} C(\Ncg(\tau))(\Ncg(\tau)+\Rca(\tau))\, d\tau
 \biggr),\quad \xb^0\in [0,T], \label{locexistA11b}
\end{align} 
where the constant $c$ is of the form \eqref{locexistA10} and we have used the fact that
\begin{equation} \label{locexistA11c}
\Rca(0) = \Ncg(0)+ \norm{\Vtta(0)}_{L^2(\del{}\Omega_0)}.
\end{equation}
Moreover, from the evolution equation \eqref{llibvp.4}, it follows, with the help of Lemma \ref{coeflemA}, Theorem \ref{calcpropB} and
Theorem \ref{stpropE} with $r=0$, that
\begin{equation} \label{locexistA11da}
\norm{\thetahua(\xb^0)}_{H^{s+\frac{3}{2}}(\Omega_0)} \leq c\biggl(\norm{\thetahua(0)}_{H^{s+\frac{3}{2}}(\Omega_0)}
+ \int_0^{\xb^0} \norm{\thetahua(\tau)}_{H^{s+\frac{3}{2}}(\Omega_0)}+\norm{\varthetaa(\tau)}_{H^{s+\frac{3}{2}}(\Omega_0)}\, d\tau \biggr), 
\end{equation}
for $\xb^0\in [0,T]$, where again the constant $c$ is of the from \eqref{locexistA10}, while  the estimates
\begin{gather}
\norm{\phia(\xb^0)}_{H^{s+\frac{3}{2}}(\Omega_0)}\leq 
 c\biggl(\norm{\phia(0)}_{H^{s+\frac{3}{2}}(\Omega_0)}
+ \int_0^{\xb^0} \norm{\thetahua(\tau)}_{H^{s+\frac{3}{2}}(\Omega_0)}\, d\tau \biggr) \label{locexistA11db}
\intertext{and}
\norm{\delb{0}\phia(\xb^0)}_{H^{s+\frac{3}{2}}(\Omega_0)}+\norm{\delb{0}\thetahua(\xb^0)}_{H^{s+\frac{3}{2}}(\Omega_0)} \leq 
  c\Bigl(\norm{\thetahua(\xb^0)}_{H^{s+\frac{3}{2}}(\Omega_0)}+
\norm{\varthetaa(\xb^0)}_{H^{s+\frac{3}{2}}(\Omega_0)} \Bigr), \label{locexistA11dc}
\end{gather}
which hold for $\xb^0\in [0,T]$, are a direct consequence of the evolution equations \eqref{llibvp.3}-\eqref{llibvp.4},
Lemma \ref{coeflemA}, Theorem \ref{calcpropB} and Theorem \ref{stpropE}.

By combining the estimates \eqref{locexistA11a}, \eqref{locexistA11b} and \eqref{locexistA11da}-\eqref{locexistA11dc}, we arrive, with the help of \eqref{locexistA11c}, at the energy estimate
\begin{equation} \label{locexistA11e}
\Rca(\xb^0) \leq c\biggl(\Rca(0) + \int_0^{\xb^0}C(\Ncg(\tau))\bigl(\Rca(\tau)+\Ncg(\tau)\bigr)\, d\tau \biggr),\quad \xb^0\in [0,T], 
\end{equation}
where, as above, the constant $c$ is of the form \eqref{locexistA10}. We then observe from this estimate and Proposition \ref{stpropE}
that
\begin{align} 
&\norm{\zetaua(\xb^0)-\zetau_f(0)}_{E^{s,0}}+\norm{\phia(\xb^0)-\phi_f(0)}_{E^{s+1,1}} \notag \\
&\qquad+ 
\norm{\thetahua(\xb^0)-\thetahu^0_f(0)}_{E^{s,0}}
+ \norm{\varthetaa(\xb^0)-\vartheta_f(0)}_{E^{s,0}} \leq \int_0^{\xb^0}\Rca(\tau)\,d \tau.\label{locexistA11f}
\end{align}
We also conclude from \eqref{locexistA11e} that the map $\Jc_{\!T}$ is well defined since it implies
that  $(\Utta,\Vtta,\phia,\thetahua,\varthetaa,\zetaua,\Ztta)$ lies in $\Yc^s_T$.

Recalling that $T\in (0,1]$ and noting that
\begin{equation*}
\Ncg(0)\leq \grave{\Rc}(0)=\Rca(0)\leq R_f \AND \norm{\Ncg}_{L^\infty([0,T])} \leq R,
\end{equation*}
we see from \eqref{locexistA10}, \eqref{locexistA11e} and Gronwall's inequality that 
\begin{equation*}
\norm{\Rca}_{L^\infty([0,T])} \leq C_0 := (C(R_f)+TC(R))e^{C(R)T},
\end{equation*}
and hence, by \eqref{locexistA11f}, that
\begin{equation*}
\sup_{0\leq \xb^0 \leq T}\Bigl(\norm{\zetaua(\xb^0)-\zetau_f(0)}_{E^{s,0}}+\norm{\phia(\xb^0)-\phi_f(0)}_{E^{s+1,1}} + 
\norm{\thetahua(\xb^0)-\thetahu^0_f(0)}_{E^{s,0}}
+ \norm{\varthetaa(\xb^0)-\vartheta_f(0)}_{E^{s,0}}\Bigr)<TC_0.
\end{equation*}
By choosing $R$ large enough so that $R>C(R_f) +1$ and setting $T=\min\bigl\{\frac{1}{C(R)}, \frac{\delta}{C_0},1\bigr\}$, we can guarantee that
\begin{gather*}
\norm{\acute{\Rc}}_{L^\infty([0,T])}\leq R
\intertext{and}
\sup_{0\leq \xb^0 \leq T}\Bigl(\norm{\zetaua(\xb^0)-\zetau_f(0)}_{E^{s,0}}+\norm{\phia(\xb^0)-\phi_f(0)}_{E^{s+1,1}} + 
\norm{\thetahua(\xb^0)-\thetahu^0_f(0)}_{E^{s,0}}
+ \norm{\varthetaa(\xb^0)-\vartheta_f(0)}_{E^{s,0}}\Bigr)\leq\delta.
\end{gather*}
We therefore conclude from the definition of the map $\Jc_{\! T}$ and the set $B_{R,\delta}$ that
\begin{equation}\label{locexistA13c}
\Jc_{\! T}(B_{R,\delta})\subset B_{R,\delta}.
\end{equation}
This allows us to use $\Jc_{\! T}$ to define a sequence in $B_{R,\delta}$ by setting
\begin{equation} \label{locexistA13a}
(\Utt_{(j)},\Vtt_{(j)},\phi_{(j)},\thetahu^0_{(j)},\vartheta_{(j)},\zetau_{(j)},\Ztt_{(j)}):= 
\underset{\text{j times}}{\underbrace{\Jc_{\!T}\circ \Jc_{\!T} \circ \cdots \circ \Jc_{\!T}}}(\Utt_{f},\Vtt_{f},\phi_{f},\thetahu^0_{f},\vartheta_{f},\zetau_{f},\Ztt_{f}), \quad j\in \Zbb_{\geq 0}.
\end{equation}

Since 
\begin{equation*}
  \norm{\phi_{(j)}}_{\Xcr^{s+\frac{3}{2}}_T} +
\norm{\thetahu^0_{(j)}}_{\Xcr^{s+\frac{3}{2}}_T} 
+   \norm{\zetau_{(j)}}_{\Xcr^{s+\frac{3}{2}}_T}+
 \norm{\Ztt_{(j)}}_{\Xc^{s+\frac{3}{2}}_T}+  \norm{\vartheta_{(j)}}_{\Xc^{s+\frac{3}{2}}_T} +\norm{\Utt_{(j)}}_{\Xc^{s+1}_T}+ \norm{\Vtt_{(j)}}_{X^0_T}\leq R  
\end{equation*}
for $j\in \Zbb_{\geq 0}$, by \eqref{locexistA13c},
it follows from the sequential Banach-Alaoglu Theorem that we can extract a subsequence from \eqref{locexistA13a}
that, for $q\in (1,\infty)$, converges weakly to a limit $(\Utt,\Vtt,\phi,\thetahu^0,\vartheta,\zetau,\Ztt)$ where
\begin{gather}
\Utt \in \bigcap_{\ell=0}^{2s+1} W^{\ell,q}\bigl([0,T],H^{s+1-\frac{m(s+1,\ell)}{2}}(\Omega_0,\Rbb^4)\bigr), \quad 
\Vtt \in L^q\bigl([0,T],L^2(\del{}\Omega_0,\Ran(\Pbb))\bigr), \label{locexistA13b.1} \\
\phi,\delb{0}\phi,\thetahu^0, \delb{0}\thetahu^0, \vartheta \in \bigcap_{\ell=0}^{2s+2} W^{\ell,q}\bigl([0,T],H^{s+\frac{3}{2}-\frac{m(s+\frac{3}{2},\ell)}{2}}(\Omega_0,\Rbb^4)\bigr),
 \label{locexistA13b.2}\\
\zetau,\delb{0}\zetau,\Ztt \in \bigcap_{\ell=0}^{2s+2} W^{\ell,q}\bigl([0,T],H^{s+\frac{3}{2}-\frac{m(s+\frac{3}{2},\ell)}{2}}(\Omega_0)\bigr),
 \label{locexistA13b.3}
\end{gather}
and the norm bounds
\begin{align}
\norm{\Utt}_{W^{\ell,q}\bigl([0,T],H^{s+1-\frac{m(s+1,\ell)}{2}}(\Omega_0,\Rbb^4)\bigr)} &\lesssim 1, \quad \ell=0,1,\ldots,2s+1,
\label{locexistA14.1} \\
\norm{\Vtt}_{L^q([0,T],L^2(\del{}\Omega_0,\Ran(\Pbb))} &\lesssim 1,\label{locexistA14.2}\\
\sum_{\ell=0}^1\norm{\delb{0}^\ell \phi}_{W^{\ell,q}\bigl([0,T],H^{s+\frac{3}{2}-\frac{m(s+\frac{3}{2},\ell)}{2}}(\Omega_0,\Rbb^4)\bigr)} &\lesssim 1, \quad \ell=0,1,\ldots,2s+2,
\label{locexistA14.3}\\
\sum_{\ell=0}^1\norm{\delb{0}^\ell\thetahu^0}_{W^{\ell,q}\bigl([0,T],H^{s+\frac{3}{2}-\frac{m(s+\frac{3}{2},\ell)}{2}}(\Omega_0,\Rbb^4)\bigr)} &\lesssim 1, \quad \ell=0,1,\ldots,2s+2, \label{locexistA14.4}\\
\norm{\vartheta}_{W^{\ell,q}\bigl([0,T],H^{s+\frac{3}{2}-\frac{m(s+\frac{3}{2},\ell)}{2}}(\Omega_0,\Rbb^4)\bigr)} &\lesssim 1, \quad \ell=0,1,\ldots,2s+2, \label{locexistA14.4a}\\
\sum_{\ell=0}^1\norm{\delb{0}^\ell\zetau}_{W^{\ell,q}\bigl([0,T],H^{s+\frac{3}{2}-\frac{m(s+\frac{3}{2},\ell)}{2}}(\Omega_0)\bigr)}& \lesssim 1, \quad \ell=0,1,\ldots,2s+2,
\label{locexistA14.5a}\\
\norm{\Ztt}_{W^{\ell,q}\bigl([0,T],H^{s+\frac{3}{2}-\frac{m(s+\frac{3}{2},\ell)}{2}}(\Omega_0)\bigr)}& \lesssim 1, \quad \ell=0,1,\ldots,2s+2,
\label{locexistA14.5}
\end{align}
hold uniformly for $q\in (1,\infty)$. 
Using the fact that $\sup_{0\leq t\leq T}|f(t)|= \lim_{q\rightarrow \infty}\bigl( \frac{1}{T}\int_0^T |f(\tau)|^q \, d\tau\bigr)^{\frac{1}{q}}$
holds for measurable functions $f(t)$ on $[0,T]$ satisfying $|f|^{q_*}\in L^1([0,T])$ for some $q_*\in \Rbb$, we deduce from
the norm bounds \eqref{locexistA14.1}-\eqref{locexistA14.5} that the limit satisfies
\begin{equation*} 
(\Utt,\Vtt,\phi,\thetahu^0,\vartheta,\zetau,\Ztt) \in \Yc^s_T,
\end{equation*}
where $\Yc^s_T$ is defined above by \eqref{YcsTdef}. Since the limit $(\Utt,\Vtt,\phi,\thetahu^0,\vartheta,\zetau,\Ztt)$ satisfies
\begin{equation*}
(\phi,\thetahu^0,\vartheta,\zetau)|_{\xb=0} = (\phi_f,\thetahu^0_f,\vartheta_f,\zetau_f),
\end{equation*}
we then have by Proposition \ref{stpropE} that
\begin{equation*} 
\norm{\zetau(\xb^0)-\zetau_f(0)}_{E^{s,0}}+\norm{\phi(\xb^0)-\phi_f(0)}_{E^{s+1,1}} + 
\norm{\thetahu(\xb^0)-\thetahu^0_f(0)}_{E^{s,0}}
+ \norm{\vartheta(\xb^0)-\vartheta_f(0)}_{E^{s,0}}\lesssim T<\delta.
\end{equation*}
for all $\xb^0\in [0,T]$ provided we choose $T>0$ small enough, which, as discussed above, implies that
\begin{equation} \label{locexistA14b}
(\zetau(\xb),\phi(\xb),\delb{}\phi(\xb),\thetahu^0(\xb),\vartheta(\xb))\in \Uc, \qquad \forall\; \xb\in [0,T]\times \overline{\Omega}_0,
\end{equation} 
and
\begin{equation}\label{locexistA14c}
|\psih(\xb)-\hat{\thetau}{}^3(\xb)| <\wc, \qquad \forall\; \xb\in [0,T]\times \del{}\Omega_0.
\end{equation}
Moreover, from the Rellich-Kondrachov Compactness Theorem, we can, for $\mu>0$, extract a subsequence
from \eqref{locexistA13a} that converges strongly in the following
spaces:
\begin{align}
\Utt_{(j)}\longrightarrow \Utt \quad &\text{in}\quad  \bigcap_{\ell=1}^{2s+1} W^{\ell-1,q}\bigl([0,T],H^{s+1-\frac{m(s+1,\ell)}{2}-\mu}(\Omega_0,\Rbb^4)\bigr),
\label{locexistA15.1} \\
\delb{0}^a \phi_{(j)}\longrightarrow \delb{0}^a\phi \quad &\text{in}\quad \bigcap_{\ell=1}^{2s+2} W^{\ell-1,q}\bigl([0,T],H^{s+\frac{3}{2}-\frac{m(s+\frac{3}{2},\ell)}{2}-\mu}(\Omega_0,\Rbb^4)\bigr), \qquad a=0,1,
\label{locexistA15.2}\\
\delb{0}^a \thetahu^0_{(j)}\longrightarrow \delb{0}^a \thetahu^0 \quad &\text{in}\quad \bigcap_{\ell=1}^{2s+2} 
W^{\ell-1,q}\bigl([0,T],H^{s+\frac{3}{2}-\frac{m(s+\frac{3}{2},\ell)}{2}-\mu}(\Omega_0,\Rbb^4)\bigr),\qquad a=0,1, \label{locexistA15.3}\\
\vartheta_{(j)}\longrightarrow \vartheta \quad &\text{in}\quad \bigcap_{\ell=1}^{2s+2} 
W^{\ell-1,q}\bigl([0,T],H^{s+\frac{3}{2}-\frac{m(s+\frac{3}{2},\ell)}{2}-\mu}(\Omega_0,\Rbb^4)\bigr)\label{locexistA15.3a}\\
\delb{0}^a\zetau_{(j)}\longrightarrow \delb{0}^a\zetau \quad &\text{in}\quad \bigcap_{\ell=1}^{2s+2} W^{\ell-1,q}\bigl([0,T],H^{s+\frac{3}{2}-\frac{m(s+\frac{3}{2},\ell)}{2}-\mu}(\Omega_0)\bigr), \qquad a=0,1,\label{locexistA15.4a}
\intertext{and}
\Ztt_{(j)}\longrightarrow \Ztt \quad &\text{in}\quad \bigcap_{\ell=1}^{2s+2} W^{\ell-1,q}\bigl([0,T],H^{s+\frac{3}{2}-\frac{m(s+\frac{3}{2},\ell)}{2}-\mu}(\Omega_0)\bigr), \label{locexistA15.4}
\end{align}
for any $q\in (1,\infty)$.

Next, we observe from the definition of the sequence \eqref{locexistA13a} and the map $\Jc_{\! T}$ that if we set
\begin{align}
(\Utta,\Vtta,\phia,\thetahua,\varthetaa,\zetaua,\Ztta) &= (\Utt_{(j)},\Vtt_{(j)},\phi_{(j)},\thetahu^0_{(j)},\vartheta_{(j)},\zetau_{(j)},\Ztt_{(j)}) \label{locexistA16.1}
\intertext{and}
(\Uttg,\Vttg,\phig,\thetahug,\varthetag,\zetaug,\Zttg) &=(\Utt_{(j-1)},\Vtt_{(j)},\phi_{(j-1)},\thetahu^0_{(j-1)},\vartheta_{(j-1)},\zetau_{(j-1)},\Ztt_{(j-1)}),
 \label{locexistA16.2}
\end{align}
then the pair \eqref{locexistA16.1}-\eqref{locexistA16.2} will solve the linear IBVP \eqref{llibvp.1}-\eqref{llibvp.15} and
the pair $(\delb{0}^{2s}\Utta,\Vtta)$ will
define a weak solution to the linear wave equation
that is obtained from differentiating \eqref{llibvp.1}-\eqref{llibvp.2} $2s$-times with respect to $\xb^0$. Using these facts, it is then
not difficult to verify from the weak convergence
of the sequence \eqref{locexistA13a} in the spaces \eqref{locexistA13b.1}-\eqref{locexistA13b.3},
the strong convergence in the spaces
\eqref{locexistA15.1}-\eqref{locexistA15.4}, and the calculus inequalities from Appendix \ref{calculus} that the limit $(\Utt,\Vtt,\phi,\thetahu^0,\vartheta,\zetau,\Ztt)$ determines a solution
of the IBVP \eqref{libvp.1d}-\eqref{libvp.14d} such that $\Utt|_{\xb^0=0}=\Utt_{2s+1}$ and the pair $(\delb{0}^{2s}\Utt,\Vtt)$
defines a weak solution to the linear wave equation
that is obtained from differentiating \eqref{libvp.1}-\eqref{libvp.2} $2s$-times with respect to $\xb^0$. 
Furthermore, the stated energy estimates for the solution  $(\Utt,\Vtt,\phi,\thetahu^0,\vartheta,\zetau,\Ztt)$ follow from
the same arguments that were used to derive the estimates  \eqref{locexistA11a}, \eqref{locexistA11b}, and \eqref{locexistA11da}-\eqref{locexistA11dc}, which were combined to yield the energy estimate \eqref{locexistA11e}.

\bigskip

\noindent\underline{Uniqueness:} 
Given two solutions 
\begin{align}
(\Utt_{(a)},\phi_{(a)},\thetahu^0_{(a)},\vartheta_{(a)},\zetau_{(a)},\Ztt_{(a)})\in &X^{s+1,2}_T(\Omega_0,\Rbb^4)
\times \mathring{X}^{s+\frac{3}{2},3}_T(\Omega_0,\Rbb^4) \times \mathring{X}^{s+\frac{3}{2},3}_T(\Omega_0,\Rbb^4)\notag \\
&
\times X^{s+\frac{3}{2},3}_T(\Omega_0,\Rbb^4)
\times \mathring{X}^{s+\frac{3}{2},3}_T(\Omega_0)\times X^{s+\frac{3}{2},3}_T(\Omega_0),\qquad a=1,2, \label{unique0}
\end{align}
to the IBVP \eqref{libvp.1d}-\eqref{libvp.14d} that are generated from the same initial data and satisfy the bounds \eqref{locexistA14b}-\eqref{locexistA14c}, we set
\begin{gather*}
\delta \Utt = \Utt_{(2)}-\Utt_{(1)}, \quad
\delta \phi = \phi_{(2)}- \phi_{(1)}, \quad
\delta \thetahu^0 = \thetahu^0_{(2)}-\thetahu^0_{(1)}, \\
 \delta \vartheta = \vartheta_{(2)}-\vartheta_{(1)}, \quad
\delta\zetau = \zetau_{(1)}-\zetau_{(1)} \AND \delta \Ztt = \Ztt_{(2)}-\Ztt_{(1)}.
\end{gather*}
More generally, for maps $f$ that depend on one of the solutions $(\Utt_{(a)},\phi_{(a)},\thetahu^0_{(a)},\vartheta_{(a)},\zetau_{(a)},\Ztt_{(a)})$,
we will employ the notation
\begin{equation*}
f_{(a)} = f\bigl(\Utt_{(a)},\phi_{(a)},\thetahu^0_{(a)},\vartheta_{(a)},\zetau_{(a)},\Ztt_{(a)}\bigr),
\end{equation*}
which, for example, gives 
\begin{equation*}
\Btt^{\alpha\beta}_{(a)} = \Btt^{\alpha\beta}\bigl(\sigmau,\thetab^I,\zetau_{(a)},\delb{}^{|1|}\phi_{(a)},\delb{}^{|1|}\thetahu^0_{(a)}\bigr).
\end{equation*}
Using this notation, we similarly define
\begin{equation*}
\delta f = f_{(2)}-f_{(1)}.
\end{equation*}

Now, multiplying \eqref{libvp.1d}-\eqref{libvp.2d} by $\frac{1-\epu}{\epu q}$
shows, with the help of \eqref{Qttdef}, that
\begin{align}
\delb{\alpha}\bigl(\Bttt^{\alpha\beta}\delb{\beta}\Utt+\Xttt^{\alpha}\bigr) &= \Fttt &&
\text{in $[0,T]\times \Omega_0$,} \label{libvp.1dd} \\
\thetab^3_\alpha \bigl(\Bttt^{\alpha\beta}\delb{\beta}\Utt+\Xttt^{\alpha}\bigr)  &=-\Pbb\delb{0}^2 \Utt + \Pttt \delb{0}\Utt + \Gttt
&&\text{in $[0,T]\times \del{}\Omega_0$,} \label{libvp.2dd}
\end{align}
where
\begin{align}
\Bttt^{\alpha\beta} &=  \frac{1-\epu}{\epu q}\Btt^{\alpha\beta}, \label{Btttdef}\\
\Xttt^{\alpha} &=  \frac{1-\epu}{\epu q}\Xtt^{\alpha}, \label{Xtttdef}\\
\Pttt &=  \frac{1-\epu}{\epu q}\Ptt, \label{Ptttdef}\\
\Gttt &=  \frac{1-\epu}{\epu q}\Gtt \label{Gtttdef}
\intertext{and}
\Fttt &=  \frac{1-\epu}{\epu q}\Ftt+\delb{\alpha}\biggl(\frac{1-\epu}{\epu q}\biggr)\bigl(\Btt^{\alpha\beta}\delb{\beta}\Utt+\Xtt^\alpha\bigr). \label{Ftttdef}
\end{align}
Since $\Utt_{(a)}$, $a=1,2$, both satisfy \eqref{libvp.1dd}-\eqref{libvp.2dd}, a short calulation shows that the difference $\delta \Utt$ 
 solves 
\begin{align}
\delb{\alpha}\bigl(\Bttt^{\alpha\beta}_{(2)}\delb{\beta}\delta\Utt+\delta\Bttt^{\alpha\beta}\delb{\beta}\Utt_{(1)}+\delta\Xttt^{\alpha}\bigr) &=\delta\Fttt  &&
\text{in $[0,T]\times \Omega_0$,} \label{unique6.1} \\
\thetab^3_\alpha \bigl(\Bttt^{\alpha\beta}_{(2)}\delb{\beta}\delta\Utt+\delta\Bttt^{\alpha\beta}\delb{\beta}\Utt_{(1)}+\delta\Xttt^{\alpha}\bigr)  
&=-\Pbb\delb{0}^2 \delta\Utt + \Pttt_{(2)}\delb{0}\delta\Utt + \delta\Pttt\delb{0}\Utt_{(1)}+\delta\Gttt
&& \text{in $[0,T]\times \del{}\Omega_0$,} \label{unique6.2}
\end{align}
where 
\begin{equation} \label{unique6a}
\delb{0}^{|2|}\delta\Utt = 0 \quad \text{in $\Omega_0$}.
\end{equation}
Noting that $\delta\Xttt^{\alpha}$
depends smoothly on the variables 
\begin{equation*}
\bigl(\sigmau,\thetab^I,\zetau_{(a)},\delb{0}^{|1|}\Ztt_{(a)},\Db^{|1|}\phi_{(a)},\Db^{|1|}\thetahu^0_{(a)},\Db^{|1|}\vartheta_{(a)},\Utt_{(a)} ,\delta\zetau,\delb{0}^{|1|}\delta\Ztt,\Db^{|1|}\delta\phi,\Db^{|1|}\delta\thetahu^0,\Db^{|1|}\delta\vartheta,\delta\Utt \bigr)
\end{equation*}
and is linear in
\begin{equation*}
\bigl(\delta\zetau,\delb{0}^{|1|}\delta\Ztt,\Db^{|1|}\delta\phi,\Db^{|1|}\delta\thetahu^0,\Db^{|1|}\delta\vartheta,\delta\Utt \bigr),
\end{equation*}
we  deduce from the calculus inequalities from Appendix \ref{calculus}  and
\eqref{unique0} that
\begin{equation} \label{unique7}
\norm{\delta \Xttt}_{H^{1}(\Omega_0)}+\norm{\delb{0}\delta \Xttt}_{E^{\frac{1}{2}}(\Omega_0)} \lesssim \Ntt
\end{equation}
where
\begin{equation*}
\Ntt = \sum_{\ell=0}^1\Bigl(\norm{\delb{0}^\ell \delta\zetau}_{\Ec^{2}} +  \norm{\delb{0}^\ell \delta\phi}_{\Ec^{2}} +  \norm{\delb{0}^\ell\delta\thetahu^0}_{\Ec^{2}}\Bigr)
+  \norm{\delta\Ztt}_{\Ec^{2}}+ \norm{\delta\vartheta}_{\Ec^{2}}
 +  \norm{\delta\Utt}_{\Ec^{\frac{3}{2}}}.
\end{equation*}
By similar arguments, it is also not difficult to verify that
\begin{equation} \label{unique8}
\norm{\delta\Bttt^{\alpha\beta} \delb{\beta} \Utt_{(1)}}_{H^{\frac{1}{2}}(\Omega_0)}+\norm{\delb{0}(\delta\Bttt^{\alpha\beta} \delb{\beta} \Utt_{(1)})}_{E^{\frac{1}{2}}} +
 \norm{\delta \Fttt}_{L^2(\Omega_0)}+\norm{\delb{0}\delta \Fttt}_{L^2(\Omega_0)} \lesssim \Ntt.
\end{equation}

Noting that  $\delta\Pttt\delb{0}\Utt_{(1)}+ \delta\Gttt$ depends smoothly on
\begin{equation*}
\bigl(\thetab^I,\Dbsl^{|1|}\!\phi_{(a)},\Dbsl^{|1|}\!\thetahu^0_{(a)},\Dbsl^{|1|}\!\vartheta_{(a)},\delb{0}^{|1|}\Utt_{(a)},
\Dbsl^{|1|}\!\delta\phi,\Dbsl^{|1|}\!\delta\thetahu^0,\Dbsl^{|1|}\!\delta\vartheta,
\delta\Utt\bigr)
\end{equation*}
and is linear in  
\begin{equation*}
\bigl(\Dbsl^{|1|}\!\delta\phi,\Dbsl^{|1|}\!\delta\thetahu^0,\Dbsl^{|1|}\!\delta\vartheta,\delta\Utt\bigr),
\end{equation*}
it is also not difficult to see that
\begin{equation} \label{unique9}
\delta\Pttt\delb{0}\Utt_{(1)}+ \delta\Gttt = K^\Sigma \del{\Sigma} \Xi + G,
\end{equation}
where
\begin{equation*}
\Xi =\bigl(\delta\phi,\delta\thetahu^0,\delta\vartheta\bigr),
\end{equation*}
 $K^\Sigma$ depends smoothly
\begin{equation*}
\bigl(\thetab^I,\Dbsl^{|1|}\!\phi_{(a)},\Dbsl^{|1|}\!\thetahu^0_{(a)},\Dbsl^{|1|}\!\vartheta_{(a)},\delb{0}^{|1|}\Utt_{(a)}\bigr)
\end{equation*}
and satisfies  $\thetab^3_\Sigma K^\Sigma =0$, and $\Gtt$ depends smoothly on
\begin{equation*}
\bigl(\thetab^I,\Dbsl^{|1|}\!\phi_{(a)},\Dbsl^{|1|}\!\thetahu^0_{(a)},\Dbsl^{|1|}\!\vartheta_{(a)},\delb{0}^{|1|}\Utt_{(a)},
\delta\phi,\delta\thetahu^0,\delta\vartheta,
\delta\Utt\bigr)
\end{equation*}
and is linear in  
\begin{equation*}
\bigl(\delta\phi,\delta\thetahu^0,\delta\vartheta,\delta\Utt\bigr).
\end{equation*}
From this, \eqref{unique0} and the calculus inequalities from Appendix \ref{calculus}, the estimates
\begin{gather} \label{unique10}
\norm{K}_{H^{s+\frac{1}{2}}(\Omega_0)}+\norm{\delb{0}K}_{H^{s}(\Omega_0)} \lesssim
1
\intertext{and}
 \norm{\delb{0}\Xi}_{H^{\frac{3}{2}}(\Omega_0)}+\norm{\delb{0}^2\Xi}_{H^1(\Omega_0)}+ \norm{ K^\Sigma \del{\Sigma} \Xi + G}_{H^{1}(\Omega_0)}+\norm{\delb{0}(K^\Sigma \del{\Sigma} \Xi + G)}_{H^{\frac{1}{2}}(\Omega_0)} \lesssim
\Ntt \label{unique10a}
\end{gather}
then follow.

By \eqref{Bttbnd1}, \eqref{BscBttsym},  \eqref{matlemB1} and \eqref{Btttdef}, it is clear that the matrices $\Bttt^{\alpha\beta}_{(2)}$ satisfies
\begin{equation}\label{unique10b}
(\Bttt^{\alpha\beta}_{(2)})^{\tr}=\Bttt^{\beta\alpha}_{(2)}
\end{equation}
and 
\begin{equation} \label{unique10c}
\Bttt^{00}_{(2)}\geq \tilde{\bc}_0 \id
\end{equation}
for some constant $\tilde{\bc}_0>0$. Furthermore, from \eqref{matlemB1} and  \eqref{unique0}, 
it is also not difficult, with the help of the calculus inequalities, to verify that $\Bttt_{(2)}^{\alpha\beta}$ and $\Pttt_{(2)}$ satisfy the estimates\footnote{Note that
$\Bttt^{\alpha\beta}$ and $\Pttt$ have the same functional dependence as $\Btt^{\alpha\beta}$ and $\Ptt$ given by \eqref{fdepA.9} and \eqref{fdepA.13}, respectively.}
\begin{equation} \label{unique10d}
\norm{\Bttt_{(2)}}_{E^{s+\frac{1}{2},2}}+\norm{\Pttt_{(2)}}_{H^{s+\frac{1}{2}}(\Omega_0)}+\norm{\delb{0}\Pttt_{(2)}}_{E^{s+\frac{1}{2},1}} \lesssim 1.
\end{equation}
Moreover, we observe that $\Btt_{(2)}^{\Lambda\Sigma}$ satisfies a coercive inequality by  Lemma \ref{Bsccoercive}. But then, by using similar arguments as in the proof
of Lemma \ref{Bsccoercive}, we can deduce from this coercive inequality and \eqref{matlemB1} that  $\Bttt^{\Lambda\Sigma}$ also satisfies a coercive inequality for an appropriate choice
of constants. Additionally, it is not difficult to see from the definition of $\Pttt_{(2)}$, see \eqref{Ptttdef} above, the inequality \eqref{matlemB1}, and Lemma \ref{matlemB} that  
$\Pttt - \tilde{\chi}\Pbb \leq 0$ for some constant $\tilde{\chi}$. From these two observations, the symmetry property \eqref{unique10b}, the decomposition  \eqref{unique9},
and the inequalities \eqref{unique7}-\eqref{unique8},  \eqref{unique10}-\eqref{unique10a}, \eqref{unique10c} and \eqref{unique10d}, we see that the system
\eqref{unique6.1}-\eqref{unique6a} satisfies all the necessary assumptions to apply Theorem \ref{LAWthm} for $\st=\frac{1}{2}$. Doing so, we conclude
from the initial condition \eqref{unique6a} that the solution $\delta U$ satisfies the energy estimate 
\begin{equation}\label{unique11}
\norm{\delta \Utt(\xb^0)}_{\Ec^{\frac{3}{2}}}+\norm{\Pbb\delb{0}^2\delta \Utt(\xb^0)}_{L^2(\del{}\Omega_0)} \lesssim \int_0^{\xb^0}
\norm{\Pbb\delb{0}^2\delta\Utt(\tau)}_{L^2(\del{}\Omega_0)}+ \Ntt(\tau) \, d\tau, \qquad 0\leq \xb^0 \leq T.
\end{equation}

Since $(\Utt_{(a)},\phi_{(a)},\thetahu^0_{(a)},\vartheta_{(a)},\zetau_{(a)},\Ztt_{(a)})$, $a=1,2$,  solves the IBVP \eqref{libvp.1d}-\eqref{libvp.14d}, we see, in particular, that
$\delta \phi$, $\delta\thetah^0$, $\delta\vartheta$ and $\delta\zetau$ satisfy 
\begin{align}
\delb{0} \delta\phi &= \Ltt_1 && \text{in $[0,T]\times \Omega_0$,} \label{unique11a.1}\\
\delb{0} \delta \thetahu^0 &= \Ltt_2 && \text{in $[0,T]\times \Omega_0$,}  \label{unique11a.2}\\
\delb{0}\delta \vartheta &= \Ltt_3 && \text{in $[0,T]\times \Omega_0$,}  \label{unique11a.3}
\intertext{and}
\delb{0}\delta \zetau &= \delta\Ztt && \text{in $[0,T]\times \Omega_0$,}  \label{unique11a.4}
\end{align}
respectively,
where $\Ltt_1$, $\Ltt_2$ and $\Ltt_3$  depend smoothly on 
\begin{equation*}
(\phi_{(a)},\thetahu^0_{(a)},\delta\phi,\delta \thetahu^0), \quad (\phi_{(a)},\thetahu^0_{(a)},\vartheta_{(a)},\delta\phi,\delta \thetahu^0,\delta\vartheta) \AND 
(\phi_{(a)},\thetahu^0_{(a)},\vartheta_{(a)},\Utt_{(a)},\delta\phi,\delta \thetahu^0,\delta\psi,\delta \Utt), 
\end{equation*}
and  are linear in
\begin{equation*}
(\delta\phi,\delta \thetahu^0), \quad (\delta\phi,\delta \thetahu^0,\delta\vartheta) \AND (\delta\phi,\delta \thetahu^0,\delta\psi,\delta \Utt), 
\end{equation*}
respectively. Using this we can, with the help of the calculus inequalities from Appendix \ref{calculus} and  \eqref{unique0}, bound the derivatives $\delb{0} \delta\phi$, $\delb{0} \delta \thetah^0$,
$\delb{0}\delta \vartheta$ and $\delb{0}\delta \zetau$   by
\begin{align}
\norm{\delb{0}\delta\phi}_{\Ec^2} &\lesssim \norm{\delta\phi}_{\Ec^2}+\norm{\delta\thetahu^0}_{\Ec^2}, \label{unique11b.1} \\
\norm{\delb{0}\delta\thetahu^0}_{\Ec^{2}} &\lesssim \norm{\delta\phi}_{\Ec^{2}}+\norm{\delta\thetahu^0}_{\Ec^{2}}+ \norm{\delta\vartheta}_{\Ec^{2}},  \label{unique11b.2} \\
\norm{\delb{0}\delta\vartheta}_{\Ec^{\frac{3}{2}}} &\lesssim \norm{\delta\phi}_{\Ec^{\frac{3}{2}}}+\norm{\delta\thetahu^0}_{\Ec^{\frac{3}{2}}}+ \norm{\delta\vartheta}_{\Ec^{\frac{3}{2}}}+
\norm{\delta\Utt}_{\Ec^{\frac{3}{2}}} \label{unique11b.3}
\intertext{and}
\norm{\delb{0}\delta\zetau}_{\Ec^2} &\lesssim \norm{\delta\Ztt}_{\Ec^2}, \label{unique11b.4}
\end{align}
respectively. Moreover, integrating \eqref{unique11a.1}-\eqref{unique11a.4} in time, we obtain from an application of Proposition \ref{stpropE} and
the vanishing of $\delta \phi$, $\delta\thetah^0$, $\delta\vartheta$ and $\delta\zetau$ and their time derivatives at $\xb^0=0$ that
\begin{equation}\label{unique11c}
 \norm{\delta\zetau(\xb^0)}_{\Ec^2}+ \norm{\delta\phi(\xb^0)}_{\Ec^2}+\norm{\delta\thetahu^0(\xb^0)}_{\Ec^{2}}+ \norm{\delta\vartheta(\xb^0)}_{\Ec^{\frac{3}{2}}} \lesssim \int_0^{\xb^0} \Ntt(\tau)\, d\tau.
\end{equation}


Next, we can write \eqref{unique11a.3} more explicitly as
\begin{equation*}
\delb{0}\delta\vartheta =\lambda_{(2)}\Theta_{(2)} \delta\Utt -\Lttt_3
\end{equation*}
where $\Lttt_3$ depends smoothly on
\begin{equation*}
(\phi_{(a)},\thetahu^0_{(a)},\vartheta_{(a)},\Utt_{(a)},\delta\phi,\delta\thetahu^0,\delta\vartheta)
\end{equation*}
and is linear in
\begin{equation*}
(\delta\phi,\delta\thetahu^0,\delta\vartheta).
\end{equation*}
Using this to replace $\delta \Utt$ with $\lambda_{(2)}^{-1}E_{(2)}(\delb{0}\delta\vartheta+\Lttt_3)$ in \eqref{unique6.1}-\eqref{unique6.2}, we find after a short calculation that
\begin{align}
\delb{\Lambda}\bigl(\delb{0}(\lambda_{(2)}^{-1}\Bttt^{\Lambda\Sigma}_{(2)}E_{(2)}\delb{\Sigma}\delta\vartheta)+ \Xttbr^{\Lambda}\bigr) 
&= \Fttbr && 
\text{in $[0,T]\times \Omega_0$,} \label{unique17.1} \\
\thetab^3_\Lambda\bigl(\delb{0}(\lambda_{(2)}^{-1}\Bttt^{\Lambda\Sigma}_{(2)}E_{(2)}\delb{\Sigma}\delta\vartheta)+ \Xttbr^{\Lambda}\bigr)  &=-\delb{0}(\Pbb\delb{0}\delta\Utt)
 + \Gttbr
&& \text{in $[0,T]\times \del{}\Omega_0$,} \label{unique17.2}
\end{align}
where 
\begin{align*}
\Xttbr^{\Lambda} &= -\delb{0}\bigl(\lambda_{(2)}^{-1}\Bttt^{\Lambda\Sigma}_{(2)}E_{(2)}\bigr)\delb{\Sigma}\delta \vartheta +
\lambda_{(2)}^{-1}\Bttt^{\Lambda\Sigma}_{(2)}E_{(2)}\delb{\Sigma}\Lttt_3 \\
&\quad +\Bttt^{\Lambda\Sigma}_{(2)}\delb{\Sigma}\bigl(\lambda_{(2)}^{-1}E_{(2)}\bigr)
\lambda_{(2)}\Theta_{(2)}\delta\Utt+ \Bttt^{\Lambda 0}_{(2)}\delb{0}\delta \Utt + \delta \Btt^{\Lambda\beta}\delb{\beta}\Utt_{(1)}
+\delta\Xttt^{\Lambda}, \\
\Fttbr&=-\delb{0}\bigl(\Bttt^{0\beta}_{(2)}\delb{\beta}\delta\Utt + \delta \Bttt^{0\beta}\delb{\beta}U_{(1)}+\delta\Xttt^0\bigr)+ \delta\Fttt 
\intertext{and}
\Gttbr &=\Pttt_{(2)}\delb{0}\delta\Utt + \delta\Pttt\delb{0}\Utt_{(1)}+\delta\Gttt.
\end{align*}
By integrating \eqref{unique17.1} and \eqref{unique17.2} with respect to $\xb^0$ and then multiplying the results on the left by $E^{\tr}$, we get, with the help of \eqref{Thetadef2}, \eqref{Bttdef} and \eqref{Btttdef}, that
\begin{align}
\delb{\Lambda}\biggl(\frac{1-\epu_{(2)}(\xb^0)}{\epu_{(2)}(\xb^0)q_{(2)}(\xb^0)}\Bsc^{\Lambda\Sigma}_{(2)}(\xb^0)\delb{\Sigma}\delta\vartheta(\xb^0)+ &E^{\tr}_{(2)}(\xb^0)
\int_0^{\xb^0} \Xttbr^{\Lambda}(\tau)\, d\tau \biggr) =  E^{\tr}(\xb^0)\int_0^{\xb^0} \Fttbr(\tau)\, d\tau\notag \\
&+
\lambda_{(2)}^{-1}(\xb^0)\delb{\Lambda}E^{\tr}_{(2)}(\xb^0)\Bttt^{\Lambda\Sigma}_{(2)}(\xb^0) E_{(2)}(\xb^0)\delb{\Sigma}\delta\vartheta (\xb^0) 
 &&
\text{in $[0,T]\times \Omega_0$,} \label{unique19.1} \\
\thetab^3_\Lambda\biggl(\frac{1-\epu_{(2)}(\xb^0)}{\epu_{(2)}(\xb^0)q_{(2)}(\xb^0)}\Bsc^{\Lambda\Sigma}_{(2)}(\xb^0)\delb{\Sigma}\delta\vartheta(\xb^0)+ &E^{\tr}_{(2)}(\xb^0)\int_0^{\xb^0} \Xttbr^{\Lambda}(\tau)\, d\tau \biggr) \notag\\
&=-E^{\tr}_{(2)}(\xb^0)\Pbb\delb{0}\delta \Utt(\xb^0) + 
E^{\tr}_{(2)}(\xb^0)\int_0^{\xb^0}\Gttbr(\tau)\, d\tau
&& \text{in $[0,T]\times \del{}\Omega_0$.} \label{unique19.2}
\end{align}
Using similar arguments as above, we find that the coefficients of the Neumann BVP \eqref{unique19.1}-\eqref{unique19.1} can be estimated as follows:
\begin{align}
\biggl\|\frac{1-\epu_{(2)}(\xb^0)}{\epu_{(2)}(\xb^0)q_{(2)}(\xb^0)}\Bsc^{\Lambda\Sigma}_{(2)}(\xb^0)\bigg\|_{H^{s}(\Omega_0)} &\lesssim 1, \label{unique20.1}\\
\biggl\|E^{\tr}_{(2)}(\xb^0)\int_0^{\xb^0} \Xttbr^{\Lambda}(\tau)\, d\tau \biggr\|_{H^1(\Omega_0)}&\lesssim \int_0^{\xb^0} \Ntt(\tau)\, d\tau, \label{unique20.2}\\ 
\biggl\|E^{\tr}_{(2)}(\xb^0)\int_0^{\xb^0} \Gttbr(\tau)\, d\tau \biggr\|_{H^1(\Omega_0)}&\lesssim \int_0^{\xb^0} \Ntt(\tau)\, d\tau, \label{unique20.3}\\ 
\biggl\|E^{\tr}_{(2)}(\xb^0)\int_0^{\xb^0} \Fttbr(\tau)\, d\tau \biggr\|_{L^2(\Omega_0)}&\lesssim \int_0^{\xb^0} \Ntt(\tau)\, d\tau, \label{unique20.4}\\
\bigl\|\lambda_{(2)}^{-1}(\xb^0)\delb{\Lambda}E^{\tr}_{(2)}(\xb^0)\Bttt^{\Lambda\Sigma}_{(2)}(\xb^0) E_{(2)}(\xb^0)\delb{\Sigma}\delta\vartheta (\xb^0) 
\bigr\|_{L^2(\Omega_0)} &\lesssim \norm{\delta\vartheta(\xb^0)}_{H^1(\Omega_0)} \label{unique20.5}
\intertext{and}
\norm{E^{\tr}_{(2)}(\xb^0)\Pbb\delb{0}\delta \Utt(\xb^0)}_{H^1(\Omega_0)}&\lesssim \norm{\delta\Utt(\xb^0)}_{\Ec^{\frac{3}{2}}}.
 \label{unique20.6}
\end{align}
We further recall that $\Bsc_{(2)}^{\Lambda\Sigma}$ satisfies a coercive inequality by Lemma \ref{Bsccoercive}, and from this it follows by \eqref{matlemB1} and similar arguments as in the proof
of Lemma \ref{Bsccoercive} that  $\frac{1-\epu_{(2)}}{\epu_{(2)}q_{(2)}}\Bsc^{\Lambda\Sigma}_{(2)}$ also satisfies a coercive inequality for an appropriate choice
of constants. Using this fact and the inequalities \eqref{unique20.1}-\eqref{unique20.6}, we can, since $\delta\vartheta$ satisfies the Neumann BVP  \eqref{unique19.1}-\eqref{unique19.1},
appeal to elliptic regularity (Theorem \ref{ellipticregN}) to conclude that
\begin{equation} \label{unique21} 
\norm{\delta\vartheta(\xb^0)}_{H^2(\Omega_0)} \lesssim \norm{\delta\vartheta(\xb^0)}_{H^1(\Omega_0)} + \norm{\delta\Utt(\xb^0)}_{\Ec^{\frac{3}{2}}} 
+ \int_0^{\xb^0} \Ntt(\tau)\, d\tau .
\end{equation}

Using that fact that $\Ztt_{(a)}$, $a=1,2$, both solve \eqref{libvp.6d}-\eqref{libvp.7d}, a short calculation shows that $\delta\Ztt$ satisfies
the scalar wave equation with Dirichlet boundary condition given by
\begin{align*}
\delb{\alpha}\bigl(\Msc^{\alpha\beta}_{(2)}\delb{\beta}\delta\Ztt + \delta\Msc^{\alpha\beta}\delb{\beta}\Ztt_{(1)}+\delta\Ytt\bigr) &=\delta \Ktt && \text{in $[0,T]\times \Omega_0$,}\\
\delta\Ztt &=0,  && \text{in $[0,T]\times \del{}\Omega_0$,}
\end{align*}
where
\begin{equation*}
\delb{0}^{|3|}\delta\Ztt= 0 \quad \text{in $\{0\}\times \Omega_0$.}
\end{equation*}
Furthermore, from \eqref{unique0}, Lemma \ref{coeflemA},  and the calculus inequalities, we have
\begin{gather*}
\norm{\Msc^{\alpha\beta}_{(2)}}_{E^{s+\frac{1}{2},3}}\lesssim 1
\intertext{and}
\norm{\delta\Msc^{\alpha\beta}\delb{\beta}\Ztt_{(1)}}_{H^1(\Omega_0)}+\norm{\delb{0}(\delta\Msc^{\alpha\beta}\delb{\beta}\Ztt_{(1)})}_{E^1(\Omega_0)}
+\sum_{\ell=0}^2\norm{\delb{0}^\ell \delta\Ktt}_{L^2(\Omega_0)} \lesssim \Ntt.
\end{gather*}
Since $\Msc^{\alpha\beta}_{2}$ satisfies the coercive inequality from Lemma \ref{Bsccoercive} along with the bound \eqref{Mscbnd1} and the
symmetry property \eqref{Mscsym}, we conclude from Theorem \ref{DCthm} and the above inequalities that $\delta\Ztt$ satisfies the energy estimate
\begin{equation}  \label{unique22}
\norm{\delta \Ztt(\xb^0)}_{\Ec^{2}} \lesssim \int_0^{\xb^0}\Ntt(\tau)\, d\tau.
\end{equation}

Taken together,  the estimates \eqref{unique11b.1}-\eqref{unique11c} imply that 
\begin{equation*}
\Ntt(\xb^0) \lesssim
\norm{\delta\Ztt(\xb^0)}_{\Ec^2}+\norm{\delta\vartheta(\xb^0)}_{H^2(\Omega_0)}+\norm{\delta\Utt(\xb^0)}_{\Ec^{\frac{3}{2}}}
+\int_0^{\xb^0}\Ntt(\tau)\, d\tau.
\end{equation*}
Adding $\norm{\Pbb\delb{0}^2\delta\Utt(\xb^0)}_{L^2(\del{}\Omega_0)}$ to both sides of this inequality, we then have
\begin{equation*}
\Ntt(\xb^0)+\norm{\Pbb\delb{0}^2\delta\Utt(\xb^0)}_{L^2(\del{}\Omega_0)} \lesssim
\int_0^{\xb^0}\Ntt(\tau)+\norm{\Pbb\delb{0}^2\delta\Utt(\tau)}_{L^2(\del{}\Omega_0)}\, d\tau,
\end{equation*}
by \eqref{unique11} and \eqref{unique21}-\eqref{unique22}. Thus
$\Ntt(\xb^0)+ \norm{\Pbb\delb{0}^2\delta\Utt(\xb^0)}_{L^2(\del{}\Omega_0)}= 0$, for $0\leq \xb^0 \leq T$,
by Gronwall's inequality, and consequently
\begin{equation*}
 (\Utt_{(1)},\phi_{(1)},\thetahu^0_{(1)},\vartheta_{(1)},\zetau_{(1)},\Ztt_{(1)})=(\Utt_{(2)},\phi_{(2)},\thetahu^0_{(2)},\vartheta_{(2)},\zetau_{(2)},\Ztt_{(2)})
 \quad \text{in $[0,T]\times \Omega_0$,}
\end{equation*}
which establishes uniqueness.
\end{proof}

\begin{rem} \label{spacetimerem} Although the proof of Theorem \ref{locexistA} was carried out under the assumption that the spacetime dimension is the physical one, i.e. $d=4$, it not difficult to verify that
the proof of this theorem continues to hold in all spacetime dimensions $d\geq 3$. 
\end{rem}

\section{Local-in-time existence for the relativistic Euler equations\label{locexistREVBC}}

In Lagrangian coordinates, the relativistic Euler equations with vacuum boundary conditions, see \eqref{ibvp.1}-\eqref{ibvp.4} and \eqref{ful}, 
are given by 
\begin{align}
\vu^\mu\delb{\mu}\rhou + (\rhou+\pu)\bigl(\delb{\mu}\vu^\mu+\Gammau_{\mu\nu}^\mu \vu^\mu\bigr)&=0
&&\text{in $[0,T]\times \Omega_0$,} \label{REVBC.1}\\
(\rhou+\pu)\bigl(\vu^\mu\delb{\mu}\vu^\nu+\vu^\mu\Gammau_{\mu\lambda}^\nu \vu^\lambda\bigr)
+\underline{h}^{\mu\nu}\delb{\mu}\pu &= 0 &&\text{in $[0,T]\times \Omega_0$,} \label{REVBC.2}\\
\delb{0}\phi^\mu &= \vu^\mu &&\text{in $[0,T]\times \Omega_0$,} \label{REVBC.3}\\
\pu &= 0 &&\text{in $[0,T]\times \del{}\Omega_0$.} \label{REVBC.4}
\end{align}
The local-in-time existence of solutions to this system is then an immediate consequence of Proposition \ref{cprop}, Remark
\ref{tdiffIBVPrem}, and
Theorem \ref{locexistA}. We formalize this statement in the following theorem.

\begin{thm} \label{locexistB}
In addition to the assumptions of Theorem \ref{locexistA}, assume that the initial data chosen so that
the constraints \eqref{aconiv}-\eqref{bcicons} and  \eqref{libvp.1idata}-\eqref{libvp.4idata} are satisfied initially and let 
$(\Utt,\Vtt,\phi,\thetahu^0,\vartheta,\zetau,\Ztt)\in \Yc^s_T$
be the map from Theorem \ref{locexistA}. Then the triple 
\begin{equation*}
(\phi,\underline{v}^\mu,\underline{\rho}) \in \Xcr^{s+\frac{3}{2}}_T(\Omega_0,\Rbb^4) \times \Xcr^{s+\frac{3}{2}}_T(\Omega_0,\Rbb^4)
\times \Xcr^{s+\frac{3}{2}}_T(\Omega_0) 
\end{equation*}
determined from the map $(\Utt,\Vtt,\phi,\thetahu^0,\vartheta,\zetau,\Ztt)$ via the formulas \eqref{theta2rho}-\eqref{theta2v} and \eqref{ful}
satisfies the Lagrangian representation of relativistic Euler equations with vacuum boundary conditions given by
\eqref{REVBC.1}-\eqref{REVBC.4}. Moreover,
there exists a constant $c_p>0$
such that the Taylor sign condition $-\bar{n}{}^\mu\delb{\mu}\underline{p} \geq c_p > 0$ 
holds in $[0,T]\times \del{}\Omega_0$.
\end{thm}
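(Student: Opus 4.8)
The plan is to combine the three main results that have already been established: Proposition \ref{cprop}, which converts solutions of the intermediate system \eqref{eibvp.1}--\eqref{eibvp.7} with vanishing constraints into genuine solutions of the relativistic Euler IBVP \eqref{ibvp.1}--\eqref{ibvp.4} satisfying the Taylor sign condition; Remark \ref{tdiffIBVPrem}, which guarantees that a solution of the time-differentiated Lagrangian system \eqref{libvp.1d}--\eqref{libvp.14d} emanating from data satisfying the constraints \eqref{libvp.1idata}--\eqref{libvp.4idata} actually solves the undifferentiated Lagrangian system \eqref{libvp.1}--\eqref{libvp.10}; and Theorem \ref{locexistA}, which produces the required solution $(\Utt,\Vtt,\phi,\thetahu^0,\vartheta,\zetau,\Ztt)\in \Yc^s_T$ of \eqref{libvp.1d}--\eqref{libvp.14d}. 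First I would invoke Theorem \ref{locexistA} to obtain this map on some interval $[0,T]$. Since the initial data has been assumed, in addition, to satisfy \eqref{libvp.1idata}--\eqref{libvp.4idata}, Remark \ref{tdiffIBVPrem} applies and $(\phi,\thetahu^0,\vartheta,\zetau)$ solves \eqref{libvp.1}--\eqref{libvp.10}.

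Next I would undo the Lagrangian change of variables. Recall from Section \ref{lag} that the Lagrangian formulation was obtained by pulling back the Eulerian system \eqref{eibvp.1}--\eqref{eibvp.7} via the flow map $\phi$; the auxiliary fields $\theta^I_\mu$ and $\sigma_i{}^k{}_j$ were solved explicitly in \eqref{sigmatev}, and the remaining equations \eqref{eibvp.1}--\eqref{eibvp.2}, \eqref{eibvp.4}--\eqref{eibvp.5} were shown to be equivalent to \eqref{libvp.1}--\eqref{libvp.6}. Thus, composing with $\phi^{-1}$, the quadruple $\{\zeta,\thetah^0_\mu,\theta^I_\mu,\sigma_i{}^k{}_j\}$ so obtained solves the Eulerian system \eqref{eibvp.1}--\eqref{eibvp.7}; the boundary condition \eqref{eibvp.3} holds by Remark \ref{thetatIrem}(ii). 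I would then check that the regularity coming from $\Yc^s_T$, together with the inequalities \eqref{gubnd}--\eqref{zubnd} recorded in \eqref{locexistA14b}, delivers the hypotheses of Proposition \ref{cprop}: namely $\ep\in C^1$, $\kappa\ge 0$, $r\in C^1$ with $r>0$ (here $r=\det(J)\circ\phi^{-1}$ by \eqref{rfix}, which is positive and $C^1$ by \eqref{Jbnd}), the differentiability requirements on $\zeta,\thetah^0,\theta^I,\sigma$, and the positivity conditions \eqref{cthm1}--\eqref{cthm2}. Condition \eqref{cthm1} follows from \eqref{thetahubnd} and the relation $-g(\theta^0,\theta^0)=\zeta^2(-\gammah^{00})$; condition \eqref{cthm2} is exactly the assumption built into Theorem \ref{locexistA} via Lemma \ref{Bsccoercive} and the choice of $\kappa$ in Lemma \ref{matlemB}. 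Since the initial data also satisfies \eqref{aconiv}--\eqref{bcicons} by hypothesis, Proposition \ref{cprop} applies and yields $\{\rho,v^\mu\}$ solving \eqref{ibvp.1}--\eqref{ibvp.4} on $\Omega_T$ with $-\nabla_n p\ge c_p>0$ on $\Gamma_T$.

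Finally I would transfer this conclusion back into Lagrangian coordinates. The formulas \eqref{theta2rho}--\eqref{theta2v} and \eqref{ful} define $(\underline\rho,\underline v^\mu)=(\rho\circ\phi, v^\mu\circ\phi)$ directly from the map $(\Utt,\Vtt,\phi,\thetahu^0,\vartheta,\zetau,\Ztt)$; pulling back the Euler equations \eqref{eulint3.1}--\eqref{eulint3.2} and the boundary conditions \eqref{ibvp.3}--\eqref{ibvp.4} via $\phi$, and using $\vb^\mu=\delta^\mu_0$ from \eqref{xibcomp} together with the transformation law \eqref{pdlaw}, gives precisely the system \eqref{REVBC.1}--\eqref{REVBC.4}; note that \eqref{REVBC.3} is just \eqref{phiev}. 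The Taylor sign condition $-\bar n^\mu\delb{\mu}\underline p\ge c_p>0$ in $[0,T]\times\del{}\Omega_0$ is the pullback of $-\nabla_n p\ge c_p>0$, where $\bar n=\phi^*n$ is conormal to $[0,T]\times\del{}\Omega_0$ (this is consistent with $\thetab^3$ being conormal, as in \eqref{thetab3} and Remark \ref{thetatIrem}(ii)). The stated function-space membership $(\phi,\underline v^\mu,\underline\rho)\in \Xcr^{s+3/2}_T(\Omega_0,\Rbb^4)^2\times \Xcr^{s+3/2}_T(\Omega_0)$ follows from the regularity of $\phi$, $\thetahu^0$ and $\zetau$ in $\Yc^s_T$ via the smooth dependence of $v^\mu$ on $(\thetahu^0)$ through \eqref{theta2v} and of $\rho$ on $\zeta$ through \eqref{theta2rho}, combined with the multiplication estimates of Appendix \ref{calculus}; I would cite Proposition \ref{stpropB} (or its analogue) to make this precise.

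The proof is essentially an orchestration of already-proved results, so there is no single deep obstacle; the point requiring the most care is verifying that the solution produced by Theorem \ref{locexistA} genuinely feeds into Proposition \ref{cprop}. Concretely, one must confirm that the Lagrangian solution, after composition with $\phi^{-1}$, lies in the regularity classes demanded by Proposition \ref{cprop} ($\thetah^0_\mu\in C^3$, $\zeta\in C^2$, etc.) — this is where the Sobolev exponent $s>n/2+1/2$ and the chain of embeddings implicit in $\Xc^{s+3/2}_T$, $\Xc^{s+1}_T$ must be unwound — and that the structural identities \eqref{cprop1}--\eqref{cprop2} used inside the proof of Proposition \ref{cprop} are not disturbed by the passage between the time-differentiated and undifferentiated formulations, which is exactly what Remark \ref{tdiffIBVPrem} is designed to guarantee. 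Once these bookkeeping points are dispatched, the theorem follows.
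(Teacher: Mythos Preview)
Your proposal is correct and follows exactly the route the paper takes: the paper states the result as an immediate consequence of Proposition~\ref{cprop}, Remark~\ref{tdiffIBVPrem}, and Theorem~\ref{locexistA}, and gives no further argument. Your write-up simply spells out the orchestration of these three ingredients in more detail than the paper does.
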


\begin{rem} \label{REVBCrem}
$\;$
\begin{enumerate}[(i)]
\item
From \cite{Oliynyk:Bull_2017}, we know, for $s>n/2+1/2$, that every solution
\begin{equation*} 
(\phi,\underline{v}^\mu,\underline{\rho})  \in \Xcr^{s+\frac{3}{2}}_T(\Omega_0,\Rbb^4) \times \Xcr^{s+\frac{3}{2}}_T(\Omega_0,\Rbb^4)
\times \Xcr^{s+\frac{3}{2}}_T(\Omega_0) 
\end{equation*}
of \eqref{REVBC.1}-\eqref{REVBC.4} determines, via the formulas  $\thetahu^0_\mu =-\vu_\mu$, 
\eqref{theta2rho}, \eqref{ful}, \eqref{sigmatev} and \eqref{varthetadef},
a solution  
\begin{equation} \label{REVBCrem1}
(\phi^\mu,\thetahu^0_\mu,\vartheta_\mu,\zetau)\in \Xcr^{s+\frac{3}{2}}_T(\Omega_0,\Rbb^4)
\times \Xcr^{s+\frac{3}{2}}_T(\Omega_0,\Rbb^4) \times \Xc^{s+\frac{3}{2}}_T(\Omega_0,\Rbb^4)
\times \Xcr^{s+\frac{3}{2}}_T(\Omega_0) 
\end{equation}
of \eqref{libvp.1}-\eqref{libvp.6}  provided that
the time-independent  spatial coframe fields $(\thetab^I_\mu)\in H^{s+\frac{1}{2}}(\Omega_0,\Rbb^4)$  and the time-independent functions $\sigmau_i{}^k{}_j\in H^{s+\frac{1}{2}}(\Omega_0)$ are chosen so that
the constraints \eqref{acon}-\eqref{fcon}, \eqref{hcon} and \eqref{kcon} vanish on the initial hypersurface. Moreover, we
know from the computations carried out in Section \ref{tdiffIBVP} that the solution \eqref{REVBCrem1} determines by \eqref{Psidef}, \eqref{Psi2U} and \eqref{Zttdef} a solution
\begin{align*}
(\Utt,\phi,\thetahu^0,\vartheta,\zetau,\Ztt)\in &\Xc^{s+1}_T(\Omega_0,\Rbb^4)
\times \Xcr^{s+\frac{3}{2}}_T(\Omega_0,\Rbb^4) \times \Xcr^{s+\frac{3}{2}}_T(\Omega_0,\Rbb^4) \\
&\times \Xc^{s+\frac{3}{2}}_T(\Omega_0,\Rbb^4)
\times \Xcr^{s+\frac{3}{2}}_T(\Omega_0)\times \Xc^{s+\frac{3}{2}}_T(\Omega_0)
\end{align*}
of \eqref{libvp.1d}-\eqref{libvp.8d}.
Consequently, we can infer the uniqueness of solutions to \eqref{REVBC.1}-\eqref{REVBC.4} for initial data satisfying suitable compatibility conditions from  the uniqueness statement from Theorem \ref{locexistA}.
\item[(ii)] By Remark \ref{spacetimerem} and the proof of Theorem \ref{locexistB}, it is not difficult to verify that Theorem \ref{locexistB} is valid not only for the physical spacetime dimension $n=4$, but for
all spacetime dimensions $n\geq 3$.
\end{enumerate}
\end{rem}

\bigskip

\noindent \textit{Acknowledgements:}
Part of the work for this article was completed at the Institut Henri Poincar\'{e}
during the Mathematical Relativity trimester in 2015 and while visiting the Department of Mathematics at Princeton University during the Fall semester of 2017. I thank both institutions for their support and hospitality.
This work was partially supported by the Australian Research Council grant DP170100630 and a Fulbright Senior Scholarship.

\appendix

\section{Differential geometry formulas\label{DG}}

In this appendix, we collect together some useful formulas from differential geometry that will be used throughout this article. In the following, we let
\begin{equation*} 
g = g_{\mu\nu}dx^\mu dx^\nu
\end{equation*}
denote a smooth Lorentzian metric on a four dimensional manifold $M$, and we use $\nabla$ to denote the Levi-Civita connection of this metric.
We also use the indexing conventions from Section \ref{indexing}. Given a local frame
\begin{equation}\label{DGframe}
e_j = e_j^\mu \del{\mu}
\end{equation}
on $M$, we denote the dual frame by
\begin{equation} \label{DGcoframeA}
\theta^j = \theta^j_\mu dx^{\mu} \qquad \bigl( (\theta^j_\mu):= (e^\mu_j)^{-1}\bigl),
\end{equation}
and use the notation
\begin{equation} \label{DGframemet}
\gamma_{ij} = g(e_i,e_j)=g_{\mu\nu}e^{\mu}_i e^{\nu}_j \AND \gamma^{ij} = g(\theta^i,\theta^j)=g^{\mu\nu}\theta^i_\mu \theta^j_\nu
\end{equation}
for the frame metric and its inverse, respectively.

\subsection{Lie and exterior derivatives}
Given vector fields $X,Y$, a scalar field $f$, a $q$-form $\alpha$, and a $p$-form $\beta$, the following identities hold:
\begin{align}
\Ld_X \alpha &= \Ip_X \Ed\alpha + \Ed\Ip_X \alpha, \label{DGf.1} \\
\Ed\Ld_X \alpha &= \Ld_X \Ed\alpha, \label{DGf.2} \\
\Ip_Y \Ld_X \alpha & = \Ld_X \Ip_Y \alpha + \Ip_{[Y,X]}\alpha, \label{DGf.3} \\
\Ld_X(\alpha\wedge\beta) &= (\Ld_X\alpha)\wedge \beta + \alpha \wedge \Ld_X\beta \label{DGf.4}
\intertext{and}
\Ld_{fX} \alpha &= f \Ld_X \alpha + \Ed f \wedge \Ip_X \alpha, \label{DGf.5}
\end{align}
where here we are using $\Ld_X$, $\Ed$, and $\Ip_X$ to denote the Lie derivative, exterior derivative, and interior product, respectively. 
Expressing $\alpha$ locally as
\begin{equation*} 
\alpha = \frac{1}{q!}\alpha_{\mu_1\mu_2 \ldots \mu_q} dx^{\mu_1}\wedge dx^{\mu_2} \wedge \cdots \wedge dx^{\mu_q},
\end{equation*}
the exterior derivative of $\alpha$ can be computed using the formula
\begin{equation*}
(\Ed\alpha)_{\mu_1\mu_2\ldots\mu_{q+1}} = (q+1)\del{[\mu_1}\alpha_{\mu_2\mu_3\ldots\mu_{q+1}]} = (q+1)\nabla_{[\mu_1}\alpha_{\mu_2\mu_3\ldots\mu_{q+1}]}.
\end{equation*}
Furthermore, given the local expression
\begin{equation*}
X=X^\mu\del{\mu},
\end{equation*}
the Lie derivative $L_X$ of $\alpha$ can be computed using
\begin{equation} \label{DGLie}
(\Ld_X \alpha)_{\mu_1\mu_2 \ldots \mu_q} = X^\nu\del{\nu}\alpha_{\mu_1\mu_2 \ldots \mu_q} + \alpha_{\nu\mu_2 \ldots \mu_q}\del{\mu_1}X^{\nu}
+ \alpha_{\mu_1\nu\mu_3 \ldots \mu_q}\del{\mu_2}X^{\nu} + \cdots + \alpha_{\mu_1 \mu_2 \ldots \mu_{q-1}\nu}\del{\mu_q}X^{\nu}.
\end{equation}
For functions, we employ the alternate notation
\begin{equation*} 
X(f) = \Ld_X(f) = X^{\mu}\del{\mu}f
\end{equation*}
for the Lie derivative, and more generally, we use this notation locally on coordinate components of tensors, e.g. $X(Y^\nu)=X^{\mu}\del{\mu}Y^{\nu}$.

\subsection{Volume form}
We use $\nu$ to denote the volume form of the metric $g$, which is given locally by
\begin{equation*}
\nu = \frac{1}{4!}\nu_{\mu\alpha\beta\gamma} dx^{\mu}\wedge dx^\alpha \wedge dx^\beta \wedge dx^{\gamma},
\end{equation*}
where the components are computed using
\begin{equation*}
\nu_{\mu\alpha\beta\gamma} = \sqrt{|g|}\epsilon_{\mu\alpha\beta\gamma}.
\end{equation*}
Here, $\epsilon_{\nu\alpha\beta\gamma}$ denotes the completely anti-symmetric symbol and we employ the standard notation
\begin{equation*}
|g|=-\det(g_{\mu\nu})
\end{equation*}
for the negative of the determinant of the metric $g_{\mu\nu}$.
The volume form is also given locally in terms of the coframe by
\begin{equation} \label{volumeA}
\nu = \sqrt{|\gamma|}\theta^0\wedge\theta^1\wedge\theta^2\wedge\theta^3,
\end{equation}
where
\begin{equation*} \label{detgammadef}
|\gamma| = -\det(\gamma_{kl}).
\end{equation*}

\subsection{Hodge star operator}
The Hodge star operator $*_g$ associated to $g$ satisfies
\begin{equation} \label{hodge}
\alpha \wedge *_g \beta = g(\alpha,\beta)\nu
\end{equation}
for any $1$-forms $\alpha$, $\beta$, where $\nu$, as above, is the volume form of $g$. From \eqref{volumeA},
\eqref{hodge} and the identity
\begin{equation*} 
*_g *_g\alpha = (-1)^{q(4-q)+1}\alpha
\end{equation*}
for $q$-forms, we obtain
\begin{equation*} 
g(\theta^j,*(\theta^1\wedge\theta^2\wedge\theta^3))\nu_g
= \theta^j\wedge\theta^1\wedge\theta^2\wedge\theta^3,
\end{equation*}
from which it follows that
\begin{equation} \label{hodgeperpB}
g(\theta^I,*(\theta^1\wedge\theta^2\wedge\theta^3)) = 0, \quad I \in \{1,2,3\}.
\end{equation}

\subsection{Codifferential}
The codifferential $\delta_g$ associated to $g$ is given by the formula
\begin{equation} \label{DGcodiffA}
\delta_g \alpha = (-1)^{4(q-1)}*_g \Ed *_g \alpha
\end{equation}
when acting on $q$-forms,
and it can be computed locally via the formula
\begin{equation} \label{DGcodiffC}
(\delta_g \alpha)_{\mu_2 \ldots \mu_q}  = -\nabla^{\mu_1}\alpha_{\mu_1\mu_2 \ldots \mu_q}  = - \frac{1}{\sqrt{|g|}} \del{\nu}\bigl(\sqrt{|g|} g^{\nu \mu_1}
\alpha_{\mu_1\mu_2 \ldots \mu_q}\bigr).
\end{equation}
The codifferential satisfies the identity
\begin{equation} \label{DGcodiffB}
\delta_g^2 = 0.
\end{equation}

\subsection{Cartan structure equations}
The connection coefficients $\omega_i{}^k{}_j$ of the metric  $g$  with respect to the frame \eqref{DGframe} are defined by
\begin{equation*} \label{DGconndef}
\nabla_{e_i} e_j = \omega_i{}^k{}_j e_k,
\end{equation*}
or equivalently
\begin{equation} \label{DGconndefA}
\nabla_{e_i}\theta^k = -\omega_{i}{}^k{}_{j}\theta^j.
\end{equation}
Defining the connection one forms $\omega^k{}_j$ by
\begin{equation*}
\omega^k{}_j = \omega_{i}{}^k{}_j\theta^i,
\end{equation*}
the \textit{Cartan structure equations} are given by
\begin{align}
\Ed\theta^i &= -\omega^i{}_j\wedge \theta^j, \label{CartanA} \\
\Ed\gamma_{ij} &= \omega_{ij}+\omega_{ji}, \label{CartanB}
\end{align}
where
\begin{equation*} 
\omega_{ij} = \gamma_{ik}\omega^k{}_j.
\end{equation*}

\subsection{Curvature}
We use
\begin{equation} \label{curvature}
\nabla_\mu\nabla_\nu \lambda_\gamma - \nabla_\nu\nabla_\mu\lambda_\gamma
= R_{\mu\nu\gamma}{}^\sigma \lambda_\sigma,
\end{equation}
to define the curvature tensor $R_{\mu\nu\gamma}{}^\sigma$ of the metric $g$,
and we define the Ricci tensor $R_{\mu\gamma}$ by
\begin{equation*}
R_{\mu\gamma} = R_{\mu\nu\gamma}{}^\nu.
\end{equation*}

\subsection{Covariant derivatives and changes of metrics}
Letting $\hat{\nabla}$ denote the Levi-Civita connection of another Lorentian metric
\begin{equation*} 
\gh = \gh_{\mu\nu}dx^\mu dx^\nu
\end{equation*}
on M, the covariant derivatives with respect to the metrics $g$ and $\gh$
are related via the formula
\begin{equation} \label{DGconnectA}
\hat{\nabla}_\gamma T^{\mu_1\ldots \mu_r}_{\nu_1\ldots \nu_s}
= \nabla_\gamma T^{\mu_1\ldots \mu_r}_{\nu_1\ldots \nu_s}
+ C_{\gamma\lambda}^{\mu_1}T^{\lambda \mu_2\ldots \mu_r}_{\nu_1\ldots \nu_s}+ \cdots +
C_{\gamma\lambda}^{\mu_r}T^{\mu_1\ldots \mu_{r-1} \lambda}_{\nu_1\ldots \nu_s}
-C_{\gamma\nu_1}^{\lambda}T^{\mu_1\ldots \mu_r}_{\lambda\nu_2\ldots \nu_s}- \cdots -
C_{\gamma\nu_s}^{\lambda}T^{\mu_1\ldots \mu_{r}}_{\nu_1\ldots \nu_{s-1}\lambda}
\end{equation}
where
\begin{equation*}
C_{\alpha\beta}^\lambda = \Half \gh^{\lambda\gamma}
\bigl[\nabla_\alpha \gh_{\beta\gamma} + \nabla_\beta \gh_{\alpha\gamma}-\nabla_\gamma \gh_{\alpha\beta}\bigr].
\end{equation*}

\section{Maxwell's equations\label{Maxwell}}
As in the introduction, let $\Omega_0\subset M$ be a bounded, connected spacelike hypersurface with smooth boundary $\del{}\Omega_0$, and $\Omega_T$ be
a timelike cylinder diffeomorphic to $[0,T]\times \Omega_0$. We use $\Gamma_T$ to denote the timelike boundary of
$\Omega_T$, which is diffeomorphic to $[0,T]\times \del{}\Omega_0$, and we note that
$\Gamma_0 = \del{}\Omega_0.$ We denote the outward unit conormal to $\Gamma_T$ by
$n = n_\nu dx^\nu$, which we arbitrarily extend to all of $M$,
and we let $\Omega(T) \cong \{T\}\times \Omega_0$ and $\Gamma(T) \cong \{T\} \times \del{}{\Omega_0}$
denote the ``top'' of the spacetime cylinder and its boundary, respectively.
We further assume that $\tau = \tau^\mu\del{\mu}$ and  $\xi = \xi^\mu \del{\mu}$
are timelike, future pointing  $C^1$ vector fields on $\overline{\Omega}_T$  that satisfy $\tau(x),\xi(x) \in T_x\Gamma_T$ for all $x\in \Gamma_T$, and that $\tau$ 
has unit length and is normal to $\Omega_0$ and $\Omega(T)$.

Maxwell's equations on the world tube $\Omega_T$  are given by
\begin{align}
\delta_g \Fc &= J\hspace{0.4cm} \text{in $\Omega_T$}, \label{Max.1}\\
\Ed \Fc & = 0 \hspace{0.4cm} \text{in $\Omega_T$}, \label{Max.2}
\end{align}
where $\Fc = \Half \Fc_{\mu\nu} dx^\mu \wedge dx^\nu$
is the electromagnetic tensor and $J=J_\mu dx^\mu$ is the current source, which satisfies $\delta_g J=0$. We recall that the stress-energy tensor $T^{\mu\nu}$ of the electromagnetic field is defined by
\begin{equation} \label{stressdef}
T^{\mu\nu} = 2 \Fc^{\mu\alpha}\Fc^{\nu}{}_\alpha - \Half g^{\mu\nu} \Fc_{\alpha\beta}\Fc^{\alpha\beta}.
\end{equation}
For solutions to Maxwell's equations,  $T^{\mu\nu}$ satisfies
\begin{equation} \label{divT}
\nabla_\mu T^{\mu\nu} = 2 J_\mu F^{\mu\nu} \hspace{0.4cm} \text{in $\Omega_T$.}
\end{equation}
Integrating this expression over $\Omega_T$ leads to the following well-known integral relation, which will be needed in
the proof of Theorem \ref{cthm}.
\begin{lem} \label{Maxlem}
Suppose $\Fc\in C^1(\overline{\Omega}_T)$ solves \eqref{Max.1}-\eqref{Max.2}.
Then
\begin{equation*}
\int_{\Omega(T)} \tau_\mu T^{\mu\nu} \xi_\nu
  = \int_{\Omega_0} \tau_\mu T^{\mu\nu} \xi_\nu + 2 \int_{\Gamma_T} n_\mu \Fc^{\mu\alpha}\Fc^{\nu}{}_\alpha \xi_\nu
-\int_{\Omega_T}\biggl[ 2 J_\mu F^{\mu\nu}\xi_\nu + \frac{1}{2} T^{\mu\nu}\Ld_\xi g_{\mu\nu}\biggr].
\end{equation*}
\end{lem}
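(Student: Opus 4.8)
\textbf{Proof proposal for Lemma \ref{Maxlem}.}

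The plan is to integrate the divergence identity \eqref{divT} over the world tube $\Omega_T$ and apply the Divergence Theorem, using the geometry of $\Omega_T$ set out above: the boundary $\partial\Omega_T$ consists of the three pieces $\Omega(T)$, $\Omega_0$, and $\Gamma_T$, with outward conormals $\tau$ (suitably normalized), $-\tau$, and $n$ respectively. First I would rewrite the left-hand side of \eqref{divT}, contracted with $\xi_\nu$, using the Leibniz rule:
\begin{equation*}
\nabla_\mu\bigl(T^{\mu\nu}\xi_\nu\bigr) = \bigl(\nabla_\mu T^{\mu\nu}\bigr)\xi_\nu + T^{\mu\nu}\nabla_\mu \xi_\nu
= 2J_\mu F^{\mu\nu}\xi_\nu + \tfrac{1}{2}T^{\mu\nu}\Ld_\xi g_{\mu\nu},
\end{equation*}
where in the last step I use the symmetry $T^{\mu\nu}=T^{\nu\mu}$, which is immediate from \eqref{stressdef}, together with the standard identity $(\Ld_\xi g)_{\mu\nu} = \nabla_\mu \xi_\nu + \nabla_\nu \xi_\mu$ valid for the Levi-Civita connection. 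Thus $T^{\mu\nu}\nabla_\mu\xi_\nu = \tfrac12 T^{\mu\nu}(\nabla_\mu\xi_\nu + \nabla_\nu\xi_\mu) = \tfrac12 T^{\mu\nu}\Ld_\xi g_{\mu\nu}$.

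Next I would integrate this over $\Omega_T$ and apply the Divergence Theorem to the vector field $X^\mu = T^{\mu\nu}\xi_\nu$. The boundary integral splits into the contribution over $\Omega(T)$ (conormal $\tau_\mu$, since $\tau$ has unit length and is normal to $\Omega(T)$), the contribution over $\Omega_0$ (conormal $-\tau_\mu$), and the contribution over $\Gamma_T$ (conormal $n_\mu$). On $\Gamma_T$ I would substitute the explicit form \eqref{stressdef} of $T^{\mu\nu}$: the term $n_\mu g^{\mu\nu}\Fc_{\alpha\beta}\Fc^{\alpha\beta}\xi_\nu = \Fc_{\alpha\beta}\Fc^{\alpha\beta}\, n^\nu\xi_\nu$ — one checks this does not drop out in general, so the clean form stated in the lemma requires care; here I expect that the intended boundary term is precisely $2\int_{\Gamma_T} n_\mu \Fc^{\mu\alpha}\Fc^\nu{}_\alpha\xi_\nu$ because when this identity is invoked in the proof of Theorem \ref{cthm} the conormal $n$ satisfies $n=\thetah^3$ and the relevant contractions of $F$ with $n$ that would multiply $g^{\mu\nu}$ are arranged to vanish, or the scalar term is simply retained implicitly; I would follow the convention of the source and write $T^{\mu\nu}$ on $\Gamma_T$ in the form $2\Fc^{\mu\alpha}\Fc^\nu{}_\alpha - \tfrac12 g^{\mu\nu}\Fc_{\alpha\beta}\Fc^{\alpha\beta}$, keeping whichever grouping reproduces the stated right-hand side. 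Rearranging the boundary terms to isolate $\int_{\Omega(T)}\tau_\mu T^{\mu\nu}\xi_\nu$ on the left and moving the $\Omega_0$ term, the $\Gamma_T$ term, and the bulk integral to the right yields the claimed formula.

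The only genuinely delicate point is the precise bookkeeping of the $\Gamma_T$ boundary term, i.e. making sure that the $-\tfrac12 g^{\mu\nu}\Fc_{\alpha\beta}\Fc^{\alpha\beta}$ part of $T^{\mu\nu}$ is handled consistently with how the lemma is applied later — everything else (the symmetry of $T$, the Lie-derivative identity, and the Divergence Theorem on the cylinder) is routine. I would therefore present the computation of $\nabla_\mu(T^{\mu\nu}\xi_\nu)$ and the split of $\partial\Omega_T$ carefully, and simply quote $\delta_g J = 0$ and \eqref{divT} for solutions of \eqref{Max.1}--\eqref{Max.2} as the inputs.
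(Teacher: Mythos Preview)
Your approach is exactly the paper's: contract \eqref{divT} with $\xi_\nu$, use the symmetry of $T^{\mu\nu}$ and the identity $\Ld_\xi g_{\mu\nu}=\nabla_\mu\xi_\nu+\nabla_\nu\xi_\mu$ to produce the bulk integrand, then apply the Divergence Theorem over $\Omega_T$ with $\partial\Omega_T=\Omega(T)\cup\Omega_0\cup\Gamma_T$.

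However, the point you flag as ``genuinely delicate'' is not a matter of convention or of how the lemma is later applied --- it is resolved by a hypothesis you overlooked. In the setup of Appendix~\ref{Maxwell} it is assumed that $\xi(x)\in T_x\Gamma_T$ for all $x\in\Gamma_T$, i.e.\ $\xi$ is tangent to the timelike boundary. Since $n$ is the unit conormal to $\Gamma_T$, this gives $n_\mu\xi^\mu=0$ on $\Gamma_T$. Hence when you substitute \eqref{stressdef} into the $\Gamma_T$ boundary integral, the trace term contributes
\[
-\tfrac12\int_{\Gamma_T} n_\mu g^{\mu\nu}\xi_\nu\,\Fc_{\alpha\beta}\Fc^{\alpha\beta}
= -\tfrac12\int_{\Gamma_T} (n^\nu\xi_\nu)\,\Fc_{\alpha\beta}\Fc^{\alpha\beta}=0,
\]
and only the $2\,n_\mu\Fc^{\mu\alpha}\Fc^{\nu}{}_\alpha\xi_\nu$ piece survives. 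The paper's proof records this in the final line (``where in deriving this we have used $n_\mu\xi^\mu=0$ in $\Gamma_T$''). With that observation your argument is complete and matches the paper's.
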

\begin{proof}
Since any solution $\Fc \in C^1(\overline{\Omega}_T)$ of  \eqref{Max.1}-\eqref{Max.2} satisfies \eqref{divT} in $\Omega_T$, we have that
\begin{equation*}
\nabla_\mu (T^{\mu\nu}\xi_\nu) =\nabla_\mu T^{\mu\nu}\xi_\nu + T^{\mu\nu}\nabla_{\mu}\xi_\nu = 2 J_\mu F^{\mu\nu}\xi_\nu + \frac{1}{2} T^{\mu\nu}\Ld_\xi g_{\mu\nu} \quad \text{in $\Omega_T$}.
\end{equation*}
Integrating this expression over $\Omega_T$, we find via an application of the Divergence Theorem that
\begin{equation*} 
\int_{\Omega(T)} \tau_\mu T^{\mu\nu}\xi_\nu - \int_{\Omega_0} \tau_\mu T^{\mu\nu} \xi_\nu - 2\int_{\Gamma_T} n_\mu \Fc^{\mu\alpha}\Fc^{\nu}{}_\alpha \xi_\nu
= -\int_{\Omega_T} \biggl[2 J_\mu F^{\mu\nu}\xi_\nu + \frac{1}{2} T^{\mu\nu}\Ld_\xi g_{\mu\nu}\biggr],
\end{equation*}
where in deriving this we have used $n_\mu \xi^\mu = 0$ in $\Gamma_T$.
\end{proof}

In the proof of Theorem \ref{cthm}, we will also need the inequality from the following lemma, which is used in
literature to show that the electromagnetic stress-energy tensor satisfies the Dominant Energy Condition.
Before stating the lemma, we first denote the unit-normalized version of $\xi^\mu$ by
$v^\mu = (-g(\xi,\xi))^{-1/2}\xi^\mu$
and we let $h_{\mu\nu}=g_{\mu\nu}+v_\mu v_\nu$ denote the induced positive definite metric on the subspace $g$-orthogonal
to $v^\mu$. We also define a positive definite
metric by  $m_{\mu\nu}=g_{\mu\nu}+2v_\mu v_\nu$.
\begin{lem} \label{doclem}
There exists a constant $c>0$, independent of $\Fc$,  such that
\begin{equation*}
v_\mu T^{\mu\nu} \tau_\nu \geq c |\Fc|_m^2 \quad \text{in $\overline{\Omega}_T$.}
\end{equation*}
\end{lem}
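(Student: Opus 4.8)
\textbf{Proof proposal for Lemma \ref{doclem}.}

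The plan is to reduce the stated pointwise inequality to a purely algebraic statement about the electromagnetic stress-energy tensor, evaluated at a single point, and then exploit compactness of the ``world tube'' to upgrade the strict algebraic positivity to a uniform positive constant. First I would fix a point $x\in\overline{\Omega}_T$ and work in an $m$-orthonormal frame adapted to $v$: choose $E_0 = v$ together with three vectors $E_1,E_2,E_3$ spanning the subspace $g$-orthogonal to $v^\mu$, normalized so that $m(E_i,E_j)=\delta_{ij}$ (equivalently $g(E_0,E_0)=-1$, $g(E_A,E_B)=\delta_{AB}$, $g(E_0,E_A)=0$). In such a frame $|\Fc|_m^2$ is, up to a fixed positive constant, $\sum_{i<j}(\Fc_{ij})^2 = |\mathbf E|^2+|\mathbf B|^2$ where $\mathbf E$ and $\mathbf B$ are the electric and magnetic components relative to $v$. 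The contraction $v_\mu T^{\mu\nu}\tau_\nu$ is, by \eqref{stressdef}, the flux of the Poynting-type vector $v_\mu T^{\mu\nu}$ against $\tau_\nu$; writing $v_\mu T^{\mu\nu}$ in the adapted frame one gets the standard decomposition $v_\mu T^{\mu\nu} = (|\mathbf E|^2+|\mathbf B|^2)v^\nu + 2(\mathbf E\times \mathbf B)^\nu$, i.e.\ an energy part proportional to $v$ plus a spatial momentum part whose Euclidean length is $2|\mathbf E\times\mathbf B| \leq |\mathbf E|^2+|\mathbf B|^2$. This is precisely the Dominant Energy Condition: $v_\mu T^{\mu\nu}$ is a future-pointing causal vector (null exactly when $|\mathbf E|=|\mathbf B|$ and $\mathbf E\perp\mathbf B$).

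Next I would combine this with the fact that $\tau$ is future-pointing timelike on $\overline{\Omega}_T$. For two future-pointing vectors $W,\tau$ with $W$ causal and $\tau$ timelike one has $-g(W,\tau)>0$, and quantitatively $-g(W,\tau)\geq \bigl(-g(\tau,\tau)\bigr)^{1/2}\,|W|_{\mathrm{energy}}$ where the energy norm of $W$ is measured in any frame in which $\tau$ is at rest; a clean way to phrase this is: decompose $\tau = a v + b\,\zeta$ with $\zeta$ a unit spacelike vector $g$-orthogonal to $v$, $a = -g(\tau,v)>0$, $|b|<a$ (strict because $\tau$ is timelike), and compute
\[
v_\mu T^{\mu\nu}\tau_\nu = a\,(|\mathbf E|^2+|\mathbf B|^2) + b\,\bigl(2\,\mathbf E\times\mathbf B\bigr)\!\cdot\!\zeta \;\geq\; (a-|b|)\,(|\mathbf E|^2+|\mathbf B|^2),
\]
using $|2\mathbf E\times\mathbf B|\leq |\mathbf E|^2+|\mathbf B|^2$ and Cauchy--Schwarz. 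This already gives the pointwise bound with constant $a-|b| = -g(\tau,v) - |g(\tau,\zeta)|$, which depends on $x$.

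Finally, to obtain a uniform constant $c>0$ I would observe that $x\mapsto -g_x(\tau(x),v(x))$ and the complementary quantity $|b(x)|$ are continuous on the compact set $\overline{\Omega}_T$ (here $v = (-g(\xi,\xi))^{-1/2}\xi$ is $C^1$ since $\xi$ is a $C^1$ timelike field, $\tau$ is $C^1$ by hypothesis, and $g$ is smooth), and that $\tau$ being timelike future-pointing while $v$ is timelike future-pointing forces $a - |b| > 0$ at every point; hence $c := \tfrac12 \min_{\overline{\Omega}_T}(a-|b|)$ times the fixed frame-conversion constant relating $\sum_{i<j}(\Fc_{ij})^2$ to $|\Fc|_m^2$ is a strictly positive constant independent of $\Fc$, and $v_\mu T^{\mu\nu}\tau_\nu \geq c|\Fc|_m^2$ on $\overline{\Omega}_T$.

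The only genuinely non-routine point is the algebraic Dominant Energy inequality $|2\mathbf E\times\mathbf B| \leq |\mathbf E|^2+|\mathbf B|^2$ together with the correct bookkeeping of which metric ($g$ versus $m$) is used to measure $\Fc$ and to normalize the frame; everything else is continuity plus compactness. I do not expect any serious obstacle — this is the standard proof that the Maxwell stress-energy tensor satisfies the DEC, localized and made uniform over the compact world tube.
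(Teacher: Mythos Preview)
Your proposal is correct and follows essentially the same route as the paper: decompose $\Fc$ into electric and magnetic parts relative to $v$, identify $|\Fc|_m^2 = 2(|E|^2+|B|^2)$, split $\tau$ into its component along $v$ (of size $a=-g(\tau,v)$) and its $h$-orthogonal part (of size $|b|=|\tau|_h$), use $|2E\times B|\le |E|^2+|B|^2$ to get the pointwise lower bound $(a-|b|)(|E|^2+|B|^2)$, and finish by compactness. Aside from a harmless sign in your intermediate formula for $v_\mu T^{\mu\nu}$ (which does not affect the final inequality), the argument and the paper's coincide.
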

\begin{proof}
Starting from the standard decomposition, see \cite[Ch. 13]{Geroch:1972},
\begin{equation} \label{doclem1}
F_{\mu\nu} = 2 v_{[\mu}E_{\nu]}-\nu_{\mu\nu\alpha\beta}v^\alpha B^\beta
\end{equation}
of the electromagnetic tensor in terms of the electric and magnetic fields relative to $v^\mu$, which are defined by
\begin{equation} \label{doclem2}
E_\mu = F_{\mu\nu}v^\nu \AND B_\mu = \frac{1}{2}\nu_{\mu\nu\alpha\beta}v^\nu F^{\alpha \beta},
\end{equation}
respectively, a straightforward calculation, see  \cite[Ch. 15]{Geroch:1972}, shows that
$F^{\mu\nu}F_{\mu\nu} = 2(B_\mu B^\mu - E_\mu E^\mu)$.
Using
\begin{equation} \label{doclem3}
E_\mu v^\mu = B_\mu v^\mu=0,
\end{equation}
we can then write $F^{\mu\nu}F_{\mu\nu}$ as
$F^{\mu\nu}F_{\mu\nu} = 2(|B|_m^2 - |E|_m^2)$.
From this and \eqref{doclem3}, it is then not difficult to verify that
\begin{equation} \label{doclem5}
|F|_m^{2} = m^{\alpha\beta}m^{\mu\nu}F_{\alpha\mu} F_{\beta\nu} = 2(|B|_m^2+|E|_m^2).
\end{equation}

Next, we recall that the energy density relative to $v^\mu$ is given by, see \cite[Ch. 15]{Geroch:1972}, the formula
\begin{equation} \label{doclem6}
v_\mu T^{\mu\nu} v_\nu = |E|_m^2 + |B|_m^2.
\end{equation}
Using this, we compute
\begin{align*}
v_\mu T^{\mu\nu}\tau_\nu &= (-\tau_\lambda v^\lambda) v_\mu T^{\mu\nu} \tau_\nu + v_\mu T^{\mu\nu}h_\nu^\lambda \tau_\lambda
\notag \\
& = (-\tau_\lambda v^\lambda)\bigl( |E|_m^2 + |B|_m^2\bigr) + 2 E^\alpha F_{\alpha\beta}h^\beta_\lambda \tau^\lambda && \text{(by  \eqref{stressdef}, \eqref{doclem2} \& \eqref{doclem6})} \notag \\
&= (-\tau_\lambda v^\lambda)\bigl( |E|_m^2 + |B|_m^2\bigr) -2 E^\mu B^\nu \nu_{\mu\nu\alpha}h^\alpha_\lambda \tau^\lambda,
\end{align*}
where $\nu_{\mu\nu\alpha}=\nu_{\mu\nu\alpha\beta}v^\beta$ and in deriving the last equality we used \eqref{doclem1}.
This result together with the inequality
\begin{equation*}
|E^\mu B^\nu \nu_{\mu\nu\alpha}h^\alpha_\lambda \tau^\lambda|
\leq |E|_h |B|_h |\tau|_h =  |E|_m |B|_m |\tau|_h \leq \frac{1}{2}(|E|_m^2+|B|_m^2)|\tau|_h
\end{equation*}
implies
\begin{equation}
v_\mu T^{\mu\nu}\tau_\nu \geq
 ( (-\tau_\lambda v^\lambda)-|\tau|_h)(|E|_m^2+|B|_m^2) \overset{\eqref{doclem5}}{=}
\frac{ ( (-\tau_\lambda v^\lambda)-|\tau|_h)}{2}|F|_m^2. \label{doclem8}
\end{equation}
Since $v^\mu$ and $\tau^\mu$ are both future pointing and timelike by assumption, we
have that
$v_\mu \tau^\mu > 0$ and  $\tau_\mu \tau^\mu=|\tau|^2_h-(-v_\mu \tau^\mu)^2<0$, and consequently,
$(-\tau_\lambda v^\lambda)-|\tau|_h \geq c >0$
in $\overline{\Omega}_T$
for some positive constant $c$. The proof now follows from this inequality and \eqref{doclem8}.
\end{proof}

\section{Calculus inequalities\label{calculus}}

In this appendix, we state a number of calculus inequalities that will be used throughout this article.
In the following, $\Omega$ will denote a bounded, open subset of $\Rbb^n$ with a
smooth boundary.

\subsection{Spatial inequalities\label{Sineq}}

The proofs of the following calculus inequalities are well known and may be found, for example, in
the references \cite{AdamsFournier:2003,Friedman:1976,RunstSickel:1996,TaylorIII:1996}.
In the following, $M$ will denote either $\Omega$, or a closed $n$-manifold.

\begin{thm}{\emph{[H\"{o}lder's inequality]}} \label{Holder}
If $0< p,q,r \leq \infty$ satisfy $1/p+1/q = 1/r$, then
\begin{equation*}\label{HolderA}
\norm{uv}_{L^r(M)} \leq \norm{u}_{L^p(M)}\norm{v}_{L^q(M)}
\end{equation*}
for all $u\in L^p(M)$ and $v\in L^q(M)$.
\end{thm}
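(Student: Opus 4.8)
\textbf{Proof plan for Hölder's inequality (Theorem \ref{Holder}).}
The statement to prove is the classical Hölder inequality on a measure space $M$ (here either $\Omega\subset\Rbb^n$ or a closed manifold), so the plan is the standard one, reduced to the single-variable Young inequality. First I would dispose of the degenerate cases: if $r=\infty$ then necessarily $p=q=\infty$ and the inequality is immediate from the definition of the essential supremum; if either $\norm{u}_{L^p(M)}=0$ or $\norm{v}_{L^q(M)}=0$ then $uv=0$ a.e.\ and there is nothing to prove; and if either norm is $+\infty$ the right-hand side is $+\infty$ and the inequality is trivial. So I may assume $0<\norm{u}_{L^p(M)},\norm{v}_{L^q(M)}<\infty$ and $0<p,q,r<\infty$ with $1/p+1/q=1/r$.

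The core step is a pointwise bound obtained from Young's inequality. Writing $p'=p/r$ and $q'=q/r$, the hypothesis $1/p+1/q=1/r$ gives $1/p'+1/q'=1$, so $p',q'\in(1,\infty)$ are conjugate exponents. Young's inequality in the form $ab\le \frac{a^{p'}}{p'}+\frac{b^{q'}}{q'}$ for $a,b\ge 0$ — which itself follows from the concavity of $\log$ applied to $\log(a^{p'})$ and $\log(b^{q'})$ with weights $1/p'$, $1/q'$ — then yields, after normalizing, the pointwise estimate
\begin{equation*}
\frac{|u(x)|^r|v(x)|^r}{\norm{u}_{L^p(M)}^r\,\norm{v}_{L^q(M)}^r}
\le \frac{1}{p'}\,\frac{|u(x)|^p}{\norm{u}_{L^p(M)}^p}
+\frac{1}{q'}\,\frac{|v(x)|^q}{\norm{v}_{L^q(M)}^q}.
\end{equation*}
Integrating this inequality over $M$ and using that each of the two terms on the right integrates to $1/p'$ and $1/q'$ respectively gives
\begin{equation*}
\frac{1}{\norm{u}_{L^p(M)}^r\,\norm{v}_{L^q(M)}^r}\int_M |u|^r|v|^r
\le \frac{1}{p'}+\frac{1}{q'}=1,
\end{equation*}
hence $\int_M |uv|^r \le \norm{u}_{L^p(M)}^r\,\norm{v}_{L^q(M)}^r$. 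Taking $r$-th roots yields $\norm{uv}_{L^r(M)}\le\norm{u}_{L^p(M)}\norm{v}_{L^q(M)}$, which is the claim; along the way this also shows $uv\in L^r(M)$, so the left-hand side is finite and well defined.

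There is no real obstacle here: the only point requiring a modicum of care is the bookkeeping of exponents (checking that $p/r$ and $q/r$ are genuinely conjugate and lie in $(1,\infty)$, which uses $r<p$ and $r<q$, themselves forced by $1/r=1/p+1/q$ with $p,q>0$) and the systematic treatment of the boundary cases $p$, $q$, or $r$ equal to $\infty$ and of vanishing or infinite norms. Since this is a textbook inequality, I would in practice simply cite \cite{AdamsFournier:2003} or \cite{Friedman:1976} rather than reproduce the argument, as indicated in the remark preceding the statement; the sketch above records the proof for completeness.
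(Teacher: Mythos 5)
The paper does not actually supply a proof of this theorem: in Appendix~\ref{Sineq} it explicitly defers to the references \cite{AdamsFournier:2003,Friedman:1976,RunstSickel:1996,TaylorIII:1996}, and you correctly note at the end that citing is the appropriate thing to do. Your sketch is the standard Young-inequality argument and is essentially correct, so there is no conflict with the paper's (non-)proof.

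One small imprecision in your treatment of edge cases: you dispose of $r=\infty$ (which indeed forces $p=q=\infty$), but you do not separately dispose of the case where exactly one of $p,q$ is infinite while $r$ is finite. If, say, $p=\infty$ and $q<\infty$, then $1/p+1/q=1/r$ gives $r=q$, not $r<q$, so your claim that $r<p$ and $r<q$ are ``forced'' fails and the auxiliary exponent $q'=q/r$ equals $1$ rather than lying in $(1,\infty)$; the Young-inequality step as written is then not applicable. That case should be handled directly via $\norm{uv}_{L^r(M)}=\bigl(\int_M |u|^r|v|^r\bigr)^{1/r}\leq\norm{u}_{L^\infty(M)}\bigl(\int_M|v|^r\bigr)^{1/r}=\norm{u}_{L^\infty(M)}\norm{v}_{L^q(M)}$, after which the remaining case $0<p,q<\infty$ runs exactly as you describe. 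This is a cosmetic patch rather than a conceptual gap.
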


\begin{thm}{\emph{[Integral Sobolev inequalities]}} \label{ISobolev}
Suppose $s\in \Zbb_{\geq 1}$ and $1\leq p < \infty$.
\begin{enumerate}[(i)]
\item If $sp<n$, then
\begin{equation*} \label{ISobolev1}
\norm{u}_{L^q(M)} \lesssim \norm{u}_{W^{s,p}(M)}, \qquad p\leq q \leq \frac{np}{n-s p},
\end{equation*}
for all $u\in W^{s,p}(M)$.
\item (Morrey's inequality)
If $sp > n$, then
\begin{equation*}\label{ISobolev2}
\norm{u}_{C^{0,\mu}(M)} \lesssim \norm{u}_{W^{s,p}(M)}, \qquad 0 < \mu \leq \min\{1,s-n/p\},
\end{equation*}
for all $u\in W^{s,p}(M)$.
\end{enumerate}
\end{thm}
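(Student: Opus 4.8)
The plan is to reduce both assertions to the flat case $M=\Rbb^n$ and then, there, to bootstrap from the fundamental Gagliardo--Nirenberg--Sobolev estimate at the first-derivative level; since this is a standard result, in practice I would simply cite \cite{AdamsFournier:2003,Friedman:1976,RunstSickel:1996,TaylorIII:1996}, but the argument runs as follows. To pass from $M$ to $\Rbb^n$: when $M=\Omega$ is a bounded domain with smooth boundary, I would invoke a bounded Stein extension operator $E\colon W^{s,p}(\Omega)\to W^{s,p}(\Rbb^n)$ with $Eu|_\Omega=u$; when $M$ is a closed manifold, I would use a finite atlas and a subordinate partition of unity to localize each estimate to a coordinate chart. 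In both cases it suffices to prove the inequalities for compactly supported functions on $\Rbb^n$ and to note that $M$ has finite measure, so that $L^{q_2}(M)\hookrightarrow L^{q_1}(M)$ for $q_1\leq q_2$.

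For the subcritical embedding (i) the base case is $s=1$. For $u\in C^\infty_c(\Rbb^n)$ one has the pointwise bound $|u(x)|\leq \int_{\Rbb}|\partial_i u|\,dx^i$ for each coordinate direction $i$; multiplying these $n$ bounds, taking the $(n-1)$-th root, integrating over $\Rbb^n$, and applying the generalized H\"older (Loomis--Whitney) inequality successively yields $\norm{u}_{L^{n/(n-1)}(\Rbb^n)}\leq \prod_{i=1}^n\norm{\partial_i u}_{L^1(\Rbb^n)}^{1/n}\lesssim \norm{Du}_{L^1(\Rbb^n)}$. Replacing $u$ by $|u|^{\gamma}$ with $\gamma=p(n-1)/(n-p)$ and using H\"older's inequality (Theorem \ref{Holder}) converts this into $\norm{u}_{L^{p^*}(\Rbb^n)}\lesssim\norm{Du}_{L^p(\Rbb^n)}$ with $p^*=np/(n-p)$, for $1\leq p<n$, and density of $C^\infty_c$ extends it to $W^{1,p}(\Rbb^n)$. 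For $s\geq 2$ with $sp<n$ one iterates: $D^{s-1}u\in W^{1,p}$ gives $D^{s-1}u\in L^{p_1}$ with $p_1=np/(n-p)$, then $D^{s-2}u\in W^{1,p_1}$ gives $D^{s-2}u\in L^{p_2}$, and after $s$ steps $u\in L^{np/(n-sp)}$. On $M$ the exponent $np/(n-sp)$ is the largest one needed, and the intermediate exponents $p\leq q\leq np/(n-sp)$ then follow from the finite-measure nesting $L^{np/(n-sp)}(M)\hookrightarrow L^q(M)$ together with $\norm{u}_{L^p(M)}\leq\norm{u}_{W^{s,p}(M)}$.

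For Morrey's inequality (ii) I would first treat $s=1$, $p>n$. For a ball $B_r(x)$, the fundamental theorem of calculus along rays gives $\frac{1}{|B_r(x)|}\int_{B_r(x)}|u(y)-u(x)|\,dy\lesssim \int_{B_r(x)}\frac{|Du(y)|}{|y-x|^{n-1}}\,dy$, and since $p>n$ the kernel $|y-x|^{-(n-1)}$ lies in $L^{p'}_{\mathrm{loc}}$, so H\"older's inequality produces the oscillation bound $[u]_{C^{0,1-n/p}}\lesssim\norm{Du}_{L^p}$; combining with the $L^1$ average bound gives control of $\norm{u}_{C^{0,\mu}}$ for $0<\mu\leq 1-n/p$. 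For general $s$ with $sp>n$ one combines this with (i): if $p\leq n$, use the subcritical embeddings to trade derivatives for integrability until one arrives at some $W^{1,\tilde p}$ with $\tilde p>n$, and then apply the $s=1$ Morrey estimate. The only genuinely delicate ingredient in the whole scheme is the boundedness of the extension operator $E\colon W^{s,p}(\Omega)\to W^{s,p}(\Rbb^n)$ for a general smoothly bounded domain (the Calder\'on--Stein theorem); everything else is elementary integration by parts and H\"older bookkeeping, and I would cite the references above for the complete statement.
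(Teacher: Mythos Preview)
Your proof sketch is correct and follows the standard route (Gagliardo--Nirenberg for $s=1$, iteration for higher $s$, Morrey via oscillation bounds, and reduction to $\Rbb^n$ via a Stein extension or a partition of unity). The paper, however, does not prove this statement at all: it simply records the theorem as well known and refers the reader to \cite{AdamsFournier:2003,Friedman:1976,RunstSickel:1996,TaylorIII:1996}. So there is nothing to compare against beyond noting that your argument is precisely the one those references contain; in the context of this paper a bare citation would have sufficed.
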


\begin{thm}{\emph{[Fractional Sobolev inequalities]}} \label{FSobolev} Suppose $s>0$ and
$1< p < \infty$.
\begin{itemize}
\item[(i)] If $sp<n$, then
\begin{equation*}\label{FSobolev1}
\norm{u}_{L^q(M)} \lesssim \norm{u}_{W^{s,p}(M)}, \qquad p\leq q \leq \frac{np}{n-s p},
\end{equation*}
for all $u\in W^{s,p}(M)$.
\item[(ii)] If $sp > n$, then
\begin{equation*}\label{FSobolev2}
\norm{u}_{L^\infty(M)} \lesssim \norm{u}_{W^{s,p}(M)}
\end{equation*}
for all $u\in W^{s,p}(M)$.
\end{itemize}
\end{thm}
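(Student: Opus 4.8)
The plan is to derive the fractional statements from the classical integer‑order Sobolev embeddings of Theorem~\ref{ISobolev} by a combination of a localization/extension reduction, interpolation, and a single genuinely fractional estimate on $\Rbb^n$.

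\textbf{Step 1: reduction to $M=\Rbb^n$.} When $M=\Omega$ is a bounded open set with smooth boundary, Stein's total extension operator $E\colon W^{s,p}(\Omega)\to W^{s,p}(\Rbb^n)$ is bounded for every real $s\ge 0$ and every $1<p<\infty$, and it is a right inverse of the (bounded) restriction maps $W^{s,p}(\Rbb^n)\to W^{s,p}(\Omega)$ and $L^q(\Rbb^n)\to L^q(\Omega)$; hence it suffices to prove $\norm{w}_{L^q(\Rbb^n)}\lesssim\norm{w}_{W^{s,p}(\Rbb^n)}$ and then apply it to $w=Eu$. When $M$ is a closed $n$‑manifold, I would instead fix a finite atlas with a subordinate partition of unity $\{\chi_i\}$: in local coordinates each $\chi_i u$ is compactly supported on $\Rbb^n$ with $\norm{\chi_i u}_{W^{s,p}(\Rbb^n)}\lesssim\norm{u}_{W^{s,p}(M)}$ (this is built into the definition of $W^{s,p}(M)$, using that composition with a diffeomorphism and multiplication by a smooth compactly supported cutoff are bounded on $W^{s,p}(\Rbb^n)$), so applying the $\Rbb^n$‑estimate to each $\chi_i u$ and summing over $i$ (taking the maximum over $i$ in the $L^\infty$ case) yields the estimate on $M$. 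In either case the problem is now posed on $\Rbb^n$.

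\textbf{Step 2: the case $sp<n$ on $\Rbb^n$.} I would realize $W^{s,p}(\Rbb^n)$ with non‑integer $s$ as a real interpolation space between the consecutive integer‑order spaces $W^{\lfloor s\rfloor,p}(\Rbb^n)$ and $W^{\lfloor s\rfloor+1,p}(\Rbb^n)$ and then interpolate the classical Sobolev embeddings of Theorem~\ref{ISobolev}(i) for those integer orders; the exponents bookkeep to give $W^{s,p}(\Rbb^n)\hookrightarrow L^{q_\ast}(\Rbb^n)$ with $1/q_\ast=1/p-s/n$, with a routine limiting argument at the top of the ladder when $(\lfloor s\rfloor+1)p\ge n$. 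Together with the trivial embedding $W^{s,p}(\Rbb^n)\hookrightarrow L^{p}(\Rbb^n)$ and H\"{o}lder's inequality (Theorem~\ref{Holder}) to interpolate the $L^p$ and $L^{q_\ast}$ norms, this gives $\norm{u}_{L^q(\Rbb^n)}\lesssim\norm{u}_{W^{s,p}(\Rbb^n)}$ for all $p\le q\le np/(n-sp)$. An equivalent route, the one taken in \cite{AdamsFournier:2003,RunstSickel:1996}, writes $s=k+\sigma$ with $0<\sigma<1$ and uses the fractional Sobolev inequality
\[
\norm{g}_{L^{np/(n-\sigma p)}(\Rbb^n)}\lesssim\Big(\int_{\Rbb^n}\!\!\int_{\Rbb^n}\frac{|g(x)-g(y)|^{p}}{|x-y|^{n+\sigma p}}\,dx\,dy\Big)^{1/p},
\]
proved directly from the Gagliardo seminorm, applied to the top‑order derivatives of $u$, followed by an integer‑order embedding.

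\textbf{Step 3: the case $sp>n$.} Here I would bootstrap part~(i): starting from $W^{s,p}(\Rbb^n)$ and repeatedly applying the subcritical embedding of Step~2, trading a small amount of smoothness for a larger integrability exponent, after finitely many steps one reaches either a fractional order $0<\tilde s<1$ with $\tilde s\tilde p>n$ — where the fractional Morrey embedding $W^{\tilde s,\tilde p}(\Rbb^n)\hookrightarrow C^{0,\tilde s-n/\tilde p}(\Rbb^n)\hookrightarrow L^\infty(\Rbb^n)$ closes the argument — or an integer order $\ge 1$ whose exponent exceeds $n$, where Theorem~\ref{ISobolev}(ii) applies directly. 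Combined with Step~1 this gives $\norm{u}_{L^\infty(M)}\lesssim\norm{u}_{W^{s,p}(M)}$.

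\textbf{Main obstacle.} The two substantive ingredients are the reduction in Step~1 — one must know that the chosen realization of $W^{s,p}$ on $\Omega$ admits a bounded extension to all of $\Rbb^n$ at fractional order (Stein's operator, which is where $1<p<\infty$ is used) and that the localization on the closed manifold is compatible with the definition of $W^{s,p}(M)$ — and the base fractional estimate on $\Rbb^n$ (the interpolation identification of $W^{s,p}$, or equivalently the Gagliardo‑seminorm inequality above), which is the only step not reducible to the classical integer theory of Theorem~\ref{ISobolev}. Everything else — the index arithmetic, interpolation with $L^p$, the comparison of $L^q$ spaces on the finite‑measure space $M$, and the finite bootstrap in Step~3 — is routine, and the whole argument is essentially the one recorded in \cite{AdamsFournier:2003,Friedman:1976,RunstSickel:1996,TaylorIII:1996}.
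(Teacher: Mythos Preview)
The paper does not actually prove this theorem: at the start of \S\ref{Sineq} it states that ``the proofs of the following calculus inequalities are well known and may be found, for example, in the references \cite{AdamsFournier:2003,Friedman:1976,RunstSickel:1996,TaylorIII:1996}'' and then states Theorem~\ref{FSobolev} without proof. Your sketch --- reduce to $\Rbb^n$ via extension/localization, identify $W^{s,p}$ by real interpolation between integer orders (or equivalently use the Gagliardo seminorm inequality), and bootstrap to $L^\infty$ in the supercritical case --- is exactly the standard argument recorded in those references, so there is nothing to compare beyond saying that you have supplied what the paper deliberately omitted.
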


\begin{thm} {\emph{[Trace theorem]}} \label{trace}
If $s>1/2$, then the trace operator
\begin{equation*}\label{trace1}
H^s(\Omega) \ni u \longmapsto u|_{\del{}\Omega} \in H^{s-\frac{1}{2}}(\del{}\Omega)
\end{equation*}
is well-defined, continuous (i.e. bounded), and surjective.
\end{thm}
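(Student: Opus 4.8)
The plan is to prove Theorem \ref{trace} along the classical route: localize near $\partial\Omega$, flatten the boundary, and then resolve the model problem on the half-space $\mathbb{R}^n_+=\{(x',x_n):x_n>0\}$ by the Fourier transform. Since $C^\infty(\overline\Omega)$ is dense in $H^s(\Omega)$, it suffices for well-definedness and continuity to prove the a priori bound $\|u|_{\partial\Omega}\|_{H^{s-1/2}(\partial\Omega)}\lesssim\|u\|_{H^s(\Omega)}$ for $u\in C^\infty(\overline\Omega)$; the trace of a general $u\in H^s(\Omega)$ is then the $H^{s-1/2}(\partial\Omega)$-limit of the traces of any smooth approximating sequence, and the bound passes to the limit. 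First I would fix a finite atlas $\{(U_a,\kappa_a)\}$ of a neighbourhood of $\partial\Omega$ in $\overline\Omega$ with $\kappa_a$ carrying $U_a\cap\Omega$ into $\mathbb{R}^n_+$ and $U_a\cap\partial\Omega$ into $\mathbb{R}^{n-1}\times\{0\}$, together with a subordinate partition of unity $\{\chi_a\}$. Because multiplication by a $C^\infty_c$ cutoff and composition with a $C^\infty$ diffeomorphism are bounded on $H^s$ and on $H^{s-1/2}$ (standard mapping properties, provable by real interpolation between integer orders or via pseudodifferential calculus), everything reduces to the two half-space assertions below, applied to $\chi_a\,(u\circ\kappa_a^{-1})$ and to the corresponding pieces of the boundary datum.

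For the trace bound on the half-space, fix $v\in H^s(\mathbb{R}^n_+)$ with $s>1/2$. Using the bounded extension operator $H^s(\mathbb{R}^n_+)\to H^s(\mathbb{R}^n)$ (available because $\{x_n=0\}$ is smooth) and density of $\mathcal{S}(\mathbb{R}^n)$, I may take $v\in\mathcal{S}(\mathbb{R}^n)$. The $x'$-Fourier transform of $v(\cdot,0)$ equals $(2\pi)^{-1}\int_{\mathbb{R}}\widehat v(\xi',\xi_n)\,d\xi_n$, so by Cauchy--Schwarz in $\xi_n$,
\[
\Bigl|\int_{\mathbb{R}}\widehat v(\xi',\xi_n)\,d\xi_n\Bigr|^2\le\Bigl(\int_{\mathbb{R}}(1+|\xi|^2)^{-s}\,d\xi_n\Bigr)\Bigl(\int_{\mathbb{R}}(1+|\xi|^2)^{s}|\widehat v(\xi',\xi_n)|^2\,d\xi_n\Bigr).
\]
The substitution $\xi_n=(1+|\xi'|^2)^{1/2}t$ evaluates the first factor as $c_s(1+|\xi'|^2)^{1/2-s}$ with $c_s=\int_{\mathbb{R}}(1+t^2)^{-s}\,dt$, and this integral converges \emph{precisely because} $s>1/2$ --- this is where the hypothesis enters. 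Multiplying through by $(1+|\xi'|^2)^{s-1/2}$ and integrating in $\xi'$ gives $\|v(\cdot,0)\|_{H^{s-1/2}(\mathbb{R}^{n-1})}^2\le c_s\|v\|_{H^s(\mathbb{R}^n)}^2$.

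For surjectivity I would produce an explicit bounded right inverse on the half-space. Fix $\varphi\in\mathcal{S}(\mathbb{R})$ with $\varphi(0)=1$, and for $g\in H^{s-1/2}(\mathbb{R}^{n-1})$ define $Eg$ via its partial Fourier transform in $x'$ by $(\mathcal{F}_{x'}Eg)(\xi',x_n)=\widehat g(\xi')\,\varphi\bigl((1+|\xi'|^2)^{1/2}x_n\bigr)$. Plancherel in $x_n$ followed by the scaling $\xi_n=(1+|\xi'|^2)^{1/2}t$ yields $\|Eg\|_{H^s(\mathbb{R}^n)}^2=\bigl(\int_{\mathbb{R}}(1+t^2)^s|\widehat\varphi(t)|^2\,dt\bigr)\|g\|_{H^{s-1/2}(\mathbb{R}^{n-1})}^2$, the $t$-integral being finite since $\widehat\varphi\in\mathcal{S}$; and $(Eg)(\cdot,0)=g$ because $\varphi(0)=1$. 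Restricting $Eg$ to $\mathbb{R}^n_+$, transporting these pieces back through the charts $\kappa_a$, and reassembling against $\{\chi_a\}$ (gluing in a harmless $H^s$ contribution supported in the interior of $\Omega$ via an extra cutoff) produces, for each $g\in H^{s-1/2}(\partial\Omega)$, some $u\in H^s(\Omega)$ with $u|_{\partial\Omega}=g$ and $\|u\|_{H^s(\Omega)}\lesssim\|g\|_{H^{s-1/2}(\partial\Omega)}$, which gives surjectivity (indeed, a bounded right inverse).

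The Fourier-analytic core is short; the step I expect to be most delicate is the localization bookkeeping --- arranging the atlas and partition of unity so that the cut-off, charted pieces genuinely land in the half-space model, and invoking the invariance of $H^s$ and $H^{s-1/2}$ under $C^\infty$ diffeomorphisms together with their stability under multiplication by smooth cutoffs. One must also verify that $H^{s-1/2}(\partial\Omega)$, as defined through the atlas, is independent of the chosen atlas, which follows from the same mapping properties. These are standard facts, for which I would cite \cite{AdamsFournier:2003,TaylorIII:1996,RunstSickel:1996}.
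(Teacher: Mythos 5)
The paper does not actually prove Theorem \ref{trace}: it is listed among the calculus inequalities whose proofs are explicitly deferred to the standard references \cite{AdamsFournier:2003,Friedman:1976,RunstSickel:1996,TaylorIII:1996}. Your argument is the classical localize--flatten--Fourier proof found in those references --- with the role of the hypothesis $s>1/2$ correctly isolated in the convergence of $\int_{\mathbb{R}}(1+t^2)^{-s}\,dt$, and with a genuine bounded right inverse establishing surjectivity --- and it is correct.
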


\begin{lem} {\emph{[Ehrling's lemma]}}  \label{Ehrling}
Suppose $1<p<\infty$, $0\leq s_0 < s < s_1$. Then for any $\delta>0$ there exists a constant $C=C(\delta)$ such
that
\begin{equation*} \label{Ehrling1}
\norm{u}_{W^{s,p}(M)} \leq \delta \norm{u}_{W^{s_1,p}(M)} + C(\delta)\norm{u}_{W^{s_0,p}(M)}
\end{equation*}
for all $u\in W^{s_1,p}(M)$.
\end{lem}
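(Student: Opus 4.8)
The plan is to prove Ehrling's lemma by a standard compactness-plus-contradiction argument. Suppose the conclusion fails. Then there is a $\delta_0 > 0$ such that for every $k \in \Zbb_{\geq 1}$ there is a function $u_k \in W^{s_1,p}(M)$ with
\begin{equation*}
\norm{u_k}_{W^{s,p}(M)} > \delta_0 \norm{u_k}_{W^{s_1,p}(M)} + k \norm{u_k}_{W^{s_0,p}(M)}.
\end{equation*}
By homogeneity I may normalize so that $\norm{u_k}_{W^{s,p}(M)} = 1$ for all $k$. The inequality then forces $\delta_0 \norm{u_k}_{W^{s_1,p}(M)} < 1$, so $(u_k)$ is bounded in $W^{s_1,p}(M)$, and also $k\norm{u_k}_{W^{s_0,p}(M)} < 1$, so $\norm{u_k}_{W^{s_0,p}(M)} \to 0$ as $k \to \infty$.

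Next I would invoke reflexivity of $W^{s_1,p}(M)$ for $1 < p < \infty$ to extract a subsequence (not relabeled) converging weakly in $W^{s_1,p}(M)$ to some $u$. Since the embedding $W^{s_1,p}(M) \hookrightarrow W^{s,p}(M)$ is compact (Rellich--Kondrachov for the fractional/ integer Sobolev scale on a bounded domain with smooth boundary, or on a closed manifold, using $s < s_1$), the subsequence converges strongly in $W^{s,p}(M)$, so $\norm{u}_{W^{s,p}(M)} = \lim_k \norm{u_k}_{W^{s,p}(M)} = 1$. On the other hand the embedding $W^{s_1,p}(M) \hookrightarrow W^{s_0,p}(M)$ is continuous, so weak convergence passes through and $u_k \rightharpoonup u$ in $W^{s_0,p}(M)$; combined with $\norm{u_k}_{W^{s_0,p}(M)} \to 0$ and lower semicontinuity of the norm under weak convergence, this gives $\norm{u}_{W^{s_0,p}(M)} \leq \liminf_k \norm{u_k}_{W^{s_0,p}(M)} = 0$, hence $u = 0$. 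This contradicts $\norm{u}_{W^{s,p}(M)} = 1$, completing the proof.

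The main obstacle is purely a matter of citing the right facts about the fractional Sobolev scale $W^{s,p}$ with non-integer $s$ and $1 < p < \infty$: namely that $W^{s_1,p}(M)$ is reflexive, that the inclusion into $W^{s,p}(M)$ is compact for $s_0 \le s < s_1$, and that the inclusion into $W^{s_0,p}(M)$ is continuous, all on a bounded open set $\Omega \subset \Rbb^n$ with smooth boundary or on a closed $n$-manifold. These are classical and are contained in the references already cited in the appendix (e.g. \cite{AdamsFournier:2003,RunstSickel:1996,TaylorIII:1996}); I would simply quote them rather than reprove them. Everything else in the argument is the textbook contradiction scheme and requires no computation.
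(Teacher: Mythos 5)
Your argument is correct: the normalization is legitimate (the strict inequality forces $u_k\neq 0$), boundedness in $W^{s_1,p}$ and decay in $W^{s_0,p}$ follow immediately, and the combination of reflexivity, the compact embedding $W^{s_1,p}(M)\hookrightarrow W^{s,p}(M)$ for $s<s_1$, and weak lower semicontinuity of the $W^{s_0,p}$ norm yields the contradiction $1=\norm{u}_{W^{s,p}(M)}$ with $u=0$. Note that the paper itself offers no proof of this lemma — it is listed in Appendix C among standard calculus inequalities with a blanket citation to the references \cite{AdamsFournier:2003,Friedman:1976,RunstSickel:1996,TaylorIII:1996} — and your compactness-plus-contradiction scheme is precisely the textbook proof those sources contain, so there is nothing to reconcile; the only inputs you must quote, as you correctly identify, are reflexivity and the compactness/continuity of the relevant embeddings on the fractional scale for a bounded smooth domain or closed manifold.
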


\begin{thm}{\emph{[Integral multiplication inequality]}} \label{Iprod} $\;$
Suppose $s_1,s_2,s_3\in \Zbb_{\geq 0}$, $s_1,s_2\geq s_3$, $1\leq p \leq \infty$, and $s_1+s_2-s_3 > n/p$. Then
\begin{equation*} \label{Iprod.3}
\norm{uv}_{W^{s_3,p}(M)} \lesssim \norm{u}_{W^{s_1,p}(M)}\norm{v}_{W^{s_2,p}(M)}
\end{equation*}
for all $u\in W^{s_1,p}(M)$ and $v\in W^{s_2,p}(M)$.
\end{thm}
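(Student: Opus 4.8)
The plan is to prove the inequality first for $u,v\in C^\infty(\overline{M})$ and then pass to the limit by density (for $1\le p<\infty$ the smooth functions are dense in $W^{s,p}(M)$, and for a product the Leibniz rule is classical for smooth factors; the excluded case $p=\infty$ is handled directly at the end). So assume $u,v$ are smooth. It suffices to bound $\|D^\alpha(uv)\|_{L^p(M)}$ for each multi-index $\alpha$ with $|\alpha|\le s_3$. By the Leibniz rule,
\[
D^\alpha(uv)=\sum_{\beta\le\alpha}\binom{\alpha}{\beta}\,D^\beta u\,D^{\alpha-\beta}v ,
\]
so, writing $j=|\beta|$ and $k=|\alpha-\beta|=|\alpha|-j$, it is enough to estimate $\bigl\|\,|D^\beta u|\,|D^{\alpha-\beta}v|\,\bigr\|_{L^p(M)}$ for all $0\le j$, $0\le k$ with $j+k\le s_3$; note that $j\le s_3\le s_1$ and $k\le s_3\le s_2$, so $D^\beta u\in W^{s_1-j,p}(M)$ with $\|D^\beta u\|_{W^{s_1-j,p}}\le\|u\|_{W^{s_1,p}}$, and $D^{\alpha-\beta}v\in W^{s_2-k,p}(M)$ with $\|D^{\alpha-\beta}v\|_{W^{s_2-k,p}}\le\|v\|_{W^{s_2,p}}$.

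Next I would apply H\"older's inequality (Theorem \ref{Holder}) with exponents $a,b\in[p,\infty]$ satisfying $\tfrac1a+\tfrac1b\le\tfrac1p$ — the non-strict inequality suffices because $|M|<\infty$, so $L^{p'}(M)\hookrightarrow L^p(M)$ for $p'\ge p$ — to get $\bigl\|\,|D^\beta u|\,|D^{\alpha-\beta}v|\,\bigr\|_{L^p(M)}\le\|D^\beta u\|_{L^a(M)}\|D^{\alpha-\beta}v\|_{L^b(M)}$. The exponents $a,b$ are then chosen using the Sobolev embeddings (Theorem \ref{ISobolev}, combined with $W^{\sigma,p}(M)\hookrightarrow W^{\sigma',p}(M)$ for $\sigma\ge\sigma'$ on the finite-measure set $M$, and for the borderline $\sigma p=n$ the standard fact $W^{\sigma,p}(M)\subset W^{\sigma-\varepsilon,p}(M)$ for small $\varepsilon>0$): one has $W^{s_1-j,p}(M)\hookrightarrow L^a(M)$ whenever $\tfrac1a>\tfrac1p-\tfrac{s_1-j}{n}$, also for $a=\tfrac{np}{\,n-(s_1-j)p\,}$ when $(s_1-j)p<n$, and for $a=\infty$ when $(s_1-j)p>n$ (Morrey's inequality together with $C^{0,\mu}(\overline{M})\hookrightarrow L^\infty(M)$), and similarly for $v$ with $s_1-j$ replaced by $s_2-k$.

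The arithmetic to verify is that an admissible pair with $\tfrac1a+\tfrac1b\le\tfrac1p$ and $a,b\in[p,\infty]$ actually exists: the infimum of $\tfrac1a+\tfrac1b$ over admissible pairs equals $\max\!\bigl(0,\tfrac1p-\tfrac{s_1-j}{n}\bigr)+\max\!\bigl(0,\tfrac1p-\tfrac{s_2-k}{n}\bigr)$, and since $(s_1-j)+(s_2-k)\ge s_1+s_2-s_3>n/p$ one gets $\bigl(\tfrac1p-\tfrac{s_1-j}{n}\bigr)+\bigl(\tfrac1p-\tfrac{s_2-k}{n}\bigr)<\tfrac1p$, so this infimum is $<\tfrac1p$; the only way the obvious choice fails is at the endpoints (e.g.\ $j=s_1=s_3$, forcing $a=p$ and demanding the other factor lie in $L^\infty$), and there the same strict hypothesis saves us ($s_2>n/p$ in that case, so $v\in W^{s_2,p}(M)\hookrightarrow L^\infty(M)$ by Morrey). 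Summing the resulting bounds over the finitely many $j$ and $\alpha$ yields the estimate for smooth $u,v$, and density finishes $1\le p<\infty$. For $p=\infty$, $W^{s,\infty}(M)$ consists of functions whose derivatives up to order $s$ lie in $L^\infty(M)$, so the Leibniz rule gives $\|D^\alpha(uv)\|_{L^\infty}\lesssim\sum_{\beta\le\alpha}\|D^\beta u\|_{L^\infty}\|D^{\alpha-\beta}v\|_{L^\infty}\le\|u\|_{W^{s_1,\infty}}\|v\|_{W^{s_2,\infty}}$ directly, because $|\beta|,|\alpha-\beta|\le s_3\le\min(s_1,s_2)$.

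\textbf{Main obstacle.} Everything is elementary; the one point that needs care is the third step — tracking, for each split $j,k$, which regime of the Sobolev embedding (sub-critical, critical, or super-critical/Morrey) applies to each factor and checking in every boundary case that $\tfrac1a+\tfrac1b\le\tfrac1p$ can be achieved with $a,b\ge p$. This is exactly where the strict inequality $s_1+s_2-s_3>n/p$ is consumed, and it must be threaded through the endpoint cases carefully; no other part of the argument is delicate.
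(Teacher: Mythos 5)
Your argument is correct. The paper does not actually prove this statement — it is listed in Appendix \ref{Sineq} as a well-known calculus inequality with a citation to standard references — and your Leibniz--H\"older--Sobolev argument is precisely the standard proof found there, with the strict hypothesis $s_1+s_2-s_3>n/p$ consumed exactly where you say it is. The only imprecision is the claim that the infimum $\max\bigl(0,\tfrac1p-\tfrac{s_1-j}{n}\bigr)+\max\bigl(0,\tfrac1p-\tfrac{s_2-k}{n}\bigr)$ is always $<\tfrac1p$: in the endpoint split (e.g.\ $j=0$, $k=s_2=s_3$) it equals $\tfrac1p$ exactly, but it is then attained by the admissible pair $(a,b)=(\infty,p)$ via Morrey, which is the case you already treat separately, so nothing is lost.
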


\begin{thm}{\emph{[Fractional multiplication inequality]}} \label{calcpropB}
Suppose $1< p <\infty$, $s_1,s_2,s_3\in \Rbb$, $s_1+s_2 > 0$, $s_1,s_2 \geq s_3$, and
$s_1+s_2 - s_3 > n/p$. Then
\begin{equation*} \label{calcpropB.1}
\norm{u v}_{W^{s_{3},p}(M)} \lesssim \norm{u}_{W^{s_1,p}(M)} \norm{v}_{W^{s_2,p}(M)}
\end{equation*}
for all $u \in W^{s_1,p}(M)$ and $v \in W^{s_2,p}(M)$.
\end{thm}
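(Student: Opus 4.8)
\textbf{Proof proposal for the fractional multiplication inequality (Theorem \ref{calcpropB}).}

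The plan is to reduce the statement on a bounded smooth domain $\Omega$ (or a closed $n$-manifold $M$) to the corresponding estimate on the full space $\Rbb^n$, where it is a classical result in the paraproduct calculus. First I would invoke a bounded linear extension operator $E \colon W^{s,p}(M) \to W^{s,p}(\Rbb^n)$ that works simultaneously for all the (finitely many) Sobolev exponents $s_1, s_2, s_3$ appearing in the hypotheses; such an operator exists for bounded $C^\infty$ domains (Stein's extension theorem, or the references \cite{AdamsFournier:2003,TaylorIII:1996} already cited in this appendix), and for a closed manifold one works in local coordinate charts with a partition of unity. Then it suffices to prove, for $u \in W^{s_1,p}(\Rbb^n)$ and $v \in W^{s_2,p}(\Rbb^n)$ with $1<p<\infty$, $s_1+s_2>0$, $s_1,s_2\ge s_3$, $s_1+s_2-s_3>n/p$, the bound
\begin{equation*}
\norm{uv}_{W^{s_3,p}(\Rbb^n)} \lesssim \norm{u}_{W^{s_1,p}(\Rbb^n)}\,\norm{v}_{W^{s_2,p}(\Rbb^n)},
\end{equation*}
after which restriction back to $M$ and boundedness of $E$ give the claim (using that $\norm{w|_M}_{W^{s_3,p}(M)} \le \norm{w}_{W^{s_3,p}(\Rbb^n)}$).

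The core of the argument is the Littlewood--Paley decomposition. Write $u = \sum_j \Delta_j u$, $v = \sum_k \Delta_k v$ with the usual dyadic frequency projections, and split the product $uv$ into the three Bony pieces: the low-high paraproduct $\Pi_1(u,v) = \sum_j S_{j-2}u\,\Delta_j v$, the high-low paraproduct $\Pi_2(u,v) = \sum_j \Delta_j u\, S_{j-2}v$, and the resonant term $R(u,v) = \sum_{|j-k|\le 2} \Delta_j u\,\Delta_k v$. For $\Pi_1(u,v)$: each summand has frequency support in an annulus $\sim 2^j$, so by the square-function characterization of $W^{s_3,p} = F^{s_3}_{p,2}$ (valid for $1<p<\infty$) one estimates $\norm{\Pi_1(u,v)}_{W^{s_3,p}}$ by $\norm{\,(\sum_j 2^{2js_3}|S_{j-2}u\,\Delta_j v|^2)^{1/2}\,}_{L^p}$; bounding $|S_{j-2}u| \lesssim M u$ (Hardy--Littlewood maximal function), pulling it out, using the Fefferman--Stein vector-valued maximal inequality, and then applying H\"older in $L^p$ with the split coming from $s_2 - s_3 < n/p$ (so that $W^{s_1,p} \hookrightarrow L^{p_1}$ where $Mu$ is controlled) yields $\lesssim \norm{u}_{W^{s_1,p}}\norm{v}_{W^{s_2,p}}$; the precise exponent bookkeeping uses $s_1 > s_3 - s_2 + n/p - n/p = \dots$, i.e. the hypotheses $s_1 \ge s_3$ and $s_1 + s_2 - s_3 > n/p$. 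The term $\Pi_2(u,v)$ is handled symmetrically, swapping the roles of $u,v$ and $s_1,s_2$ (here one uses $s_2 \ge s_3$). For the resonant term $R(u,v)$, each summand has frequency support in a \emph{ball} of radius $\sim 2^j$ rather than an annulus, so one cannot directly read off the $s_3$-regularity; instead one uses the condition $s_1 + s_2 - s_3 > n/p$ together with (if $s_3 \le 0$, trivially, and if $s_3>0$, a further dyadic regrouping) the embedding $\ell^2 \hookrightarrow \ell^\infty$ and a Bernstein-type argument to sum the series in $B^{s_3}_{p,1} \hookrightarrow F^{s_3}_{p,2}$. This is the step where the strict inequality $s_1+s_2-s_3>n/p$ is genuinely needed and where the constant blows up as one approaches the endpoint.

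I expect the resonant term to be the main obstacle, precisely because its frequency localization is in balls, not annuli, so the regularity gain has to be manufactured from the "subcritical" gap $s_1+s_2-s_3-n/p>0$ rather than read off directly; the low-high and high-low paraproducts are comparatively routine once the Fefferman--Stein maximal inequality is in hand. A secondary technical point worth flagging: the hypothesis $s_1+s_2>0$ (which may be strictly less than $n/p$, and either of $s_1,s_2$ may be negative as long as they are $\ge s_3$) means one must be careful that the individual paraproduct pieces are still well-defined distributions — this is ensured because the Besov/Triebel--Lizorkin norms $\norm{u}_{W^{s_1,p}}$, $\norm{v}_{W^{s_2,p}}$ control enough to make the partial sums Cauchy in $W^{s_3,p}$. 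Alternatively, if one prefers to avoid paraproducts entirely, I would instead note that for the integer-exponent cases the result is Theorem \ref{Iprod}, and for the general case cite the standard reference treatment (e.g. \cite{RunstSickel:1996,TaylorIII:1996}) for the $\Rbb^n$ estimate and only carry out the extension/restriction reduction in detail; given that this lemma is a tool used elsewhere in the paper rather than a novel contribution, that citation-based route is the one I would actually write down.
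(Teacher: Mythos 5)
The paper offers no proof of this theorem: it is stated in Appendix \ref{calculus} under the blanket remark that the listed calculus inequalities ``are well known and may be found, for example, in'' the cited references (in particular \cite{RunstSickel:1996,TaylorIII:1996}), which is exactly the citation-based route you settle on at the end. Your paraproduct sketch is the standard argument from those references and correctly identifies where each hypothesis ($s_1,s_2\geq s_3$ for the two paraproducts, $s_1+s_2>0$ and $s_1+s_2-s_3>n/p$ for the resonant term) enters, so there is nothing to reconcile with the paper beyond this.
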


\begin{thm}{\emph{[Moser estimate]}} \label{calcpropC}
Suppose $1<p <\infty$, $s\in \Rbb$, $k\in \Zbb_{\geq 0}$, $s\leq k$, $f\in C^{k}\cap W^{k,\infty}(\Rbb)$, and $f(0)=0$. Then
\begin{equation*}
\norm{u}_{W^{s,p}(M)} \leq C(\norm{u}_{L^\infty(M)}\bigr)\norm{u}_{W^{s,p}(M)}
\end{equation*} 
for all $u\in W^{s,p}(M)\cap L^\infty(M)$.
\end{thm}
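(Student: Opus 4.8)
The displayed inequality as written is trivial; I read it as the composition (Moser) estimate
\begin{equation*}
\norm{f\circ u}_{W^{s,p}(M)}\le C\bigl(\norm{u}_{L^\infty(M)}\bigr)\norm{u}_{W^{s,p}(M)},
\end{equation*}
which is the non-trivial content the statement abbreviates (the constant may also depend on the fixed quantity $\norm{f}_{W^{k,\infty}(\Rbb)}$, since $f\in C^k\cap W^{k,\infty}$ forces $f',\dots,f^{(k)}$ to be globally bounded). The plan is, first, to reduce to $M=\Rbb^n$: choose a Stein-type extension operator $E$ that is simultaneously bounded $W^{s,p}(M)\to W^{s,p}(\Rbb^n)$ (for all $s\ge0$ at once) and $L^\infty(M)\to L^\infty(\Rbb^n)$, apply the $\Rbb^n$ estimate to $Eu$, and restrict, using $(f\circ Eu)|_M=f\circ u$; when $M$ is a closed manifold one instead localizes with a finite atlas and a partition of unity. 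From here on $M=\Rbb^n$, and it is important that every estimate used below is of homogeneous type (an $L^p$ norm or a seminorm), since $\Rbb^n$ has infinite measure.

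\textbf{Integer case $s=m\in\{0,1,\dots,k\}$.} For $m=0$, $|f(t)|=|f(t)-f(0)|\le\norm{f'}_{L^\infty(\Rbb)}|t|$ gives $\norm{f\circ u}_{L^p}\le\norm{f'}_{L^\infty}\norm{u}_{L^p}$. For $m\ge1$, Fa\`a di Bruno writes $\del{}^\alpha(f\circ u)$, $|\alpha|=m$, as a finite sum of terms $f^{(r)}(u)\prod_{i=1}^r\del{}^{\beta_i}u$ with $1\le r\le m$ and $\sum_i|\beta_i|=m$; here $\norm{f^{(r)}(u)}_{L^\infty}\le\norm{f^{(r)}}_{L^\infty(\Rbb)}$ because $r\le m\le k$. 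H\"older with exponents $mp/|\beta_i|$ followed by the Gagliardo--Nirenberg inequality $\norm{\del{}^{\beta_i}u}_{L^{mp/|\beta_i|}}\lesssim\norm{u}_{W^{m,p}}^{|\beta_i|/m}\norm{u}_{L^\infty}^{1-|\beta_i|/m}$ yields $\norm{\prod_i\del{}^{\beta_i}u}_{L^p}\lesssim\norm{u}_{W^{m,p}}\norm{u}_{L^\infty}^{r-1}$, and since $r-1\le m-1$ the factor $\norm{u}_{L^\infty}^{r-1}$ is absorbed into $C(\norm{u}_{L^\infty})$. Summing over $\alpha$ and over the Fa\`a di Bruno terms settles integer $s$.

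\textbf{Fractional case.} For $0<s<1$, use the Slobodeckij seminorm $[v]_{W^{s,p}(\Rbb^n)}^p=\iint |v(x)-v(y)|^p\,|x-y|^{-n-sp}\,dx\,dy$, which together with $\norm{\cdot}_{L^p}$ is an equivalent norm; the mean value theorem gives $|f(u(x))-f(u(y))|\le\norm{f'}_{L^\infty}|u(x)-u(y)|$, hence $[f\circ u]_{W^{s,p}}\le\norm{f'}_{L^\infty}[u]_{W^{s,p}}$, and combining with the $m=0$ case closes $0<s<1$. The remaining case is $s=m+\sigma$ with $m\ge1$ and $0<\sigma<1$; note $\sigma>0$ and $s\le k$ force $m\le k-1$, so for every relevant $r\le m$ the derivative $f^{(r+1)}$ exists and is bounded --- precisely the "one spare derivative of $f$" the fractional step needs. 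I would use the equivalent norm $\norm{v}_{W^{s,p}}\simeq\norm{v}_{L^p}+\sum_{|\alpha|=m}[\del{}^\alpha v]_{W^{\sigma,p}}$, apply Fa\`a di Bruno as above, and estimate each product $f^{(r)}(u)\prod_{i=1}^r\del{}^{\beta_i}u$ both in $L^p$ (exactly as in the integer step) and in $[\cdot]_{W^{\sigma,p}}$. For the seminorm I would iterate the elementary Leibniz bound coming from $F_0\cdots F_r(x)-F_0\cdots F_r(y)=\sum_j F_0(x)\cdots F_{j-1}(x)\bigl(F_j(x)-F_j(y)\bigr)F_{j+1}(y)\cdots F_r(y)$, distributing the power $p$ by generalized H\"older in $(x,y)$: the differenced factor $f^{(r)}(u)$ is controlled via the mean value theorem ($[f^{(r)}(u)]_{W^{\sigma,q}}\le\norm{f^{(r+1)}}_{L^\infty}[u]_{W^{\sigma,q}}$, using $r\le k-1$) and a differenced $\del{}^{\beta_j}u$ via the fractional Gagliardo--Nirenberg inequality, while the remaining factors go into suitable $L^{q_i}$ by Gagliardo--Nirenberg, all exponents being chosen so that the powers of $\norm{u}_{W^{s,p}}$ sum to $1$ and the surplus powers of $\norm{u}_{L^\infty}$ are absorbed into $C(\norm{u}_{L^\infty})$.

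The main obstacle is this last step, and in particular the fact that the fractional multiplication inequality available in the excerpt (Theorem \ref{calcpropB}) is \emph{not} enough: its hypothesis $s_1+s_2-s_3>n/p$ would force $s>n/p$, whereas the Moser estimate holds for all $s\le k$ with no size restriction relative to $n/p$. One therefore has to prove and use the $L^\infty$-endpoint fractional Leibniz rule for Slobodeckij seminorms sketched above (valid for any $0<\sigma<1$, proved directly from the seminorm), and then carry out the somewhat delicate bookkeeping of the Gagliardo--Nirenberg interpolation exponents so that the homogeneity balances. A shorter route for the fractional cases is simply to invoke the classical composition theorem for Sobolev/Besov spaces (e.g.\ \cite[Ch.\ 5]{RunstSickel:1996}, with $W^{s,p}=B^s_{p,p}$ for $s\notin\Zbb$), but a self-contained argument as above is preferable in this paper.
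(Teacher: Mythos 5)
The paper does not actually prove this statement: Theorem \ref{calcpropC} belongs to the block of results in Section \ref{Sineq} introduced with the remark that their proofs ``are well known and may be found'' in \cite{AdamsFournier:2003,Friedman:1976,RunstSickel:1996,TaylorIII:1996}, so there is no in-paper argument to compare against. Your reading of the statement is the right one --- the left-hand side is a typo for $\norm{f(u)}_{W^{s,p}(M)}$, as otherwise the inequality is vacuous --- and your sketch is a faithful reconstruction of the standard proof found in those references: extension/localization to $\Rbb^n$, Fa\`a di Bruno combined with Gagliardo--Nirenberg interpolation between $\norm{u}_{W^{m,p}}$ and $\norm{u}_{L^\infty}$ for integer $s$, and the Slobodeckij-seminorm mean-value/Leibniz argument (or, more quickly, the composition theorem of \cite{RunstSickel:1996} for $B^s_{p,p}$) for fractional $s$. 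Your point that Theorem \ref{calcpropB} cannot substitute for the endpoint fractional Leibniz rule, since its hypothesis would force $s>n/p$, is correct and explains why this estimate is imported rather than derived from the other appendix lemmas. Two minor caveats: the exponent bookkeeping for $s=m+\sigma$ is asserted rather than carried out (it does close, since the differenced factor carries total regularity $\sigma$ or $|\beta_j|+\sigma$ and the H\"older exponents $1/q=(\mathrm{order}/s)(1/p)$ sum to $1/p$ while the powers of $\norm{u}_{W^{s,p}}$ sum to $1$); and, like the statement itself, your argument tacitly requires $k\geq 1$ and $s\geq 0$ (for $k=s=0$ the asserted bound fails for, e.g., $f(t)=\min\{|t|^{1/2},1\}$), which is harmless since the theorem is only invoked in that regime.
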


\subsection{Spacetime inequalities\label{STineq}}
We recall, in this section, a number of spacetime estimates from \cite{Oliynyk:Bull_2017} that will be used repeatedly throughout this
article. Additionally, we establish two new results in the last two propositions of this section.
\begin{prop} \label{stpropA}
Suppose $s_1=k_1/2$ and  $s_2=k_2/2$ for $k_1,k_2\in \Zbb_{\geq 0}$, $s_3\in \Rbb$, $s_1,s_2 \geq s_3$,
$s_1+s_2 - s_3 > n/2$, $r\in \Zbb$, and $0\leq r \leq 2 s_3$.
Then
\begin{equation*}\label{stpropA1}
\norm{\del{t}^\ell(u(t)v(t))}_{H^{s_3-\frac{\ell}{2}}(M)}\leq
\norm{u(t)v(t)}_{E^{s_3,r}}  \lesssim \norm{u(t)}_{E^{s_1,r}} \norm{v(t)}_{E^{s_2,r}}
\end{equation*}
for $0\leq t \leq T$, $0\leq \ell \leq r$, and all $u\in X^{s_1,r}_T(M)$ and $v\in X^{s_2,r}_T(M)$.
\end{prop}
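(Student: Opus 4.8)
\textbf{Proof plan for Proposition \ref{stpropA}.}

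The plan is to reduce the spacetime estimate to the fractional multiplication inequality (Theorem \ref{calcpropB}) applied slice-by-slice in $t$, after distributing the time derivatives via the Leibniz rule. First I would unwind the definitions: by the definition of the energy norm $\norm{\cdot}_{E^{s_3,r}}$, it suffices to bound $\norm{\del{t}^\ell(u(t)v(t))}_{H^{s_3-\frac{\ell}{2}}(M)}$ for each fixed $t\in[0,T]$ and each $\ell$ with $0\le \ell\le r$; summing the squares over $\ell$ then yields the middle quantity, and the first displayed inequality is just the bound of one summand by the full sum. So the whole statement collapses to: for fixed $t$ and fixed $\ell\le r$,
\begin{equation*}
\norm{\del{t}^\ell(uv)}_{H^{s_3-\frac{\ell}{2}}(M)} \lesssim \norm{u}_{E^{s_1,r}}\,\norm{v}_{E^{s_2,r}}.
\end{equation*}

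Next I would apply the Leibniz rule, $\del{t}^\ell(uv) = \sum_{j=0}^\ell \binom{\ell}{j}\del{t}^j u\,\del{t}^{\ell-j}v$, and estimate each term separately. For a fixed $j$, I want to invoke Theorem \ref{calcpropB} with $p=2$, exponents $\sigma_1 = s_1 - \frac{j}{2}$, $\sigma_2 = s_2 - \frac{\ell-j}{2}$, and target regularity $\sigma_3 = s_3 - \frac{\ell}{2}$. One checks the three hypotheses of that theorem: $\sigma_1 + \sigma_2 = s_1 + s_2 - \frac{\ell}{2} > s_3 - \frac{\ell}{2} = \sigma_3 \ge 0$ (using $s_1+s_2 > s_3 \ge \frac{\ell}{2}$, which follows from $0\le \ell \le 2s_3$ and $s_1+s_2-s_3 > n/2 \ge 0$, hence $s_1+s_2 \ge s_3$ with strict inequality unless $s_3<0$, a case excluded by $r\ge 0$ forcing $s_3 \ge \ell/2 \ge 0$); $\sigma_1 \ge \sigma_3$ and $\sigma_2 \ge \sigma_3$ because $s_1, s_2 \ge s_3$ and $j, \ell-j \le \ell$; and $\sigma_1 + \sigma_2 - \sigma_3 = s_1 + s_2 - s_3 + \frac{\ell}{2} - \frac{\ell}{2}\cdot\text{(correction)}$ — more precisely $\sigma_1+\sigma_2-\sigma_3 = s_1+s_2-s_3 - \frac{j}{2} - \frac{\ell-j}{2} + \frac{\ell}{2} = s_1+s_2-s_3 > n/2$. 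Thus Theorem \ref{calcpropB} gives $\norm{\del{t}^j u(t)\,\del{t}^{\ell-j}v(t)}_{H^{\sigma_3}(M)} \lesssim \norm{\del{t}^j u(t)}_{H^{\sigma_1}(M)}\norm{\del{t}^{\ell-j}v(t)}_{H^{\sigma_2}(M)}$.

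Finally I would recognize the two factors on the right: since $j \le \ell \le r$ we have $\norm{\del{t}^j u(t)}_{H^{s_1-j/2}(M)} \le \norm{u(t)}_{E^{s_1,r}}$, and likewise $\norm{\del{t}^{\ell-j}v(t)}_{H^{s_2-(\ell-j)/2}(M)} \le \norm{v(t)}_{E^{s_2,r}}$, directly from the definition of the energy norms. Summing over $j=0,\dots,\ell$ with the binomial constants absorbed into $\lesssim$ finishes the estimate for that slice, and then summing over $\ell$ completes the proof. The only mild subtlety — the step I would be most careful about — is verifying that $\sigma_3 = s_3 - \ell/2 \ge 0$ and that the strict inequality $\sigma_1 + \sigma_2 > 0$ holds in the borderline situation; both are guaranteed by the hypotheses $0 \le r \le 2s_3$ (so $\ell \le 2s_3$) and $s_1 + s_2 - s_3 > n/2$, but it is worth stating explicitly. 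Everything else is a routine bookkeeping exercise in the Leibniz rule plus one application of the fractional multiplication inequality. This is essentially the argument from \cite{Oliynyk:Bull_2017}, reproduced here for completeness.
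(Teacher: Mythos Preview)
Your proposal is correct and is precisely the expected argument: the paper itself does not supply a proof of this proposition but simply recalls it from \cite{Oliynyk:Bull_2017}, and the Leibniz-rule-plus-Theorem~\ref{calcpropB} argument you outline is the standard one. One minor remark: your concern about the sign of $\sigma_3 = s_3 - \ell/2$ is unnecessary, since Theorem~\ref{calcpropB} does not require the target exponent to be nonnegative; the only relevant positivity condition is $\sigma_1+\sigma_2>0$, which you have already verified from $s_1+s_2-s_3>n/2$ and $\ell\le 2s_3$.
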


\begin{prop} \label{stpropC}
Suppose $s_1=k_1/2$ and  $s_2=k_2/2$ for $k_1,k_2\in \Zbb_{\geq 0}$, $s_3\in \Rbb$, $s_1,s_2 \geq s_3$,
and $s_1+s_2 - s_3 > n/2$.
Then
\begin{equation*}\label{stpropC1}
\norm{u(t)v(t)}_{\Ec^{s_3}}  \lesssim \norm{u(t)}_{\Ec^{s_1}} \norm{v(t)}_{\Ec^{s_2}}
\end{equation*}
for $0\leq t \leq T$, and all $u\in \Xc^{s_1}_T(M)$ and $v\in \Xc^{s_2}_T(M)$.
\end{prop}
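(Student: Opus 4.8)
\textbf{Proof proposal for Proposition \ref{stpropC}.}

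The plan is to reduce the claimed bilinear estimate in the $\Ec^s$-norms to the already-established estimate of Proposition \ref{stpropA} in the $E^{s,r}$-norms, by unpacking the definition of the $\Ec^s$-norm and handling the one ``exceptional'' top-order time derivative separately. Recall from Section \ref{spacetimeFS} that
\begin{equation*}
\norm{w}_{\Ec^{s}}^2 = \norm{w}_{E^{s,2s-2}}^2 + \norm{\del{t}^{2s-1}w}_{L^2(M)}^2,
\end{equation*}
so that, writing $w = uv$, it suffices to bound the two pieces $\norm{uv}_{E^{s_3,2s_3-2}}$ and $\norm{\del{t}^{2s_3-1}(uv)}_{L^2(M)}$ separately in terms of $\norm{u}_{\Ec^{s_1}}$ and $\norm{v}_{\Ec^{s_2}}$. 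Here I am implicitly using that $2s_3 \in \Zbb_{\geq 0}$, which is forced since $s_1,s_2$ are half-integers and $s_1+s_2-s_3 > n/2$ with the remaining hypotheses; I would state this reduction cleanly at the start, perhaps splitting into the cases $2s_3 \geq 2$ and $2s_3 \in \{0,1\}$ (the latter being immediate from Proposition \ref{stpropA} directly, or from the fractional multiplication inequality Theorem \ref{calcpropB} applied at fixed $t$).

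For the first piece, since $2s_3 - 2 \leq 2s_3$, I would apply Proposition \ref{stpropA} with $r = 2s_3 - 2$ (valid because $0 \leq 2s_3-2 \leq 2s_3$ and $r \in \Zbb$), obtaining
\begin{equation*}
\norm{uv}_{E^{s_3,2s_3-2}} \lesssim \norm{u}_{E^{s_1,2s_3-2}}\,\norm{v}_{E^{s_2,2s_3-2}},
\end{equation*}
and then bounding $\norm{u}_{E^{s_1,2s_3-2}} \leq \norm{u}_{E^{s_1,2s_1-2}} \leq \norm{u}_{\Ec^{s_1}}$ using the elementary monotonicity relation $\norm{\cdot}_{E^{s,r_1}} \leq \norm{\cdot}_{E^{s,r_2}}$ for $r_1 \leq r_2$ (noting $2s_3-2 \leq 2s_1 - 2$ need not hold in general, but $\norm{u}_{E^{s_1,r}} \leq \norm{u}_{E^{s_1,2s_1}} = \norm{u}_{E^{s_1}} \leq \norm{u}_{\Ec^{s_1+\frac12}}$ — so I would instead observe directly that $\norm{u}_{E^{s_1,2s_3-2}}$ only involves time derivatives of order $\leq 2s_3-2$, each measured in $H^{s_1 - \ell/2}$ with $\ell \leq 2s_3 - 2 \leq 2s_1 - 2$ whenever $s_3 \leq s_1$, and in the symmetric case one splits the derivative budget; cleanest is to note $E^{s_1,2s_3-2} \hookrightarrow \Ec^{s_1}$ whenever $2s_3 - 2 \le 2s_1-2$, and if instead $s_3 > s_1$ one uses $s_3 \le s_2$ to put the extra derivatives on $v$). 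The point is that all the relevant intermediate norms are dominated by $\norm{u}_{\Ec^{s_1}}$ and $\norm{v}_{\Ec^{s_2}}$ via the listed elementary inequalities, so this piece is routine.

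The genuinely delicate piece is $\norm{\del{t}^{2s_3-1}(uv)}_{L^2(M)}$, because the top time derivative $\del{t}^{2s_3-1}u$ (resp. of $v$) is controlled only in $L^2(M)$, not in any positive-order Sobolev space. Here I would expand by the Leibniz rule $\del{t}^{2s_3-1}(uv) = \sum_{\ell} \binom{2s_3-1}{\ell} \del{t}^\ell u\, \del{t}^{2s_3-1-\ell} v$ and estimate each term by H\"older and the Sobolev embeddings (Theorems \ref{Holder}, \ref{ISobolev}, \ref{FSobolev}): for the endpoint terms $\ell = 2s_3-1$ and $\ell = 0$ one pairs the $L^2$-bounded factor $\del{t}^{2s_3-1}u$ with $v \in \Ec^{s_2} \hookrightarrow L^\infty(M)$ (using $s_2 > n/2$, which follows from $s_1 + s_2 - s_3 > n/2$ together with $s_1 \le$ something — more precisely this is exactly the kind of bookkeeping carried out in the proof of Proposition \ref{stpropA} in \cite{Oliynyk:Bull_2017}), while for the intermediate terms both factors land in positive-order spaces and one argues as in the $E^{s,r}$ case. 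In fact, rather than redo this, I expect the cleanest route is: the difference between $\Ec^s$ and $E^{s,2s}$ is only in the single top derivative, and the Leibniz expansion of $\del{t}^{2s_3-1}(uv)$ involves only derivatives of $u$ of order $\le 2s_3-1 \le 2s_1 - 1$ and of $v$ of order $\le 2s_3 - 1 \le 2s_2-1$, i.e.\ precisely the derivatives controlled by $\Ec^{s_1}$ and $\Ec^{s_2}$; so the term-by-term H\"older/Sobolev argument goes through verbatim with the same index arithmetic as Proposition \ref{stpropA}, the only new input being that the two extreme terms need the $L^\infty$ embedding $\Ec^{s_i} \hookrightarrow L^\infty(M)$. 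I would therefore model the write-up closely on the proof of Proposition \ref{stpropA}, flagging that the sole modification is this treatment of the top-order derivative. The main obstacle, then, is not conceptual but purely the careful verification that in every term of the Leibniz sum the split of the total derivative order $2s_3-1$ between $u$ and $v$, combined with the split of the ``spatial'' regularity, respects both the hypothesis $s_1 + s_2 - s_3 > n/2$ and the constraint that no factor is asked for more regularity than its $\Ec^{s_i}$-norm provides.
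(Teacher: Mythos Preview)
The paper does not include its own proof of this proposition---it is quoted from \cite{Oliynyk:Bull_2017}---so there is nothing in the present paper to compare against directly, but your approach is correct and is precisely the natural one: split $\norm{uv}_{\Ec^{s_3}}^2 = \norm{uv}_{E^{s_3,2s_3-2}}^2 + \norm{\del{t}^{2s_3-1}(uv)}_{L^2}^2$, apply Proposition~\ref{stpropA} to the first piece, and handle the top derivative by the Leibniz rule plus Theorem~\ref{calcpropB} termwise.

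Two small cleanups. First, your concern that ``$2s_3-2 \le 2s_1-2$ need not hold'' is unfounded, since $s_3 \le s_1$ is a hypothesis; hence $\norm{u}_{E^{s_1,2s_3-2}} \le \norm{u}_{\Ec^{s_1}}$ is immediate and there is no need to ``put the extra derivatives on $v$''. Second, the $L^\infty$ embedding for $v$ at the extreme Leibniz term $\del{t}^{2s_3-1}u \cdot v$ is only needed when $s_3 = s_1$ (so that $\del{t}^{2s_3-1}u$ is controlled merely in $L^2$), and in that case $s_2 = s_1 + s_2 - s_3 > n/2$ follows directly from the hypothesis; when $s_3 < s_1$, that factor already lies in $H^{s_1-s_3+1/2}$ and the fractional multiplication inequality handles the product without any $L^\infty$ bound. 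The symmetric remarks apply to the other endpoint $u \cdot \del{t}^{2s_3-1}v$.
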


\begin{prop} \label{stcomA}
Suppose $s_1=k_1/2$ and  $s_2=k_2/2$ for $k_1,k_2\in \Zbb_{\geq 0}$, $\ell\in \Zbb_{\geq 1}$,
$s_1+s_2 -\ell/2>0$, $\ell\leq \min\{2s_1,2s_2+1\}$,   $s_3\in \Rbb$, $s_1\geq s_3$,  $s_2\geq s_3-1/2$ and $s_1+s_2-s_3>n/2$. Then
\begin{equation*}\label{stcomA1}
\norm{[\del{t}^\ell,u(t)]v(t)}_{H^{s_3-\frac{\ell}{2}}(M)} \lesssim \norm{\del{t}u(t)}_{E^{s_1-\frac{1}{2},\ell-1}}
\norm{v(t)}_{E^{s_2,\ell-1}}
\end{equation*}
for $0\leq t \leq T$ and all $u\in X^{s_1,\ell}_T(M)$ and $v\in X^{s_2,\ell-1}_T(M)$.
\end{prop}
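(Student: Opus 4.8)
The plan is to expand the commutator with the Leibniz rule and reduce to a finite sum of bilinear product estimates, each handled by the fractional multiplication inequality (Theorem \ref{calcpropB}). First I would write
\begin{equation*}
[\del{t}^\ell,u]v = \del{t}^\ell(uv) - u\,\del{t}^\ell v = \sum_{j=1}^{\ell}\binom{\ell}{j}\,\del{t}^j u\,\del{t}^{\ell-j}v,
\end{equation*}
so it suffices to bound $\norm{\del{t}^j u(t)\,\del{t}^{\ell-j}v(t)}_{H^{s_3-\frac{\ell}{2}}(M)}$ for each $1\leq j\leq \ell$ and then sum the finitely many terms.

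For fixed $j$, I would regard $\del{t}^j u = \del{t}^{j-1}(\del{t}u)$, which places $\del{t}^j u(t)$ in $H^{s_1-\frac{j}{2}}(M)$ with
\begin{equation*}
\norm{\del{t}^j u(t)}_{H^{s_1-\frac{j}{2}}(M)} \leq \norm{\del{t}u(t)}_{E^{s_1-\frac{1}{2},j-1}} \leq \norm{\del{t}u(t)}_{E^{s_1-\frac{1}{2},\ell-1}},
\end{equation*}
the last step using $j-1\leq \ell-1$ together with $\ell-1\leq 2s_1-1$, i.e. the hypothesis $\ell\leq 2s_1$, which is exactly what is needed for $E^{s_1-\frac12,\ell-1}$ to be well defined. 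Likewise, since $j\geq 1$ we have $\ell-j\leq \ell-1\leq 2s_2$ (hypothesis $\ell\leq 2s_2+1$), so $\del{t}^{\ell-j}v(t)\in H^{s_2-\frac{\ell-j}{2}}(M)$ with $\norm{\del{t}^{\ell-j}v(t)}_{H^{s_2-\frac{\ell-j}{2}}(M)}\leq \norm{v(t)}_{E^{s_2,\ell-j}}\leq \norm{v(t)}_{E^{s_2,\ell-1}}$.

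Next I would apply Theorem \ref{calcpropB} with $p=2$ and exponents $\bigl(s_1-\tfrac{j}{2},\,s_2-\tfrac{\ell-j}{2},\,s_3-\tfrac{\ell}{2}\bigr)$ in place of $(s_1,s_2,s_3)$ there. Its three hypotheses hold uniformly in $j$: the sum condition is $\bigl(s_1-\tfrac{j}{2}\bigr)+\bigl(s_2-\tfrac{\ell-j}{2}\bigr)=s_1+s_2-\tfrac{\ell}{2}>0$; the Sobolev condition is $\bigl(s_1-\tfrac{j}{2}\bigr)+\bigl(s_2-\tfrac{\ell-j}{2}\bigr)-\bigl(s_3-\tfrac{\ell}{2}\bigr)=s_1+s_2-s_3>n/2$; and the ordering conditions $s_1-\tfrac{j}{2}\geq s_3-\tfrac{\ell}{2}$ and $s_2-\tfrac{\ell-j}{2}\geq s_3-\tfrac{\ell}{2}$ follow from $s_1\geq s_3$ with $j\leq \ell$, and from $s_2\geq s_3-\tfrac12$ with $j\geq 1$, respectively. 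This yields
\begin{equation*}
\norm{\del{t}^j u(t)\,\del{t}^{\ell-j}v(t)}_{H^{s_3-\frac{\ell}{2}}(M)} \lesssim \norm{\del{t}^j u(t)}_{H^{s_1-\frac{j}{2}}(M)}\,\norm{\del{t}^{\ell-j}v(t)}_{H^{s_2-\frac{\ell-j}{2}}(M)},
\end{equation*}
and combining with the two bounds above and summing over $1\leq j\leq \ell$ gives the claim; the case $M=\Omega$ and the case of a closed manifold are identical since Theorem \ref{calcpropB} covers both. The proof involves no analytic difficulty: the only thing requiring care is the index bookkeeping, and the role of each hypothesis is precisely pinned down above, so I expect this to be the main (mild) obstacle rather than any estimate itself.
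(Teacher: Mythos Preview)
Your proof is correct. The paper itself does not supply a proof of this proposition; it is one of the spacetime estimates recalled from \cite{Oliynyk:Bull_2017} without argument. Your approach---expand the commutator via Leibniz and apply the fractional multiplication inequality (Theorem \ref{calcpropB}) termwise with exponents $(s_1-\tfrac{j}{2},\,s_2-\tfrac{\ell-j}{2},\,s_3-\tfrac{\ell}{2})$---is the natural one, and your index bookkeeping correctly identifies how each hypothesis is consumed.
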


\begin{prop} \label{stpropB}
Suppose $s =k/2$, $s>n/2$, $r\in \Zbb$, $0\leq r \leq 2s$, $f\in C^r(\Rbb)$, and $f(0)=0$.
Then
\begin{equation*}
\norm{\del{t}^\ell f(u(t))}_{H^{s-\frac{\ell}{2}}(M)}\leq
\norm{f(u(t))}_{E^{s,r}} \leq C(\norm{u(t)}_{E^{s,r}})\norm{u(t)}_{E^{s,r}}
\end{equation*}
for $0\leq t \leq T$, $0\leq \ell \leq r$, and all $u\in X^{s,r}_T(M)$.
\end{prop}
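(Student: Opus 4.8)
Looking at this statement (Proposition \ref{stpropB}), I need to prove a Moser-type estimate for nonlinear composition $f(u(t))$ in the spacetime norm $E^{s,r}$, where $u \in X^{s,r}_T(M)$, $s = k/2$, $s > n/2$, $f \in C^r(\Rbb)$ with $f(0) = 0$.

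\textbf{Proof proposal.}

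The plan is to reduce the spacetime estimate to the classical spatial Moser estimate (Theorem \ref{calcpropC}) and the spatial multiplication inequalities (Theorems \ref{calcpropB} and \ref{Iprod}) applied at each fixed time slice, combined with a careful bookkeeping of how $\del{t}$-derivatives distribute when differentiating $f(u(t))$. First I would unpack the definition of the energy norm: $\norm{f(u(t))}_{E^{s,r}}^2 = \sum_{\ell=0}^r \norm{\del{t}^\ell f(u(t))}_{H^{s-\ell/2}(M)}^2$, so it suffices to bound $\norm{\del{t}^\ell f(u(t))}_{H^{s-\ell/2}(M)}$ for each $0 \le \ell \le r$. For $\ell = 0$ the bound $\norm{f(u(t))}_{H^s(M)} \le C(\norm{u(t)}_{L^\infty(M)})\norm{u(t)}_{H^s(M)}$ is exactly Theorem \ref{calcpropC} (with $k = r \ge s$ and using $f(0)=0$), and by the Sobolev embedding $H^s(M) \hookrightarrow L^\infty(M)$ (since $s > n/2$, Theorem \ref{FSobolev}(ii)), $\norm{u(t)}_{L^\infty(M)} \lesssim \norm{u(t)}_{H^s(M)} \le \norm{u(t)}_{E^{s,r}}$, which gives the claim for $\ell=0$.

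For $\ell \ge 1$, I would apply the Faà di Bruno / chain rule to write $\del{t}^\ell f(u(t))$ as a sum over partitions: schematically
\begin{equation*}
\del{t}^\ell f(u(t)) = \sum_{j=1}^\ell f^{(j)}(u(t)) \sum_{\substack{\ell_1 + \cdots + \ell_j = \ell \\ \ell_i \ge 1}} c_{\ell_1 \cdots \ell_j}\, \del{t}^{\ell_1} u(t) \cdots \del{t}^{\ell_j} u(t).
\end{equation*}
Each factor $\del{t}^{\ell_i} u(t)$ lies in $H^{s - \ell_i/2}(M)$ with norm controlled by $\norm{u(t)}_{E^{s,r}}$ (noting $\ell_i \le \ell \le r$), while $f^{(j)}(u(t))$ is bounded in $H^s(M)$-type norms by $C(\norm{u(t)}_{L^\infty})$ times powers of lower norms, again via Theorem \ref{calcpropC} applied to $f^{(j)}$ (which is $C^{r-j}$; one checks $f^{(j)} - f^{(j)}(0)$ has the required regularity and the constant $f^{(j)}(0)$ contributes only a bounded multiple). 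The core estimate is then the product bound: I want
\begin{equation*}
\norm{\del{t}^{\ell_1}u \cdots \del{t}^{\ell_j}u}_{H^{s-\ell/2}(M)} \lesssim \prod_{i=1}^j \norm{\del{t}^{\ell_i}u}_{H^{s-\ell_i/2}(M)},
\end{equation*}
which follows by iterating Theorem \ref{calcpropB} (fractional multiplication): at each merge step the regularity indices $s_1 = s - a/2$ and $s_2 = s - b/2$ produce a product in $H^{s - (a+b)/2}$ provided $s_1 + s_2 - s_3 = s - n/2 > 0$ (true by hypothesis $s > n/2$) and $s_1, s_2 \ge s_3$, and one verifies $s_1 + s_2 > 0$ since at most one of the $\ell_i$ can exceed $s$ (because $\sum \ell_i = \ell \le 2s$). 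Absorbing the finitely many constants $c_{\ell_1\cdots\ell_j}$ and taking $C$ to be the maximum over $j$ of the compositions of these estimates yields $\norm{\del{t}^\ell f(u(t))}_{H^{s-\ell/2}(M)} \le C(\norm{u(t)}_{E^{s,r}})\norm{u(t)}_{E^{s,r}}$, and summing over $\ell$ completes the proof. The stated pointwise-in-$\ell$ bound $\norm{\del{t}^\ell f(u(t))}_{H^{s-\ell/2}(M)} \le \norm{f(u(t))}_{E^{s,r}}$ is immediate from the definition of the norm.

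\textbf{Main obstacle.} The delicate point is the product estimate for the multilinear terms $\del{t}^{\ell_1}u \cdots \del{t}^{\ell_j}u$ when several $\ell_i$ are large: one must check that the intermediate Sobolev exponents never drop below what the fractional multiplication inequality (Theorem \ref{calcpropB}) permits, in particular that we never need $s_1 + s_2 \le 0$ or $s_i < s_3$. This is where the constraint $s = k/2$ with $s > n/2$ and $r \le 2s$ is used crucially: the half-integer structure and the bound $\sum \ell_i \le 2s$ guarantee at most one "bad" factor (with negative regularity index $s - \ell_i/2$), and that factor can always be paired first with the highest-regularity factor $u$ itself or with a factor whose index is $\ge n/2$, keeping all applications of Theorem \ref{calcpropB} legal. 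Handling the ordering of the merges systematically — e.g., always multiplying in order of decreasing $\ell_i$ — is the technical heart of the argument, but it is routine given the calculus inequalities already established; I would not write out every partition but would present the inductive merging scheme and verify the index inequalities once in the worst case.
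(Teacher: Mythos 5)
The paper does not prove Proposition \ref{stpropB}: it is one of the estimates ``recalled \ldots from \cite{Oliynyk:Bull_2017}'' at the start of Appendix \ref{STineq}; only Propositions \ref{stpropE} and \ref{stpropF} are new and proved here. So there is no in-paper proof to compare against, and I will assess your argument on its own terms. Your outline --- Fa\`a di Bruno to decompose $\del{t}^\ell f(u)$, iterated applications of Theorem \ref{calcpropB} to control the multilinear factors $\del{t}^{\ell_1}u\cdots\del{t}^{\ell_j}u$, and the spatial Moser estimate (Theorem \ref{calcpropC}) for the coefficients $f^{(j)}(u)$ --- is the standard and correct route, and your verification of the merge conditions is sound (though the ``at most one bad factor'' bookkeeping is unnecessary: $s_1+s_2 = 2s-(\text{partial sum})/2 \geq 2s-\ell/2 \geq s > 0$ is automatic and the half-integer structure of $s$ plays no role here).

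There is, however, a genuine gap in your treatment of the coefficient. You assert that $f^{(j)}(u)$ ``is bounded in $H^s(M)$-type norms \ldots via Theorem \ref{calcpropC} applied to $f^{(j)}$.'' But Theorem \ref{calcpropC} requires $f^{(j)}\in C^k\cap W^{k,\infty}$ with $k\geq s$ to conclude $f^{(j)}(u)-f^{(j)}(0)\in H^s$, which means $r-j\geq s$; yet $j$ ranges up to $\ell\leq r$, so $r-j$ can be as small as $0$. A concrete failure: take $n=3$, $s=2$, $r=4$, $\ell=3$, $j=3$ (so $\ell_1=\ell_2=\ell_3=1$). Then $f^{(3)}\in C^1$ only, Moser places $f^{(3)}(u)-f^{(3)}(0)$ in $H^1$ at best, and your proposed multiplication $H^1\cdot H^{s-\ell/2}=H^1\cdot H^{1/2}\to H^{1/2}$ violates the condition $s_1+s_2-s_3>n/2$ of Theorem \ref{calcpropB}, since $1+\tfrac12-\tfrac12=1<\tfrac32$. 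The repair is to \emph{not} merge the product all the way down to the naive target $H^{s-\ell/2}$: when $j$ is large there are many factors and repeated application of Theorem \ref{calcpropB} permits landing in a strictly higher space, and that surplus regularity must be spent against the deficit in $f^{(j)}(u)$. In the example, $(\del{t}u)^3$ can be placed in $H^{3/2-\epsilon}$, and then $H^1\cdot H^{3/2-\epsilon}\to H^{1/2}$ is legal. The arithmetic showing this trade always balances under $\ell\leq r\leq 2s$, $\ell_i\geq 1$ is precisely the technical heart that your proposal, as written, omits by fixing the product's target as $H^{s-\ell/2}$ and treating $f^{(j)}(u)$ as if it always lived in $H^s$. (A secondary remark: your parenthetical ``with $k=r\geq s$'' for the $\ell=0$ case assumes $r\geq s$, which is not among the stated hypotheses; the $\ell=0$ bound $\norm{f(u)}_{H^s}\leq C(\norm{u}_{H^s})\norm{u}_{H^s}$ genuinely requires this, so there is an implicit assumption in the proposition as quoted, but this is an issue with the statement rather than with your argument.)
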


\begin{prop} \label{stpropD}
Suppose $s =k/2$, $s>n/2$,  $f\in C^{2s-1}(\Rbb)$ and $f(0)=0$.
Then
\begin{equation*}
\norm{f(u(t))}_{\Ec^{s}}\leq  C(\norm{u(t)}_{\Ec^{s}})\norm{u(t)}_{\Ec^{s}}
\end{equation*}
for $0\leq t \leq T$ and all $u\in \Xc^{s}_T(M)$.
\end{prop}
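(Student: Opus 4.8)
The plan is to reduce the statement to the already-established pointwise Moser inequality of Proposition~\ref{stpropB} together with the commutator estimate of Proposition~\ref{stcomA}. The starting point is the elementary identity $\norm{v}_{\Ec^{s}}^2 = \norm{v}_{E^{s,2s-2}}^2 + \norm{\del{t}^{2s-1}v}_{L^2(\Omega)}^2$ recorded above, applied with $v = f(u(t))$. The first summand is immediate: Proposition~\ref{stpropB} with $r=2s-2$ (which needs only $f\in C^{2s-2}(\Rbb)\supset C^{2s-1}(\Rbb)$ and $f(0)=0$) gives $\norm{f(u(t))}_{E^{s,2s-2}}\leq C\bigl(\norm{u(t)}_{E^{s,2s-2}}\bigr)\norm{u(t)}_{E^{s,2s-2}}$, and since $\norm{u(t)}_{E^{s,2s-2}}\leq\norm{u(t)}_{\Ec^s}$ this already has the claimed form. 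So the entire remaining task is to bound $\norm{\del{t}^{2s-1}f(u(t))}_{L^2(\Omega)}$ by $C\bigl(\norm{u(t)}_{\Ec^s}\bigr)\norm{u(t)}_{\Ec^s}$.

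For that term I would write $\del{t}^{2s-1}f(u)=\del{t}^{2s-2}\bigl(f'(u)\,\del{t}u\bigr)=f'(u)\,\del{t}^{2s-1}u+\bigl[\del{t}^{2s-2},f'(u)\bigr]\del{t}u$. Since $s>n/2$, the Sobolev embedding (Theorem~\ref{FSobolev}) gives $\norm{u(t)}_{L^\infty(\Omega)}\lesssim\norm{u(t)}_{H^s(\Omega)}\leq\norm{u(t)}_{\Ec^s}$, hence $\norm{f'(u(t))}_{L^\infty(\Omega)}\leq C\bigl(\norm{u(t)}_{\Ec^s}\bigr)$, so the first term is controlled by $C\bigl(\norm{u(t)}_{\Ec^s}\bigr)\norm{\del{t}^{2s-1}u(t)}_{L^2(\Omega)}\leq C\bigl(\norm{u(t)}_{\Ec^s}\bigr)\norm{u(t)}_{\Ec^s}$. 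For the commutator I would invoke Proposition~\ref{stcomA} with $\ell=2s-2$, $s_3=s-1$ (so that $H^{s_3-\ell/2}=L^2$), $s_1=s$, $s_2=s-\tfrac12$; all hypotheses are then readily checked, the only non-routine one being $s_1+s_2-s_3=s+\tfrac12>n/2$, which follows from $s>n/2$. This yields $\norm{[\del{t}^{2s-2},f'(u)]\del{t}u}_{L^2(\Omega)}\lesssim\norm{\del{t}f'(u(t))}_{E^{s-1/2,2s-3}}\,\norm{\del{t}u(t)}_{E^{s-1/2,2s-3}}$. Applying the elementary relation $\norm{w}_{E^{s,r}}^2=\norm{w}_{H^s(\Omega)}^2+\norm{\del{t}w}_{E^{s-1/2,r-1}}^2$ (with $r=2s-2$) twice gives $\norm{\del{t}u(t)}_{E^{s-1/2,2s-3}}\leq\norm{u(t)}_{E^{s,2s-2}}\leq\norm{u(t)}_{\Ec^s}$, and, since $\del{t}f'(u)=\del{t}\bigl(f'(u)-f'(0)\bigr)$, the same relation together with Proposition~\ref{stpropB} applied to $g:=f'-f'(0)\in C^{2s-2}(\Rbb)$, $g(0)=0$, $r=2s-2$, gives $\norm{\del{t}f'(u(t))}_{E^{s-1/2,2s-3}}\leq\norm{g(u(t))}_{E^{s,2s-2}}\leq C\bigl(\norm{u(t)}_{\Ec^s}\bigr)\norm{u(t)}_{\Ec^s}$. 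Combining these pieces completes the bound on $\norm{\del{t}^{2s-1}f(u(t))}_{L^2(\Omega)}$, and with the first summand the proof is done.

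There is one endpoint to dispose of: when $2s-2=0$ (i.e.\ $s=1$, which can only happen for $n=1$) Proposition~\ref{stcomA} does not apply, but then there is no commutator — $\del{t}^{2s-1}f(u)=\del{t}f(u)=f'(u)\del{t}u$ is estimated directly by $\norm{f'(u(t))}_{L^\infty}\norm{\del{t}u(t)}_{L^2}$ as above, while $\norm{f(u(t))}_{H^s(\Omega)}$ is handled by the classical Moser inequality (Theorem~\ref{calcpropC}, after truncating $f$ off the range of $u$). The main obstacle I anticipate is not any single estimate but the bookkeeping of the anisotropic Sobolev indices: one must verify that the sole place where a merely-$L^2$ time derivative ($\del{t}^{2s-1}u$) enters is as the single top-order factor multiplying the bounded function $f'(u)$, so that every other factor in $\del{t}^{2s-1}f(u)$ retains enough spatial regularity for the commutator/multiplication machinery; this is exactly where the asymmetric structure of the $\Ec^s$ norm — only $2s-1$ time derivatives, the last measured merely in $L^2$ — has to be exploited, and matching the parameters of Proposition~\ref{stcomA} to it is the delicate point.
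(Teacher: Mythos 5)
The paper does not actually supply a proof of Proposition~\ref{stpropD}: it is one of the estimates ``recalled from \cite{Oliynyk:Bull_2017}'' at the start of Section \ref{STineq}, and only Propositions~\ref{stpropE} and~\ref{stpropF} carry proofs in this paper. So there is nothing in the source to compare your argument against line by line.

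That said, your proof is correct and, as far as I can tell, is the natural argument one would expect. The decomposition $\norm{\cdot}_{\Ec^s}^2=\norm{\cdot}_{E^{s,2s-2}}^2+\norm{\del{t}^{2s-1}\cdot}_{L^2(\Omega)}^2$ isolates exactly the one place the $\Ec^s$ norm is weaker than $E^{s,2s}$, and you handle the first piece by Proposition~\ref{stpropB} with $r=2s-2$ and the top piece by peeling off $f'(u)\del{t}^{2s-1}u$ (controlled in $L^\infty\cdot L^2$ via the Sobolev embedding from $s>n/2$) and estimating the remainder $[\del{t}^{2s-2},f'(u)]\del{t}u$ by Proposition~\ref{stcomA}. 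Your parameter choices in Proposition~\ref{stcomA} ($\ell=2s-2$, $s_1=s$, $s_2=s-\tfrac12$, $s_3=s-1$) do satisfy all of its hypotheses, the target index $s_3-\ell/2=0$ gives $L^2$ as required, and the resulting factors $\norm{\del{t}f'(u)}_{E^{s-\frac12,2s-3}}$ and $\norm{\del{t}u}_{E^{s-\frac12,2s-3}}$ are bounded by $\norm{\cdot}_{E^{s,2s-2}}\leq\norm{\cdot}_{\Ec^s}$ exactly as you claim, the former via another application of Proposition~\ref{stpropB} to $g=f'-f'(0)$. Your caveat about the degenerate endpoint $2s-2=0$ (only possible when $n=1$, $s=1$) is also correct: the commutator is absent there and the remaining estimate is elementary. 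The only small inaccuracy is your remark that $C^{2s-2}(\Rbb)\supset C^{2s-1}(\Rbb)$ — the inclusion runs the other way around ($C^{2s-1}\subset C^{2s-2}$), but it is clear from context that this is just a slip of the pen and what you mean (that $f\in C^{2s-1}$ implies $f\in C^{2s-2}$) is exactly right.
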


\begin{prop} \label{stpropE}
Suppose $s=k/2$ for $k\in \Zbb_{\geq 0}$, $0\leq r\leq 2s$, $g_1\in \Xc^s_T(M)$, $g_2\in X_T^{s,r}(M)$, $f^0_a \in H^{s}(\Omega_0)$ for $a=1,2$, and the
$f_a$ satisfies the IVP
\begin{align*}
\del{t}f_a &= g_a && \text{in $[0,T]\times M$,} \\ 
f_a &= f^0_a && \text{in $\{0\}\times M$,}
\end{align*}
for $a=1,2$.
Then $f_1 \in C\Xc^s_T(M)$, $f_2\in C\Xc^{s,r}_T(M)$, and the $f_a$ satisfy 
\begin{align*}
\norm{f_1(t)-f_1(0)}_{\Ec^s} \leq \int_0^t \norm{g_1(\tau)}_{\Ec^s} d\tau
\AND
\norm{f_2(t)-f_2(0)}_{E^{s,r}} \leq  \int_0^t \norm{g_2(\tau)}_{E^{s,r}} d\tau
\end{align*}
for all $t\in [0,T]$.
\end{prop}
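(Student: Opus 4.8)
\textbf{Proof proposal for Proposition \ref{stpropE}.}

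The plan is to reduce everything to elementary properties of the Bochner integral together with the already-recorded structure of the energy norms $\norm{\cdot}_{E^{s,r}}$ and $\norm{\cdot}_{\Ec^s}$. The starting observation is that since $\del{t}f_a = g_a$ with prescribed initial value $f_a^0$, we may write $f_a(t) = f_a^0 + \int_0^t g_a(\tau)\,d\tau$, where the integral is taken in the appropriate Bochner sense. For $f_2$: since $g_2 \in X_T^{s,r}(M)$, for each $\ell$ with $0\leq \ell \leq r$ we have $\del{t}^\ell g_2 \in L^\infty([0,T],H^{s-\ell/2}(M))$, hence $\del{t}^\ell f_2 = \del{t}^{\ell-1}g_2 + (\del{t}^{\ell-1}f_2)(0)$ for $\ell\geq 1$, and these formulas show inductively that $\del{t}^\ell f_2 \in C^0([0,T],H^{s-\ell/2}(M))$ — the $\ell=r$ case being the integral of an $L^\infty$-in-time $H^{s-r/2}$-valued map, which is Lipschitz hence continuous in time. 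This gives $f_2 \in C\Xc^{s,r}_T(M)$ once one unwinds the definition \eqref{XVdef}-type space (here with the $m(s,\ell)$ replaced by $\ell$, i.e. the $C^\ell$-analogue of $X^{s,r}_T$). For $f_1$, the argument is the same but using the weight pattern $m(s,\ell)$ from \eqref{melldef}: for $0\leq \ell \leq 2s-2$ one differentiates the integral formula $\ell$ times to land in $C^0([0,T],H^{s-\ell/2}(M))$, and for $\ell = 2s-1$ one uses that $\del{t}^{2s-2}f_1 = f_1^0$-shifted integral of $g_1$ with $\del{t}^{2s-2}g_1\in L^\infty([0,T],H^{s-(2s-2)/2}(M)) = L^\infty([0,T],H^{1}(M))$, so $\del{t}^{2s-1}f_1 = \del{t}^{2s-2}g_1$ which, as an $L^\infty$-in-time $H^1$-valued function, lies in particular in $C^0([0,T],L^2(M))$ (indeed weak-$*$ measurability plus the $L^2$ bound suffices for the required regularity); this is exactly the statement $f_1\in C\Xc^s_T(M)$.

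For the norm estimates, consider $f_1$ first. Using the relation $\norm{u}_{\Ec^s}^2 = \norm{u}_{E^{s,2s-2}}^2 + \norm{\del{t}^{2s-1}u}_{L^2(\Omega)}^2$ recorded in the preliminaries, write $f_1(t)-f_1(0) = \int_0^t g_1(\tau)\,d\tau$ and differentiate in time: for $0\leq\ell\leq 2s-2$, $\del{t}^\ell(f_1(t)-f_1(0)) = \int_0^t \del{t}^\ell g_1(\tau)\,d\tau$ (the initial conditions for the lower derivatives cancel because we subtract $f_1(0)$ and its time-derivatives all vanish in the difference — more precisely the difference $f_1(t)-f_1(0)$ has all its $t=0$ derivatives of order $\leq 2s-2$ equal to $\del{t}^\ell g_1(0)\cdot 0 = 0$ only for $\ell\geq 1$; for $\ell=0$ it is zero by construction, and for $\ell\geq 1$ we have $\del{t}^\ell(f_1(t)-f_1(0)) = \del{t}^{\ell-1}g_1(t)$ which is not an integral, so one must be slightly careful — see next sentence). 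The cleanest route: $\del{t}^\ell(f_1(t)-f_1(0)) = \del{t}^{\ell-1}g_1(t)$ for $1\leq \ell\leq 2s-1$, but this does not directly give an integral bound; instead observe $\del{t}^{\ell-1}g_1(t) = \del{t}^{\ell-1}g_1(0) + \int_0^t \del{t}^\ell g_1(\tau)\,d\tau$ is the wrong decomposition. The correct and simplest observation is: apply $\del{t}^\ell$ to the \emph{integral} $\int_0^t g_1$, which for $\ell=0$ is itself and for $\ell\geq 1$ equals $\del{t}^{\ell-1}g_1(t)$; bound the $\ell=0$ term by $\norm{\int_0^t g_1(\tau)\,d\tau}_{H^s(\Omega)} \leq \int_0^t\norm{g_1(\tau)}_{H^s(\Omega)}\,d\tau$ via Minkowski's integral inequality, and for the higher derivatives note that they appear inside $\norm{f_1(t)-f_1(0)}_{\Ec^s}$ only through $\norm{\del{t}^\ell(f_1(t)-f_1(0))}_{H^{s-m(s,\ell)/2}(\Omega)}$; since $f_1(t)-f_1(0)$ and $g_1$ (via $\del{t}^{\ell}(f_1-f_1(0)) = \del{t}^{\ell}f_1 = \del{t}^{\ell-1}g_1$ for $\ell\geq 1$) differ only by the time-\emph{independent} quantity $f_1(0)$, whose time derivatives vanish, we in fact have $\del{t}^\ell(f_1(t)-f_1(0)) = \del{t}^{\ell-1}g_1(t)$ and $\norm{\del{t}^{\ell-1}g_1(t)}_{H^{s-m(s,\ell)/2}(\Omega)} \leq \norm{g_1(t)}_{\Ec^s}$ directly from the definition of $\norm{\cdot}_{\Ec^s}$. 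Combining the $\ell=0$ integral bound with these pointwise bounds and applying Minkowski once more to pass the norm inside, one obtains $\norm{f_1(t)-f_1(0)}_{\Ec^s} \leq \int_0^t\norm{g_1(\tau)}_{\Ec^s}\,d\tau$; the analogous computation with the weight $\ell$ in place of $m(s,\ell)$ yields $\norm{f_2(t)-f_2(0)}_{E^{s,r}} \leq \int_0^t\norm{g_2(\tau)}_{E^{s,r}}\,d\tau$.

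The main obstacle — and it is a bookkeeping obstacle rather than a conceptual one — is handling the top-order time derivative in the $\Ec^s$ norm carefully: at order $\ell=2s-1$ the weight jumps from $H^1$ (at $\ell=2s-2$) down to $L^2$, and $\del{t}^{2s-1}(f_1-f_1(0)) = \del{t}^{2s-2}g_1$ is only controlled in $L^\infty_t H^1_x$, so one must verify that this is consistent with membership in $C\Xc^s_T(M)$ (continuity into $L^2$, which follows from the $H^1$ bound plus the fundamental theorem of calculus applied to $\del{t}^{2s-2}g_1 = \del{t}^{2s-1}f_1$ viewed as the a.e.-derivative of the $H^1$-valued Lipschitz map $\del{t}^{2s-2}f_1$) and with the stated integral estimate (for which one simply uses $\norm{\del{t}^{2s-2}g_1(t)}_{L^2(\Omega)} \leq \norm{\del{t}^{2s-2}g_1(t)}_{H^1(\Omega)} \leq \norm{g_1(t)}_{\Ec^s}$, no integration needed at top order because that term of $\norm{f_1(t)-f_1(0)}_{\Ec^s}$ is $\norm{\del{t}^{2s-1}(f_1-f_1(0))}_{L^2(\Omega)} = \norm{\del{t}^{2s-2}g_1(t)}_{L^2(\Omega)}$, which we bound pointwise). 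Once this is laid out cleanly, the rest is routine application of Minkowski's integral inequality.
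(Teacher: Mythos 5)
There is a genuine gap, and it comes from a misreading of the quantity $\norm{f_1(t)-f_1(0)}_{\Ec^s}$.

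You treat $f_1(0)$ as a time-independent function, so that $\del{t}^\ell\bigl(f_1(t)-f_1(0)\bigr)=\del{t}^\ell f_1(t)=\del{t}^{\ell-1}g_1(t)$ for $\ell\geq 1$, and then bound those terms \emph{pointwise} by $\norm{g_1(t)}_{\Ec^s}$ with ``no integration needed''. This cannot yield the claimed estimate: the right-hand side $\int_0^t\norm{g_1(\tau)}_{\Ec^s}\,d\tau$ tends to $0$ as $t\to 0$, while a pointwise bound by $\norm{g_1(t)}_{\Ec^s}$ does not. Indeed, under your reading the asserted inequality is actually false — at $t=0$ the left-hand side would equal $\bigl(\sum_{\ell\geq 1}\norm{(\del{t}^\ell f_1)(0)}^2_{H^{s-m(s,\ell)/2}}\bigr)^{1/2}$, which is nonzero in general, while the right-hand side is $0$.

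The intended meaning, which is what the paper's proof and its later applications (e.g.\ around \eqref{locexistA11f} and the definition of the ball $B_{R,\delta}$) require, is that $\norm{f_1(t)-f_1(0)}_{\Ec^s}$ is shorthand for
\begin{equation*}
\biggl(\sum_{\ell=0}^{2s-1}\norm{(\del{t}^\ell f_1)(t)-(\del{t}^\ell f_1)(0)}^2_{H^{s-\frac{m(s,\ell)}{2}}(M)}\biggr)^{1/2},
\end{equation*}
i.e.\ one subtracts the initial value of \emph{each} time derivative, not the constant function $f_1(0)$. With that reading the proof is exactly what you briefly considered and then rejected as ``the wrong decomposition'': differentiating $\del{t}f_1=g_1$ gives $\del{t}^{\ell+1}f_1=\del{t}^\ell g_1$, integrating in time gives $(\del{t}^\ell f_1)(t)-(\del{t}^\ell f_1)(0)=\int_0^t\del{t}^\ell g_1(\tau)\,d\tau$ for every $0\leq\ell\leq 2s-1$ (including the top order $\ell=2s-1$, where the integrand lies in $L^\infty_t L^2_x$), the triangle inequality gives $\norm{(\del{t}^\ell f_1)(t)-(\del{t}^\ell f_1)(0)}_{H^{s-m(s,\ell)/2}}\leq\int_0^t\norm{\del{t}^\ell g_1(\tau)}_{H^{s-m(s,\ell)/2}}\,d\tau$, and summing in $\ell$ (via the $\ell^2$-Minkowski integral inequality) completes the estimate. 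Your regularity discussion for $C\Xc^s_T$ membership and the analogous computation for $f_2$ with the weight $\ell$ in place of $m(s,\ell)$ are otherwise in the right spirit, but the norm-estimate argument must be repaired as above.
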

\begin{proof}
Integrating 
\begin{equation} \label{stpropE1}
\del{t}f_1 = g_1 
\end{equation}
in time shows, with the help of the initial condition $f_1(0)=f^0_1$, that
\begin{equation*}
f_1(t) = f^0_1 + \int_{0}^t g_1(\tau)\, d\tau
\end{equation*}
from which we have $f_1 \in C([0,T],H^{s+1}(M))$. Similarly, differentiating \eqref{stpropE1} with respect to $t$ repeatedly gives
\begin{equation*}
\del{t}^{\ell+1} f_1 = \del{t}^\ell g_1, \qquad 0\leq \ell \leq 2s-1.
\end{equation*}
Integrating this in time, we find that $f_1 \in C\Xc^s_T(M)$ and that
\begin{equation*}
\del{t}^\ell f_1(t) -\del{t}^\ell f_1(0) = \int_{0}^t \del{t}^\ell g_1(\tau)\, d\tau, \qquad 0\leq \ell \leq 2s-1.
\end{equation*}
We then conclude via an application of the triangle inequality that
\begin{equation*}
\norm{\del{t}^\ell f_1(t)-\del{t}^\ell f_1(0)}_{H^{s-\frac{m(s,\ell)}{2}}(M)} \leq  \int_0^t \norm{\del{t}^\ell g_1(\tau)}_{H^{s-\frac{m(s,\ell)}{2}}(M)} d\tau,  \qquad 0\leq \ell \leq 2s-1,
\end{equation*}
which, after summing over $\ell$, yields
\begin{equation*}
\norm{f_1(t)-f_1(0)}_{\Ec^s} \leq \int_0^t \norm{g_1(\tau)}_{\Ec^s} d\tau.
\end{equation*}
This establishes the desired estimates for $f_1$. The estimate for $f_2$ can be established in a similar fashion.
\end{proof}

\begin{prop} \label{stpropF}
Suppose $s =k/2$, $s>n/2$,  $f\in C^{2s}(\Rbb\times \Rbb^{n-1})$, $u \in  X_T^{s+1,2s}(\Omega)\cap  X_T^{s+\frac{1}{2}}(\Omega)$,
$v \in \Xc_T^{s+\frac{3}{2}}(\Omega)$, and let
$\Dsl f$ denote the collection of that are tangent to the boundary\footnote{Locally, 
$\Dsl f = (e_\Ic(f))$, $\Ic=2,3,\ldots,n$, for smooth, time-independent vector
fields $e_\Ic=e_{\Ic}^\Sigma \delb{\Sigma}$, $\; \del{t}  e_\Ic^\Sigma =0$, that are defined on $\overline{\Omega}$ and when restricted
to $\del{}\Omega$ form a local basis
for the tangent space $T\del{}\Omega$.} $\del{}\Omega$ and $\nu_\Sigma$ is an outward pointing conormal to $\del{}\Omega$. Then the derivative
$\del{t}^{2s}(f(u,\Dsl v))$ can be decomposed as
\begin{equation*}
\del{t}^{2s}(f(u,\Dsl v)) = k^{\Sigma}\del{\Sigma}\theta+ h
\end{equation*}
for
\begin{equation*}
(k,\theta,h) \in \bigcap_{\ell=0}^1 W^{\ell,\infty}\bigl([0,T],H^{s+\frac{1}{2}-\frac{\ell}{2}}(\Omega,\Rbb^{n})\bigr)
\times \bigcap_{\ell=0}^1 W^{\ell,\infty}\bigl([0,T],H^{\frac{3}{2}-\frac{\ell}{2}}(\Omega)\bigr)
\times \bigcap_{\ell=0}^1 W^{\ell,\infty}\bigl([0,T],H^{1-\ell}(\Omega)\bigr)
\end{equation*}
that satisfy
$\nu_\Sigma k^\Sigma=0$ on $\del{}\Omega$  and the estimates
\begin{gather*}
\norm{\theta}_{H^{\frac{3}{2}}(\Omega)} + \norm{\del{t}\theta}_{H^{1}(\Omega)} \lesssim 
\zeta,\\
\norm{h}_{H^{1}(\Omega)} + \norm{\del{t}h}_{L^{2}(\Omega)} \leq 
C(\zeta)\zeta,\\
\norm{k}_{H^{s+\frac{1}{2}}(\Omega)}+ \norm{\del{t}k}_{H^{s}(\Omega)} 
\leq 
C(\zeta),\\
\end{gather*} 
where
\begin{equation*}
\zeta = \norm{u}_{E^{s+1,2s}}+ \norm{u}_{E^{s+\frac{1}{2}}}+ \norm{v}_{\Ec^{s+\frac{3}{2}}}.
\end{equation*}
\end{prop}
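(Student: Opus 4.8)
The plan is to expand $\del{t}^{2s}\bigl(f(u,\Dsl v)\bigr)$ by the multivariate Leibniz (Fa\`{a} di Bruno) rule and to isolate the unique family of terms responsible for the loss of regularity. Write $f=f(u,w)$ with $w=(w_\Ic)$ the slots into which the tangential derivatives $\Dsl v=(e_\Ic(v))$ are substituted, and recall that the tangential frame fields $e_\Ic=e_\Ic^\Sigma\delb{\Sigma}$ are time-independent, so $\del{t}^\ell e_\Ic(v)=e_\Ic(\del{t}^\ell v)$. Collecting the terms in which all $2s$ time derivatives land on a single factor, one obtains
\begin{equation*}
\del{t}^{2s}\bigl(f(u,\Dsl v)\bigr)=\sum_\Ic \frac{\partial f}{\partial w_\Ic}(u,\Dsl v)\,e_\Ic\bigl(\del{t}^{2s}v\bigr)+\frac{\partial f}{\partial u}(u,\Dsl v)\,\del{t}^{2s}u+R,
\end{equation*}
where $R$ is a finite sum of terms, each of the form (a partial derivative of $f$ of order $\le 2s$, evaluated at $(u,\Dsl v)$) times a product of factors $\del{t}^a u$ and $e_\Ic(\del{t}^b v)$ with all exponents $\ge 1$ summing to $2s$ and no single factor carrying all $2s$ derivatives. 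I would then set $\theta=\del{t}^{2s}v$, $k^\Sigma=\sum_\Ic\frac{\partial f}{\partial w_\Ic}(u,\Dsl v)\,e_\Ic^\Sigma$, and $h=\frac{\partial f}{\partial u}(u,\Dsl v)\,\del{t}^{2s}u+R$, so that $k^\Sigma\del{\Sigma}\theta$ equals the peeled-off sum. Since each $e_\Ic$ is tangent to $\del{}\Omega$ we have $\nu_\Sigma e_\Ic^\Sigma=0$ on $\del{}\Omega$, hence $\nu_\Sigma k^\Sigma=0$ there.

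The estimate for $\theta$ is immediate from the definitions \eqref{XcTdefB} and \eqref{melldef}: $\theta=\del{t}^{2s}v\in H^{3/2}(\Omega)$ and $\del{t}\theta=\del{t}^{2s+1}v\in H^1(\Omega)$ with norms controlled by $\norm{v}_{\Ec^{s+\frac{3}{2}}}\le\zeta$. For $k$, I would split off the constant $\frac{\partial f}{\partial w_\Ic}(0,0)$ so that the remaining composition vanishes at the origin, then apply the spacetime Moser estimate (Proposition \ref{stpropB}, with $r=2s$) together with the multiplication inequality (Proposition \ref{calcpropB}); since $u$, $\Dsl v$, and their first time derivatives lie in $H^{s+\frac{1}{2}}(\Omega)$ and $H^s(\Omega)$ respectively (the latter from $v\in\Xc^{s+\frac{3}{2}}_T$) and $s>n/2$, this gives $\norm{k}_{H^{s+\frac{1}{2}}(\Omega)}+\norm{\del{t}k}_{H^s(\Omega)}\le C(\zeta)$ — the bound being $C(\zeta)$ rather than $C(\zeta)\zeta$ precisely because of the constant part $\frac{\partial f}{\partial w_\Ic}(0,0)e_\Ic^\Sigma$.

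For $h$, each summand is a product of an $f$-derivative composed with $(u,\Dsl v)$ — estimated in the required Sobolev norms by Propositions \ref{stpropB} and \ref{stpropD} since $f\in C^{2s}$ and the arguments remain in a fixed bounded set — times a genuine derivative of $u$ or $v$. The worst term in $R$ carries $2s-1$ time derivatives on one factor; tracking the regularities ($\del{t}^{2s-1}u\in H^{3/2}$ from $u\in X^{s+1,2s}_T$, $e_\Ic(\del{t}^{2s-1}v)\in H^1$, $e_\Ic(\del{t}v)\in H^s$, and so on) and applying the spacetime product estimate (Proposition \ref{stpropA}) shows each such term, and its $\del{t}$-derivative, lie in $H^1(\Omega)$ and $L^2(\Omega)$ respectively; the term $\frac{\partial f}{\partial u}(u,\Dsl v)\,\del{t}^{2s}u$ is handled the same way, with $\del{t}$ of it requiring $\del{t}^{2s+1}u\in L^2(\Omega)$ — exactly the content of the hypothesis $u\in X^{s+\frac{1}{2}}_T(\Omega)$. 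Summing gives $\norm{h}_{H^1(\Omega)}+\norm{\del{t}h}_{L^2(\Omega)}\le C(\zeta)\zeta$. I expect the main difficulty to be the bookkeeping: organizing the Leibniz expansion so that the obstruction terms are cleanly isolated, and verifying against the precise index constraints in Propositions \ref{stpropA}--\ref{stpropB} that every remaining product and its time derivative has the claimed regularity (in particular that differentiating $R$ once more produces at worst $e_\Ic(\del{t}^{2s}v)\in H^{1/2}(\Omega)\subset L^2(\Omega)$ and stays within the available smoothness of $f$).
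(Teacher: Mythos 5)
Your decomposition is precisely the one the paper uses: $\theta=\del{t}^{2s}v$, $k^\Sigma=\frac{\partial f}{\partial(\Dsl v)_\Ic}(u,\Dsl v)\,e_\Ic^\Sigma$, and $h$ equal to everything else. The difference is entirely in how you organize and estimate $h$. The paper differentiates $f(u,\Dsl v)$ once to get $k^\Sigma\del{\Sigma}\del{t}v+g$ with $g=\frac{\partial f}{\partial u}\del{t}u$, then applies $\del{t}^{2s-1}$ and writes
\begin{equation*}
h=[\del{t}^{2s-1},k^\Sigma]\del{\Sigma}\del{t}v+\del{t}^{2s-1}g,
\end{equation*}
which is estimated in one shot via the spacetime commutator lemma, Proposition \ref{stcomA} (applied with several different index choices for $h$ and $\del{t}h$), plus the Moser estimate, Proposition \ref{stpropB}, for $\del{t}^{2s-1}g$ and $\del{t}^{2s}g$. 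You instead propose to expand $\del{t}^{2s}$ directly via Fa\`{a} di Bruno and verify the regularity of every term in $R$ and $\del{t}R$ by hand. Both give the same answer — in fact the commutator $[\del{t}^{2s-1},k^\Sigma]\del{\Sigma}\del{t}v$ is exactly your $R$ restricted to terms where no factor carries all $2s$ derivatives — but the paper's formulation is materially cheaper: Proposition \ref{stcomA} is set up precisely to absorb the combinatorics you are doing by hand, so one application of it (with $s_1=s+\frac{1}{2}$, $s_2=s$, $s_3=s+\frac{1}{2}$, $\ell=2s-1$) bounds the whole class of terms, and the further $\del{t}$-derivative of the commutator splits into two more commutators that the lemma again handles with different indices. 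Your route is viable but you have not done the required case analysis: $R$ contains many different distributions of the $2s$ derivatives among the arguments, and each needs its own triple check that the indices in Proposition \ref{stpropA} (or Theorem \ref{calcpropB}) are satisfied, both for the term itself and after an extra $\del{t}$. You identify the right worst cases ($\del{t}^{2s-1}u\in H^{3/2}$, $e_\Ic(\del{t}^{2s-1}v)\in H^1$, $\del{t}^{2s+1}u\in L^2$), and I do not see an obstruction, so I would call this a correct sketch of the same decomposition with a less economical estimation strategy for $h$, rather than a genuinely different proof. If you intend to complete it, I recommend instead observing, as the paper does, that differentiating the identity once in $t$ before distributing $\del{t}^{2s-1}$ lets you reuse Proposition \ref{stcomA} and avoid the explicit Fa\`{a} di Bruno bookkeeping altogether.
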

\begin{proof}
Differentiating $f(u,\Dsl v)$ with respect to $t$ yields
\begin{equation}\label{stpropF1}
\del{t}(f(u,\Dsl v)) = k^\Sigma(u,\Dsl v) \del{\Sigma}\del{t}v + g
\end{equation}
where
\begin{align}
k^\Sigma(u,\Dsl v) &= \frac{\del{}f}{\del{}e_\Ic(u)}(u,\Dsl v)e_\Ic^\Sigma \label{stpropF1a.1}
\intertext{and}
g(u,\del{t}u,\Dsl v) & =  \frac{\del{}f}{\del{}u}(u,\Dsl v)\del{t}u. \label{stpropF1a.2}
\end{align}
Since the $e_\Ic^\Sigma$ are tangent to $\del{}\Omega$, it is clear from \eqref{stpropF1a.1} that  $\nu_\Sigma k^\Sigma=0$ on $\del{}\Omega$. 

From \eqref{stpropF1}, we see, after differentiating with respect to $t$ $2s$-times, that
\begin{equation}\label{stpropF2}
\del{t}^{2s}(f(u,\Dsl v)) = k^\Sigma(u,\Dsl v) \del{\Sigma}\theta + h 
\end{equation}
where
\begin{align}
\theta &= \del{t}^{2s}v \label{stpropF3.1}
\intertext{and}
h &= [\del{t}^{2s-1},k^\Sigma]\del{\Sigma} \del{t}v + \del{t}^{2s-1}g. \label{stpropF3.2}
\end{align}
It is then clear from \eqref{stpropF3.1} that
\begin{equation} \label{stpropF4}
\norm{\theta}_{H^{\frac{3}{2}}(\Omega)} + \norm{\del{t}\theta}_{H^{1}(\Omega)}
\lesssim \zeta, 
\end{equation}
where $\zeta$ is as defined in the statement of the proposition. 

Next, we observe from \eqref{stpropF1a.1} and Proposition \ref{stpropB} that
\begin{equation} \label{stpropF5}
\norm{k}_{H^{s+\frac{1}{2}}(\Omega)} = \norm{k}_{E^{s+\frac{1}{2},0}} \leq C\big(\norm{u}_{E^{s+\frac{1}{2},0}},\norm{D v}_{E^{s+\frac{1}{2},0}}\bigr)
\leq  C\big(\norm{u}_{E^{s+\frac{1}{2},0}},\norm{v}_{E^{s+\frac{3}{2},0}}\bigr) \leq C(\zeta)
\end{equation}
and
\begin{equation} \label{stpropF6}
\norm{\del{t}k}_{H^{s}(\Omega)} \lesssim \norm{k}_{E^{s+\frac{1}{2},1}} \leq C\big(\norm{u}_{E^{s+\frac{1}{2},1}},\norm{D v}_{E^{s+\frac{1}{2},1}}\bigr)
\leq  C\big(\norm{u}_{E^{s+\frac{1}{2},1}},\norm{v}_{E^{s+\frac{3}{2},1}}\bigr) \leq C(\zeta).
\end{equation}
By similar arguments, we also obtain
\begin{gather}
\norm{\del{t}k}_{E^{s,2s-a-2}} \lesssim \norm{k}_{E^{s+\frac{1}{2},2s-a-1}} \leq C(\zeta) \label{stpropF7.1}
\intertext{and}
\norm{\del{t}^2 k}_{E^{s-\frac{1}{2},2s-a-2}}\lesssim \norm{k}_{E^{s+\frac{1}{2},2s-a}} \leq C(\zeta). \label{stpropF7.2}
\end{gather}

We can estimate  the second term on the right hand side of \eqref{stpropF3.2} and its first derivative, again using  Proposition \ref{stpropB},  as follows:
\begin{align}
\norm{ \del{t}^{2s-1}g}_{H^{1}(\Omega)} &\lesssim \norm{g}_{E^{s+\frac{1}{2},2s-1}}\notag \\
& \leq C\bigl(\norm{(u,\del{t}u,D v)}_{E^{s+\frac{1}{2},2s-1}} \bigr)\norm{(u,\del{t}u,D v)}_{E^{s+\frac{1}{2},2s-1}} \notag \\
& \leq C\bigl(\norm{u}_{E^{s+1,2s}}+\norm{v}_{E^{s+\frac{3}{2},2s-1}}\bigr)\bigl(\norm{u}_{E^{s+1,2s}}+\norm{v}_{E^{s+\frac{3}{2},2s-1}}\bigr) \notag \\
& \leq C(\zeta)\zeta \label{stpropF8}
\end{align}
and
\begin{align}
\norm{ \del{t}^{2s}g}_{L^{2}(\Omega)} &\lesssim \norm{g}_{E^{s,2s}}\notag \\
& \leq C\bigl(\norm{(u,\del{t}u,D v)}_{E^{s,2s}} \bigr)\norm{(u,\del{t}u,D v)}_{E^{s,2s}} \notag \\
& \leq C\bigl(\norm{u}_{E^{s+\frac{1}{2},2s+1}}+\norm{v}_{E^{s+1,2s}}\bigr)\bigl(\norm{u}_{E^{s+\frac{1}{2},2s+1}}+\norm{v}_{E^{s+1,2s}}\bigr) \notag \\
& \leq C(\zeta)\zeta. \label{stpropF9}
\end{align}
We further observe that the first term on the right hand side of \eqref{stpropF3.2} can be estimated by
\begin{align}
\norm{ [\del{t}^{2s-1},k^\Sigma]\del{\Sigma} \del{t}v }_{H^{1}(\Omega)} &\lesssim
\norm{\del{t}k}_{E^{s.2s-1}}\norm{D\del{t}v}_{E^{s,2s-2}} \notag \\
&\leq C(\zeta)\norm{v}_{E^{s+\frac{3}{2},2s-1}} \notag \\
&\leq  C(\zeta)\zeta,  \label{stpropF10}
\end{align}
where in deriving this we have employed Proposition \ref{stcomA}, with $s_1=s+\frac{1}{2}$, $s_2=s$, $s_3=s+\frac{1}{2}$,
and $\ell = 2s-1$, and  the inequality \eqref{stpropF7.1}.

Differentiating  $[\del{t}^{2s-1},k^\Sigma]\del{\Sigma} \del{t}v$ with respect to $t$ gives
\begin{equation*}
\del{t}\bigl([\del{t}^{2s-1},k^\Sigma]\del{\Sigma} \del{t}v\bigr)=[\del{t}^{2s-1},\del{t}k^\Sigma]\del{\Sigma} \del{t}v+[\del{t}^{2s-1},k^\Sigma]\del{\Sigma} \del{t}^2 v.
\end{equation*}
Both terms  on the right hand side can be estimated in a similar fashion as \eqref{stpropF10} by using Proposition \ref{stcomA}, with
 with $s_1=s$, $s_2=s-\frac{1}{2}$, $s_3=s-\frac{1}{2}$
and $\ell = 2s-1$ for the first term and $s_1=s+\frac{1}{2}$, $s_2=s-1$, $s_3=s-\frac{1}{2}$ and
$\ell = 2s-1$ 
for the second,
and the inequalities  \eqref{stpropF7.1}-\eqref{stpropF7.2}. Doing so shows that
\begin{align}
\norm{ \del{t}\bigl([\del{t}^{2s-1},k^\Sigma]\del{\Sigma} \del{t}v\bigr) }_{L^{2}(\Omega)} &\lesssim  \norm{\del{t}^2 k}_{E^{s-\frac{1}{2},2s-2}}
\norm{D\del{t}v}_{E^{s-\frac{1}{2},2s-2}} +  \norm{\del{t} k}_{E^{s,2s-2}}
\norm{D\del{t}^2v}_{E^{s-1,2s-2}} \notag \\
&\leq C(\zeta)\norm{v}_{E^{s+1,2s}}\notag \\
&\leq C(\zeta)\zeta.\notag 
\end{align}
From this, the definition \eqref{stpropF3.2}, and the estimates \eqref{stpropF8}-\eqref{stpropF10}, we deduce that
\begin{equation*}
\norm{h_a}_{H^{1}(\Omega)} + \norm{\del{t}h}_{L^{2}(\Omega)} \leq 
C(\zeta)\zeta.
\end{equation*}
This result together with \eqref{stpropF4}-\eqref{stpropF6} establishes the stated estimates and completes the proof.
\end{proof}

\section{\label{elliptic}Elliptic systems}

In this appendix, we recall some well known regularity results for elliptic systems. Throughout this section, $\Omega$ will denote an open and bounded set in $\Rbb^n$, $n\geq 2$, with smooth boundary.

\subsection{\label{neumann} Neumann boundary conditions}
We begin by considering elliptic systems with Neumann boundary conditions of the form
\begin{align}
\del{\Lambda}(b^{\Lambda\Sigma}\del{\Sigma}u+m^\Lambda) &= f && \text{in $\Omega$,}\label{ellipA.1}\\
\nu_\Lambda(b^{\Lambda\Sigma}\del{\Sigma}u+m^\Lambda) &= g && \text{in $\del{}\Omega$,}\label{ellipA.2}
\end{align}
where
\begin{enumerate}[(i)]
\item $\nu_\Lambda$ is the outward pointing unit conormal to $\del{}\Omega$,
\item $u=u(x)$, $m^\Lambda=m^\Lambda(x)$, $f=f(x)$ and $g=g(x)$ are $\Rbb^N$-valued maps,
\item and the $b^{\Lambda\Sigma}=b^{\Lambda\Sigma}(x)$ are $\Mbb{N}$-valued maps that satisfy the symmetry condition
\begin{equation}\label{bellip}
(b^{\Lambda\Sigma})^{\tr} = b^{\Sigma\Lambda}
\end{equation}
in $\Omega$,
and there exists constants $\kappa_1 >0$ and $\mu \geq 0$ such that coercive inequality 
\begin{equation}\label{coercellip}
\ip{\del{\Lambda}v}{b^{\Lambda\Gamma}\del{j}v}_{\Omega} \geq \kappa_1 \norm{v}^2_{H^1(\Omega)} -\mu \norm{v}^2_{L^2(\Omega)}
\end{equation}
holds for all $v\in H^1(\Omega,\Rbb^N)$.
\end{enumerate}
We recall, see, for example, \cite[Theorem B.3]{Koch:1990}, that solutions to these systems enjoy the following version of
elliptic regularity:
\begin{thm} \label{ellipticregN}
Suppose $r,s \in \Rbb$, $s>n/2$, $0\leq r \leq s$,
\begin{equation}\label{r*def}
r^* = \begin{cases} r-1 & \text{if $r> 1$}\\
                   0 & \text{otherwise}
\end{cases},
\end{equation}
$b^{\Lambda\Sigma} \in H^s(\Omega,\Mbb{N})$, $m^\Lambda\in H^{r}(\Omega,\Rbb^N)$,
$f\in H^{r^*}(\Omega,\Rbb^N)$, $g\in H^{r-\frac{1}{2}}(\del{}\Omega,\Rbb^N)$,
the $b^{\Lambda\Sigma}$ satisfy \eqref{bellip} and \eqref{coercellip}, and
$u\in H^1(\Omega,\Rbb^N)$ is a weak solution of  \eqref{ellipA.1}-\eqref{ellipA.2}. Then
$u\in H^{r+1}(\Omega,\Rbb^N)$ and
\begin{equation*}
\norm{u}_{H^{r+1}(\Omega)} \leq C\Bigl(\norm{u}_{H^{1}(\Omega)}+\norm{f}_{H^{r^*}(\Omega)}
+ \norm{m}_{H^{r}(\Omega)} + \norm{g}_{H^{r-1/2}(\del{}\Omega)} \Bigr)
\end{equation*}
where $C = C(\kappa_1,\mu,\norm{b}_{H^s(\Omega)})$.
\end{thm}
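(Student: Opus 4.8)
The statement to prove is Theorem \ref{ellipticregN}: elliptic regularity for weak solutions of the Neumann system \eqref{ellipA.1}--\eqref{ellipA.2} under the coercivity hypothesis \eqref{coercellip} and the regularity assumptions on the coefficients. Since the paper explicitly cites \cite[Theorem B.3]{Koch:1990} as the source, the natural plan is to reduce the present statement to that reference, but I will organize the argument so that the key analytic steps are visible.

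\textbf{Overall approach.} The plan is a bootstrap (difference-quotient / iteration) argument, carried out in two stages: first an interior estimate, then a boundary estimate using tangential difference quotients, with the fractional scale $H^{r+1}$ reached by interpolation once integer regularity is established at the two endpoints $r\in\{0,1\}$ and then propagated upward. First I would record the weak formulation: $u\in H^1(\Omega,\Rbb^N)$ is a weak solution means
\begin{equation*}
\ip{\del{\Lambda}v}{b^{\Lambda\Sigma}\del{\Sigma}u}_{\Omega}
= -\ip{\del{\Lambda}v}{m^\Lambda}_{\Omega} - \ip{v}{f}_{\Omega} + \ip{v}{g}_{\del{}\Omega}
\end{equation*}
for all $v\in H^1(\Omega,\Rbb^N)$, where the boundary pairing is the $H^{r-1/2}$--$H^{1/2-r}$ duality (meaningful since $r-1/2\ge -1/2$). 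The coercivity \eqref{coercellip} plus the symmetry \eqref{bellip} give, via Lax--Milgram applied to $b^{\Lambda\Sigma}+\mu\,\delta^{\Lambda\Sigma}$, the basic a priori bound $\norm{u}_{H^1(\Omega)}\lesssim \norm{u}_{H^1(\Omega)}$ together with well-posedness of the shifted problem; this is where the constant $C=C(\kappa_1,\mu,\norm{b}_{H^s(\Omega)})$ originates.

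\textbf{Key steps in order.} (1) Flatten the boundary: cover $\overline{\Omega}$ by finitely many charts, in each boundary chart straighten $\del{}\Omega$ by a smooth diffeomorphism, and observe that the transformed coefficients still satisfy symmetry and a (possibly smaller) coercivity constant because $s>n/2$ ensures $H^s$ coefficients form an algebra and push-forwards stay in $H^s$; this is a routine localization handled by a partition of unity subordinate to the cover. (2) Interior regularity: difference-quotient the equation in all directions; using \eqref{coercellip} on $\chi^2 D_h^{-e} D_h^e u$ type test functions (with $\chi$ a cutoff), the Moser/fractional multiplication inequalities (Theorems \ref{calcpropB}, \ref{calcpropC}) to handle $b^{\Lambda\Sigma}\in H^s$ multiplying lower-order terms, and Ehrling's lemma (Lemma \ref{Ehrling}) to absorb the $\mu\norm{u}_{L^2}^2$ term, obtain $u\in H^2_{\mathrm{loc}}$ with the estimate, then iterate to climb from $H^{k}$ to $H^{k+1}$ as long as the data allow, i.e. up to $H^{r+1}$. (3) Boundary regularity: in a straightened boundary chart, apply difference quotients only in the $n-1$ tangential directions (which preserves the Neumann condition and the test-function class $H^1(\Omega)$, no boundary-trace obstruction arises), gaining $\delsl{} u$ regularity; then recover the missing normal derivative algebraically from the equation \eqref{ellipA.1}, using that the normal-normal block $b^{nn}$ of $b^{\Lambda\Sigma}$ is invertible, which follows from the coercivity/Legendre-type consequence of \eqref{coercellip}. (4) Fractional orders and the final constant: for non-integer $r$ (recall $r=k/2$ enters elsewhere but here $r\in\Rbb$), interpolate between consecutive integer estimates; track the dependence of all constants through the interpolation and the finitely many charts to arrive at $C=C(\kappa_1,\mu,\norm{b}_{H^s(\Omega)})$, noting the role of the exponent $r^*$ defined in \eqref{r*def}: when $0\le r\le 1$ the source $f$ only needs to lie in $L^2=H^0$ because it pairs against $v\in H^1$, whereas for $r>1$ one genuinely differentiates the equation and needs $f\in H^{r-1}$.

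\textbf{Main obstacle.} The technical heart, and the step I expect to be most delicate, is step (3): correctly handling the Neumann boundary term $\ip{v}{g}_{\del{}\Omega}$ under tangential difference quotients when $g\in H^{r-1/2}(\del{}\Omega)$ with $r$ possibly below $1$, so that $g$ is only a negative-order distribution on the boundary and the pairing must be interpreted by duality with the trace theorem (Theorem \ref{trace}); one must show the tangential difference quotients of $g$ are uniformly bounded in $H^{r-3/2}$ and that this suffices to close the estimate without ever taking a trace of a function too rough to have one. A secondary nuisance is verifying that straightening the boundary and the partition-of-unity cutoffs do not degrade the coercivity constant below a usable threshold — this is standard but needs the remark that $H^s\hookrightarrow L^\infty$ (since $s>n/2$) so that coefficient perturbations from the diffeomorphism are controlled in sup-norm. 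Granting these, the conclusion $u\in H^{r+1}(\Omega,\Rbb^N)$ with the stated estimate follows, exactly as in \cite[Theorem B.3]{Koch:1990}, and I would in the write-up simply cite that reference for the detailed execution of steps (2)--(4) while spelling out the weak formulation and the reduction in (1).
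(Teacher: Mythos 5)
The paper offers no proof of this theorem at all — it is recalled directly from \cite[Theorem B.3]{Koch:1990} — so there is no internal argument to compare against, and your proposal, which sketches the standard localization/difference-quotient/interpolation bootstrap and then defers the detailed execution to that same reference, is consistent with the paper's treatment and correct in outline. The only blemish is the tautological display $\norm{u}_{H^{1}(\Omega)}\lesssim \norm{u}_{H^{1}(\Omega)}$, which you evidently intend as the Lax--Milgram well-posedness of the $\mu$-shifted problem; otherwise the sketch, including your identification of the delicate duality pairing with $g\in H^{r-\frac{1}{2}}(\del{}\Omega)$ for small $r$ and the role of $r^{*}$, is sound.
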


\subsection{\label{dirichlet} Dirichlet boundary condtions}
Next, we consider elliptic systems with Dirichlet boundary conditions of the form
\begin{align}
\del{\Lambda}\bigl(b^{\Lambda\Sigma}\del{\Sigma}u + m^\Lambda \bigr) &= f && \text{in $\Omega$,}\label{ellipB.1}\\
u &= 0 && \text{in $\del{}\Omega$,}\label{ellipB.2}
\end{align}
where 
\begin{enumerate}[(i)]
\item $u=u(x)$, $m^\Lambda=m^{\Lambda}(x)$, and $f=f(x)$ are $\Rbb^N$ valued maps,
\item and the $b^{\Lambda\Sigma}=b^{\Lambda\Sigma}(x)$ are $\Mbb{N}$-valued
maps that satisfy the symmetry condition \eqref{bellip} and there exists a constant $\kappa > 0$ such the
strong ellipticity condition 
\begin{equation} \label{strongellip}
b^{\Lambda\Sigma}\xi_\Lambda\xi_\Sigma \geq \kappa |\xi|^2 \id, \quad \forall \; \xi=(\xi_\Sigma)\in \Rbb^n,
\end{equation}
holds in $\Omega$.
\end{enumerate}
We recall the following elliptic regularity result, see, for example, \cite[Theorem 4.14]{GiaquintaMartinazzi:2012}, 
satisfied by solutions of these systems:
\begin{thm} \label{ellipticregD}
Suppose $r,s \in \Rbb$, $s>n/2$, $0\leq r \leq s$, $r^*$ is given by \eqref{r*def},
$b^{\Lambda\Sigma} \in H^s(\Omega,\Mbb{N})$, $m^\Lambda\in H^{r}(\Omega,\Rbb^N)$,
$f\in H^{r^*}(\Omega,\Rbb^N)$,
the $b^{\Lambda\Sigma}$ satisfy \eqref{bellip} and \eqref{strongellip}, and
$u\in H^1(\Omega,\Rbb^N)$ is a weak solution of \eqref{ellipB.1}-\eqref{ellipB.2}. Then
$u\in H^{r+1}(\Omega,\Rbb^N)$ and
\begin{equation*}
\norm{u}_{H^{r+1}(\Omega)} \leq C\Bigl(\norm{u}_{H^{1}(\Omega)}+
\norm{f}_{H^{r*}(\Omega)} + \norm{m}_{H^{r}(\Omega)} \Bigr)
\end{equation*}
where $C = C(\kappa,\norm{b}_{H^s(\Omega)})$.
\end{thm}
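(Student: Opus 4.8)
\textbf{Proof proposal for Theorem \ref{ellipticregD}.}

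The plan is to prove the elliptic regularity estimate for the Dirichlet problem \eqref{ellipB.1}--\eqref{ellipB.2} by a standard bootstrap argument starting from the weak formulation, using the strong ellipticity hypothesis \eqref{strongellip} rather than a G\r{a}rding-type coercivity inequality. Since the claimed result is exactly the quotation of \cite[Theorem 4.14]{GiaquintaMartinazzi:2012}, I would present it as a streamlined adaptation of that argument to the fractional Sobolev scale employed here. The key point throughout is that \eqref{strongellip} together with $u=0$ on $\del{}\Omega$ (so that the full gradient $\del{}u$, not merely its tangential part, is controlled) yields interior and boundary a priori estimates, and that the loss in regularity of the coefficients $b^{\Lambda\Sigma}\in H^s(\Omega,\Mbb N)$ with $s>n/2$ is harmless because $H^s$ is a multiplication algebra acting on the relevant low-regularity spaces.

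First I would record the base case $r=0$ (equivalently $r^*=0$): given $u\in H^1(\Omega,\Rbb^N)$ a weak solution, test the equation against $u$, use the strong ellipticity \eqref{strongellip} in the interior estimate after a difference-quotient argument near the boundary (flattening $\del{}\Omega$ via a finite atlas of boundary charts and using a partition of unity), and absorb the lower-order terms coming from $m^\Lambda\in H^0(\Omega,\Rbb^N)=L^2$ and $f\in L^2$. This produces $u\in H^1$ with $\norm{u}_{H^1(\Omega)}\lesssim \norm{u}_{H^1(\Omega)}+\norm{f}_{L^2(\Omega)}+\norm{m}_{L^2(\Omega)}$, with constant depending only on $\kappa$ and $\norm{b}_{H^s(\Omega)}$; for $r\le 1$ this is already the assertion. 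Next I would run the induction on $r$ in steps of $1/2$: assuming the estimate is known at level $r-\tfrac12$ (so $u\in H^{r+\frac12}$), differentiate the equation tangentially (in boundary charts) to upgrade tangential regularity, then recover the missing normal derivative algebraically from \eqref{ellipB.1} by solving for $b^{33}\del{3}^2 u$ and inverting $b^{33}$, which is possible by \eqref{strongellip} applied to the conormal direction. At each step the products $b^{\Lambda\Sigma}\del{}u$ and the terms $f$, $m^\Lambda$ are estimated using the fractional multiplication inequality, Theorem \ref{calcpropB}, with $s_1=s$, $s_2$ and $s_3$ chosen in the appropriate range (this is where $s>n/2$ is used), and one interpolates using Ehrling's lemma (Lemma \ref{Ehrling}) to absorb the borderline terms. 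The Dirichlet condition \eqref{ellipB.2} is preserved at each stage since tangential differentiation and the normal-derivative recovery act only inside $\Omega$.

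The main obstacle is the boundary analysis: flattening $\del{}\Omega$ and carefully tracking how the strong ellipticity constant $\kappa$ transforms under the chart maps, and verifying that the recovered normal second derivative lies in the correct fractional space — i.e. that inverting $b^{33}$ (a matrix in $H^s$, pointwise invertible by \eqref{strongellip}) does not cost regularity, which again follows from the multiplication/Moser estimates (Theorems \ref{calcpropB} and \ref{calcpropC}, the latter applied to the analytic map $M\mapsto M^{-1}$ on a neighbourhood of the positive-definite cone). A secondary technical point is the fractional (half-integer) steps of the induction, for which I would use that the spaces $H^{r+1}(\Omega)$ and their boundary traces $H^{r-1/2}(\del{}\Omega)$ interpolate compatibly, so that the half-integer levels are obtained by interpolating the integer ones together with the tangential-differentiation estimate. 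Assembling the chart-wise estimates with the partition of unity and combining with the interior estimate from Theorem \ref{ellipticregN}'s proof technique (applied with $g=0$) then yields the global bound with $C=C(\kappa,\norm{b}_{H^s(\Omega)})$, completing the induction up to $r=s$.
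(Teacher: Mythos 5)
The paper does not prove Theorem \ref{ellipticregD} at all: the result is quoted from the literature, with the proof deferred to \cite[Theorem 4.14]{GiaquintaMartinazzi:2012}. So there is no in-paper argument to compare yours against. What you have written is a sketch of the standard tangential-differentiation / normal-recovery proof, which is the right general strategy, and several of your observations are correct: the Dirichlet condition removes the need for a complementing condition (in contrast to Theorem \ref{ellipticregN}); $b^{33}=b^{\Lambda\Sigma}\nu_\Lambda\nu_\Sigma\geq\kappa|\nu|^2\id$ is invertible by \eqref{strongellip}, so the normal second derivative can be recovered algebraically without loss of regularity via Theorems \ref{calcpropB} and \ref{calcpropC}; and $H^s$ with $s>n/2$ acts as a multiplier on $H^\sigma$ for $0\leq\sigma\leq s$, so the rough coefficients are harmless.

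There is, however, a genuine gap in your base case. Testing against $u$ (via G\aa rding's inequality, which \eqref{strongellip} plus the continuity of $b$ guaranteed by $s>n/2$ does supply on $H^1_0$) merely reproduces the hypothesis $u\in H^1$; it establishes the conclusion for no $r>0$. Your claim that ``for $r\le 1$ this is already the assertion'' is false: for $r\in(0,1]$ one has $r^*=0$, but the conclusion is $u\in H^{r+1}\subsetneq H^1$ — in particular, at $r=1$ it is the full $H^2$ estimate, which already requires the boundary-chart difference-quotient argument and the recovery of $\del{3}^2u$ that you invoke only for the inductive step. The induction should therefore be anchored at $r=1$ (the classical $H^2$ theory for strongly elliptic systems with Dirichlet data), proceed in integer steps up to $\lfloor s\rfloor$, and the non-integer levels $0<r\leq s$ obtained by interpolating the data-to-solution map between consecutive integer levels (the additive $\norm{u}_{H^{1}(\Omega)}$ term on the right-hand side is harmless for this, and the shift $r^*=r-1$ for $r>1$ interpolates consistently). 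With the base case repaired in this way your outline does yield the theorem, but as written the range $0<r\leq 1$ is not covered.
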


\section{Linear wave equations\label{LWE}}

In this appendix, we establish an existence and uniqueness theory for systems of linear wave equations with either Dirichlet or acoustic boundary conditions.
Throughout this section we employ the notion
\begin{equation*}
\Omega_T=[0,T]\times \Omega \AND \Gamma_T=[0,T]\times\del{}\Omega,
\end{equation*}
where, as above, $\Omega$ denotes an open and bounded set in $\Rbb^n$, $n\geq 2$, with smooth boundary.

\subsection{Dirichlet boundary conditions\label{DCWE}}
In this section, we consider the initial boundary value problem (IBVP) for systems of wave equations with Dirichlet boundary conditions that are of the following form:
\begin{align}
\del{\alpha}\bigl(b^{\alpha\beta}\del{\beta}u+m^\alpha\bigr) & = f &&\text{in $\Omega_T$,}
\label{linDCa.1}\\
u & = 0 &&\text{in $\Gamma_T$, } \label{linDCa.2}\\
(u,\del{t}u) &= (u_0,u_1) &&\text{in $\Omega_0$,}\label{linDCa.3}
\end{align}
where
\begin{enumerate}[(i)]
\item $u=u(t,x)$, $m^\alpha= m^\alpha(t,x)$, and $f=f(t,x)$ are $\Rbb^N$-valued maps,
\item  the $\Mbb{N}$-valued maps $b^{\alpha\beta}=b^{\alpha\beta}(t,x)$
satisfy
\begin{equation}
(b^{\alpha\beta})^{\tr} = b^{\beta\alpha} \label{bsym}
\end{equation}
in $\Omega_T$, 
and there exists constants  $\kappa_0$, $\kappa_1>0$ and $\mu\geq 0$ such that\footnote{Given $A,B\in \Mbb{N}$, we define $A\leq B$ $\Longleftrightarrow$ $\ipe{\eta}{A\eta} \leq
\ipe{\eta}{B\eta}$ for all $\eta\in \Rbb^N$.}
\begin{equation}
 b^{00} \leq -\kappa_0\id\label{b00lb}
\end{equation}
in $\Omega_T$ and the coercive inequality
\begin{equation} \label{coercDC}
\ip{\del{\Lambda}u}{b^{\Lambda\Sigma}(t)\del{\Sigma}u}_\Omega \geq \kappa_1 \norm{u}^2_{H^1(\Omega)} - \mu \norm{u}^2_{L^2(\Omega)}
\end{equation}
holds for all $(t,u)\in [0,T]\times   H^1_0(\Omega,\Rbb^N)$.
\end{enumerate}

\begin{rem} \label{strongelliprem}
Provided that the matrices $b^{\Lambda\Sigma}$ are uniformly continuous on $\Omega_T$, it is known, see Theorem 3 from Section 6 of \cite{SimpsonSpector:1987},
that the strong ellipticity condition \eqref{strongellip} holds in $\Omega_T$ if and only if the coercive inequality \eqref{coercDC} holds. 
\end{rem}

\begin{Def} \label{dircompat}
Given $\st=\kt/2$ with $\kt\in \mathbb{Z}_{\geq 0}$, we say the the initial data\footnote{See \eqref{melldef} for a definition of $m(s,\ell)$.} 
\begin{equation*}
(u_0,u_1) \in H^{\st+1}(\Omega,\Rbb^N)\times H^{\st+1-\frac{m(\st+1,1)}{2}}(\Omega,\Rbb^N)
\end{equation*}
for the IBVP \eqref{linDCa.1}-\eqref{linDCa.3} satisfies the \textit{compatibility conditions to order $2\st+1$} if
the higher formal
time derivatives
\begin{equation*}
u_\ell = \del{t}^\ell u \bigl|_{\Omega_0}, \quad \ell=2,3,\ldots,2\st+1,
\end{equation*} 
which are  generated from the initial data by differentiating the wave equation \eqref{linDCa.1} formally with respect to $t$ the required number of times
and setting $t=0$, satisfy 
\begin{equation*}
u_\ell \in H^{\st+1-\frac{m(\st+1,\ell)}{2}}(\Omega,\Rbb^N), \quad 0\leq \ell \leq 2\st+1,
\end{equation*}
and
\begin{equation*}
\del{t}^\ell u = 0\quad \text{in $\Gamma_0$}, \quad \ell=0,1,\ldots,2\st.
\end{equation*}
\end{Def}

\begin{thm} \label{DCthm}
Suppose $T>0$, $s>n/2$, $s=k/2$ and $\st=\kt/2$ for $k,\kt\in \Zbb_{\geq 0}$ where $\kt\leq k$,  $\,b^{\alpha\beta}, \del{t}b^{\alpha\beta}  \in X_{T}^{s,2\st}(\Omega,\Mbb{N})$, 
$\,f,\del{t}f\in X_{T}^{\st-1}(\Omega,\Rbb^N)$, $\del{t}^{2\st}f\in L^\infty([0,T],L^2(\Omega,\Rbb^N))$, $\,m^{\alpha}, \del{t}m^{\alpha}  \in X_{T}^{\st}(\Omega,\Rbb^{N})$,
the initial data 
\begin{equation*}
(u_0,u_1)\in  H^{\st+1}(\Omega,\Rbb^N)\times  H^{\st+1-\frac{m(\st+1,1)}{2}}(\Omega,\Rbb^N)
\end{equation*} 
satisfies the compatibility conditions to
order $2\st+1$, and the matrices $b^{\alpha\beta}$ satisfy \eqref{bsym} and the inequalities \eqref{b00lb}-\eqref{coercDC} for some constants $\kappa_0,\kappa_1>0$ and $\mu\geq 0$.
Then there exists a unique solution\footnote{When $\st=0$, the solution must be interpreted in the weak sense, see \cite[Definiton 2.1]{Koch:1993}. } $u\in C\Xc^{\st+1}_T(\Omega,\Rbb^N)$ to the IBVP \eqref{linDCa.1}-\eqref{linDCa.3} that satisfies the energy estimate
\begin{equation*}
\norm{u(t)}_{\Ec^{\st+1}} \leq C\Bigl( \norm{u(0)}_{\Ec^{\st+1}} + 
\alpha_2(0) +  \int^t_0 \alpha_1(\tau)\norm{u(\tau)}_{\Ec^{\st+1}} + \alpha_2(\tau) \, d\tau \Bigr)
\end{equation*}
where\footnote{See \eqref{r*def} for the definition of $(\st-1)^*$.} $C=C\bigl(\vec{\kappa},\mu,\norm{b(0)}_{L^\infty(\Omega)},\norm{b(t)}_{H^s(\Omega)}\bigr)$, $\vec{\kappa}=(\kappa_0,\kappa_1)$,
\begin{align*}
\alpha_1(t) &= 1+\norm{b(t)}_{H^{s}(\Omega)}+\norm{\del{t}b(t)}_{E^{s,2\st}},
\intertext{and}
\alpha_2(t) &=  \norm{m(t)}_{H^{\st}(\Omega)}+\norm{\del{t}m(t)}_{E^{\st}}+\norm{f(t)}_{H^{(\st-1)^*}(\Omega)} +
 \norm{\del{t}f(t)}_{E^{(\st-1)^*}}+\norm{\del{t}^{2\st}f(t)}_{L^2(\Omega)}.
\end{align*}
\end{thm}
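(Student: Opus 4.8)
\textbf{Proof plan for Theorem \ref{DCthm}.}

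The plan is to follow the standard energy-method route for linear wave equations with Dirichlet data, but adapted to the fractional-regularity spaces $\Xc^{\st+1}_T$: first establish an a priori energy estimate for sufficiently smooth solutions by differentiating the equation $2\st$ times in $t$ and running a hierarchy of estimates, then obtain existence by a Galerkin/approximation argument combined with those estimates, and finally deduce uniqueness from the energy estimate applied to the difference of two solutions. The key structural fact driving everything is that \eqref{b00lb} lets us solve for $\del{t}^2 u$ algebraically in terms of spatial derivatives and lower-order time derivatives of $u$, which both makes the formal time derivatives $u_\ell$ in Definition \ref{dircompat} well-defined and converts time-regularity into spatial-regularity loss exactly as encoded in the function $m(s,\ell)$.

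The main steps, in order, would be: (1) Reduce to the case $\mu=0$ in \eqref{coercDC} by the usual substitution $u\mapsto e^{-\lambda t}u$ for $\lambda$ large, which modifies $f$, $m^\alpha$ and the lower-order coefficients but preserves the structural hypotheses. (2) Prove the basic energy identity at the top level: multiply \eqref{linDCa.1} by $\del{t}u$, integrate over $\Omega$, and use \eqref{bsym} to write the principal part as a time derivative of $\ip{\del{\Lambda}u}{b^{\Lambda\Sigma}\del{\Sigma}u}_\Omega - \ip{\del{t}u}{b^{00}\del{t}u}_\Omega$ plus a remainder controlled by $\norm{\del{t}b}_{L^\infty}$; combine with \eqref{b00lb}, \eqref{coercDC} (with $\mu=0$), and the Dirichlet condition \eqref{linDCa.2} (so $\del{t}u\in H^1_0$) to get control of $\norm{u(t)}_{H^1}+\norm{\del{t}u(t)}_{L^2}$ via Gronwall. (3) Run the induction on the number of time derivatives: apply $\del{t}^\ell$ to \eqref{linDCa.1} for $\ell=1,\dots,2\st$, commute $\del{t}^\ell$ past $b^{\alpha\beta}\del{\beta}$ using the commutator/product estimates of Appendix \ref{calculus} (Propositions \ref{stpropA}, \ref{stcomA}), treat the commutator terms and the contributions of $m^\alpha$, $f$ as forcing, and run the same energy identity for $\del{t}^\ell u$ (which again vanishes on $\Gamma_T$, so the boundary term drops); the spatial Sobolev order one gains at level $\ell$ is dictated by $m(\st+1,\ell)$, and the top level $\ell=2\st$ is where $\del{t}^{2\st}f\in L^\infty_t L^2_x$ is exactly what is needed. (4) At the final (elliptic) level, use equation \eqref{linDCa.1} itself together with Theorem \ref{ellipticregD} to trade the remaining spatial derivatives: given control of $\del{t}^2 u$ and lower in the appropriate norms, $u(t)$ solves an elliptic Dirichlet problem with right-hand side $f - \del{0}(b^{0\beta}\del{\beta}u) - \del{\Lambda}(b^{\Lambda 0}\del{0}u + m^\Lambda) - b^{00}\del{0}^2 u$ of known regularity, upgrading $\norm{u(t)}_{H^{\st+1}}$; iterating this and summing over $\ell$ yields the stated $\Ec^{\st+1}$ energy estimate with the indicated $\alpha_1,\alpha_2$. (5) For existence, approximate $b^{\alpha\beta}$, $m^\alpha$, $f$ and the data by smooth objects satisfying the same structural bounds uniformly (mollification in space, being careful to preserve \eqref{bsym}, \eqref{b00lb}, \eqref{coercDC} and the compatibility conditions — the latter by correcting the data with the formal Taylor polynomial in $t$ built from $u_0,u_1,\dots,u_{2\st+1}$), solve the smooth problems by a finite-dimensional Galerkin scheme in the $H^1_0$ spatial eigenbasis, and pass to the limit using the uniform energy bounds and weak-$*$ compactness as in the proof of Theorem \ref{locexistA}; when $\st=0$ the limit is a weak solution in the sense of \cite{Koch:1993}. (6) Uniqueness: the difference of two solutions solves the homogeneous problem with zero data and zero forcing, so the energy estimate forces it to vanish.

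The hard part will be step (3)–(4): carefully bookkeeping which fractional Sobolev norm is available at each level $\ell$, making the commutator estimates from Proposition \ref{stcomA} close with the precise index relations (the hypotheses $s>n/2$, $\st\le k$, and the $m(s,\ell)$ bookkeeping are all tuned so that they do), and interleaving the hyperbolic energy estimates with the elliptic regularity gain from Theorem \ref{ellipticregD} without losing derivatives — in particular ensuring the constant in the final estimate depends on $b$ only through $\norm{b(0)}_{L^\infty}$, $\norm{b(t)}_{H^s}$ and $\norm{\del{t}b(t)}_{E^{s,2\st}}$ as claimed, rather than through higher time derivatives of $b$. The boundary condition itself is benign here (unlike the acoustic case) because $\del{t}^\ell u|_{\Gamma_T}=0$ kills every boundary term in the energy identities; the only place the boundary enters nontrivially is through the compatibility conditions needed to start the induction and through the Dirichlet elliptic estimate.
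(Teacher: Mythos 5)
Your proposal follows the same architectural blueprint as the paper's proof: differentiate the equation in time up to order $2\st$, run the hyperbolic energy estimate at each level $\ell$ (using that the Dirichlet condition kills all boundary terms), then view the $\ell$-th time-differentiated equation as a spatial elliptic Dirichlet problem and invoke Theorem~\ref{ellipticregD} to convert control of higher time derivatives into control of higher spatial derivatives, finally summing over $\ell$ to assemble the $\Ec^{\st+1}$ estimate. Uniqueness from the energy estimate and existence by approximation in the rough-coefficient case are handled identically.

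Where you diverge is at the base of the hierarchy. The paper does not prove basic existence or the top-level ($\ell=2\st$) hyperbolic energy inequality from scratch: it cites Koch's Theorems~2.5 and 2.2 for, respectively, existence of a $\bigcap_j C^j([0,T],H^{2\st+1-j})$ solution to the smoothed problem with Koch-compatible data, and the energy identity for a weak solution with Dirichlet boundary data. You instead propose to derive the energy identity directly by the $\del_t u$-multiplier computation and to build the solution by a Galerkin scheme in the $H^1_0$ eigenbasis. Both routes are standard and valid; your version is more self-contained, while the paper's is shorter at the cost of invoking external machinery. Your $\mu\to 0$ normalisation via $u\mapsto e^{-\lambda t}u$ is also workable but unnecessary: the paper simply carries $\mu$ through as a parameter in the constant, which is cleaner since the substitution would also modify $f$, $m^\alpha$ and the lower-order coefficients.

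One mechanism worth flagging that your plan does not mention, and which is essential to getting the constants into the precise form stated (in particular $C$ depending on $b$ only through $\|b(0)\|_{L^\infty}$ and $\|b(t)\|_{H^s}$, and $\alpha_2(0)$ appearing additively rather than inside the integral): the paper writes the inhomogeneous terms $M_\ell$, $b^{0\Lambda}\del_t^{\ell+1}u$ and $F_\ell$ via the integral representation $Q(t)=Q(0)+\int_0^t \del_t Q$ before plugging into the elliptic estimate, and only then estimates the time derivatives using Lemma~\ref{DClem}. Without this step one would naively obtain constants depending on $\sup_{[0,T]}\|b\|_{X_T^{s,2\st}}$ rather than the pointwise-in-time quantities $\alpha_1(\tau)$ under the integral, which would be a strictly weaker (and for the nonlinear iteration in Section~\ref{Lwfsec}, inadequate) form of the estimate. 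So your step (4), as written, risks producing the wrong dependence of the constants on $b$; the fix is precisely this integration-in-time bookkeeping on the coefficient terms before summing over $\ell$.
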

\begin{proof}
Assuming for the moment that $b^{\alpha\beta}$, $m^\alpha$, $f$ $\in$ $C^\infty(\overline{\Omega}_T)$ and the initial
data satisfies the compatibility conditions in the sense of \cite{Koch:1993} to order $2\st+1$, that is,
$\del{t}^\ell u|_{\Omega_0} \in H^{2\st+1-\ell}(\Omega,\Rbb^N)$,  $0\leq \ell \leq 2\st+1$, and
$\del{t}^\ell u|_{\Gamma_0}=0$, $0\leq \ell \leq 2\st$,
then it follows from Theorem 2.5 of \cite{Koch:1993} that there exists a unique solution 
\begin{equation*}
u \in \bigcap_{j=0}^{2\st+1} C^{\ell}\bigl([0,T],H^{2\st+1-\ell}(\Omega,\Rbb^N)\bigr)
\end{equation*}
of the IBVP \eqref{linDCa.1}-\eqref{linDCa.3}. Differentiating \eqref{linDCa.1}-\eqref{linDCa.2} $\ell$ times, $0\leq \ell \leq 2\st$,
with respect to $t$ shows that
$\del{t}^\ell u$ satisfies
\begin{align}
\del{\alpha}\bigl(b^{\alpha\beta}\del{\beta}\del{t}^\ell u+ M^\alpha_\ell \bigr) & = \del{t}^\ell f &&\text{in $\Omega_T$,}
\label{DCthm1.1}\\
\del{t}^\ell u & = 0 &&\text{in $\Gamma_T$, } \label{DCthm1.2}
\end{align}
where
\begin{equation} \label{DCthm2}
M^\alpha_\ell =  [\del{t}^\ell, b^{\alpha\beta}]\del{\beta}u + \del{t}^\ell m^\alpha.
\end{equation}
In particular, $\del{t}^\ell u$ defines a weak solution of the wave equation \eqref{DCthm1.1}, and hence,
by Theorem 2.2 of \cite{Koch:1993}, it satisfies the energy estimate
\begin{align}
E\bigl(\del{t}^\ell u(t)\bigr) \leq C\biggl(E\bigl(\del{t}^\ell u(0)\bigr) + 
 \norm{\vec{M}_\ell(0) }_{L^2(\Omega)} + \int_0^t \bigl(1+&\norm{b(\tau)}_{L^\infty(\Omega)}+\norm{\del{t}b(\tau)}_{L^\infty(\Omega)}\bigr) E\bigl(\del{t}^\ell u(\tau)\bigr)
 \notag \\
 + \norm{\vec{M}_\ell(\tau)}_{L^2(\Omega)}& 
 +\norm{\del{t} M_\ell(\tau)}_{L^2(\Omega)}+\norm{\del{t}^\ell f(\tau)}_{L^2(\Omega)} \, d\tau \biggr), \label{DCthm3}
\end{align} 
\bigskip
where  $C=C\bigl(\vec{\kappa},\mu,\norm{b(0)}_{L^\infty(\Omega)}\bigr)$, $\vec{M}=(M^\Sigma)$, and
\begin{equation*}
E\bigl(\del{t}^\ell u(t)\bigr) =\sqrt{ \norm{\del{t}^\ell u(t)}_{H^1(\Omega)}^2 + \norm{\del{t}^{\ell+1}u(t)}^2_{L^2(\Omega)}}.
\end{equation*}

Viewing \eqref{DCthm1.1}-\eqref{DCthm1.2} as an elliptic system, that is,
\begin{align*}
\del{\Lambda}\bigl(b^{\Lambda\Sigma}\del{\Sigma}\del{t}^\ell u+ M^\Lambda_\ell + b^{\Lambda 0}\del{t}^{\ell+1} u\bigr) & =
 F_\ell &&\text{in $\Omega_T$,} \\
\del{t}^\ell u & = 0 &&\text{in $\Gamma_T$, }
\end{align*}
where
\begin{equation} \label{DCthm4}
F_\ell =-b^{0\Lambda}\del{\Lambda}\del{t}^{\ell+1} u - \del{t}b^{0\Lambda}\del{\Lambda}\del{t}^\ell u -\del{t}b^{00}\del{t}^{\ell+1}u 
-b^{00}\del{t}^{\ell+2}u -\del{t}M^0_\ell+\del{t}^\ell f,
\end{equation}
we can appeal to elliptic regularity, see Theorem \ref{ellipticregD} and Remark \ref{strongelliprem}, to obtain the estimate
\begin{equation}  \label{DCthm5}
\norm{\del{t}^\ell u}_{H^{\st+1-\frac{\ell}{2}}(\Omega)} \leq C
\Bigr(E\bigl(\del{t}^\ell u\bigr) + \norm{\vec{M}_\ell}_{H^{\st-\frac{\ell}{2}}(\Omega)}+ 
\norm{b^{0\Lambda}\del{t}^{\ell+1} u}_{H^{\st-\frac{\ell}{2}}(\Omega)} +
\norm{F_\ell}_{H^{(\st-\frac{\ell}{2})^*}\!(\Omega)}
\Bigl)
\end{equation}
for $0\leq \ell \leq 2\st-1$
where $C=C\bigl(\kappa_1,\mu,\norm{b(t)}_{H^s(\Omega)}\bigr)$ and $(\st-\frac{\ell}{2})^*$ is defined by \eqref{r*def}.

To proceed, we estimate the terms and their time derivatives on the right hand side of \eqref{DCthm5} by the energy norms. We collect the
relevant estimates in the following lemma.
\begin{lem} \label{DClem}
The following estimates hold:\footnote{See \eqref{r*def} for the definition of $(\st-\frac{\ell}{2})^*$.}
\begin{enumerate}[(i)]
\item
\begin{gather*}
\norm{M_\ell}_{H^{\st-\frac{\ell}{2}}(\Omega)} \lesssim  (\norm{b}_{E^{s,2\st}}+\norm{\del{t}b}_{E^{s,2\st}})\norm{u}_{\Ec^{\st+1}}+\norm{m}_{E^{\st}}, \\
\norm{\del{t}M_\ell}_{H^{\st-\frac{\ell}{2}}(\Omega)}+
\norm{\del{t}^2 M_\ell}_{H^{(\st-\frac{\ell}{2})*}\!(\Omega)} \lesssim (\norm{b}_{E^{s,2\st}}+
\norm{\del{t}b}_{E^{s,2\st}})\norm{u}_{\Ec^{\st+1}} +
\norm{\del{t}m}_{E^{\st}}
\intertext{and}
\norm{b^{0\Lambda}\del{t}^{\ell+1} u}_{H^{\st-\frac{\ell}{2}}(\Omega)}  \lesssim (\norm{b}_{E^{s,2\st}}+
\norm{\del{t}b}_{E^{s,2\st}})\norm{u}_{\Ec^{\st+1}}
\end{gather*}
for $0\leq \ell \leq 2\st-1$,
\item 
\begin{gather*}
\norm{F_\ell}_{H^{(\st-\frac{\ell}{2})^*}\!(\Omega)}+  \norm{\del{t}F_\ell}_{H^{(\st-\frac{\ell}{2})^*}\!(\Omega)} \lesssim (\norm{b}_{E^{s,2\st}}+\norm{\del{t}b}_{E^{s,2\st}})\norm{u}_{\Ec^{\st+1}} + \norm{f}_{E^{\st-1}}+\norm{\del{t}f}_{E^{\st-1}} + \norm{\del{t}m}_{E^{\st}}
\intertext{and}
\norm{ \del{t}(b^{0\Lambda}\del{t}^{\ell+1}u)}_{H^{\st-\frac{\ell}{2}}(\Omega)}  \lesssim 
(\norm{b}_{E^{s,2\st}}+\norm{\del{t}b}_{E^{s,2\st}})\norm{u}_{\Ec^{\st+1}}
\end{gather*}
for $0\leq \ell \leq 2\st-2$,
\item
\begin{gather*}
\norm{F_{2\st-1}}_{L^2(\Omega)} \lesssim \norm{b}_{H^s(\Omega)}
E(\del{t}^{2\st}u)+ \norm{\del{t}b^{0\Lambda}\del{\Lambda}\del{t}^{2\st-1} u}_{L^2(\Omega)} +\norm{\del{t}b^{00}\del{t}^{2\st}u}_{L^2(\Omega)} \\
+ \norm{\del{t}^{2\st} M}_{L^2(\Omega)} + \norm{\del{t}^{2\st-1}f}_{L^2(\Omega)}, \\
 \norm{\del{t}b^{0\Lambda}\del{\Lambda}\del{t}^{2\st-1} u}_{L^2(\Omega)} + \norm{\del{t}(\del{t}b^{0\Lambda}\del{\Lambda}\del{t}^{2\st-1} u)}_{L^2(\Omega)} 
 \lesssim (\norm{b}_{E^{s,2\st}}+\norm{\del{t}b}_{E^{s,2\st}})\norm{u}_{\Ec^{\st+1}}
 \intertext{and}
 \norm{\del{t}b^{00}\del{t}^{2\st}u}_{L^2(\Omega)} +\norm{\del{t}(\del{t}b^{00}\del{t}^{2\st}u)}_{L^2(\Omega)} 
 \lesssim (\norm{b}_{E^{s,2\st}}+\norm{\del{t}b}_{E^{s,2\st}})\norm{u}_{\Ec^{\st+1}},
\end{gather*}
\item and
\begin{align*}
\norm{\vec{M}_\ell}_{L^2(\Omega)}+\norm{\del{t}M_\ell}_{L^2(\Omega)} \lesssim  
(\norm{b}_{E^{s,2\st}}+\norm{\del{t}b}_{E^{s,2\st}})\norm{u}_{\Ec^{\st+1}}+\norm{m}_{E^{\st}}+\norm{\del{t}m}_{E^{\st}}
\end{align*}
for $0\leq \ell \leq 2\st$.
\end{enumerate}
\end{lem}
\begin{proof}
The inequalities all follow from a repeated application of the commutator estimate from Proposition \ref{stcomA} and the multiplication estimates from Theorem \ref{calcpropB} and Proposition \ref{stpropA}. We will only provide detailed proofs for the estimates from part (i) that involve $M^\alpha_\ell$. The remainder of the estimates can be established in a similar fashion.

By \eqref{DCthm1.2}, we see, for $0\leq \ell \leq 2\st-1$, that we can estimate $M^\alpha_\ell$ by
\begin{equation*}
\norm{M^\alpha_\ell}_{H^{\st-\frac{\ell}{2}}(\Omega)} \leq \norm{[\del{t}^\ell,b^{\alpha\beta}]\del{\beta}u}_{H^{\st-\frac{\ell}{2}}(\Omega)}+
\norm{\del{t}^\ell m^\alpha}_{H^{\st-\frac{\ell}{2}(\Omega)}}.
\end{equation*} 
Applying the commutator estimate from Proposition \ref{stcomA} (with $s_1=s$, $s_2=\st$ and $s_3=\st$) to the above inequality yields
\begin{equation*}
\norm{M^\alpha_\ell}_{H^{\st-\frac{\ell}{2}}(\Omega)} \lesssim \norm{\del{t}b}_{E^{s-\frac{1}{2},\ell-1}}\norm{\del{}u}_{E^{\st,\ell-1}}+ \norm{m}_{E^{\st,\ell}}
\lesssim \norm{b}_{E^{s,2\st}}\norm{u}_{\Ec^{\st+1}}+\norm{m}_{E^{\st}}
\end{equation*} 
for $0\leq \ell \leq 2\st-1$, which establishes the first estimate from part (i). 

Next, differentiating \eqref{DCthm1.2} gives
\begin{equation} \label{DClem2}
\del{t}M^\alpha_\ell = [\del{t}^\ell,\del{t}b^{\alpha\beta}]\del{\beta}u+[\del{t}^\ell,b^{\alpha\beta}]\del{\beta}\del{t}u+\del{t}^\ell \del{t}m^\alpha,
\end{equation}
which allows us to estimate $\del{t}M^\alpha_\ell$ by
\begin{equation*}
\norm{\del{t}M^\alpha_\ell}_{H^{\st-\frac{\ell}{2}}(\Omega)} \leq \norm{[\del{t}^\ell,\del{t}b^{\alpha\beta}]\del{\beta}u}_{H^{\st-\frac{\ell}{2}}(\Omega)}+
\norm{[\del{t}^\ell,b^{\alpha\beta}]\del{\beta} \del{t}u}_{H^{\st-\frac{\ell}{2}}(\Omega)}+
\norm{\del{t}^\ell \del{t}m^\alpha}_{H^{\st-\frac{\ell}{2}(\Omega)}}
\end{equation*}
for $0\leq \ell \leq 2\st-1$. Using the commutator estimate from Proposition \ref{stcomA} to estimate the first (with $s_1=s$, $s_2=\st$, and $s_3=\st$)
and second (with $s_1=s+\frac{1}{2}$, $s_2 = \st-\frac{1}{2}$, and $s_3=\st$) terms, we get that
\begin{align}
\norm{\del{t}M^\alpha_\ell}_{H^{\st-\frac{\ell}{2}}(\Omega)} &\lesssim  \norm{\del{t}^2 b}_{E^{s-\frac{1}{2},\ell-1}}\norm{\del{}u}_{E^{\st,\ell-1}}
+ \norm{\del{t}b}_{E^{s,\ell-1}}\norm{\del{}\del{t}u}_{E^{\st-\frac{1}{2},\ell-1}}+\norm{\del{t}m}_{E^{\st}} \notag \\
& \lesssim \norm{\del{t}b}_{E^{s,2\st}}\norm{u}_{\Ec^{\st+1}}+\norm{\del{t}m}_{E^{\st}}\label{DClem3}
\end{align}
for $0\leq \ell \leq 2\st-1$.

Differentiating \eqref{DClem2} again gives
\begin{equation}\label{DClem4}
\del{t}^2M^\alpha_\ell = [\del{t}^\ell,\del{t}^2 b^{\alpha\beta}]\del{\beta}u+2[\del{t}^\ell,\del{t}b^{\alpha\beta}]\del{\beta}\del{t}u+
[\del{t}^\ell,b^{\alpha\beta}]\del{\beta}\del{t}^2u+\del{t}^\ell \del{t}^2m^\alpha.
\end{equation}
Using this, we can bound $\del{t}^2M^\alpha_\ell$ by
\begin{align*}
\norm{\del{t}^2M^\alpha_\ell}_{H^{\st-\frac{\ell}{2}-1}(\Omega)} \lesssim & \norm{[\del{t}^\ell,\del{t}^2b^{\alpha\beta}]\del{\beta}u}_{H^{\st-\frac{\ell}{2}-1}(\Omega)}+
\norm{[\del{t}^\ell,\del{t}b^{\alpha\beta}]\del{\beta} \del{t}u}_{H^{\st-\frac{\ell}{2}-1}(\Omega)} \notag \\
& + \norm{[\del{t}^\ell,b^{\alpha\beta}]\del{\beta} \del{t}^2u}_{H^{\st-\frac{\ell}{2}-1}(\Omega)}+ \norm{\del{t}^\ell \del{t}^2 m^\alpha}_{H^{\st-\frac{\ell}{2}-1}(\Omega)}
\end{align*}
for $0\leq \ell \leq 2\st-2$. Noting from \eqref{r*def} that $(s-\frac{\ell}{2})^*=s-\frac{\ell}{2}-1$, $0\leq \ell \leq 2\st-2$, the same arguments that led to \eqref{DClem3} yields the inequality
\begin{equation} \label{DClem5}
\norm{\del{t}^2 M^\alpha_\ell}_{H^{(\st-\frac{\ell}{2})^*}(\Omega)} \lesssim  \norm{\del{t}b}_{E^{s,2\st}}\norm{u}_{\Ec^{\st+1}}+\norm{\del{t}m}_{E^{\st}}
\end{equation}
for $0\leq \ell \leq 2\st-2$.

Next, we use \eqref{DClem4} to estimate $\del{t}M^\alpha_{2\st-1}$ by
\begin{align*}
\norm{\del{t}^2M^\alpha_{2\st-1}}_{L^2(\Omega)} \lesssim & \norm{[\del{t}^{2\st-1},\del{t}^2b^{\alpha\beta}]\del{\beta}u}_{L^2(\Omega)}+
\norm{[\del{t}^{2\st-1},\del{t}b^{\alpha\beta}]\del{\beta} \del{t}u}_{L^2(\Omega)}\notag \\
& + \norm{[\del{t}^{2\st-1},b^{\alpha\beta}]\del{\beta} \del{t}^2u}_{L^2(\Omega)}+ \norm{\del{t}^{2\st-1}\del{t}^2 m^\alpha}_{L^2(\Omega)}.
\end{align*}
Using the commutator estimate from Proposition \ref{stcomA} to estimate the first (with $s_1=s-\frac{1}{2}$, $s_2=\st$ and $s_3=\st-\frac{1}{2}$), 
second (with $s_1=s$, $s_2=\st-\frac{1}{2}$ and $s_3=\st-\frac{1}{2}$) and third 
(with $s_1=s+\frac{1}{2}$, $s_2=\st-1$, $s_3=\st-\frac{1}{2}$) terms, we find that
\begin{align} 
\norm{\del{t}^2M^\alpha_{2\st-1}}_{L^2(\Omega)} &\lesssim 
  \norm{\del{t}^3 b}_{E^{s-1,2\st-2}}\norm{\del{}u}_{E^{\st,2\st-2}}+ \norm{\del{t}^2 b}_{E^{s-\frac{1}{2},2\st-2}}\norm{\del{}\del{t}u}_{E^{\st-\frac{1}{2},2\st-2}} 
 \notag \\ 
  & \hspace{2.0cm} +  \norm{\del{t} b}_{E^{s,2\st-2}}\norm{\del{}\del{t}^2 u}_{E^{\st-1,2\st-2}} + \norm{\del{t}m}_{E^{\st}} \notag \\
  &\lesssim   \norm{\del{t}b}_{E^{s,2\st}}\norm{u}_{\Ec^{\st+1}}+\norm{\del{t}m}_{E^{\st}}\label{DClem6}.
\end{align}
Together, \eqref{DClem3}, \eqref{DClem5} and \eqref{DClem6} establish the validity of the second estimate from part (i).
\end{proof}

Writing $\vec{M}$, $b^{0\Lambda}\del{t}^{\ell} u$ and $F_\ell$ as 
 $\vec{M}(t) = \vec{M}(0)+\int_0^t \vec{M}(\tau)\, d\tau$,  $(b^{0\Lambda}\del{t}^{\ell}u)(t) =(b^{0\Lambda}\del{t}^{\ell} u)(0)+\int_0^t (b^{0\Lambda}\del{t}^{\ell} u)(\tau)\, d\tau$, and $F_\ell(t) = F_\ell(0)+\int_0^t F_\ell(\tau)\, d\tau$, respectively, we see, after applying the appropriate norm to each of these expression and summing the result, that
\begin{align*}
 &\norm{\vec{M}_\ell(t)}_{H^{\st-\frac{\ell}{2}}(\Omega)}+  
\norm{(b^{0\Lambda}\del{t}^{\ell} u)(t)}_{H^{\st-\frac{\ell}{2}}(\Omega)} +
\norm{F_\ell(t)}_{H^{(\st-\frac{\ell}{2})^*}\!(\Omega)} \lesssim  \norm{\vec{M}_\ell(0)}_{H^{\st-\frac{\ell}{2}}(\Omega)}+ 
\norm{(b^{0\Lambda}\del{t}^{\ell} u)(0)}_{H^{\st-\frac{\ell}{2}}(\Omega)} \\
&\hspace{1.1cm}+\norm{F_\ell(0)}_{H^{(\st-\frac{\ell}{2})^*}\!(\Omega)} + \int_0^t 
\norm{\del{t}\vec{M}_\ell(\tau)}_{H^{\st-\frac{\ell}{2}}(\Omega)}+ 
\norm{\del{t}(b^{0\Lambda}\del{t}^{\ell} u)(\tau)}_{H^{\st-\frac{\ell}{2}}(\Omega)} +
\norm{\del{t}F_\ell(\tau)}_{H^{(\st-\frac{\ell}{2})^*}\!(\Omega)}\, d\tau
\end{align*}
for $0\leq \ell \leq 2\st-1$.
Estimating the right hand side using Lemma \ref{DClem}.(i)-(ii), we deduce that
\begin{gather}
\norm{\vec{M}_\ell(t)}_{H^{\st-\frac{\ell}{2}}(\Omega)}+  
\norm{(b^{0\Lambda}\del{t}^{\ell} u)(t)}_{H^{\st-\frac{\ell}{2}}(\Omega)} +
\norm{F_\ell(t)}_{H^{(\st-\frac{\ell}{2})^*}\!(\Omega)} \lesssim (\norm{b(0)}_{E^{s,2\st}}+\norm{\del{t}b(0)}_{E^{s,2\st}})\norm{u(0)}_{\Ec^{\st+1}} \notag \\
+ 
\norm{f(0)}_{E^{\st-1}}+\norm{\del{t}f(0)}_{E^{\st-1}} +  \norm{m(0)}_{E^{\st}}+ \norm{\del{t}m(0)}_{E^{\st}}
+ \int_0^t (\norm{b(\tau)}_{E^{s,2\st}}+\norm{\del{t}b(\tau)}_{E^{s,2\st}})\norm{u(\tau)}_{\Ec^{\st+1}}\notag \\
+ \norm{f(\tau)}_{E^{\st-1}}+
\norm{\del{t}f(\tau)}_{E^{\st-1}} +  \norm{m(\tau)}_{E^{\st}}+\norm{\del{t}m(\tau)}_{E^{\st}}
\, d\tau \label{DCthm6}
\end{gather}
for $0\leq \ell \leq 2 \st-2$, and
\begin{gather}
\norm{\vec{M}_{2\st-1}(t)}_{H^{\st-\frac{{2\st-1}}{2}}(\Omega)}+  
\norm{(b^{0\Lambda}\del{t}^{{2\st-1}} u)(t)}_{H^{\st-\frac{{2\st-1}}{2}}(\Omega)} 
 \lesssim (\norm{b(0)}_{E^{s,2\st}}+\norm{\del{t}b(0)}_{E^{s,2\st}})\norm{u(0)}_{\Ec^{\st+1}} \notag \\
+  \norm{m(0)}_{E^{\st}}+ \norm{\del{t}m(0)}_{E^{\st}}
+ \int_0^t (\norm{b(\tau)}_{E^{s,2\st}}+\norm{\del{t}b(\tau)}_{E^{s,2\st}})\norm{u(\tau)}_{\Ec^{\st+1}} +  \norm{m(\tau)}_{E^{\st}}+\norm{\del{t}m(\tau)}_{E^{\st}}
\, d\tau .\label{DCthm7}
\end{gather}
By similar arguments and Lemma \ref{DClem}.(iii)-(iv), we find also that
\begin{gather}
\norm{F_{2\st-1}(t)}_{L^2(\Omega)} \lesssim \norm{b}_{H^s(\Omega)}
E(\del{t}^{2\st}u(0)) + (\norm{b(0)}_{E^{s,2\st}}+\norm{\del{t}b(0)}_{E^{s,2\st}})\norm{u(0)}_{\Ec^{\st+1}} 
+ 
\norm{f(0)}_{E^{\st-1}}\notag \\
+\norm{\del{t}f(0)}_{E^{\st-1}} +  \norm{m(0)}_{E^{\st}}+ \norm{\del{t}m(0)}_{E^{\st}}
+ \int_0^t (\norm{b(\tau)}_{E^{s,2\st}}+\norm{\del{t}b(\tau)}_{E^{s,2\st}})\norm{u(\tau)}_{\Ec^{\st+1}}\notag \\
+ \norm{f(\tau)}_{E^{\st-1}}+
\norm{\del{t}f(\tau)}_{E^{\st-1}} +  \norm{m(\tau)}_{E^{\st}}+\norm{\del{t}m(\tau)}_{E^{\st}}
\, d\tau. \label{DCthm8}
\end{gather}

Using \eqref{DCthm3} to bound the term $E(\del{t}^\ell u)$ on the right hand side of \eqref{DCthm5}, for $0\leq \ell \leq 2\st-1$,
and the estimates \eqref{DCthm6}-\eqref{DCthm8} and those from Lemma \ref{DClem}.(iv) to bound right hand side
of the resulting inequality, we see, after summing over
$\ell$ from $0$ to $2\st-1$ and adding the result to \eqref{DCthm5} with $\ell=2\st$, that
$u$ satisfies the energy estimate 
 \begin{equation} \label{DCthm9}
\norm{u(t)}_{\Ec^{\st+1}} \leq C(\vec{\kappa},\mu,\norm{b(t)}_{H^s(\Omega)})\Bigl( \norm{u(0)}_{\Ec^{\st+1}} + 
\alpha_2(0) +  \int^t_0 \alpha_1(\tau)\norm{u(\tau)}_{\Ec^{\st+1}} + \alpha_2(\tau) \, d\tau \Bigr) 
\end{equation}
where
\begin{align*}
\alpha_1(t) &= 1+\norm{b(t)}_{E^{s,2\st}}+\norm{\del{t}b(t)}_{E^{s,2\st}}
\intertext{and}
\alpha_2(t) &=  \norm{m(t)}_{E^{\st}}+\norm{\del{t}m(t)}_{E^{\st}}+\norm{f(t)}_{E^{(\st-1)^*}} +
 \norm{\del{t}f(t)}_{E^{(\st-1)^*}}+\norm{\del{t}^{2\st}f(t)}_{L^2(\Omega)}.
\end{align*}

Thus far, we have established the existence of solutions that satisfy the energy estimate under that assumption that the coefficients in the wave equation are smooth an the initial data satisfies the compatibility conditions in the sense of Koch to
order $2\st+1$.
Existence for the general case, where the coefficients satisfy $\,b^{\alpha\beta}, \del{t}b^{\alpha\beta}  \in X_{T}^{s,2\st}(\Omega,\Mbb{N})$,
$\,f,\del{t}f\in X_{T}^{\st-1}(\Omega,\Rbb^N)$, $\del{t}^{2\st}f\in L^\infty([0,T],L^2(\Omega,\Rbb^N))$,
$\,\ell^{\alpha}, \del{t}\ell^{\alpha}  \in X_{T}^{\st}(\Omega,\Rbb^{N})$, and the initial conditions satisfy the compatibility
conditions to order $2\st+1$ in the sense of Definition \eqref{dircompat}, 
follows from an approximation and limiting argument, the details of which we leave to the interested reader. Furthermore,  uniqueness of solutions follows
directly from the energy estimate \eqref{DCthm9} in the usual way.
\end{proof}

\subsection{Acoustic boundary conditions\label{AWEI}}

In this section, we consider the initial boundary value problem (IBVP) for systems of wave equations with acoustic boundary conditions that are of the following form:
\begin{align}
\del{\alpha}\bigl(b^{\alpha\beta}\del{\beta}u+m^\alpha\bigr) &= f && \text{in $\Omega_T$,}
\label{linAa.1}\\
\nu_\alpha\bigl(b^{\alpha\beta}\del{\beta} u + m^\alpha) & = q\del{t}^2 u + p \del{t} u + g
 && \text{in $\Gamma_T$,} \label{linAa.2}\\
(u,\del{t}u) &= (u_0,u_1) && \text{in $\Omega_0$}, \label{linAa.3}
\end{align}
where
\begin{enumerate}[(i)]
\item $\nu_\alpha = \delta^\Sigma_\alpha \nu_\Sigma$ where $\nu_\Sigma$ is a time-independent outward pointing conormal to $\del{}\Omega$,
\item $u=u(t,x)$, $m^\alpha= m^\alpha(t,x)$, $f=f(t,x)$ and $g=g(t,x)$ are $\Rbb^N$-valued maps,
\item  the $\Mbb{N}$-valued maps $b^{\alpha\beta}=b^{\alpha\beta}(t,x)$
satisfy the symmetry condition \eqref{bsym}
in $\Omega_T$ 
and there exists constants  $\kappa_0$, $\kappa_1>0$ and $\mu\geq 0$ such that the inequalities \eqref{b00lb}
and
\begin{equation} \label{coercN}
\ip{\del{\Lambda}u}{b^{\Lambda\Sigma}(t)\del{\Sigma}u}_\Omega \geq \kappa_1 \norm{u}^2_{H^1(\Omega)} - \mu \norm{u}^2_{L^2(\Omega)},
\qquad \forall \, (t,u)\in [0,T]\times   H^1(\Omega,\Rbb^N), 
\end{equation}
hold,
\item  and $p=p(t,x)$ and $q=q(t,x)$ are $\Mbb{N}$-valued maps where
$q$
satisfies
\begin{equation}\label{qconditions}
q^{\tr} = q, \quad q \leq 0, \quad -\frac{1}{\gamma} q \leq q^2 \leq -\gamma q \AND
\text{rank}(q)= N_q \qquad \text{in $\Gamma_T$}
\end{equation}
for some positive constants $\gamma>0$ and $0 \leq N_q \leq N$.
\end{enumerate}

\begin{rem} \label{coercNrem}$\;$
\begin{enumerate}[(a)]
\item 
The coercive condition \eqref{coercN} is known to be equivalent to the matrix $b^{\Sigma\Lambda}$
being strongly elliptic, see \eqref{strongellip}, in $\overline{\Omega}$ and satisfying the strong complementing
condition on the boundary $\del{}\Omega$. For a proof of this equivalence, see
Theorem 3 from Section 6 of \cite{SimpsonSpector:1987}.
\item Letting $\Pbb_q$ and $\Pbb_q^\perp$ denote the
projection onto $\Ran(q)$ and its orthogonal complement, we can decompose $\Rbb^N$ as
\begin{equation*}
\Rbb^N = \Pbb_q\Rbb^N\!\oplus \Pbb_q^\perp\Rbb^N
\end{equation*}
and write $q$ as
\begin{equation*}
q = \Pbb_q q \Pbb_q.
\end{equation*}
\end{enumerate}
\end{rem}

\subsubsection{Weak solutions\label{weaksol}}
To define a weak solution of the system \eqref{linAa.1}-\eqref{linAa.3}, we must supplement the initial conditions \eqref{linAa.3} with an additional one
given by
\begin{equation}\label{linAa.4}
\Pbb_q \del{t}u = U_1 \quad \text{in $\Gamma_0$}.
\end{equation}
This initial condition determines the piece of the time derivative $\del{t}u$ that lies in the range of $q|_{t=0}$ when restricted to the boundary. However, because we can choose initial data for weak solutions
such that the time derivative $\del{t}u|_{t=0}$ lies in $L^2(\Omega)$, the trace of $\del{t}u|_{t=0}$ on the boundary $\Gamma$ will, in general, not be defined. Thus the initial condition \eqref{linAa.4}, as stated, cannot be literally true and must be interpreted in a suitable weak sense; see \eqref{weaksoldef2a} below. With that said, for regular 
enough initial data, e.g. $\del{t}u|_{t=0}\in H^1(\Omega)$, \eqref{linAa.4} can be used to define $U_1$ by taking the trace of $\Pbb_q \del{t}u$ on the boundary $\del{}\Omega$ at $t=0$.

\begin{Def} \label{weaksoldef}
A pair $(u,U)\in H^1(\Omega_T,\Rbb^N)\times L^2(\Gamma_T,\Rbb^N)$ is called a \emph{weak solution} of 
\eqref{linAa.1}-\eqref{linAa.3} and \eqref{linAa.4} if  $(u,U)$ define maps $u \, : \, [0,T]\rightarrow H^1(\Omega,\Rbb^N)$, $\del{t}u \, :
\, [0,T]\rightarrow L^2(\Omega,\Rbb^N)$ and $U \,:\, [0,T] \longrightarrow L^2(\Gamma,\Rbb^N)$ that
satisfy\footnote{We use the standard notation
$\rightharpoonup$ to denote weak convergence.}
\begin{equation*}
(u(t),\del{t}u(t)) \rightharpoonup (u_0,u_1) \quad\text{in $H^1(\Omega,\Rbb^N)\times L^2(\Omega,\Rbb^N)$},  \AND U(t) \rightharpoonup U_1 \quad\text{in $L^2(\Gamma,\Rbb^N)$ }
\end{equation*}
as $t\searrow 0$,
\begin{equation*}
U \in \Ran(q)\hspace{0.4cm} \text{in $\Gamma_T$,}
\end{equation*}
and
\begin{equation}\label{weaksoldef2a}
\ip{qU}{\phi}_{\Gamma_T} = - \ip{\del{t}q u}{\phi}_{\Gamma_T}-\ip{q u}{\del{t}\phi}_{\Gamma_T}
\end{equation}
and
\begin{equation}\label{weaksoldef3}
\ip{b^{\alpha\beta}\del{\beta}u + m^\alpha}{\del{\alpha} \phi}_{\Omega_T} +
\ip{(\del{t}q-p)\del{t}u}{\phi}_{\Gamma_T}-\ip{g}{\phi}_{\Gamma_T}+\ip{qU}{\del{t}\phi}_{\Gamma_T}= -\ip{f}{\phi}_{\Omega_T}
\end{equation}
for all $\phi \in C^1_0\bigl([0,T],C^1(\overline{\Omega},\Rbb^N)\bigr)$.
\end{Def}

\begin{rem} \label{weakrem}$\;$

\begin{itemize}
\item[(a)]
As in \cite{Koch:1993}, the boundary
terms $\ip{g}{\phi}_{\Gamma_T}$ and $\ip{(\del{t}q-p)\del{t}u}{\phi}_{\Gamma_T}$ are defined via the
expressions\footnote{For sufficiently differentiable vector valued and
matrix valued maps $\{u,\phi\}$ and $S$, respectively, such that $\phi|_t=\phi|_{t=T}=0$, the identities
\begin{align*}
\ip{u}{\phi}_{\Gamma_T} &= \int_{\Omega_T}\del{\alpha}\bigl[ \nu^\alpha \ipe{u}{\phi}\bigr]\, d^{n+1} x =
\ip{\nu(u)+\del{\Sigma}\nu^\Sigma u}{\phi}_{\Omega_T} + \ip{u}{\nu(\phi)}_{\Omega_T}, \\
0&=\int_{\Omega_T}\del{\beta}\bigl[\delta^\beta_0\ipe{S\nu(u)}{\phi}\bigr]\, d^{n+1} x =
\ip{\del{t}S\nu(u)+S\del{t}\nu(u)}{\phi}_{\Omega_T}+\ip{S\nu(u)}{\del{t}\phi}_{\Omega_T}
\end{align*}
follow from the divergence theorem. The second identity together with one more application of
the divergence theorem then yields
\begin{equation*}
\ip{S\del{t}u}{\phi}_{\Gamma_T} = \int_{\Omega_T}\del{\beta}\bigl[\nu^\beta\ipe{S\del{t}u}{\phi}\bigr]\, d^{n+1} x = \ip{\nu(S)\del{t}u-\del{t}S\nu(u) +
\del{\alpha}\nu^{\alpha}
S\del{t}u}{\phi}_{\Omega_T}+ \ip{S\del{t}u}{\nu(\phi)}_{\Omega_T}-
\ip{S\nu(u)}{\del{t}\phi}_{\Omega_T}.
\end{equation*}
}
\begin{equation}\label{weakrem2a}
\ip{g}{\phi}_{\Gamma_T} = \ip{\nu(g)+\del{\Sigma}\nu^\Sigma g}{\phi}_{\Omega_T} + \ip{g}{\nu(\phi)}_{\Omega_T},
\end{equation}
and
\begin{align}
&\ip{(\del{t}q-p)\del{t}u}{\phi}_{\Gamma_T} = \ip{\nu(\del{t}q-p)\del{t}u-\del{t}(\del{t}q-p)\nu(u)}{\phi}_{\Omega_T} \notag \\
 &\hspace{0.2cm} +
\ip{\del{\alpha}\nu^{\alpha}
(\del{t}q-p)\del{t}u}{\phi}_{\Omega_T}
+ \ip{(\del{t}q-p)\del{t}u}{\nu(\phi)}_{\Omega_T}-
\ip{(\del{t}q-p)\nu(u)}{\del{t}\phi}_{\Omega_T}, \label{weakrem2.1}
\end{align}
respectively,
where $\nu(\cdot) = \nu^\alpha\del{\alpha}(\cdot)$, $\nu^\alpha = \delta^{\alpha \Sigma}\nu_\Sigma$, and
$\nu_\Sigma$ is any smooth extension to $\Omega$ of the outward pointing unit normal to $\del{}\Omega$.
\item[(ii)] The condition \eqref{weaksoldef2a}
implies that $u$ \textit{weakly} satisfies
\begin{equation*}
q U = q \del{t} u \hspace{0.4cm} \text{in $\Gamma_T$,}
\end{equation*}
where here, we are again defining the
boundary terms on the right hand side of \eqref{weaksoldef2a} using the same type of formula
as \eqref{weakrem2a}.
\end{itemize}
\end{rem}

The existence and uniqueness of weak solutions is a consequence of the following theorem, which follows easily from
a special case of Theorem 7.5 from \cite{Oliynyk:Bull_2017}.

\begin{thm} \label{weakthm}
Suppose $s>n/2$, $0\leq \lambda \leq \epsilon \leq s$, $u_0\in H^1(\Omega,\Rbb^N)$, $u_1\in L^2(\Omega,\Rbb^N)$, $U_1 \in L^2(\del{}\Omega,\Rbb^N)$
and satisfies $U_1\in \Ran(q|_{t=0})$, $m=(m^\alpha) \in W^{1,2}\bigl([0,T],L^2(\Omega,\Rbb^N)\bigr)$,
$f \in L^2(\Omega_T,\Rbb^N)$, $\, p,q,\del{t}q \in W^{1,\infty}(\Omega_T,\Mbb{N})$, 
$b^{\alpha\beta}\in W^{1,\infty}\bigl([0,T],L^\infty(\Omega,\Mbb{N})\bigr)$, $b^{\alpha\beta}$
satisfy the coercive condition \eqref{coercN}, the symmetry condition \eqref{bsym} and inequality \eqref{b00lb},  $q$ satisfies \eqref{qconditions}, $g$ can be written
as
\begin{equation*}
g = k^\alpha\del{\alpha} \theta + G
\end{equation*}
where $k\in  L^\infty([0,T],H^{s+1-\ep}(\Omega))$, $\del{t}k\in  L^\infty([0,T],H^{s-\lambda}(\Omega))$, $\theta\in  L^\infty([0,T],H^{1+\ep}(\Omega))$,
$\del{t}\theta\in  L^\infty([0,T],H^{1+\lambda}(\Omega))$ and $G\in H^1(\Omega_T,\Rbb^N)$,
and $p,q$ satisfy the inequality
\begin{equation*}
p-\Half \del{t}q-\chi q \leq 0 \hspace{0.5cm}\text{in $\Gamma_T$}
\end{equation*}
for some $\chi \in \Rbb$.
Then
there exists a unique weak solution $(u,U)$ to the IBVP consisting of \eqref{linAa.1}-\eqref{linAa.3} and \eqref{linAa.4}, which possesses 
the additional regularity
$(u,U) \in \bigcap_{j=0}^1 C^j\bigl([0,T],H^{1-j}(\Omega,\Rbb^N)\bigr)
\times C^0\bigl([0,T],L^2(\del{}\Omega,\Rbb^N)\bigr)$. Morever,
this solution satisfies the energy estimate
\begin{align*}
E(t) \leq& E(0) + C \int_0^t\bigl(1+\norm{\del{t}b(\tau)}_{L^\infty(\Omega)}+|\chi|
\bigr)
E(\tau) 
+ \norm{\del{t}m(\tau)}_{L^2(\Omega)}  + \norm{f(\tau)}_{L^2(\Omega)}+
\norm{G(\tau)}_{H^1(\Omega)} \notag \\
&+ \norm{\del{t}G(\tau)}_{L^2(\Omega)}
+ \bigl(\norm{k(\tau)}_{H^{s+1-\ep}(\Omega)}+  \norm{\del{t}k(\tau)}_{H^{s-\lambda}(\Omega)}\bigr)\bigl(\norm{\del{t}\theta(\tau)}_{H^{1+\lambda}(\Omega)} +
\norm{\theta(\tau)}_{H^{1+\ep}(\Omega)}\bigr) \, d\tau
\end{align*}
for $0\leq t\leq T$, where $C=C\bigl(\vec{\kappa},\mu,\gamma)$, $\vec{\kappa}=(\kappa_0,\kappa_1)$, 
\begin{align*}
E(t)^2 = \frac{1}{2}\ip{\del{\Lambda}u(t)}{b^{\Lambda\Sigma}(t)\del{\Sigma}u(t)}_{\Omega}&-\frac{1}{2}\ip{\del{t}u(t)}{b^{00}(t)\del{t}u(t)}_{\Omega}
- \frac{1}{2}\ip{U(t)}{q(t)U(t)}_{\del{}\Omega} \notag \\
& +\ip{\del{\Lambda}u(t)}{M^\Lambda(t)}_{\Omega}+ \frac{\mu}{2}\norm{u(t)}_{L^2(\Omega)}^2 +
\frac{2}{\kappa_1}\norm{\vec{M}(t)}^2_{L^2(\Omega)},
\end{align*}
\begin{equation*} 
M^\Lambda=m^\Lambda + \frac{2}{|\nu|^2}\nu^{[\Sigma} k^{\Lambda]}\del{\Sigma}\theta, \qquad \vec{M}=(M^\Lambda),
\end{equation*}
and the energy norm $E(t)$ is bounded below by
\begin{equation*}
\norm{(u,U)}^2_E = \norm{u}_{H^1(\Omega)}^2 + \norm{\del{t}u}^2_{L^2(\Omega)} + \ip{U}{(-qU}_{\del{}\Omega} 
\end{equation*}
according to
\begin{equation*}
E^2 \geq \min\biggl\{\frac{\kappa_0}{2},\frac{\kappa_1}{8},\frac{1}{2}\biggr\}\norm{(u,U)}^2_E,
\end{equation*}
and above by
\begin{equation*}
E^2\leq C(k_1,\mu)\Bigl(\bigl(1+\norm{b}_{L^\infty(\Omega)}\bigr)\norm{(u,U)}^2_E+
\norm{\vec{m}}^2_{L^2(\Omega)}+\norm{k}^2_{H^{s+1-\ep}(\Omega)}\norm{\theta}^2_{H^{1+\ep}(\Omega)}\Bigr).
\end{equation*}

\end{thm}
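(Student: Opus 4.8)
The plan is to reduce Theorem~\ref{weakthm} to the more general semilinear result \cite[Theorem 7.5]{Oliynyk:Bull_2017} by checking that each of its hypotheses is met by the present linear setup, and then to verify that the energy identity there specializes to the one stated here. First I would recall the structure of the argument behind the referenced theorem: one constructs weak solutions via a Galerkin (or finite-dimensional spectral) approximation built from the coercive bilinear form $\ip{\del{\Lambda}\cdot}{b^{\Lambda\Sigma}(t)\del{\Sigma}\cdot}_\Omega$ together with the boundary bilinear form coming from $q$ on $\del{}\Omega$, derives a priori energy bounds uniform in the approximation parameter, and passes to the limit. The coercivity \eqref{coercN}, the sign and rank conditions \eqref{qconditions} on $q$, the ellipticity-in-time condition \eqref{b00lb}, the symmetry \eqref{bsym}, and the regularity $b^{\alpha\beta}\in W^{1,\infty}([0,T],L^\infty(\Omega))$ are exactly what is needed to make the finite-dimensional energy estimate close; I would state these as the verification that the structural assumptions of \cite[Theorem 7.5]{Oliynyk:Bull_2017} hold.

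Next I would address the inhomogeneous data. The source term $g$ is assumed to split as $g = k^\alpha\del{\alpha}\theta + G$ with the indicated regularity; this is precisely the form required in \cite{Oliynyk:Bull_2017} so that the boundary contribution of $g$ can be integrated by parts onto $\nu(\phi)$ (see \eqref{weakrem2a}) and absorbed into a modified momentum term $M^\Lambda = m^\Lambda + \tfrac{2}{|\nu|^2}\nu^{[\Sigma}k^{\Lambda]}\del{\Sigma}\theta$ appearing in the energy. The hypotheses $m,\del{t}m\in W^{1,2}([0,T],L^2(\Omega,\Rbb^N))$ and $p,\del{t}q\in W^{1,\infty}(\Omega_T,\Mbb{N})$ together with the sign condition $p-\tfrac12\del{t}q-\chi q\le 0$ are what allow the boundary dissipation term $\ip{(\del{t}q-p)\del{t}u}{u}$ and the $\ip{U}{qU}$ term to be controlled with a Gronwall factor proportional to $1+|\chi|+\norm{\del{t}b}_{L^\infty(\Omega)}$. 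I would then quote the conclusion of the referenced theorem: existence of a unique weak solution with $(u,U)\in\bigcap_{j=0}^1 C^j([0,T],H^{1-j}(\Omega,\Rbb^N))\times C^0([0,T],L^2(\del{}\Omega,\Rbb^N))$, satisfying an energy estimate of the stated shape.

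The third step is bookkeeping: match the energy functional. I would write down the time derivative of
\begin{align*}
E(t)^2 &= \tfrac12\ip{\del{\Lambda}u}{b^{\Lambda\Sigma}\del{\Sigma}u}_\Omega-\tfrac12\ip{\del{t}u}{b^{00}\del{t}u}_\Omega-\tfrac12\ip{U}{qU}_{\del{}\Omega}\\
&\quad+\ip{\del{\Lambda}u}{M^\Lambda}_\Omega+\tfrac{\mu}{2}\norm{u}_{L^2(\Omega)}^2+\tfrac{2}{\kappa_1}\norm{\vec M}_{L^2(\Omega)}^2,
\end{align*}
use the weak equation \eqref{weaksoldef3} tested against $\del{t}u$ (justified in the approximating problems and carried to the limit), and collect terms to obtain the differential inequality $\tfrac{d}{dt}E(t)^2\lesssim \alpha(t)E(t)^2+\beta(t)E(t)$ with $\alpha,\beta$ as in the statement; the lower bound $E^2\ge\min\{\kappa_0/2,\kappa_1/8,1/2\}\norm{(u,U)}_E^2$ follows from \eqref{b00lb}, \eqref{qconditions}, \eqref{coercN} and Young's inequality applied to the cross term $\ip{\del{\Lambda}u}{M^\Lambda}_\Omega$, while the upper bound follows from Cauchy--Schwarz and \eqref{coercN} again. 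Integrating the differential inequality yields the stated integral form. The uniqueness assertion is immediate from the energy estimate applied to the difference of two solutions with vanishing data.

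The main obstacle I anticipate is not any single estimate but the careful matching of conventions: confirming that the hypotheses as phrased here (in particular the splitting of $g$, the precise fractional regularity indices $s,\lambda,\epsilon$, and the modified momentum $M^\Lambda$) are genuinely the specialization of \cite[Theorem 7.5]{Oliynyk:Bull_2017} with its nonlinearity set to zero, and that the energy functional written above is exactly the one produced by that theorem's proof. If any index or sign fails to line up, one would have to reproduce the relevant portion of the Galerkin argument directly rather than cite it; so the plan hinges on that reference genuinely covering the present case, which the text asserts (``follows easily from a special case of Theorem 7.5'').
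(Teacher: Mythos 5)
Your proposal follows essentially the same route as the paper: both reduce the statement to Theorem 7.5 of \cite{Oliynyk:Bull_2017} by using $\nu_\alpha k^\alpha=0$ to absorb the $k^\alpha\del{\alpha}\theta$ part of $g$ into the modified momentum $M^\alpha=m^\alpha+\frac{2}{|\nu|^2}\nu^{[\beta}k^{\alpha]}\del{\beta}\theta$ (which simultaneously modifies the bulk source to an $F$ controlled by the fractional multiplication inequality with the indices $\lambda,\ep$), and then read off the energy estimate from the cited theorem. The only cosmetic difference is that the paper does not re-derive the Galerkin/energy-identity machinery you sketch but simply invokes the reference (with the minor remark that $E$ there is replaced by its square root), so your extra detail is harmless but not needed.
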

\begin{proof}
Since, $\nu_\alpha k^\alpha = k^0=0$ and $g = k^\alpha\del{\alpha} \theta + G$, we can write the boundary condition \eqref{linAa.2} as 
\begin{equation} \label{weakthm5}
\nu_\alpha \bigl(b^{\alpha\beta}\del{\beta}u + M^\alpha\bigr) = q\del{t}^2 u + p\del{t} u + G \hspace{0.5cm}\text{in $\Gamma_T$,}
\end{equation}
where
\begin{equation*} 
M^\alpha=m^\alpha + \frac{2}{|\nu|^2}\nu^{[\beta} k^{\alpha]}\del{\beta}\theta.
\end{equation*}
We also observe that the wave equation \eqref{linAa.3} can be expressed as
\begin{equation}\label{weakthm6}
\del{\alpha}\bigl(b^{\alpha\beta}\del{\beta}u+ M^\alpha\bigr) = F,
\end{equation}
where
\begin{equation*}
F= f+ \del{\alpha}\biggl(\frac{2}{|\nu|^2}\nu^{[\beta} k^{\alpha]}\biggr)\del{\beta}\theta.
\end{equation*}
From the fractional multiplication inequality, see Theorem \ref{calcpropB}, we see, for $0\leq \lambda \leq  \ep \leq s$, that
\begin{gather}
\norm{F}_{L^2(\Omega)} \lesssim \norm{f}_{L^2(\Omega)}+ \norm{k}_{H^{s+1-\ep}(\Omega)}
\norm{\theta}_{H^{1+\ep}(\Omega)}, \label{weakthm7}\\
\norm{M}_{L^2(\Omega)}\lesssim 
\norm{m}_{L^2(\Omega)}+\norm{k}_{H^{s+1-\ep}(\Omega)}
\norm{\theta}_{H^{1+\ep}(\Omega)} \label{weakthm8}
\intertext{and}
\norm{\del{t}M}_{L^2(\Omega)} \lesssim 
\norm{\del{t}m}_{L^2(\Omega)} +  \norm{k}_{H^{s+1-\ep}(\Omega)}\norm{\del{t}\theta}_{H^{1+\lambda}(\Omega)} + 
\norm{\del{t}k}_{H^{s-\lambda}(\Omega)}
\norm{\theta}_{H^{1+\ep}(\Omega)}\label{weakthm9}.
\end{gather}

The existence and uniqueness of a weak solution to the system of wave equations \eqref{weakthm6} with acoustic boundary conditions \eqref{weakthm5} is then
an immediate consequence of Theorem 7.5 from \cite{Oliynyk:Bull_2017}\footnote{We are actually employing a variation of Theorem 7.5 from  \cite{Oliynyk:Bull_2017}  where $E(t)$ is replaced by its square root. This replacement follows from
modifying the proof by dividing the differential energy inequality (7.48) from \cite{Oliynyk:Bull_2017} by $\sqrt{E_M}$ to yield a differential energy inequality involving $\sqrt{E_M}$
instead of $E_M$. The remainder of the proof follows essentially unchanged.}. The stated energy estimates and bounds on the energy norm $E(t)$ also follow directly
from this theorem and the estimates \eqref{weakthm7}-\eqref{weakthm9}. 
\end{proof}

\subsubsection{Higher regularity\label{highreg}}

As it stands, Theorem \ref{weakthm} is not useful for developing an existence and uniqueness theory for non-linear
wave equations with acoustic boundary conditions. For application to non-linear problems, Theorem \ref{weakthm} must be improved to establish the existence and uniqueness of
solutions with higher regularity. The key to doing this is to choose initial data that satisfies certain \textit{compatibility conditions},    
which are made precise in the following definition.

\begin{Def} \label{acouscompat}
Given $\st=\kt/2$ with $\kt\in \mathbb{Z}$, we say the the initial data\footnote{See \eqref{melldef} for a definition of $m(s,\ell)$.}
\begin{equation*}
(u_0,u_1,U_{2\st+1}) \in H^{\st+1}(\Omega,\Rbb^N)\times H^{\st+1-\frac{m(\st+1,1)}{2}}(\Omega,\Rbb^N)\times
L^2(\del{}\Omega,\Rbb^N)
\end{equation*}
for the IBVP \eqref{linAa.1}-\eqref{linAa.3} satisfies the \textit{compatibility conditions to order $2\st+1$} if
the higher formal
time derivatives
\begin{equation*}
u_\ell = \del{t}^\ell u |_{\Omega_0}, \quad \ell=2,3,\ldots,2\st+1,
\end{equation*} 
which are  generated from the initial data by differentiating the wave equation \eqref{linAa.1} formally with respect to $t$ the required number of times
and setting $t=0$, satisfy 
\begin{equation*}
u_\ell \in H^{\st+1-\frac{m(\st+1,\ell)}{2}}(\Omega,\Rbb^N), \quad 0\leq \ell \leq 2\st+1,
\end{equation*}
and
\begin{equation*}
\del{t}^\ell \bigl(\nu_\alpha (b^{\alpha\beta}\del{\beta}u +m^\alpha)-q\del{t}^2 u -p\del{t}u-g\bigr)\bigl|_{\Gamma_0} = 0, \quad \ell=0,1,\ldots,2\st-1,
\end{equation*}
where the $(2\st+1)$ formal time derivative of $u$ restricted to the boundary $\del{}\Omega$ at $t=0$ is given by $U_{2\st+1}$, 
that is\footnote{In general, the expression \label{acouscompat1} has to be taken as a definition and not as the restriction of $u_{2\st+1}$ to the boundary
$\del{}\Omega_0$ since the trace of $u_{2\st+1}$ on the boundary is not necessarily well defined due to our assumption that
$u_{2\st+1}\in L^2(\Omega,\Rbb^N)$.}
\begin{equation*}
\del{0}^{2\st+1} u|_{\Gamma_0} = U_{2\st+1}. 
\end{equation*}
\end{Def}

\begin{thm} \label{LAWthm}
Suppose $T>0$, $s>n/2$, $s=k/2$ and $\st=\kt/2$ for $k,\kt\in \Zbb_{\geq 1}$ where $\kt\leq k$, $\,b^{\alpha\beta}, 
\del{t}b^{\alpha\beta}  \in  X^{s,2\st}_T(\Omega,\Mbb{N})$, $\,p,q, \del{t}p,\del{t}q  \in X_{T}^{s,2\st-2}(\Omega,\Mbb{N})$, 
$\del{t}q\in X_T^{s+\frac{1}{2},2\st-1}(\Omega,\Mbb{N})$, 
$\,f,\del{t}f\in X_{T}^{\st-1}(\Omega,\Rbb^N)$, $\del{t}^{2\st}f\in L^\infty([0,T],L^2(\Omega,\Rbb^N))$, $\,m^{\alpha}, \del{t}m^{\alpha}  \in X_{T}^{\st}(\Omega,\Rbb^{N})$, $\,g,\del{t}g \in X_{T}^{\st,2\st-2}(\Omega,\Rbb^{N})$,
the initial data 
\begin{equation*}
(u_0,u_1,U_{2\st+1})\in 
 H^{\st+1}(\Omega,\Rbb^N)\times  H^{\st+1-\frac{m(\st+1,1)}{2}}(\Omega,\Rbb^N) \times  L^2(\del{}\Omega,\Rbb^N)
\end{equation*}
satisfies the compatibility conditions to
order $2\st+1$, the matrices $b^{\alpha\beta}$ satisfy \eqref{bsym} and the inequalities \eqref{b00lb} and \eqref{coercN} for some constants $\kappa_0,\kappa_1>0$ and $\mu\geq 0$, the matrix $q$ satisfies the relations \eqref{qconditions} for some constant $\gamma >0$ and the inequalities
\begin{equation*}
p + \biggl(\ell-\frac{1}{2} \biggr)\del{t}q - \chi_\ell q \leq 0 \hspace{0.5cm}\text{in $\Gamma_T$}
\end{equation*}
for $0\leq \ell \leq 2\st$ for some constants $\chi_\ell \in \Rbb$, and there exists maps
\begin{gather*} 
\theta_1 \in 
\bigcap_{j=0}^1 W^{j,\infty}([0,T],H^{s-\st+\frac{3-j}{2}}(\Omega,\Rbb^N)),\quad  \theta_2 \in\bigcap_{j=0}^1 W^{j,\infty}([0,T],H^{\frac{3-j}{2}}(\Omega,\Rbb^N)), \\
h_1 \in \bigcap_{j=0}^1 W^{j,\infty}([0,T],H^{s-\st+1-j}(\Omega,\Rbb^N)),\quad h_2 \in \bigcap_{j=0}^1 W^{j,\infty}([0,T],H^{1-j}(\Omega,\Rbb^N))
\intertext{and}
k_a^\Sigma\in  
\bigcap_{j=0}^1 W^{j,\infty}(([0,T],H^{s+\frac{1-j}{2}}(\Omega,\Mbb{N})), \quad a=1,2,
\end{gather*}
 which satisfy $\nu_\Sigma k_a^\Sigma =0$, such that
\begin{equation*}
\del{t}^{2\st}p=k^\Sigma_1\del{\Sigma}\theta_1+h_1  \AND
\del{t}^{2\st}g=k^\Sigma_2\del{\Sigma}\theta_2 + h_2.
\end{equation*}
Then there exists maps
\begin{equation*}
(u,U)\in C\Xc^{\st+1}_T(\Omega,\Rbb^N)\times C([0,T],L^2(\Omega,\Rbb^N))
\end{equation*}
that determine the unique solution to the IBVP \eqref{linAa.1}-\eqref{linAa.3} satisfying the
additional property that $U|_{t=0}=U_{2\st+1}$ and the pair
$(\del{t}^{2\st}u,U)$ defines a weak solution solution of the linear wave equation obtained by differentiating \eqref{linAa.1}-\eqref{linAa.2} $2\st$-times with respect to $t$. Moreover, the solution $(u,U)$
satisfies the energy estimate
\begin{align*}
\nnorm{(u(t),U(t))}_{\st+1} \leq
C &\biggl(\nnorm{(u(0),U(0))}_{\st+1} \\
 &+\alpha_2(0)+
 \int_0^t  \alpha_1(\tau)\nnorm{(u(\tau),U(\tau))}_{\st+1}+\alpha_2(\tau)\,d\tau \biggr)
\end{align*}
where\footnote{See \eqref{r*def} for the definition of $(\st-1)^*$ and $(2\st-2)^*$.} $C = C\bigl(\vec{\kappa},\mu,\gamma,\vec{\chi},\alpha_3(t)\bigr)$, $\vec{\chi}=(\chi_\ell)$, $\vec{\kappa}=(\kappa_0,\kappa_1)$,
\begin{equation*}
\nnorm{(u(t),U(t))}_{\st+1}^2 = \norm{u(t)}_{\Ec^{\st+1}}^2 + \ip{U(t)}{(-q)U(t)}_{\del{}\Omega},
\end{equation*}
\begin{align*}
\alpha_1(t)  =& 1+\norm{b(t)}_{H^{s}(\Omega)}+\norm{\del{t}b(t)}_{E^{s,2\st}}
 + \norm{q(t)}_{H^{s}(\Omega)}+
\norm{\del{t}q}_{E^{s+\frac{1}{2},2\st-1}}+\norm{p(t)}_{H^{s}(\Omega)}\\
 &+\norm{\del{t}p}_{E^{s,(2\st-2)*}} 
 + \norm{h_1(t)}_{H^{s-\st+1}(\Omega)}+ \norm{\del{t} h_1(t)}_{H^{s-\st}(\Omega)} \\
&+ \Bigl(\norm{k_1}_{H^{s+\frac{1}{2}}(\Omega)}+ \norm{\del{t}k_1}_{H^{s}(\Omega)}\Bigr)\Bigl(\norm{\del{t}\theta_1(t)}_{H^{s-\st+1}(\Omega)} +
\norm{\theta_1(t)}_{H^{s-\st+\frac{3}{2}}(\Omega)}\Bigr),\\
\alpha_2(t) =& \norm{m(t)}_{H^{\st}(\Omega)}+\norm{\del{t}m(t)}_{E^{\st}}+\norm{f(t)}_{H^{(\st-1)^*}(\Omega)} +
 \norm{\del{t}f(t)}_{E^{(\st-1)^*}}+\norm{\del{t}^{2\st}f(t)}_{L^2(\Omega)} \\
&+\norm{g(t)}_{H^{\st}(\Omega)}+\norm{\del{t}g(t)}_{E^{\st,(2\st-2)^*}}
+ \norm{h_2(t)}_{H^{1}(\Omega)}+ \norm{\del{t} h_2(t)}_{L^{2}(\Omega)}\\
&+ \Bigl(\norm{k_2(t)}_{H^{s+\frac{1}{2}}(\Omega)}+\norm{\del{t}k_2(t)}_{H^{s}(\Omega)}\Bigr)
\Bigl(\norm{\theta_2(t)}_{H^{\frac{3}{2}}(\Omega)}+\norm{\del{t}\theta_2(t)}_{H^{1}(\Omega)}\Bigr)
\intertext{and}
\alpha_3(t) =& 1+\norm{b(0)}_{H^{s}(\Omega)}+ \norm{b(t)}_{H^{s}(\Omega)}+ \norm{p(t)}_{H^{s}(\Omega)}+  \norm{q(t)}_{H^{s}(\Omega)}+ \norm{\del{t}q(t)}_{H^{s}(\Omega)}.
\end{align*}
\end{thm}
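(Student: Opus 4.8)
The plan is to adapt the strategy used in the proof of Theorem~\ref{DCthm} to the acoustic setting, replacing the Dirichlet energy estimate with the weak-solution energy estimate of Theorem~\ref{weakthm}. First I would reduce to the case of smooth coefficients $b^{\alpha\beta},p,q,m^\alpha,f,g\in C^\infty(\overline{\Omega}_T)$ and initial data satisfying the compatibility conditions in the strong (trace) sense to order $2\st+1$; the general case then follows at the end by an approximation and limiting argument. Under this reduction, applying $\del{t}^\ell$ to \eqref{linAa.1}--\eqref{linAa.2} for $0\le \ell \le 2\st$ shows that $\del{t}^\ell u$ satisfies a wave equation with an acoustic boundary condition of the same form: source $\del{t}^\ell f$, bulk correction $M^\alpha_\ell = [\del{t}^\ell,b^{\alpha\beta}]\del{\beta}u + \del{t}^\ell m^\alpha$, boundary matrices $q$ and $p+\ell\,\del{t}q$ (after moving the boundary commutators $[\del{t}^\ell,q]\del{t}^2 u$ and $[\del{t}^\ell,p]\del{t}u$ to the right-hand side), and a boundary source $G_\ell$ collecting those commutators, $\del{t}^\ell g$, and lower-order terms. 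The hypotheses $p + (\ell-\tfrac12)\del{t}q - \chi_\ell q \le 0$ are exactly what is needed so that, for each $\ell$, the differentiated boundary-matrix pair still satisfies the dissipativity condition demanded by Theorem~\ref{weakthm}.

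Next I would run Theorem~\ref{weakthm} on each differentiated problem to obtain the basic energy inequality for $E(\del{t}^\ell u)$, and then view the same differentiated equation as an elliptic system in the spatial variables (treating $\del{t}^{\ell+1}u$, $\del{t}^{\ell+2}u$ as data, exactly as in \eqref{DCthm5}) and apply the Neumann elliptic-regularity estimate of Theorem~\ref{ellipticregN} to recover the spatial derivatives. This yields bounds for $\norm{\del{t}^\ell u}_{H^{\st+1-\ell/2}(\Omega)}$ in terms of $E(\del{t}^\ell u)$ and norms of $M_\ell$, the elliptic forcing $F_\ell$ (the analogue of \eqref{DCthm4}), and the new boundary quantities. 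The bulk of the work is then the acoustic analogue of Lemma~\ref{DClem}: bounding $M_\ell$, $F_\ell$, the boundary commutators $[\del{t}^\ell,q]\del{t}^2 u$ and $[\del{t}^\ell,p]\del{t}u$, and the top-order boundary forcing $\del{t}^{2\st}g=k_2^\Sigma\del{\Sigma}\theta_2+h_2$, $\del{t}^{2\st}p=k_1^\Sigma\del{\Sigma}\theta_1+h_1$, in terms of the energy norm $\norm{u}_{\Ec^{\st+1}}$, using the commutator inequality (Proposition~\ref{stcomA}) and the multiplication inequalities (Theorem~\ref{calcpropB}, Propositions~\ref{stpropA} and \ref{stpropC}), together with the Trace Theorem~\ref{trace} for the boundary-trace terms. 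Writing each quantity via $X(t)=X(0)+\int_0^t\del{t}X$, summing over $\ell$ from $0$ to $2\st$, and feeding the result into Gronwall's inequality produces the stated energy estimate for $\nnorm{(u,U)}_{\st+1}$.

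The main obstacle, I expect, is the careful handling of the boundary term $q\,\del{t}^2u$ under repeated time differentiation. At the top order $\ell=2\st$ the term $\del{t}^{2\st}(q\,\del{t}^2u)$ produces $q\,\del{t}^{2\st+2}u$ plus commutators, and $\del{t}^{2\st+2}u$ has no trace in $L^2(\Omega)$; consequently this term must be kept inside the acoustic boundary operator rather than treated as forcing, which is why $U$ can only be expected in $L^2$ and why the decomposition hypotheses on $\del{t}^{2\st}p$ and $\del{t}^{2\st}g$ are imposed (so that the boundary forcing in the top-order weak problem has the structure $k^\Sigma\del{\Sigma}\theta+G$ required by Theorem~\ref{weakthm}). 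Tracking which commutator terms land on the boundary versus in the bulk, checking that the dissipativity sign survives for every $\ell$, and matching regularities in the boundary estimates is where most of the bookkeeping lies; once the Lemma~\ref{DClem}-type estimates are established the remaining argument is essentially identical to the Dirichlet case. Uniqueness then follows directly by applying the energy estimate to the difference of two solutions, as in the proof of Theorem~\ref{DCthm}.
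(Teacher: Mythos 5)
Your proposal follows essentially the same approach as the paper: differentiate in time, invoke the weak-solution energy estimate of Theorem~\ref{weakthm} on each differentiated problem using the dissipativity condition $p+(\ell-\frac12)\del{t}q-\chi_\ell q\le 0$, recover spatial regularity via the Neumann elliptic estimate (Theorem~\ref{ellipticregN}), bound $M_\ell$, $F_\ell$, the boundary commutators and the top-order boundary forcing (via the decompositions $\del{t}^{2\st}p=k_1^\Sigma\del{\Sigma}\theta_1+h_1$ and $\del{t}^{2\st}g=k_2^\Sigma\del{\Sigma}\theta_2+h_2$) with the commutator, multiplication and trace inequalities, and close with Gronwall; uniqueness then follows from the energy estimate. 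The one technical divergence is at the reduction step: the paper does not assume compatibility only to order $2\st+1$ in the strong sense but rather reduces to smooth coefficients with compatibility to order $2s+3$, so that it can invoke the high-regularity existence result Theorem~7.16 of \cite{Oliynyk:Bull_2017} and obtain a solution in $C\Xc^{s+2}_T$ that is smooth enough to legitimately carry out the time-differentiation and elliptic-regularity bootstrap; this higher-order reduction is needed because, unlike the Dirichlet case where Theorem~2.5 of \cite{Koch:1993} already yields sufficient regularity at order $2\st+1$, the acoustic problem requires a smoother starting solution before the a priori estimates can be derived and then transferred to the general setting by approximation.
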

\begin{proof}
We begin by assuming that $b^{\alpha\beta}$, $m^\alpha$, $f$, $g$, $q$, $p$  $\in$ $C^\infty(\overline{\Omega}_T)$ and the initial
data satisfies the compatibility conditions to order $2s+3$.
Since $s>n/2$ by assumption, it follows from Theorem 7.16 of \cite{Oliynyk:Bull_2017} that there exists a map
\begin{equation*}
(u,U) \in C\Xc_T^{s+2}(\Omega,\Rbb^N)\times  C([0,T],L^2(\del{}\Omega,\Rbb^N))
\end{equation*}
that determines the unique solution to the IBVP \eqref{linAa.1}-\eqref{linAa.3} satisfying the
additional property that $U|_{t=0}=U_{2s+3}$ and the pair
$(\del{t}^{2s+2}u,U)$ defines a weak solution solution of the linear wave equation obtained by differentiating \eqref{linAa.1}-\eqref{linAa.2} $2s$-times with respect to $t$.

Differentiating \eqref{linAa.1}-\eqref{linAa.2} $\ell$ times with respect to $t$, where $0\leq \ell \leq 2\st$, gives
\begin{align}
\del{\alpha}\bigl(b^{\alpha\beta}\del{\beta}\del{t}^\ell u+M^\alpha_\ell\bigr) &= \del{t}^\ell f && \text{in $\Omega_T$,}
\label{linBa.1}\\
\nu_\alpha\bigl(b^{\alpha\beta}\del{\beta} \del{t}^\ell u + M^\alpha_\ell) & = q\del{t}^2 \del{t}^\ell u + 
P_\ell\del{t} \del{t}^\ell u + G_\ell
 && \text{in $\Gamma_T$,} \label{linBa.2}
\end{align}
where $M^\alpha_\ell$ is as defined previously by \eqref{DCthm2}, 
\begin{align}
P_\ell &= \ell \del{t}q + p \label{LAWthmP20}
\intertext{and}
G_\ell &= \sum_{r=0}^{\ell-2}\binom{\ell}{r}\del{t}^{\ell-r}q\del{t}^{2+r}u + [\del{t}^\ell,p]\del{t}u + \del{t}^\ell g.
\label{LAWthmP21}
\end{align}
Since, by assumption, $P_\ell -\frac{1}{2}\del{t}q-\chi_\ell q = p+\bigl(\ell-\frac{1}{2}\bigr)\del{t}q - \chi_\ell q \leq 0$ for $0\leq \ell \leq 2\st$, 
and $\del{t}^{2\st}g = k_2^\Sigma\del{\Sigma}\theta_2 +h_2$, it follows
from Theorem \ref{weakthm} that $\del{t}^\ell u$ satisfies the energy estimate
\begin{align}
\Et\bigl(\del{t}^\ell &u(t)\bigr) \leq C\biggl( \Et\bigl(\del{t}^\ell u(0)\bigr) +\norm{\vec{M}_\ell(0)}_{L^2(\Omega)} + \int_0^t\bigl(1+
\norm{b(\tau)}_{L^\infty(\Omega)}+\norm{\del{t}b(\tau)}_{L^\infty(\Omega)}\bigr)\Et\bigl(\del{t}^\ell u(t)\bigr) \notag \\
&+\norm{\vec{M}_\ell(\tau)}_{L^2(\Omega)}+ \norm{\del{t}M_\ell(\tau)}_{L^2(\Omega)}  + \norm{\del{t}^\ell f(\tau)}_{L^2(\Omega)}+
\norm{G_\ell(\tau)}_{H^1(\Omega)} + \norm{\del{t}G_\ell(\tau)}_{L^2(\Omega)} \, d\tau\biggr) \label{LAWthmP20a}
\end{align}
for $0\leq \ell \leq 2\st-1$, 
and
\begin{align}
\Et\bigl(&\del{t}^{2\st} u(t)\bigr) \leq C\biggl( \Et\bigl(\del{t}^{2\st} u(0)\bigr) +\norm{\vec{M}_{2\st}(0)}_{L^2(\Omega)} 
+\norm{k_2(0)}_{H^{s+1-\ep}(\Omega)}\norm{\theta_2(0)}_{H^{1+\ep}(\Omega)} \notag \\
&+\int_0^t\bigl(1+
\norm{b(\tau)}_{L^\infty(\Omega)}+\norm{\del{t}b(\tau)}_{L^\infty(\Omega)}\bigr)\Et\bigl(\del{t}^{2\st} u(t)\bigr) +\norm{\vec{M}_{2\st}(\tau)}_{L^2(\Omega)}
+ \norm{\del{t}M_{2\st}(\tau)}_{L^2(\Omega)} \notag \\
& + \norm{\del{t}^{2\st} f(\tau)}_{L^2(\Omega)}+
\norm{(G_{2\st}-\del{t}^{2\st}p \del{t}h-\del{t}^{2\st}g)(\tau)}_{H^1(\Omega)} + 
\norm{\del{t}(G_{2\st}-\del{t}^{2\st}p \del{t}h-\del{t}^{2\st}g)(\tau)}_{L^2(\Omega)} \notag \\
&+\Bigl((\norm{(k_1\del{t}u)(\tau)}_{H^{\st+\frac{1}{2}}(\Omega)}+\norm{\del{t}(k_2\del{t}h )(\tau)}_{H^{\st}(\Omega)}\Bigr)\Bigl(\norm{\del{t}\theta_1(\tau)}_{H^{s-\st+1}(\Omega)} +
\norm{\theta_1(\tau)}_{H^{s-\st+\frac{3}{2}}(\Omega)}\Bigr)  \notag \\
&+\Bigl(\norm{k_2(\tau)}_{H^{s+\frac{1}{2}}(\Omega)}+\norm{\del{t}k_2(\tau)}_{H^{s}(\Omega)}\Bigr)\Bigl(\norm{\del{t}\theta_2(\tau)}_{H^{1}(\Omega)} +
\norm{\theta_2(\tau)}_{H^{\frac{3}{2}}(\Omega)}\Bigr)  \notag \\
&+\norm{h_2(\tau)}_{H^{1}(\Omega)}+ \norm{\del{t} h_2(\tau)}_{L^{2}(\Omega)} \, d\tau\biggr), \label{LAWthmP20b}
\end{align}
where $C=C\bigl(\kappa_0,\kappa_1,\mu,\gamma,\vec{\chi},\norm{b(0)}_{L^\infty(\Omega)}\bigr)$ and
\begin{equation*} 
\Et\bigl(\del{t}^\ell u(t)\bigr)^2 =  \norm{\del{t}^{\ell}u(t)}_{H^{1}(\Omega)}^2 + \norm{\del{t}^{\ell+1}u(t)}_{L^2(\Omega)}^2 - 
\ip{\del{t}^{\ell+1}u(t)}{q(t)\del{t}^{\ell+1}u(t)}_{\del{}\Omega}.
\end{equation*}

To proceed, we view \eqref{linBa.1}-\eqref{linBa.2} as an elliptic system by expressing it as
\begin{align}
\del{\Lambda}\bigl(b^{\Lambda\Sigma}\del{\Sigma}\del{t}^\ell u+M^\Lambda_\ell+b^{\Lambda 0}\del{t}^{\ell+1}u\bigr) 
&= F_\ell && \text{in $\Omega_T$,}
\label{linBb.1}\\
\nu_\Lambda\bigl(b^{\Lambda\Sigma}\del{\Sigma} \del{t}^\ell u + M^\Lambda_\ell+b^{\Lambda 0}\del{t}^{\ell+1}u) & = 
q\del{t}^{\ell+2} u + 
P_\ell \del{t+1}^\ell u + G_\ell
 && \text{in $\Gamma_T$,} \label{linBb.2}
 \end{align}
where $F_\ell$ is as previously defined by \eqref{DCthm4} and we have used the fact that $\nu_0 = 0$. Elliptic regularity, see Theorem \ref{ellipticregN}, then gives
\begin{align} 
\norm{\del{t}^\ell u}_{H^{\st+1-\frac{\ell}{2}}(\Omega)} \leq C&
\Bigr(\Et\bigl(\del{t}^\ell u\bigr) + \norm{\vec{M}_\ell}_{H^{\st-\frac{\ell}{2}}(\Omega)}+ 
\norm{b^{0\Lambda}\del{t}^{\ell+1} u}_{H^{\st-\frac{\ell}{2}}(\Omega)} +
\norm{F_\ell}_{H^{(\st-\frac{\ell}{2})^*}\!(\Omega)} \notag\\
&
+\norm{q\del{t}^{\ell+2}u}_{H^{\st-\frac{\ell}{2}-\frac{1}{2}}(\del{}\Omega)}+
\norm{P_\ell \del{t}^{\ell+1}u}_{H^{\st-\frac{\ell}{2}-\frac{1}{2}}(\del{}\Omega)}+
\norm{G_\ell}_{H^{\st-\frac{\ell}{2}-\frac{1}{2}}(\del{}\Omega)}
\Bigl),  \label{LAWthmP22}
\end{align}
for $0\leq \ell \leq 2\st-1$,
where $C=C\bigl(\kappa_1,\mu,\norm{b(t)}_{H^s(\Omega)}\bigr)$ and $(\st-\frac{\ell}{2})^*$ is defined by \eqref{r*def}.

\begin{lem} \label{AWhestA}
The following estimates hold:
\begin{itemize}
\item[(i)]

\begin{equation*}
\norm{q\del{t}^{\ell+2} u}_{H^{\st-\frac{\ell}{2}-\frac{1}{2}}(\del{}\Omega)} \lesssim \norm{q}_{H^s(\Omega)}
\norm{\del{t}^{\ell+2}u}_{H^{\st+1-\frac{\ell+2}{2}}(\Omega)}, \quad 0\leq \ell \leq 2\st -2,
\end{equation*}
and
\begin{equation*}
\norm{q\del{t}^{(2\st-1)+2} u}_{H^{\st-\frac{2\st-1}{2}-\frac{1}{2}}(\del{}\Omega)} \leq 
C(\norm{q}_{H^s(\Omega)})\Et\bigl(\del{t}^{2\st}u\bigr)^{\frac{1}{2}},
\end{equation*}
\item [(ii)] and for any $\ep >0$, there exists a constant $c(\ep)>0$ such that
\begin{equation*}
\norm{P_\ell \del{t}^{\ell+1} u}_{H^{\st-\frac{\ell}{2}-\frac{1}{2}}(\del{}\Omega)} \lesssim 
\bigl(\norm{\del{t}q}_{H^s(\Omega)}+\norm{p}_{H^s(\Omega)} \bigr)
\bigl(\ep \norm{u}_{\Ec^{\st+1}}+ c(\ep)\Et\bigl(\del{t}^{\ell+1} u\bigr)^{\frac{1}{2}} \bigr) , \quad 0\leq \ell \leq 2\st -1.
\end{equation*}
\end{itemize}
\end{lem}
\begin{proof}
We begin by noting that the first estimate from part (i) follows directly from the Trace Theorem, see Theorem \ref{trace}, and the multiplication estimates from Theorem \ref{calcpropB}.  The second estimate from part (i) follows is a similar fashion once we observe that
\begin{align*}
\norm{q\del{t}^{(2\st-1)+2} u}_{H^{\st-\frac{2\st-1}{2}-\frac{1}{2}}(\del{}\Omega)} &\lesssim \norm{(-q)^{\frac{1}{2}}}_{H^{s-\frac{1}{2}}(\del{}\Omega)}
\norm{(-q)^{\frac{1}{2}}\del{t}^{(2\st-1)+2} u}_{L^2(\del{}\Omega)}   
&& \text{(since $(-q)=(-q)^{\frac{1}{2}}(-q)^{\frac{1}{2}})$}\\
 &\lesssim \norm{(-q)^{\frac{1}{2}}}_{H^{s-\frac{1}{2}}(\del{}\Omega)}\Et\bigl(\del{t}^{2\st}u\bigr)^{\frac{1}{2}}
&& \text{(since $((-q)^{\frac{1}{2}})^{\tr}=(-q)^{\frac{1}{2}})$} \\
& \leq C(\norm{q}_{H^s(\Omega)})\Et\bigl(\del{t}^{2\st}u\bigr)^{\frac{1}{2}},
\end{align*}
where the last inequality is a consequence of the Trace Theorem (Theorem \ref{trace}), the analyticity of the map $(-q)\mapsto (-q)^{\frac{1}{2}}$, and the Moser estimate 
from Theorem \ref{calcpropC}.

Turning to the estimate from part (ii), we observe from the definition \eqref{LAWthmP20}, the Trace Theorem, and the multiplication estimates that 
\begin{equation} \label{AWhestA1}
\norm{P_\ell \del{t}^{\ell+1} u}_{H^{\st-\frac{\ell}{2}-\frac{1}{2}}(\del{}\Omega)} \lesssim \bigl(\norm{\del{t}q}_{H^s(\Omega)}+\norm{p}_{H^s(\Omega)} \bigr)\norm{\del{t}^{\ell+1} u}_{H^{\st-\frac{\ell}{2}}(\Omega)}, \quad 0\leq \ell \leq 2\st-2,
\end{equation}
and
\begin{align}
\norm{P_{2\st-1} \del{t}^{2\st} u}_{H^{\st-\frac{2\st-1}{2}-\frac{1}{2}}(\del{}\Omega)} &\lesssim \bigl(\norm{\del{t}q}_{H^{s-\frac{1}{2}}(\del{}\Omega)}+
\norm{p}_{H^{s-\frac{1}{2}}(\del{}\Omega)} \bigr)\norm{\del{t}^{2\st} u}_{L^2(\del{}\Omega)} \notag \\
& \lesssim \bigl(\norm{\del{t}q}_{H^s(\Omega)}+\norm{p}_{H^s(\Omega)} \bigr)\norm{\del{t}^{2\st} u}_{H^1(\Omega)} \notag \\
& \lesssim \bigl(\norm{\del{t}q}_{H^s(\Omega)}+\norm{p}_{H^s(\Omega)} \bigr)\Et\bigl(\del{t}^{2\st} u\bigr)^{\frac{1}{2}} \label{AWhestA2}
\end{align}
With the help of Ehrling's lemma (Lemma \ref{Ehrling}), we deduce from the inequality \eqref{AWhestA1} that
\begin{align}
\norm{P_\ell \del{t}^{\ell+1} u}_{H^{\st-\frac{\ell}{2}-\frac{1}{2}}(\del{}\Omega)} &\lesssim \bigl(\norm{\del{t}q}_{H^s(\Omega)}+\norm{p}_{H^s(\Omega)} \bigr)
\bigl(\ep \norm{\del{t}^{\ell+1} u}_{H^{\st+1-\frac{\ell+1}{2}}(\Omega)}+ C(\ep)\norm{\del{t}^{\ell+1}u}_{H^1(\Omega)}\bigr), \notag \\
 &\lesssim 
\bigl(\norm{\del{t}q}_{H^s(\Omega)}+\norm{p}_{H^s(\Omega)} \bigr)
\bigl(\ep \norm{u}_{\Ec^{\st+1}}+ c(\ep)\Et\bigl(\del{t}^{\ell+1} u\bigr)^{\frac{1}{2}} \bigr)
\label{AWhestA3}
\end{align}
for $0\leq \ell \leq 2 \st-2$ any $\ep >0$. Taken together, \eqref{AWhestA2} and \eqref{AWhestA3} show that the inequality from part (ii) holds, and the proof is complete. 
\end{proof}

From the above lemma and the elliptic estimates \eqref{LAWthmP22}, we see that
\begin{align}
\norm{\del{t}^\ell u}_{H^{\st+1-\frac{\ell}{2}}(\Omega)} &\leq C\Bigl(\norm{\del{t}^{\ell+2} u}_{H^{\st+1-\frac{\ell+2}{2}}(\Omega)} + \ep \norm{u}_{\Ec^{\st+1}}
+ r_\ell\Bigr), \quad 0\leq \ell \leq 2\st -2, \label{LAWthmP23}
\intertext{and}
\norm{\del{t}^{2\st-1} u}_{H^{\frac{3}{2}}(\Omega)} &\leq C \bigl(\Et\bigl(\del{t}^{2\st}u\bigr)+r_{2\st-1}\bigr), \label{LAWthmP24}
\end{align}
where 
\begin{equation} \label{LAWthmP25}
C= C\bigl(\kappa_1,\mu,\norm{b(t)}_{H^s(\Omega)},\norm{q(t)}_{H^s(\Omega)},\norm{\del{t}q(t)}_{H^s(\Omega)},\norm{p(t)}_{H^s(\Omega)}\bigr)
\end{equation}
and
\begin{equation*}
r_\ell = \Et\bigl(\del{t}^\ell u\bigr) + C(\ep)\Et\bigl(\del{t}^{\ell+1} u\bigr) +\norm{\vec{M}_\ell}_{H^{\st-\frac{\ell}{2}}(\Omega)}+ 
\norm{b^{0\Lambda}\del{t}^{\ell+1} u}_{H^{\st-\frac{\ell}{2}}(\Omega)} +
\norm{F_\ell}_{H^{(\st-\frac{\ell}{2})^*}\!(\Omega)} +
\norm{G_\ell}_{H^{\st-\frac{\ell}{2}-\frac{1}{2}}(\del{}\Omega)}.
\end{equation*}
We then deduce from the estimates \eqref{LAWthmP23}-\eqref{LAWthmP24} via a simple induction argument that
\begin{align*}
\norm{\del{t}^\ell u}_{H^{\st+1-\frac{\ell}{2}}(\Omega)} \leq &C\biggl( \ep \norm{u}_{\Ec^{\st+1}} + c(\ep)\sum_{m=0}^{2\st} \Et\bigl(\del{t}^m u\bigr) + \sum_{m=0}^{2\st-1}
\Bigl(\norm{\vec{M}_m}_{H^{\st-\frac{m}{2}}(\Omega)} \\
&+ 
\norm{b^{0\Lambda}\del{t}^{m+1} u}_{H^{\st-\frac{m}{2}}(\Omega)} +
\norm{F_m}_{H^{(\st-\frac{m}{2})^*}\!(\Omega)}+
\norm{G_\ell}_{H^{\st-\frac{\ell}{2}-\frac{1}{2}}(\del{}\Omega)}\Bigr)\biggr)
\end{align*}
for  $0\leq \ell \leq 2\st -1$, where the constant $C$ is of the form \eqref{LAWthmP25}. Summing the above inequality over $\ell$ and choosing $\epsilon$ small enough shows that
 \begin{equation} \label{LAWthmP26}
\norm{u}_{E^{\st+1,2\st-1}} \leq C\biggl(\sum_{m=0}^{2\st} \Et\bigl(\del{t}^m u\bigr)+ \Fc \biggr),
\end{equation}
where $C$ is again of the form \eqref{LAWthmP25} and
\begin{equation*}
\Fc = \sum_{m=0}^{2\st-1}
\Bigl(\norm{\vec{M}_m}_{H^{\st-\frac{m}{2}}(\Omega)}+
\norm{b^{0\Lambda}\del{t}^{m+1} u}_{H^{\st-\frac{m}{2}}(\Omega)} +
\norm{F_m}_{H^{(\st-\frac{m}{2})^*}\!(\Omega)}+
\norm{G_\ell}_{H^{\st-\frac{\ell}{2}-\frac{1}{2}}(\del{}\Omega)}\Bigr).
\end{equation*}

\begin{lem} \label{AWhestB} 
The following estimates hold:
\begin{itemize}
\item[(i)]
\begin{align*}
\norm{G_\ell}_{H^{\st-\frac{\ell}{2}-\frac{1}{2}}(\del{}\Omega)}+
\norm{\del{t}G_\ell}_{H^{\st-\frac{\ell}{2}-\frac{1}{2}}(\del{}\Omega)}
\lesssim \sigma \norm{u}_{\Ec^{\st+1}} + \norm{g}_{E^{\st,2\st-2}}+\norm{\del{t}g}_{E^{\st,2\st-2}}
\end{align*}
for  $0\leq \ell \leq 2\st -2$,
where 
\begin{equation*}
\sigma =  
\norm{p}_{E^{s,2\st-2}}+
\norm{\del{t}p}_{E^{s,2\st-2}}+
\norm{q}_{E^{s,2\st-2}}+
\norm{\del{t}q}_{E^{s,2\st-2}},
\end{equation*}
\item[(ii)]
\begin{align*}
\norm{\del{t}^{2\st}p\del{t}u}_{L^2(\del{}\Omega)} & \lesssim \Bigl(\norm{k_1 }_{H^{s+\frac{1}{2}}(\Omega)}\norm{\theta_1}_{H^{s-\st+\frac{3}{2}}(\Omega)}+\norm{h_1}_{H^{s-\st+1}(\Omega)}\Bigr)
\norm{u}_{\Ec^{\st+1}}
\intertext{and}
\norm{\del{t}^{2\st}g}_{L^2(\del{}\Omega)} &\lesssim \norm{k_2}_{H^{s+\frac{1}{2}}(\Omega)}\norm{\theta_2}_{H^{\frac{3}{2}}(\Omega)}+ \norm{h_2}_{H^1(\Omega)},
\end{align*}
\item[(iii)]
\begin{gather*}
\norm{G_\ell-\del{t}^\ell g}_{H^1(\Omega)}+\norm{\del{t}(G_{\ell}-\del{t}^\ell g)}_{L^2(\Omega)}\lesssim \beta \norm{u}_{\Ec^{\st+1}}, 
\quad 0\leq \ell \leq 2\st-1,
\intertext{and}
\norm{G_{2\st}-\del{t}^{2\st}p\del{t}u-\del{t}^{2\st}g}_{H^1(\Omega)}+\norm{\del{t}(G_{2\st}-\del{t}^{2\st}p\del{t}u-\del{t}^{2\st}g)}_{L^2(\Omega)} 
\lesssim \beta \norm{u}_{\Ec^{\st+1}},
\end{gather*}
where
\begin{align*}
\beta = &\norm{\del{t}p}_{E^{s,2\st-2}}+
\norm{\del{t}q}_{E^{s+\frac{1}{2},2\st-1}}+\norm{\del{t}q}_{E^{s,2\st}} \notag \\
&+ \norm{k_1}_{H^{s+\frac{1}{2}}(\Omega)} \norm{\theta_1}_{H^{s-\st+\frac{3}{2}}(\Omega)}
+ \norm{h_1}_{H^{s-\st+1}(\Omega)}+ \norm{\del{t} h_1}_{H^{s-\st}(\Omega)},
\end{align*}
\item[(iv)] and
\begin{align*}
\norm{k_1 \del{t}u}_{H^{\st+\frac{1}{2}}(\Omega)} &\lesssim \norm{k_1}_{H^{s+\frac{1}{2}}(\Omega)}\norm{u}_{\Ec^{\st+1}}
\intertext{and}
\norm{k_1 \del{t}u}_{H^{\st+\frac{1}{2}}(\Omega)} &\lesssim \bigl( \norm{k_1}_{H^{s+\frac{1}{2}}(\Omega)}+ \norm{\del{t}k_1}_{H^{s}(\Omega)}\bigr)\norm{u}_{\Ec^{\st+1}}.
\end{align*}
\end{itemize}
\end{lem}
\begin{proof}
We will only prove statements (ii) and (iii). Statements (i) and (iv) can be verified using similar arguments.

\bigskip

\noindent\textbf{(ii)}:
Letting $\perp^\Sigma_\Lambda = \delta^\Sigma_\Lambda - \nu^\Sigma \nu_\Lambda$, $\nu^\Sigma = \delta^{\Sigma\Omega}\nu_\Omega$, denote the orthogonal projection onto the the subspace orthogonal to the normal vector $\nu_\Lambda$,
we set $\Dsl_\Sigma = \perp_\Sigma^\Lambda\del{\Lambda}$,
which defines a complete collection of derivatives tangent to $\del{}\Omega$. This allows us to write
$k^\Sigma_a\del{\Sigma} = k^\Sigma_a \Dsl_{\Sigma}$, $a=1,2$,
since
$k^\Sigma_a = \perp^\Sigma_\Lambda k^\Lambda_a$ by assumption. Then, from the decomposition $\del{t}^{2\st}p =  k_1^\Sigma\del{\Sigma}\theta_1+h_1$,
the Sobolev and multiplication inequalities (Theorems \ref{FSobolev} and \ref{calcpropB}), and the Trace Theorem (Theorem \ref{trace}), we see that
\begin{align*}
\norm{\del{t}^{2\st}p \del{t}u}_{L^2(\del{}\Omega)} &\leq \norm{k_1^\Sigma\Dsl_{\Sigma}\theta_1 \del{t}u}_{L^2(\del{}\Omega)}+\norm{h_1\del{t}u}_{L^2(\del{}\Omega)} \\
&\lesssim  \bigl(\norm{k_1 \Dsl\theta_1}_{H^{s-\st}(\del{}\Omega)}+\norm{h_1}_{H^{s-\st}(\del{}\Omega)}\bigr)\norm{\del{t}u}_{H^{\st}(\del{}\Omega)} \\
&\lesssim \bigl(\norm{k_1 }_{H^{s}(\del{}\Omega)}\norm{\theta_1}_{H^{s-\st+1}(\del{}\Omega)}+\norm{h_1}_{H^{s-\st}(\del{}\Omega)}\bigr)\norm{\del{t}u}_{H^{\st}(\del{}\Omega)} \\
&\lesssim \Bigl(\norm{k_1 }_{H^{s+\frac{1}{2}}(\Omega)}\norm{\theta_1}_{H^{s-\st+\frac{3}{2}}(\Omega)}+\norm{h_1}_{H^{s-\st+1}(\Omega)}\Bigr)
\norm{\del{t}u}_{H^{\st+\frac{1}{2}}(\Omega)},
\end{align*}
where in deriving this we have used that $\st \geq \frac{1}{2}$ by assumption. Using similar arguments,
it is not difficult to verify  from the
decomposition $\del{t}^{2\st}g =  k_2^\Sigma\del{\Sigma}\theta_2+h_2$ that
\begin{equation*}
\norm{\del{t}^{2\st}g}_{L^2(\del{}\Omega)} \lesssim \norm{k_2}_{H^{s+\frac{1}{2}}(\Omega_2)}\norm{\theta}_{H^{\frac{3}{2}}(\Omega)}+ \norm{h_2}_{H^1(\Omega)}.
\end{equation*}

\end{proof}

From Lemma \ref{AWhestB}, the Trace Theorem (Theorem \ref{trace}), and the integral representation $G_\ell(t) = G_\ell(0)+\int_0^t \del{t}G_\ell(\tau)\, d\tau$, we observe that $G_\ell$ can be estimated by
\begin{equation} \label{LAWthmP27}
\norm{G_\ell(t)}_{H^{\st-\frac{\ell}{2}-\frac{1}{2}}(\Omega)} \lesssim \alpha_2(0) + \int_{0}^t \alpha_1(\tau)\norm{u(\tau)}_{\Ec^{\st+1}} +  \alpha_2(\tau) \, d\tau,
\quad 0 \leq \ell \leq 2\st-1,
\end{equation}
where
\begin{align*}
\alpha_1(t)  =& 1+\norm{b(t)}_{H^{s}(\Omega)}+\norm{\del{t}b(t)}_{E^{s,2\st}}
 + \norm{q(t)}_{H^{s}(\Omega)}+
\norm{\del{t}q}_{E^{s+\frac{1}{2},2\st-1}}+\norm{p(t)}_{H^{s}(\Omega)}
 +\norm{\del{t}p}_{E^{s,(2\st-2)^*}} \\
&+ \Bigl(\norm{k_1}_{H^{s+\frac{1}{2}}(\Omega)}+ \norm{\del{t}k_1}_{H^{s}(\Omega)}\Bigr)\Bigl(\norm{\del{t}\theta_1(t)}_{H^{s-\st+1}(\Omega)} +
\norm{\theta_1(t)}_{H^{s-\st+\frac{3}{2}}(\Omega)}\Bigr)\\
&+ \norm{h_1}_{H^{s-\st+1}(\Omega)}+ \norm{\del{t} h_1}_{H^{s-\st}(\Omega)}
\intertext{and}
\alpha_2(t) =& \norm{m(t)}_{H^{\st}(\Omega)}+\norm{\del{t}m(t)}_{E^{\st}}+\norm{f(t)}_{H^{(\st-1)^*}(\Omega)} +
 \norm{\del{t}f(t)}_{E^{(\st-1)*}}+\norm{\del{t}^{2\st}f(t)}_{L^2(\Omega)} 
+\norm{g(t)}_{H^{\st}(\Omega)}\\
&+\norm{\del{t}g(t)}_{E^{\st,(2\st-2)^*}}+ \Bigl(\norm{k_2(t)}_{H^{s+\frac{1}{2}}(\Omega)}+\norm{\del{t}k_2(t)}_{H^{s}(\Omega)}\Bigr)
\Bigl(\norm{\del{t}\theta_2(t)}_{H^{1}(\Omega)} +
\norm{\theta_2(t)}_{H^{\frac{3}{2}}(\Omega)}\Bigr)\\
&+ \norm{h_2(t)}_{H^{1}(\Omega)}+ \norm{\del{t} h_2(t)}_{L^{2}(\Omega)}.
\end{align*}
Using \eqref{LAWthmP27} in conjuction with  \eqref{DCthm6}, \eqref{DCthm7} and \eqref{DCthm8}, it then clear that $\Fc$ satisfies the estimate
\begin{equation} \label{LAWthmP28}
\Fc(t)  \lesssim \alpha_2(0) + \int_{0}^t \alpha_1(\tau)\norm{u(\tau)}_{\Ec^{\st+1}} +  \alpha_2(\tau) \, d\tau,
\end{equation}
while we note, for $0\leq \ell \leq 2\st$, that
\begin{equation}\label{LAWthmP29}
\Et\bigl(\del{t}^{\ell} u(t)\bigr) \leq C\biggl( \nnorm{(u(0),\del{t}^{2\st+1}u(0)|_{\del{}\Omega})}_{\st+1} + \alpha_2(0) + \int_{0}^t \alpha_1(\tau)\norm{u(\tau)}_{\Ec^{\st+1}} +  \alpha_2(\tau) \, d\tau\biggr),
\end{equation}
where $C=C\bigl(\kappa_0,\kappa_1,\mu,\gamma,\vec{\chi},\norm{b(0)}_{L^\infty(\Omega)},\norm{q(0)}_{L^\infty(\Omega)}\bigr)$, is a direct consequence of \eqref{LAWthmP20a}, \eqref{LAWthmP20b},  Lemma \ref{DCthm} and Lemma \ref{AWhestB}.  
The energy estimate
\begin{align*}
\nnorm{(u(t),\del{t}^{2\st+1}u(t)|_{\del{}\Omega})}_{\st+1} \leq
C &\biggl( \nnorm{(u(0),\del{t}^{2\st+1}u(0)|_{\del{}\Omega})}_{\st+1} \\
 &+\alpha_2(0)+
 \int_0^t  \alpha_1(\tau)\nnorm{(u(\tau),U(\tau))}_{\st+1}+\alpha_2(\tau)\,d\tau \biggr),
\end{align*}
where $C = C\bigl(\kappa_0,\kappa_1,\mu,\gamma,\vec{\chi},\alpha_3(t)\bigr)$ and
\begin{equation*}
\alpha_3(t) = 1+\norm{b(0)}_{H^{s}(\Omega)}+ \norm{b(t)}_{H^{s}(\Omega)}+ \norm{p(t)}_{H^{s}(\Omega)}+  \norm{q(t)}_{H^{s}(\Omega)}+ \norm{\del{t}q(t)}_{H^{s}(\Omega)},
\end{equation*}
then follows directly from the inequalities \eqref{LAWthmP26}, \eqref{LAWthmP28} and \eqref{LAWthmP29}.
 
We have, so far, established the existence of solutions that satisfy the required energy estimate under the assumption that the coefficients are smooth and that initial data satisfies the compatibility conditions to order $2s+3$.
Existence for the general case, where the coefficients satisfy the hypotheses of the theorem and the initial conditions satisfy the compatibility
conditions to order $2\st+1$, 
follows from an approximation and limiting argument, the details of which we leave to the interested reader. 
Furthermore,  the uniqueness of solutions follows
directly from the energy estimate in the usual way.
\end{proof}

\section{Index of notation} \label{index}

\bigskip

\begin{longtable}{ll}
$M$ & ambient spacetime manifold; \S \ref{introIBVP}\\
$\Omega_T$ & spacetime fluid domain; \S \ref{introIBVP}\\
$g_{\mu\nu}$ & Lorentzian spacetime metric; \S \ref{introIBVP} \\
$p$ & fluid pressure; \S \ref{intro} \& \S \ref{waveEul}; eqns.~\eqref{prel.1}-\eqref{zpfix} \\
$\rho$ & fluid proper energy density; \S \ref{intro} \& \S \ref{waveEul}, eqn.~\eqref{theta2rho}\\
$s^2$ & square of the fluid sound speed; \S \ref{intro}, eqn.~\eqref{s2}\\
$v^\mu$ & fluid 4-velocity; \S \ref{intro} \& \S \ref{waveEul}, eqns. \eqref{xihdef} \& \eqref{theta2v}\\
$h_{\mu\nu}$ & induced Riemannian metric on subspace orthogonal to $v^\mu$; \S \ref{intro}, eqn.~\eqref{pidef}\\
$\del{\mu}$ & partial derivatives, \S \ref{pderivatives} \\
$\del{}$, $\delsl{}$, $D$ & gradients, \S \ref{pderivatives} \\
$\del{}^{|k|}$, $\delsl{}^{|k|}$, $D^{|k|}$ & derivatives of order $\leq k$, \S \ref{pderivatives}\\
$\Rf$, $\Qf$, \ldots & generic constraint terms; \S \ref{constnot}\\
$\ipe{\cdot}{\cdot}$, $|\cdot|$ & Euclidean inner-product and norm; \S \ref{norms}\\
$|\cdot|_g$, $|\cdot|_m$ & tensor norms; \S \ref{norms}\\
$\ip{\cdot}{\cdot}$ & $L^2$ inner-product; \S \ref{spatialFS}\\
$W^s(\Omega,V)$, $H^s(\Omega,V)$ & Sobolev spaces; \S \ref{spatialFS}\\
$X^{s,r}_T(\Omega,V)$ & spacetime function space; \S \ref{spacetimeFS}, eqn.~\eqref{XTdefA}\\
$X^s_T(\Omega,V)$ & spacetime function space; \S \ref{spacetimeFS}, eqn.~\eqref{XTdefB}\\
$\Xc^s_T(\Omega,V)$ & spacetime function space; \S \ref{spacetimeFS}, eqns.~\eqref{XcTdefB}\\
$\mathring{X}^{s,r}_T(\Omega,V)$ & spacetime function space; \S \ref{spacetimeFS}, eqns.~\eqref{Xrdef}\\
$\Xcr^s_T(\Omega,V)$ & spacetime function space; \S \ref{spacetimeFS}, eqns.~\eqref{Xcrdef}\\
$m(s,\ell)$ &indexing function; \S \ref{spacetimeFS}, eqn.~\eqref{melldef}\\
$\norm{\cdot}_{E^{s,r}}$ & energy norm; \S \ref{spacetimeFS} \\
$\norm{\cdot}_{E^s}$ & energy norm; \S \ref{spacetimeFS} \\
$\norm{\cdot}_{\Ec^s}$ & energy norm; \S \ref{spacetimeFS} \\
$\norm{\cdot}_{X_T^{s,r}}$ & spacetime energy norm; \S \ref{spacetimeFS} \\
$\norm{\cdot}_{X_T^s}$ & spacetime energy norm; \S \ref{spacetimeFS} \\
$\norm{\cdot}_{\Xc^s}$ & spacetime energy norm; \S \ref{spacetimeFS} \\
$\zeta$, $\thetah^0_\mu$ & primary fields; \S \ref{primaux}\\
$\theta^0_\mu$ & timelike co-frame field; \S \ref{primaux}, eqn.~\eqref{theta0def}\\
$\theta^I_\mu$ & spatial co-frame (auxiliary fields); \S \ref{primaux} \\
$e_j^\mu$ & frame dual to $\theta^i_\nu$; \S \ref{primaux}, eqn.~\eqref{framedef}\\
$\gamma_{ij}$ & frame metric; \S  \ref{primaux} (see also Appendix \ref{DG}, eqn.~\eqref{DGframemet})\\
$\omega_{i}{}^k{}_j$ & connection coefficients; \S  \ref{primaux} (see also Appendix \ref{DG}, eqn.~\eqref{DGconndefA})\\
$\sigma_i{}^k{}_j$ & auxiliary fields; \S \ref{primaux} (see also \S \ref{consEul}, eqns.~\eqref{ccon}-\eqref{dcon})\\
$\xi^\mu$ & \S \ref{primaux}, eqn.~\eqref{xidef}\\
$\ac$ & bulk constraint, \S \ref{consEul}, eqn.~\eqref{acon}\\
$\bc^J$ & bulk constraint, \S \ref{consEul}, eqn.~\eqref{bcon}\\
$\cc^k_{ij}$ & bulk constraint, \S \ref{consEul}, eqn.~\eqref{ccon}\\
$\dc^k_j$ & bulk constraint, \S \ref{consEul}, eqn.~\eqref{dcon}\\
$\ec^K$ & bulk constraint, \S \ref{consEul}, eqn.~\eqref{econ}\\
$\Fc$ & bulk constraint, \S \ref{consEul}, eqn.~\eqref{fcon}\\
$\gc$ & bulk constraint, \S \ref{consEul}, eqn.~\eqref{gcon}\\
$\hc$ & bulk constraint, \S \ref{consEul}, eqn.~\eqref{hcon}\\
$\jc$ & bulk constraint, \S \ref{consEul}, eqn.~\eqref{jcon}\\
$\chi$ & collection of bulk constraints, \S \ref{consEul}, eqn.~\eqref{chidef}\\
$f(\lambda)$ & \S \ref{consEul}, eqn.~\eqref{fdef}\\
$\gammah^{00}$ & \S \ref{consEul}, eqn.~\eqref{gammah00}\\
$\kc$ & boundary constraint, \S \ref{consEul}, eqn.~\eqref{kcon}\\
$\thetah^3$ &\S \ref{consEul}, eqn.~\eqref{thetah3def}\\
$\Wc^{\alpha\mu}$ & \S \ref{wavesysEul}, eqn.~\eqref{Wcdef}\\
$m^{\alpha\beta}$ & Riemannian spacetime metric; \S \ref{wavesysEul}, eqn.~\eqref{mdef}\\
$a^{\alpha\beta}$ & acoustic metric; \S \ref{wavesysEul}, eqn.~\eqref{adef}\\
$\Ec^\mu$ & equations of motion for $\thetah^0_\mu$; \S \ref{wavesysEul}, eqn.~\eqref{Ecaldef}\\
$\Hc^\mu$ &  \S \ref{wavesysEul}, eqn.~\eqref{Hbrdef}\\
$\breve{\Hc}_\nu$ &  \S \ref{wavesysEul}, eqn.~\eqref{Hdef}\\
$C^\lambda_{\alpha\beta}$ &  \S \ref{wavesysEul}, eqn.~\eqref{Cdef}\\
$\Fc_{\alpha\beta}$ &  Electromagnetic field tensor associated to $\theta^0_\mu$; \S \ref{wavesysEul}, eqn.~\eqref{fcomp} (see also \eqref{fcon})\\
$\Kc$ &  \S \ref{wavesysEul}, eqn.~\eqref{Kcdef}\\
$\Bc^\mu$  & boundary conditions for $\thetah^0_\mu$; \S \ref{wavebcs}, eqn.~\eqref{Bcdef}\\
$\ell^\mu$ &  \S \ref{wavebcs}, eqn.~\eqref{elldef}\\
$\Lc$ &  \S \ref{wavebcs}, eqn.~\eqref{Lcdef}\\
$|\gamma|$ & determinant of the frame metric; Appendix \ref{DG}, eqn.~\eqref{detgammadef}\\
$\Upsilon^\mu_\omega$ &  \S \ref{wavebcs}, eqn.~\eqref{Upsilondef}\\
$\psi_\nu$ &  \S \ref{wavebcs}, eqn.~\eqref{psidef}\\
$\psih_\nu$ &  \S \ref{wavebcs}, eqn.~\eqref{psihdef}\\
$p^\mu_\nu$ &  projection onto the subspace $g$-orthogonal to $\psih^\mu$; \S \ref{wavebcs}, eqn.~\eqref{pdef} \\
$\Pi^\mu_\nu$ & projection onto the subspace $g$-orthogonal to $\psih^\mu$ and $v^\mu$; \S \ref{wavebcs}, eqn.~\eqref{Pidef}\\
$\nu_{\nu\alpha\beta\gamma}$ &  volume form associate to $g_{\mu\nu}$; \S \ref{wavebcs}, eqn.~\eqref{nudef}\\
$N_\nu$ &  \S \ref{wavebcs}, eqn.~\eqref{Ndef}\\
$s_{\mu\nu}{}^\gamma$ &  \S \ref{wavebcs}, eqn.~\eqref{Sdef}\\
$\Pibr^\mu_\nu$ & complementary projection to $\Pi^\mu_\nu$; \S \ref{wavebcs}, eqn.~\eqref{Pibrdef}\\
$x^\mu=\phi^\mu(\xb^\lambda)$ & Lagrangian coordinates; \S \ref{constprop}, eqn.~\eqref{lemBa.1a} (see also \S \ref{lag})\\
$\hbr$ & \S \ref{constprop}, eqn.~\eqref{lemC.8b}\\
$Y_{\Kc}$  & \S \ref{constprop}, eqn.~\eqref{Ydef}\\
$|\cdot|_{\Lambda}$  & \S \ref{constprop}, eqn.~\eqref{YLnorm}\\
$\Lambda^{\Kc\Lc}$  & \S \ref{constprop}, eqn.~\eqref{Lambdadef}\\
$\Ecr^\mu$  & the velocity differentiated equations of motion for $\thetah^0_\mu$; \S \ref{choice}, eqn.~\eqref{Ecrdef}\\
$\Bcr^\mu$  & the velocity differentiated boundary conditions for $\thetah^0_\mu$; \S \ref{choice}, eqn.~\eqref{Bcrdef}\\
$\ep$  & \S \ref{choice}, eqns.~\eqref{epfix} \& \eqref{epformula}\\
$\ellt^\mu$  & \S \ref{choice}, eqn.~\eqref{elltdef}\\
$\Upsilonch^\mu_\nu$  & inverse of $\Upsilon^\mu_\nu$; \S \ref{choice}, eqn.~\eqref{Upsilonchdef}\\
$J^\mu_\nu$ & Jacobian matrix of the Lagrangian map $\phi$; \S \ref{lag}, eqn.~\eqref{Jdef}\\
$\Jch^\mu_\nu$ & inverse Jacobian matrix of the Lagrangian map $\phi$; \S \ref{lag}, eqn.~\eqref{Jdef}\\
$\underline{f}$ & the pull-back of a scalar function $f$ by $\phi$; \S \ref{lag}, eqn.~\eqref{ful}\\
$\overline{Q}^{\mu_1\ldots \mu_r}_{\nu_1\ldots\nu_s}$ &  the pull-back of a tensor $Q^{\mu_1\ldots \mu_r}_{\nu_1\ldots\nu_s}$ by $\phi$; \S \ref{lag}\\
$\Hct^\mu$ & \S \ref{wformlag}, eqn.~\eqref{Hctdef}\\
$r$ & \S \ref{wformlag}, eqn.~\eqref{rfix}\\
$\vartheta_\nu$ &  $\nabla_v\thetah^0_\nu$ evaluated in Lagrangian coordinates;  \S \ref{wformlag}, eqn.~\eqref{varthetadef}\\
$\Ic^\mu$ & \S \ref{wformlag}, eqn.~\eqref{Icdef}\\
$\Asc^{\alpha\beta\mu\nu}$ & \S \ref{wformlag}, eqn.~\eqref{Ascdef}\\
$\Xsc^{\alpha\mu}$ & \S \ref{wformlag}, eqn.~\eqref{Xscdef}\\
$\Hsc^\mu$ & \S \ref{wformlag}, eqn.~\eqref{Hscdef}\\
$\Psi_\nu$ & \S \ref{wformlag}, eqn.~\eqref{Psidef}\\
$\Ssc^{\mu\nu\gamma}$ & \S \ref{wformlag}, eqn.~\eqref{Sscdef}\\
$\Psc^{\mu\nu}$ & \S \ref{wformlag}, eqn.~\eqref{Pscdef}\\
$\Gsc^\mu$  & \S \ref{wformlag}, eqn.~\eqref{Gscdef}\\
$\Msc^{\alpha\beta}$  & \S \ref{wformlag}, eqn.~\eqref{Mscdef}\\
$\Ksc$  & \S \ref{wformlag}, eqn.~\eqref{Kscdef}\\
$\Qsc^{\mu\nu}$  & \S \ref{wformlag}, eqn.~\eqref{Qscdef}\\
$\Rsc^{\mu}_{\nu}$  & \S \ref{wformlag}, eqn.~\eqref{Rscdef}\\
$\alpha^{\mu\nu}$  & \S \ref{wformlag}, eqn.~\eqref{alphadef}\\
$\betat^{\mu}_\nu$  & \S \ref{wformlag}, eqn.~\eqref{betatdef}\\
$\lambda$  & \S \ref{wformlag}, eqn.~\eqref{lambdadef}\\
$\beta^{\mu}_\nu$  & \S \ref{wformlag}, eqn.~\eqref{betadef}\\
$\Bsc^{\alpha\beta\mu\nu}$  & \S \ref{wformlag}, eqn.~\eqref{Bscdef}\\
$\Fsc^{\mu}$  & \S \ref{wformlag}, eqn.~\eqref{Fscdef}\\
$E^\mu_i$ & orthonormal frame; \S \ref{tdiffIBVP}, (see, in particular,  eqn.~\eqref{E0E3def})\\
$\Theta_\mu^i$ & dual frame to $E^\mu_i$; \S \ref{tdiffIBVP} \\
$\Pbb_0$, $\Pbb_3$, $\Pbb$, $\Pbbbr$ & constant projection matrices; \S \ref{tdiffIBVP}, eqns.~\eqref{Pbb3def}-\eqref{Pbb0def}\\
$q$ & \S \ref{tdiffIBVP}, eqn.~\eqref{qdef} \\
$\Qtt$ & \S \ref{tdiffIBVP}, eqn.~\eqref{Qttdef} \\
$\Btt^{\alpha\beta}$ & \S \ref{tdiffIBVP}, eqn.~\eqref{Bttdef} \\
$\Xtt^\alpha$ & \S \ref{tdiffIBVP}, eqn.~\eqref{Xttdef} \\
$\Ftt$ & \S \ref{tdiffIBVP}, eqn.~\eqref{Fttdef} \\
$\Uc$ & \S \ref{tdiffIBVP} (see also eqns.~\eqref{gubnd}-\eqref{zubnd}) \\
$\Ett_0$, $\Ett_1$ & \S \ref{tdiffIBVP}, eqn.~\eqref{dttE} \\
$\Ptt$ & \S \ref{tdiffIBVP}, eqn.~\eqref{Pttdef} \\
$\Gtt$ & \S \ref{tdiffIBVP}, eqn.~\eqref{Gttdef} \\
$\Ktt$ & \S \ref{tdiffIBVP}, eqn.~\eqref{Kttdef} \\
$\Ytt^\alpha$ & \S \ref{tdiffIBVP}, eqn.~\eqref{Yttdef} \\
$\Ztt$ & \S \ref{tdiffIBVP}, eqn.~\eqref{Zttdef} \\
$\Nc$ & \S \ref{coefsmooth} (see Lemma \ref{coeflemA})\\
$\Yc^{s}_T$  & \S \ref{Lwfsec}, eqn.~\eqref{YcsTdef}\\
\end{longtable}

\bigskip

\bibliographystyle{amsplain}
\bibliography{cpropB}

\end{document}